\renewcommand{\geq}{\geqslant}
\renewcommand{\leq}{\leqslant}
\def\eps{\varepsilon}
\def\s{\,{\rm s}}
\def\c{\,{\rm c}}
\def\sh{\,{\rm sh}}
\def\ch{\,{\rm ch}}
\def\th{\,{\rm th}}
\def\e{\,{\rm e}}
\newcommand{\Id}{{\rm Id }}
\def\diag{\textrm{diag}}
\setlist[itemize]{leftmargin=0.8cm}
\newcommand\ba{\begin{equation}\begin{aligned}}
\newcommand\ea{\end{aligned}\end{equation}}
\newcommand{\be}{\begin{equation}}
\newcommand{\ee}{\end{equation}}
\newtheorem{theorem}{Theorem}[section]
\newtheorem{corollary}[theorem]{Corollary}
\newtheorem{lemma}[theorem]{Lemma}
\newtheorem{definition}[theorem]{Definition}
\newtheorem*{definition*}{Definition}
\theoremstyle{remark}
\newtheorem{remark}{Remark}
\newtheorem*{remark*}{Remark}
\numberwithin{equation}{section}
\newcommand\smallO{
  \mathchoice
    {{\scriptstyle\mathcal{O}}}
    {{\scriptstyle\mathcal{O}}}
    {{\scriptscriptstyle\mathcal{O}}}
    {\scalebox{.7}{$\scriptscriptstyle\mathcal{O}$}}
  }
\begin{document}

\title[Stable and Unstable capillary-gravity waves]{Stable and Unstable capillary-gravity waves}

\author{Vera~Mikyoung~Hur}
\address{Department of Mathematics, University of Illinois at Urbana-Champaign, Urbana, IL 61801 USA}
\email{verahur@math.uiuc.edu}

\author{Zhao Yang}
\address{State Key Laboratory of Mathematical Sciences, Academy of Mathematics and Systems Science, Chinese Academy of Sciences, Beijing 100190 China}
\email{yangzhao@amss.ac.cn}

\date{\today}

\keywords{capillary-gravity; Wilton ripples; stability; spectrum; periodic Evans function}

\begin{abstract}
We make rigorous spectral stability analysis for non-resonant capillary-gravity waves as well as resonant Wilton ripples of sufficiently small amplitude. Our analysis is based on a periodic Evans function approach, developed recently by the authors for Stokes waves. On top of our previous work, we add to the approach new framework ingredients, including a two-stage Weierstrass preparation manipulation for the Periodic Evans function associated to the wave and the definition of a stability function as an analytic function of the wave amplitude parameter. These new ingredients are keys for proving stability near non-resonant frequencies and defining index functions ruling both stability and instability near non-zero resonant frequencies. We also prove that unstable bubble spectra near non-zero resonant frequencies form, at the leading order, either an ellipse or a circle and provide a justification for Creedon, Deconinck, and Trichtchenko's formal asymptotic expansion for the Floquet exponent. For non-resonant capillary-gravity waves for the stability near the origin of the complex plane, our stability results agree with the prediction from formal multi-scale expansion. New are our stability results near non-zero resonant frequencies. As the effects of surface tension vanish, our result recovers that for gravity waves. Also new are our stability results for Wilton ripples of small amplitude near the origin as well as near non-zero resonant frequencies. 
\end{abstract}

\maketitle 

\tableofcontents

\section{Introduction}\label{sec:intro}

We consider capillary-gravity waves (of small amplitude) at the free surface of an incompressible inviscid fluid in two dimensions, under the influence of gravity and surface tension. Suppose for definiteness that in Cartesian coordinates, the wave propagation is in the $x$ direction, and the gravitational acceleration in the negative $y$ direction. Suppose that the fluid at rest occupies the region $\{(x,y)\in\mathbb{R}^2: 0<y<h\}$, where $h>0$ is the fluid depth. Let 
\[
y=h+\eta(x,t),\quad x\in\mathbb{R},
\] 
denote the fluid surface at time $t$, and $y=0$ the rigid bed. Physically realistic is that $h+\eta(x,t)>0$ for all $x\in\mathbb{R}$. Throughout we assume an irrotational flow, whereby a velocity potential $\phi(x,y,t)$ satisfies
\begin{subequations}\label{eqn:ww;h}
\begin{align}
&\phi_{xx}+\phi_{yy}=0&& \text{for $0<y<h+\eta(x,t)$,}
\intertext{subject to the boundary condition}
&\phi_y=0&&\text{at $y=0$.}
\intertext{The kinematic and dynamic boundary conditions at the fluid surface are}
&\left.\begin{aligned}
\label{eqq:ww;boundary}
&\eta_t-c\eta_x+\eta_x\phi_x=\phi_y\\
&\phi_t-c\phi_x+\frac12(\phi_x^2+\phi_y^2)\\
&\quad+g\eta-b\frac{\eta_{xx}}{(1+\eta_x^2)^{3/2}}=q(t)\\
\end{aligned} \right\}&&\text{at $y=h+\eta(x,t)$}, 
\end{align}
\end{subequations}
where $c\neq0,\in\mathbb{R}$ is the velocity of the wave, $g>0$ the constant of gravitational acceleration, $b\geq0$ is the ratio of the surface tension coefficient to the fluid density, and $q(t)$ is an arbitrary function. 

When the effects of surface tension are negligible, that is, $b=0$, 
\eqref{eqn:ww;h} admits periodic traveling wave solutions, known as the Stokes waves \cite{Stokes1847} (see also \cite{Stokes1880}), which are unstable to long wavelength perturbations - namely, the Benjamin-Feir or modulational instability - provided that $\kappa h>1.3627\ldots$ \cite{Benjamin;BF, Whitham;BF} (see also references cited in \cite{ZO;review} for others). Here $\kappa$ denotes the wave number of the unperturbed wave. Bridges and Mielke \cite{BM;BF} proved rigorously such stability. We pause to mention that, for modulational instability of Stokes waves of infinite depth, see Nguyen and Strauss \cite{NS2023} and Berti, Maspero, and Ventura \cite{Berti2022} and, for nonlinear modulational instability of Stokes wave of infinite depth, see recent work of Chen and Su \cite{CS2023}. 
Additionally, numerical investigations (see, for instance,  \cite{doi:10.1098/rspa.1978.0080,doi:10.1098/rspa.1978.0081,mclean_1982,FK,DO}) revealed that Stokes waves for a wide range of the wavelength and amplitude parameters are spectrally unstable away from the origin of the complex plane when the unperturbed wave is ``resonant'' with its infinitesimal perturbations. By contrast, the Benjamin--Feir instability refers to the spectrum near the origin. Recently, the authors \cite{HY2023} developed a novel periodic Evans function approach for cylindrical domains, proving that a $2\pi/\kappa$ periodic Stokes wave of sufficiently small amplitude in water of depth $h$ is unstable near the spectrum associated with resonance of order $2$ given an explicitly computable index function \cite[$\mathrm{ind}_2$(6.34)]{HY2023} is positive. Here we take matters further and make rigorous analysis of spectral stability and instability of capillary-gravity waves, that is, $b>0$, near the origin of the complex plane as well as away from the origin. 


Following \cite{HY2023}, we reformulate \eqref{eqn:ww;h} in dimensionless variables. Let 
\begin{equation}\label{def:dimensionless}
\begin{gathered}
x\mapsto x/h,\qquad y\mapsto y/h,\qquad t\mapsto ct/h,\\
\eta\mapsto \eta/h,\qquad \phi\mapsto \phi/(ch), \qquad q\mapsto q/c^2,
\end{gathered}
\end{equation}
and let 
\begin{equation}\label{def:mu}
\mu=gh/c^2\quad\text{and}\quad \beta=b/(gh^2)   
\end{equation}
denote the Froude number and the inverse of the Bond number. Substituting \eqref{def:dimensionless} and \eqref{def:mu} into \eqref{eqn:ww;h}, after some algebra we arrive at
\ba\label{eqn:ww;1}
&\phi_{xx}+\phi_{yy}=0&& \text{for $0<y<1+\eta(x,t)$,}\\
&\phi_y=0&&\text{at $y=0$,}\\
&\left.\begin{aligned}&\eta_t-\eta_x+\eta_x\phi_x=\phi_y \\ 
&\phi_t-\phi_x+\frac12(\phi_x^2+\phi_y^2)\\
&\quad+\mu\eta-\frac{\beta\mu\eta_{xx}}{(1+\eta_x^2)^{3/2}}=q(t)\end{aligned}\right\}&&\text{at $y=1+\eta(x,t)$.}  
\ea 
Following \cite{HS;cg-solitary} (see also \cite{HY2023}), we introduce 
\be\label{def:u}
u=\phi_x \quad\text{and}\quad z=\frac{\beta\eta_x}{(1+\eta_x^2)^{1/2}}.
\ee 
and make change of variables
\[
y \mapsto \frac{y}{1+\eta(x,t)},
\]
to reformulate \eqref{eqn:ww;1} as first order ODEs with respect to the $x$ variable in the infinite cylindrical domain $\mathbb{R}\times (0,1)$. The result becomes
\ba\label{eqn:ww}
&\left.\begin{aligned}&\phi_x-\frac{y z \phi_y}{(1+\eta)\sqrt{\beta^2- z ^2}}-u=0\\
&u_x-\frac{y z  u_y}{(1+\eta)\sqrt{\beta^2- z ^2}}+\frac{\phi_{yy}}{(1+\eta)^2}=0\end{aligned}\right\}&&\text{for $0<y<1$,}\\
&\left.\begin{aligned}&\eta_x-\frac{ z }{\sqrt{\beta^2- z ^2}}=0\\
&\phi_t-\mu z _x-u+\frac{u^2}{2}+\mu\eta\\
&\quad+\frac{(u-1) z \phi_y}{(1+\eta)\sqrt{\beta^2- z ^2}}-\frac{\phi_y^2}{2(1+\eta)^2}=q(t)\end{aligned}\right\}&&\text{at $y=1$},
\ea 
and
\ba 
\label{eqn:bd}
&\phi_y=0&&\text{at $y=0$},\\
&\eta_t+\frac{ z (u-1)}{\sqrt{\beta^2- z ^2}}-\frac{\phi_y}{1+\eta}=0\quad&&\text{at $y=1$}.\\
\ea 
The third equation of \eqref{eqn:ww} is obtained through solving the second equation of \eqref{def:u}, which, for any $\beta\neq0$, is well defined for sufficiently small amplitude.

In Section~\ref{sec:Stokes}, we compute the asymptotic expansions for the periodic wave solutions known as pure and combination waves \cite{CSaffmam1979,Reeder1981part1,Reeder1981part2}. The latter combination waves arise when the former pure waves become singular and cause bifurcation. In present work, we will refer to pure waves as non-resonant capillary-gravity waves and combination $(1,M)$-waves as Wilton ripples of order $M$. In Section \ref{sec:spec}, we formulate the spectral problem associated with \eqref{eqn:ww} and \eqref{eqn:bd} in abstract form \eqref{eqn:LB} $\mathbf{u}_x=\mathbf{L}(\lambda)\mathbf{u}+\mathbf{B}(x;\lambda,\eps)\mathbf{u}=:\mathcal{L}(\eps)$ where  $\mathbf{L}(\lambda)$ is the leading $\mathcal{O}(1)$ term of $\mathcal{L}(\eps)$ and $\mathbf{B}(x;\lambda,\eps)\mathbf{u}$ is the higher $\mathcal{O}(\eps)$ term. We then make a discussion on the spectrum of $\mathcal{L}(0)$, apply the reduction method by Mielke \cite{Mielke;reduction} to reduce the abstract spectral problem to finite dimensions, and hence define an associated periodic Evans function. In Section \ref{sec:proj} and section \ref{sec:expansion_monodromy}, we illustrate how computations are carried out for the projection to finite dimensions, asymptotic expansions of the reduction function, and the monodromy matrix, which is the most cumbersome part of our analysis and is dealt by using the Matlab math symbolic toolbox. It is impossible to list all of our computations in the write-up. However, we collect some key results in Appendix. We shall also remark that the general framework introduced in Section \ref{sec:spec} is applicable to the stability analysis of Wilton ripples of order $M$ when the expansions for capillary-gravity waves become singular. 

\noindent\textbf{Main results.}~Our previous work \cite{HY2023}, along with other literature \cite{McLean;finite-depth, FK, DO}, suggests that spectral instability can only occur near resonant frequencies where two purely imaginary eigenvalues of $\mathbf{L}(i\sigma)$ differ by $iN\kappa$.  Here, $\sigma\in \mathbb{R}$, $N\in\mathbb{Z}$ and $\kappa$ is the wave number. In Section~\ref{resonances}, we establish proofs for these facts by the symmetry of roots \eqref{W_symmetric} of the Weierstrass polynomial \eqref{weierstrass_m} obtained after \textbf{a first Weierstrass preparation manipulation}. See Lemma~\ref{lem:symm:weierstrass}, Theorems~\ref{thm:stability-non-resonant} and ~\ref{thm:instability-resonant}, and Remark~\ref{remark:resonance}. We note these theories are new, as compared to our previous work \cite{HY2023}. We thereby make a thorough discussion for domains of resonant frequencies in the rest of Section \ref{resonances}. In contrast to the zero surface tension case \cite{HY2023}, there are capillary-gravity waves admitting resonant frequencies with $N=1$, while Stokes waves can only admit resonant frequencies with $N\ge 2$. As a result, if \eqref{def:ind_1} is satisfied, the foregoing waves admit unstable bubble spectra of size $\mathcal{O}(\epsilon)$, an instability stronger than their modulational instability \footnote{Given the modulational instability exists.}.  Also in contrast to the zero surface tension case, in the case of $\beta>0$, the critical frequencies $-i\sigma_{c,1}$ and $i\sigma_{c,2}$ (see Figures \ref{figure4} \ref{figure5}) can be resonant frequencies that admit two pairs of resonant eigenvalues, making a generalized eigenvector of $\mathbf{L}(\lambda)$ present in the basis of the reduced space at the critical frequencies. See Section \ref{S1S2region} cases (16)-(19) for details.

In Sections~\ref{ncg_stability_result:high} and \ref{wilton_result:high}, at \textbf{non-zero} resonant frequencies, for both non-resonant capillary-gravity waves and Wilton ripples of order $M\geq 2$, the Weierstrass polynomials \eqref{weierstrass_m} obtained after the first Weierstrass preparation manipulation are quadratic. See for instance \eqref{weierstrass_eps}, \eqref{weierstrass_eps_cri}. In Sections~\ref{low_capillary-gravity} and ~\ref{low_wilton-ripples}, at the \textbf{zero} resonant frequency, for non-resonant capillary-gravity waves and for Wilton ripples of order $M\geq 2$, the Weierstrass polynomial is quartic and sextic, respectively. See \eqref{def:W} and \eqref{eqn:evans_wilton2}. 

In the foregoing case of non-zero resonant frequencies where the corresponding Weierstrass polynomials are quadratic, thanks to the simplicity of quadratic formula, our later analysis for their roots is rigorous. Indeed, the symmetry of roots \eqref{W_symmetric} of the Weierstrass polynomial \eqref{weierstrass_m} forces that \eqref{weierstrass_coeff} holds for its coefficients. Using \eqref{weierstrass_coeff} for the quadratic Weierstrass polynomial \eqref{weierstrass_eps}, we make a critical observation in Corollary~\ref{cor:weier_quadratic} that spectral stability near the resonant frequency is completely determined by the sign of its \textbf{real-valued} discriminant ${\rm disc}(\gamma,\eps)$ \eqref{discri} and is independent of the linear term of \eqref{weierstrass_eps}. To determine the sign of ${\rm disc}(\gamma,\eps)$ for fixed $|\eps|\ll 1$ and varied $|\gamma|\ll 1$, its analyticity at $(0,0)$ suggests \textbf{a second Weierstrass preparation manipulation}, resulting \textbf{always} in a quadratic Weierstrass polynomial $W(\gamma,\eps)$ \eqref{disc_weierstrass} in $\gamma$. Lemma~\ref{lem:secondweierstrass} implies that the sign of ${\rm disc}(\gamma,\eps)$ is opposite to that of $W(\gamma,\eps)$, motivating Definition~\ref{stability_function} of the \textbf{stability function} ${\rm disc}_2(\eps)$ as the discriminant of $W(\gamma,\eps)$, whose sign, for $|\eps|\ll 1$, rules the stability. Still by Definition~\ref{stability_function}, since ${\rm disc}_2(\eps)$ is analytic and vanishing at $\eps=0$, its sign for $|\eps|\ll 1$ is determined via the sign of the coefficient of the lowest non-vanishing term in the power series expansion. This coefficient was referred to as the index function in our previous work \cite{HY2023}. We pause to remark the second Weierstrass preparation manipulation is new, compared to our previous work \cite{HY2023}. In \cite[Theorem 6.5]{HY2023}, positivity of the index function ${\rm ind}_2(\kappa)$ (6.34) implies spectral instability near the resonant frequency, while negativity of ${\rm ind}_2(\kappa)$ only restrictively implies spectral stability at the order of $\eps^2$ as $\eps\rightarrow 0$, making the statement for stability rather weak. With our new general framework of analysis, we remove the weakness in the latter stability statement and now negativity of ${\rm ind}_2(\kappa)$ implies spectral stability near the resonant frequency. We make this clear in Remark \ref{stabilitypart} and Theorem~\ref{improved_thm}. As for the shapes of unstable spectra, we further make Corollaries ~\ref{cor:ellipse_eps},~\ref{cor:ellipse_eps_square},~\ref{cor:circle_eps},~\ref{cor:circle_eps_square},~\ref{cor:ellipse_eps;wilton2},~\ref{cor:ellipse_eps;wiltonm}, and ~\ref{cor:ellipse_eps_square;wiltonm}. From which we conclude that (i) the bubble spectra are, at the leading order, either in the shape of \textbf{an ellipse} or \textbf{a circle}, depending on if the resonant frequencies are critical (See Remark~\ref{remark_circle}), and (ii) the centers of the bubbles are not necessarily at the resonant frequencies $i\sigma$ and can drift from $i\sigma$ (see Remarks~\ref{remark_drift} and ~\ref{remark_drift2}).

For stability away from the origin, our previous work for Stokes waves \cite{HY2023} only treated non-zero resonant frequencies of order $N=2$ and left $N\geq 3$ for future investigation.  Based on formal asymptotic expansions of the linear perturbation variables (including the Floquet exponent), Creedon et al. \cite{creedon_deconinck_trichtchenko_2022} studied the spectrum of Stokes waves near resonant frequencies up to $N=3$. More recently, Berti et al. \cite{berti2025} employed the Kato’s similarity transformation theory developed in the seminal work \cite{Berti2022} to derive functions $\beta_1^{(p)}(h)$ \footnote{The authors did a spatial rescaling so that the waves are all $2\pi$-periodic. The parameter $h$ plays the role of  $\kappa$ in our sense.}  for any $p\geq 2$ which tell the spectral stability near resonant frequencies of order $N=p$. In Section~\ref{RFNg3}, we attempt to treat at resonant frequencies with $N\geq 3$. It turns out, by Lemma~\ref{coefficients_highng32}, the newly defined stability function ${\rm disc}_2(\eps)$ vanishes at $\mathcal{O}(\eps^4)$-order, whence stability is determined by the next non-vanishing term, which necessarily requires the computations of $\mathbf{a}^{(m,n)}$ \eqref{def:X;exp} for $m+n\geq 3$, a matter of tedious symbolic computation. We will not pursue in this direction. In current work, our computation of $\mathbf{a}^{(m,n)}$ is still limited up to $m+n=2$, which is sufficient for determining stability near resonant frequencies listed in TABLE~\ref{table-indexes}. Nonetheless, we make use of the new general framework of analysis to justify Creedon et al.'s formal asymptotic expansion for the Floquet exponent \cite[(5.1c)]{creedon_deconinck_trichtchenko_2022}. See Corollary~\ref{formal_justify}. We also conjecture the expansion of the stability function ${\rm disc}_2(\eps)$ at the resonant frequency $i\sigma$ with $N\geq 1$ in general. See \eqref{conjecture_disc2}.

In the latter case of zero resonant frequency, a quartic or sextic polynomial is certainly harder to deal with. Nonetheless, since, over the field of reals, polynomials with real coefficients can be factorized as products of irreducible factors with degrees at most two, the real polynomial $\tilde{W}(\tilde{\delta},\gamma,\eps)$ \eqref{weierstrass_wtilde} can be factorized into quadratic and linear factors, making the analysis after accessible. Though we do not see, in a moment, how such factorization proposed can be achieved, Berti et al. \cite{Berti2022,Berti2023} obtained detailed descriptions for the spectra near the origin. Their proofs involve a series of block-diagonalizations that reduce a $4$ by $4$ matrix to two $2$ by $2$ block matrices, whereby a quadratic form is seen. Algebraically, the process of block-diagonalizations is equivalent to the factorization as we proposed here. Berti et al.'s result \cite[Theorem 1.1]{Berti2023} further implies that, for Stokes waves of finite depth, the potentially unstable roots of the quartic polynomial follow an asymptotic expansion \cite[(5.27)]{HY2023}. For non-resonant capillary-gravity waves, we expect the same asymptotic expansion \eqref{def:lambda(gamma,eps)} shall hold and use it to derive index functions \eqref{def:ind_5} \eqref{index_5i}, which yields the same stability diagram as Djordjevic and Redekopp's \cite{djordjevic_redekopp_1977}.  In fact, recent results of Sun and Wahlen  \cite{sun2025} and Hsiao and Maspero \cite{hsiao2025} imply that \eqref{def:lambda(gamma,eps)} holds. For resonant Wilton ripples, Trichtchenko, Deconinck, and Wilkening \cite{TDW16} numerically computed the unstable spectra near the origin for some choices of wave parameters. It will be very interesting to apply Berti et al.'s methods to understand fully the modulational (in)stability of Wilton ripples. Without further justification, we interpret our modulational results in Section~\ref{low_wilton-ripples} for resonant Wilton
ripples as formal ones. Nonetheless, we emphasize our former rigorous treatment for the non-modulational stability near non-zero resonant frequencies for both non-resonant capillary-gravity waves and resonant Wilton ripples. To our knowledge, the only other contributions aiming at rigorous analysis of spectral stability away from the origin is made by Noble, Rodrigues, and Sun \cite{Noble_2023} for small-amplitude periodic traveling waves of the electronic Euler-Poisson system and the more recent ones \cite{berti2024isolamodulationalinstabilitystokes,berti2025}.

\begin{figure}[htbp]
    \centering
    \includegraphics[scale=0.3]{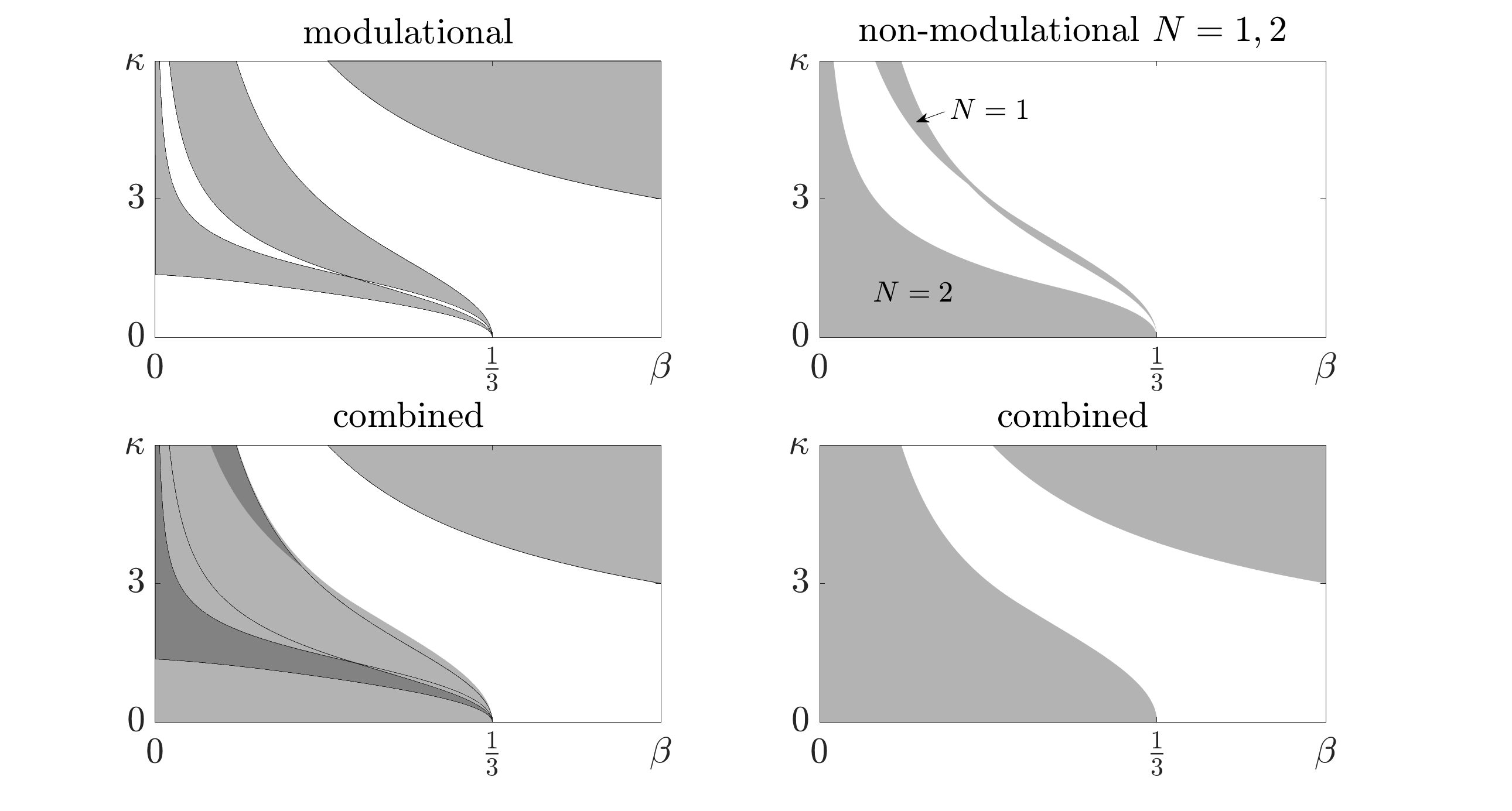}
    \caption{Unstable non-resonant capillary-gravity waves. Top left panel: unstable waves that are subject to modulational instability. These waves achieve unstable spectra near the origin. Note that the modulational stability diagram (see also FIGURE~\ref{figure15}) is first obtained by Djordjevic and Redekopp \cite{djordjevic_redekopp_1977}. We mark the boundaries with solid curves. Top right panel: unstable waves that are subject to non-modulational instability. These waves achieve unstable spectra near the resonant frequencies $i\sigma$ with $N=1,2$. We do not mark the boundaries with solid curves. Bottom left panel: unstable waves that are either subject to the modulational instability or non-modulational instability. We note that waves in the dark shaded regions are subject to both modulational and non-modulational instability and the solid curves in light shaded regions separate waves that are only subject to modulational instability from waves that are only subject to non-modulational instability. Bottom right panel: unstable waves that are either subject to the modulational instability or non-modulational instability (with no difference in shading compared to left bottom panel). We note waves in the unshaded region are stable both near the origin and near resonant frequencies with $N=1,2$ and their stability depends on further study at resonant frequencies with $N\geq 3$.}
    \label{figure1}
\end{figure}
We collect all stability results for non-resonant capillary-gravity waves in Section \ref{ncg_stability_result} and for Wilton ripples of order $M$ in Section \ref{wilton_result}. Readers will find the results are quite delicate because of the following facts: (i) non-resonant capillary-gravity waves and Wilton ripples need to be treated separately; (ii) despite that $0$ is a resonant frequency (modulational stability), resonant frequencies $i\sigma$ can also occur at $0<\sigma<\min \{-\sigma_{c,1},\sigma_{c,2}\}$ where the reduced space is six dimensional, at $\min \{-\sigma_{c,1},\sigma_{c,2}\}<\sigma<\max \{-\sigma_{c,1},\sigma_{c,2}\}$ where the reduced space is four dimensional, at $\sigma>\max \{-\sigma_{c,1},\sigma_{c,2}\}$ where the reduced space is two dimensional, and at the critical frequency $-i\sigma_{c,1}$ and $i\sigma_{c,2}$ where there are two pairs of resonant eigenvalues; (iii) the additional terms in the expansion of Wilton ripples of order $M$ complicate wave-wave interactions. The complexities give rise to 12 stability index functions, computable as explicit symbolic expressions of $\kappa,\beta,\sigma$, and resonant eigenvalues $ik_j$ of $\mathbf{L}(i\sigma)$. To help readers navigate through these sections, we summarize the stability index functions defined for capillary-gravity waves in Table~\ref{table-indexes}.
\begin{table}[htbp]
\begin{center}
\begin{tabular}{|c|c|c|c|c|c|}
\hline
wave & resonant frequency $i\sigma$ & index& Theorem  \\ \hline
\multirow{6}{*}{\begin{tabular}[c]{@{}c@{}}non-resonant\\ capillary-gravity\end{tabular}}
& $0<\sigma\neq-\sigma_{c,1},\sigma_{c,2}$, $N=1$&${\rm ind}_1$&\ref{thm:unstableeps1}\\ 
& $0<\sigma\neq-\sigma_{c,1},\sigma_{c,2}$, $N=2$&${\rm ind}_2$&\ref{thm:unstable2} \\
& $\sigma=-\sigma_{c,1},\sigma_{c,2}$, $N=1$&${\rm ind}_3$&\ref{thm:unstableeps1_cri} \\ 
&$\sigma=-\sigma_{c,1},\sigma_{c,2}$, $N=2$&${\rm ind}_4$&\ref{thm:unstableeps2_cri}\\
&$0<\sigma$, $N\geq 3$&N.A.&N.A.\\
& $\sigma=0$& ${\rm ind}_5$ & \ref{thm:modulation}\\ 
\hline\multirow{4}{*}{\begin{tabular}[c]{@{}c@{}}Wilton ripples\\ of order $2$\end{tabular}}
 & $0<\sigma\neq-\sigma_{c,1},\sigma_{c,2}$, $N=2$ &${\rm ind}_6$&\ref{thm:unstableeps1_wilton} \\
 & $0<\sigma\neq-\sigma_{c,1},\sigma_{c,2}$, $N\geq 3$ &N.A.& N.A. \\
 & $\sigma=-\sigma_{c,1},\sigma_{c,2}$ &N.A.&N.A. \\
 & $\sigma=0$ &${\rm ind}_{11,12}$&\ref{thm:wilton2} \\ \hline
\multirow{8}{*}{\begin{tabular}[c]{@{}c@{}}Wilton ripples\\ of order $M\geq3$\end{tabular}}  & $0<\sigma\neq-\sigma_{c,1},\sigma_{c,2}$, $N= 1$ &${\rm ind}_1$&\ref{thm:wiltonmp1} \\  & $0<\sigma\neq-\sigma_{c,1},\sigma_{c,2}$, $N= M$ &${\rm ind}_7$& \ref{thm:unstableeps1_wiltonm}\\  & \begin{tabular}[c]{@{}c@{}}$0<\sigma\neq-\sigma_{c,1},\sigma_{c,2}$,\\ $N=2,M-1,M+1,2M $\end{tabular} &${\rm ind}_8$& \ref{thm:unstableeps2_wiltonm}\\  & \begin{tabular}[c]{@{}c@{}}$0<\sigma\neq-\sigma_{c,1},\sigma_{c,2}$,\\ $N\neq 1,2,M-1,M,M+1,2M $\end{tabular} &N.A.& N.A.\\ & $\sigma=-\sigma_{c,1},\sigma_{c,2}$ &N.A.& N.A.\\  & $\sigma=0$ &${\rm ind}_{9,10}$&\ref{thm:wiltonm} \\ \hline
\end{tabular}
\end{center}
\caption{Summary of stability index functions defined for non-resonant capillary-gravity waves and Wilton ripples at a resonant frequency $i\sigma$. In the table, N.A. stands for ``Not Available''. See explanation in Section~\ref{RFNg3}.}
\label{table-indexes}
\end{table}

We evaluate these index functions by floating-point arithmetic to obtain numerical thresholds for unstable waves, which are listed following the corresponding theorems. The numerical results for non-modulational stability suggest a practical sign rule as detailed in Remark \ref{rm:sign_rule}. Analogous to \cite[p. 41 - p. 42]{HY2023}, one can upgrade the floating-point arithmetic to interval arithmetic, and thereby rigorously validate the sign of index functions. We will not make the upgrade because of unboundedness of the parameter domains and lack of necessity for the validated numerics. For non-resonant capillary-gravity waves, we collect results obtained from Sections~\ref{non-resonant_results}, ~\ref{non-resonant_results_critical}, and ~\ref{low_capillary-gravity} and make diagrams \ref{figure1} showing unstable waves that are either subject to the modulational instability or instability near the resonant frequencies with $N=1,2$, where we refer to the latter instability as non-modulational.  In all figures of the paper, we shade the regions where waves are found unstable. For resonant Wilton ripples, we do not make diagrams for the domains of Wilton ripples of order $M$ are one-dimensional curves.
\begin{figure}[htbp]
    \centering
    \includegraphics[scale=0.3]{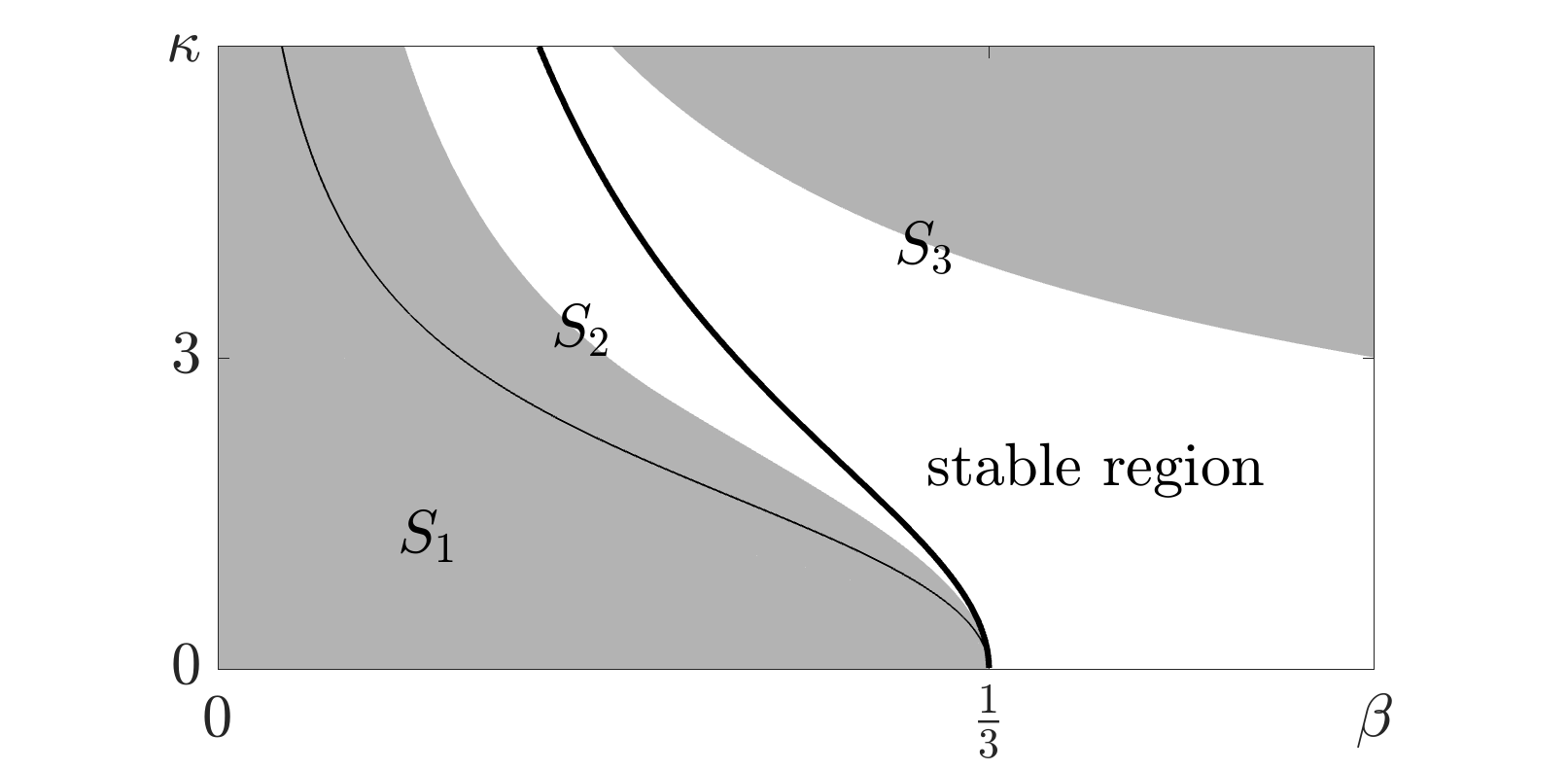}
    \caption{Super-critical $S_1$ region is bounded on the right by the thin curve. Super-critical $S_2$ region is bounded on the left by the thin curve and on the right by the bold curve. Sub-critical $S_3$ region is bounded on the left by the bold curve. Waves in the unshaded region of the sub-critical $S_3$ region are proven to be globally stable by Sun and Wahlen \cite{sun2025}. Waves in the unshaded region of the $S_2$ region admit no non-zero resonant frequencies between modes of opposite Krein signatures; hence are also fully stable.}
    \label{figure2}
\end{figure}
\medskip

\noindent\textbf{Stable waves and the stability diagram.} Capillary–gravity waves in the unshaded region\footnote{Restricted to non-resonant capillary-gravity waves.} are spectrally stable near the origin and at the resonant frequencies with $N=1,2$, whereas stability at higher resonant frequencies with $N\geq 3$ has not yet been treated. For this reason, the title of our former arXiv submissions stressed that our results single out unstable waves, in addition to those already known to be modulationally unstable since Djordjevic and Redekopp \cite{djordjevic_redekopp_1977}. We recently learned about the work of Sun and Wahlen \cite{sun2025}, and realized that we can exclude instability in the previously untreated case by a Krein-signature criterion (Remark~\ref{Kreincondition}). We sketch the reasoning next.

A recent preprint by Sun and Wahlen \cite{sun2025} discovered stable capillary-gravity waves of small amplitude, revealing a stabilizing effect of surface tension. To be specific, the authors showed that modulationally stable waves in the sub-critical $S_3$ region are globally stable, meaning that their spectrum set consists only the imaginary axis. See FIGURE~\ref{figure2}. To obtain this stability result, the authors noted that for small-amplitude waves in the $S_3$ region, at any non-zero resonant frequencies, the Krein signatures of the resonant modes are identical, whence nearby spectra stay on the imaginary and are stable; see \cite[Sections 3 and 4]{sun2025}. This greatly reduces the determination of stability of waves in the $S_3$ region to their modulational stability which is already known since \cite{djordjevic_redekopp_1977}. We pause to note that their stability result near non-zero resonant frequencies is consistent with our findings as we detect no non-modulational instability in the $S_3$ region. See FIGURE \ref{figure1} top right panel and item (4) in Section \ref{non-resonant_results}. 

To obtain a full stability diagram, it remains to determine stability of waves in the unshaded subregion of $S_2$ in FIGURE~\ref{figure2}. Indeed, by the resonance analysis for super-critical waves, these unshaded waves admit no non-zero resonant frequencies between modes of opposite Krein signatures; hence they are also fully stable. See Section~\ref{S1S2region} and Section~\ref{resonance_summary} item ii. In summary, Figure~\ref{figure2} presents the full stability diagram: the unshaded region corresponds to stable waves, and the shaded region to unstable waves. The right boundary of the unshaded stable region is given by $\{(\beta_5(\kappa),\kappa):\kappa>0\}$ where $\beta_5(\kappa)$ is the positive real root of ${\rm ind}_{5,3}$ (see Appendix~\ref{Modulational_indexes}). The left boundary of the unshaded stable region is given by the right boundary of non-modulational unstable region corresponding to $N=1$ (see FIGURE~\ref{figure1} top right panel). We find numerically the two boundaries do not intersect as $\kappa\rightarrow \infty$\footnote{We numerically track the two boundaries up to $\kappa=100$ and they do not intersect.}. Hence, evidently the stable region is unbounded, extending infinitely in both $\kappa\rightarrow +\infty$ and $\beta\rightarrow \infty$ limits.

\medskip

\noindent {\bf Acknowledgement.}~The work of VMH was partially supported by NSF through the award DMS-2009981. ZY thanks the Department of Mathematics at the University of Illinois at Urbana-Champaign for the valuable postdoc research and teaching experience. 

\section{Periodic traveling waves of sufficiently small amplitude}\label{sec:Stokes}

We seek temporally stationary and spatially periodic solutions of \eqref{eqn:ww}-\eqref{eqn:bd} for sufficiently small amplitude. That is,
\begin{subequations}\label{stationary-periodic}
\begin{equation}\label{stationary-periodic:a}
\phi,\quad \eta,\quad u=\phi_x-\frac{y\eta_x\phi_y}{1+\eta},\quad\text{and}\quad z =\frac{\beta\eta_x}{(1+\eta_x^2)^{1/2}}   
\end{equation}
satisfy
\begin{align}
&u_x-\frac{y\eta_xu_y}{1+\eta}+\frac{\phi_{yy}}{(1+\eta)^2}=0&&\text{for $0<y<1$,}\\
&\phi_y=0&&\text{at $y=0$,}\\
&(u-1)\eta_x-\frac{\phi_y}{1+\eta}=0&&\text{at $y=1$},\label{stationary-periodic:K}\\
&\mu z _x+u-\frac{u^2}{2}-\mu\eta-\frac{\phi_y^2}{2(1+\eta)^2}+q=0&&\text{at $y=1$},\label{stationary-periodic:D}
\end{align}
\end{subequations}
for some constant $q$. We note that \eqref{stationary-periodic:a} follows from the first and third equations of \eqref{eqn:ww}, and \eqref{stationary-periodic:D} follows from \eqref{stationary-periodic:K} and
\[
\mu z _x+u-\frac{u^2}{2}-\mu\eta-\frac{(u-1)\eta_x\phi_y}{1+\eta}+\frac{\phi_y^2}{2(1+\eta)^2}+q=0\quad\text{at $y=1$},
\]
by the fourth equation of \eqref{eqn:ww}. 

Suppose 
\[
\text{$\eps\in\mathbb{R}$ represents the dimensionless amplitude parameter},
\]
and let
\begin{equation}\label{eqn:stokes exp}
\begin{aligned}
&\phi(x,y;\eps)=\phi_1(x,y)\eps+\phi_2(x,y)\eps^2+\phi_3(x,y)\eps^3+O(\eps^4), \\
&\eta(x;\eps)=\eta_1(x)\eps+\eta_2(x)\eps^2+\eta_3(x)\eps^3+O(\eps^4),\\
&\mu(\eps)=\mu_0+\mu_1\eps+\mu_2\eps^2+\mu_3\eps^3+O(\eps^4),\\
&q(\eps)=q_1\eps+q_2\eps^2+q_3\eps^3+O(\eps^4),
\end{aligned}
\end{equation}
as $\eps\to 0$, for some $\phi_1,\phi_2,\phi_3,\dots$, $\eta_1,\eta_2,\eta_3,\dots$ and $\mu_0,\mu_1,\mu_2,\mu_3,\dots$, $q_1,q_2,q_3,\dots$, and, hence, 
\[
\begin{aligned}
&u(x,y;\eps)=u_1(x,y)\eps+u_2(x,y)\eps^2+u_3(x,y)\eps^3+O(\eps^4),\\
&z(x;\eps)= z _1(x)\eps+ z _2(x)\eps^2+ z _3(x)\eps^3+O(\eps^4),\\
\end{aligned}
\]
as $\eps\to 0$, where $u_1,u_2,u_3,\dots$ and $z _1, z _2, z _3,\dots$ can be determined in terms of $\phi_1,\phi_2,\phi_3,\dots$ and $\eta_1,\eta_2,\eta_3,\dots$ using \eqref{stationary-periodic:a}. We assume that $\phi_1,\phi_2,\phi_3,\dots$, $\eta_1,\eta_2,\eta_3,\dots$, and $u_1,u_2,u_3,\dots$, $z _1, z _2, z _3,\dots$ are $T$ periodic in $x$ 
\ba\label{fundamental_period}
&\text{for some $T=\tfrac{2\pi}{\kappa}$, the period, where $\kappa$ is the wave number}.
\ea
Additionally, we assume that $\phi_1,\phi_2,\phi_3,\dots$ are odd in $x$, and $\eta_1,\eta_2,\eta_3,\dots$ are even, and they are mean zero over one period. 
We remark that \eqref{eqn:stokes exp} converges for $\eps\in\mathbb{R}$ and $|\eps|\ll 1$, for instance, in $H^{s+2}(\mathbb{R}/T\mathbb{Z}\times (0,1))\times H^{s+5/2}(\mathbb{R}/T\mathbb{Z})\times\mathbb{R}\times\mathbb{R}$ for any $s>1$ \cite{NR;anal}. Therefore $\phi(\eps)$, $\eta(\eps)$, $\mu(\eps)$ and $q(\eps)$ depend real analytically on $\eps$ for $|\eps|\ll1$.

\smallskip

\noindent \textbf{Notation.}~In what follows we employ the notation
\begin{align*}
&\s(\cdot)=\sin(\cdot),& & \c(\cdot)=\cos(\cdot), \\
&\sh(\cdot)=\sinh(\cdot),& &\ch(\cdot)=\cosh(\cdot),\qquad \quad \th(\cdot)=\tanh(\cdot).
\end{align*}

\subsection{Non-resonant capillary-gravity waves}
We proceed to compute $\phi_1,\phi_2,\phi_3,\dots$, $\eta_1,\eta_2,\eta_3,\dots$ and $\mu_0,\mu_1,\mu_2,\mu_3,\dots$, $q_1,q_2,q_3,\dots$. Substituting \eqref{eqn:stokes exp} into \eqref{stationary-periodic}, after some algebra, we gather that at the order of $\eps$:
\ba \label{eqn:stokes1}
&{\phi_1}_{xx}+{\phi_1}_{yy}=0&&\text{for $0<y<1$},\\
&{\phi_1}_y=0&&\text{at $y=0$},\\
&{\eta_1}_x+{\phi_1}_y=0 &&\text{at $y=1$},\\
&\mu_0\beta{\eta_1}_{xx}+{\phi_1}_x-\mu_0\eta_1+q_1=0\quad&&\text{at $y=1$}.
\ea
Recall that $\phi_1$ and $\eta_1$ are $2\pi/\kappa$ periodic functions of $x$, where $\kappa>0$, $\phi_1$ is an odd function of $x$, $\eta_1$ is an even function and of mean zero over the period, and $q_1$ and $\mu_0$ are constants. We solve \eqref{eqn:stokes1}, for instance, by separation of variables, for $n\in\mathbb{N}^+$
\begin{gather}
\phi_1(x,y)=\s(n\kappa x)\ch(n\kappa y), 
\qquad \eta_1(x)=\sh(n\kappa)\c(n\kappa x),\qquad q_1=0,\label{def:stokes1} 
\intertext{provided that}
\mu_0(1+\beta (n\kappa)^2)\th(n\kappa)-n\kappa=0,\label{def:eqmu0kappa} 
\end{gather}
makes a solution of \eqref{eqn:stokes1}.
Let
\be\label{mu0kappa}
\mu_0(\kappa;\beta):=\frac{\kappa}{(1+\beta \kappa^2)\th(\kappa)}.
\ee 
Clearly, for any $\beta>0$, $\mu_0(0^+;\beta)=1$ and $\mu_0(+\infty;\beta)=0$ and \eqref{def:eqmu0kappa} amounts to \be\label{mu0nkappa}
\mu_0=\mu_0(n\kappa;\beta).
\ee

\begin{lemma}\label{lemma:mu0kappa}
If $\beta\geq1/3$, $\mu_0(\cdot;\beta)$ is strictly decreasing on $(0,\infty)$. On the other hand, if $0<\beta<1/3$, there exists a unique $\kappa_*>0$ such that 
$\frac{d\mu_0}{d\kappa}(\kappa_*;\beta)=0$ and $\mu_0(\cdot;\beta)$ is strictly increasing on $(0,\kappa_*)$ and strictly decreasing on $(\kappa_*,\infty)$.
\end{lemma}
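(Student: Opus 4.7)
My plan is a reduction to the monotonicity of a single explicit one-variable function. First, differentiating $\mu_0(\kappa;\beta)=\kappa/[(1+\beta\kappa^{2})\th(\kappa)]$ and multiplying the numerator of $d\mu_0/d\kappa$ by the positive factor $\ch^{2}(\kappa)$, the sign of $\frac{d\mu_0}{d\kappa}$ equals the sign of
\[
f(\kappa;\beta)=\tfrac{1}{2}[\sh(2\kappa)-2\kappa]-\tfrac{1}{2}\beta\kappa^{2}[\sh(2\kappa)+2\kappa].
\]
Since $\sh(2\kappa)\pm 2\kappa>0$ on $(0,\infty)$, the condition $f(\kappa;\beta)>0$ is equivalent to $\beta<G(\kappa)$, where
\[
G(\kappa):=\frac{\sh(2\kappa)-2\kappa}{\kappa^{2}\,[\sh(2\kappa)+2\kappa]}.
\]
Both statements of the lemma therefore follow from two properties of $G$: (i) $G(0^{+})=1/3$ and $G(\kappa)\to 0$ as $\kappa\to\infty$; (ii) $G$ is strictly decreasing on $(0,\infty)$.

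Property (i) is routine: the Taylor expansions $\sh(2\kappa)-2\kappa=\tfrac{4}{3}\kappa^{3}+O(\kappa^{5})$ and $\sh(2\kappa)+2\kappa=4\kappa+O(\kappa^{3})$ give $G(0^{+})=\tfrac{4/3}{4}=\tfrac{1}{3}$, while the exponential growth of $\sh(2\kappa)$ in the denominator forces $G(\kappa)\to 0$ at infinity. Property (ii) is the main obstacle. Substituting $y=2\kappa$ reduces it to proving that $\hat G(y):=[\sh(y)-y]/[y^{2}(\sh(y)+y)]$ is strictly decreasing on $(0,\infty)$. A direct logarithmic differentiation yields
\[
\frac{\hat G'(y)}{\hat G(y)}=\frac{2\,P(y)}{y\,(\sh^{2}(y)-y^{2})},\qquad P(y):=y^{2}\ch(y)-y\sh(y)-\sh^{2}(y)+y^{2},
\]
whose denominator is positive on $(0,\infty)$. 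Thus (ii) reduces to the single sign statement $P(y)<0$ for $y>0$.

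To prove this, I plan an exact Taylor-coefficient analysis. Writing $\sh^{2}(y)=(\ch(2y)-1)/2$ and collecting terms, the coefficient of $y^{2n}$ in $P$ is
\[
c_{n}=\frac{4n(n-1)-2^{2n-1}}{(2n)!},\qquad n\geq 2,
\]
with $c_{0}=c_{1}=c_{2}=0$. A short induction then gives $2^{2n-1}>4n(n-1)$ for every $n\geq 3$ (base $n=3$: $32>24$; step reduces to $4(n-1)>n+1$, true for $n\geq 2$), so each $c_{n}$ with $n\geq 3$ is strictly negative. Since $P$ is an even entire function with the first three even coefficients vanishing and every subsequent coefficient strictly negative, $P(y)<0$ for all $y>0$, which establishes (ii). The lemma then follows by case analysis: if $\beta\geq 1/3$, then $\beta>G(\kappa)$ throughout $(0,\infty)$, so $f<0$ and $\mu_0(\cdot;\beta)$ is strictly decreasing; if $0<\beta<1/3$, the strictly decreasing continuous bijection $G:(0,\infty)\to(0,1/3)$ supplies a unique $\kappa_{*}$ with $G(\kappa_{*})=\beta$, with $f>0$ on $(0,\kappa_{*})$ and $f<0$ on $(\kappa_{*},\infty)$, as claimed.
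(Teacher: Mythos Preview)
Your proof is correct and follows the same initial reduction as the paper: both arrive at the function $G(\kappa)=\frac{\sh(2\kappa)-2\kappa}{\kappa^{2}(\sh(2\kappa)+2\kappa)}$ (denoted $\beta_{S_1,S_2}$ in the paper), both identify the limits $G(0^{+})=1/3$ and $G(\infty)=0$, and both observe that the lemma reduces to showing $G$ is strictly decreasing on $(0,\infty)$.

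The genuine difference lies in how this monotonicity is proved. The paper computes $G'$ directly, obtains an expression whose sign is governed by an explicit exponential combination $\#(\kappa)$, and then certifies $\#(\kappa)>0$ by a \emph{computer-assisted} argument: a sixth-derivative mean-value bound near the origin, an elementary estimate for large $\kappa$, and INTLAB interval arithmetic on the intermediate range $[0.001,2]$. Your route is fully analytic: the logarithmic derivative of $\hat G$ reduces the question to the sign of $P(y)=y^{2}\ch(y)-y\sh(y)-\sh^{2}(y)+y^{2}$, and the Taylor coefficient computation $c_{n}=(4n(n-1)-2^{2n-1})/(2n)!$ together with the elementary induction $2^{2n-1}>4n(n-1)$ for $n\ge 3$ settles $P<0$ without any numerics. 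Your argument is shorter, self-contained, and avoids the validated computation entirely; the paper's approach, by contrast, illustrates a template they reuse elsewhere for similar sign verifications.
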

\begin{proof}
We compute 
$$
\frac{d\mu_0}{d\kappa}(\kappa;\beta)=-\frac{\kappa^2\big(\th(\kappa)+\kappa(1-\th(\kappa)^2)\big)(\beta-\beta_{S_1,S_2}(\kappa))}{(\beta \kappa^2 + 1)^2\th(\kappa)^2},
$$
where
\be 
\label{thincurve}
\beta_{S_1,S_2}(\kappa):=\frac{\sh(2\kappa)-2\kappa}{\kappa^2(2\kappa+\sh(2\kappa))}.
\ee 
Because $\th(\kappa)<1$ for $\kappa>0$, the sign of $\frac{d\mu_0}{d\kappa}(\kappa;\beta)$ is determined by that of $\beta-\beta_{S_1,S_2}(\kappa)$. Clearly $\beta_{S_1,S_2}(+\infty)=0$ and, by L'Hospital rule, $\beta_{S_1,S_2}(0^+)=\frac{1}{3}$.  We now show $\beta_{S_1,S_2}(\kappa)$ is strictly decreasing on $(0,+\infty)$. Further computation reveals 
$$
\frac{d\beta_{S_1,S_2}(\kappa)}{d\kappa}=-\frac{2(e^{2\kappa} + 1)\#(\kappa)}{\kappa^3(e^{4\kappa} + 4\kappa e^{2\kappa} - 1)^2},
$$
where
$$
\#(\kappa):=e^{6\kappa} +(-8\kappa^2+ 4\kappa  -1) e^{4\kappa} +( - 8\kappa^2- 4\kappa -1)e^{2\kappa}   + 1
$$
satisfies
\[
\#(0),\frac{d\#}{d\kappa}(0),
\frac{d^2\#}{d\kappa^2}(0),
\frac{d^3\#}{d\kappa^3}(0),
\frac{d^4\#}{d\kappa^4}(0),
\frac{d^5\#}{d\kappa^5}(0)=0,
\;\;\text{and}\;\;\frac{d^6\#}{d\kappa^6}(0)=1024. 
\]
It follows from the mean value theorem that 
\[
\#(\kappa)=\kappa^{(1)}\kappa^{(2)}\kappa^{(3)}\kappa^{(4)}\kappa^{(5)}\frac{d^6\#}{d\kappa^6}(\kappa^{(6)})\kappa
\quad\text{for some $\kappa^{(n)}$, $n=1,2,3,4,5,6$,}
\] 
such that $0<\kappa^{(6)}<\kappa^{(5)}<\kappa^{(4)}<\kappa^{(3)}<\kappa^{(2)}<\kappa^{(1)}<\kappa$. An INTLAB computation \cite{Ru99a} shows
\[
\frac{d^6\#(\kappa)}{d\kappa^6}(\text{infsup}(0,0.001))
=[ 0.48576425206934,1.56320790772994]\times 10^{3} ,
\]
where $\text{infsup}(0,0.001)$ denotes an interval rigorously enclosing $[0,0.001]$, accounting for rounding error, and the right side rigorously encloses the range of $\frac{d^6\#}{d\kappa^6}(\kappa)$ for $\kappa\in[0,0.001]$. This justifies $\frac{d^6\#}{d\kappa^6}(\kappa)>0$ for $\kappa\in[0,0.001]$ and, hence, $\#(\kappa)>0$ for $\kappa\in(0,0.001]$.

We turn to $\kappa\gg1$, say, $\kappa\in(2,\infty)$, where there holds
\begin{align*}
\#(\kappa)=(e^{2\kappa}-8\kappa^2)e^{4\kappa}+((4\kappa-1)e^{2\kappa}-8\kappa^2-4\kappa-1)e^{2\kappa}+1>0.
\end{align*}
It remains to treat $\kappa \in [0.001, 2]$. We divide the interval into finitely many subintervals $I_n$ and verify by means of validated numerics that $\inf(\#(I_n))>0$ for each subinterval. Combining the above, for $\kappa\in (0,\infty)$, $\#(\kappa)>0$, yielding $\frac{d\beta_{S_1,S_2}(\kappa)}{d\kappa}<0$. 
\end{proof}
The left and middle panels of Figure~\ref{figure3} depict $\mu_0(\cdot;\beta)$ \eqref{mu0kappa}. 

Returning to \eqref{mu0nkappa}, by Lemma~\ref{lemma:mu0kappa}, we conclude that
\begin{itemize}
    \item[(i)] When $\beta\geq 1/3$ and $0<\mu_0<1$, $\mu_0=\mu_0(n\kappa;\beta)$ holds true for a unique $n\kappa>0$;
    \item[(ii)] When $\beta\geq 1/3$ and $\mu_0\geq 1$, $\mu_0=\mu_0(n\kappa;\beta)$ does not hold true for any $n\kappa>0$;
    \item[(iii)] When $0<\beta<1/3$ and $0<\mu_0\leq 1$, $\mu_0=\mu_0(n\kappa;\beta)$ holds true for a unique $n\kappa>0$;
    \item[(iv)] When $0<\beta<1/3$ and $1<\mu_0\leq\mu_0(\kappa_*;\beta)$, where $\mu_0(\kappa_*;\beta)$ is the maximum of $\mu_0(\cdot;\beta)$, the equation $\mu_0=\mu_0(\cdot;\beta)$ admits two solutions $\kappa_1$ and $\kappa_2$ with $0<\kappa_1\leq \kappa_*\leq \kappa_2$; 
    \item[(v)] When $0<\beta<1/3$ and $\mu_0>\mu_0(\kappa_*;\beta)$, $\mu_0=\mu_0(n\kappa;\beta)$ does not hold true for any $n\kappa>0$.
\end{itemize}
In the literature \cite{CSaffmam1979,Reeder1981part1,Reeder1981part2}, cases (i) and (iii) give rise to pure $n$-waves and the case (iv) gives rise to either pure $n$-waves or combination waves, depending on whether $\kappa_1/\kappa_2\in \mathbb{Q}\setminus\{1\}$. Our assumption \eqref{fundamental_period} that $T$ is the fundamental period of the waves removes the redundancy of discussions for pure $n$-waves with $n\geq 2$. We summary the results in the following lemma.
\begin{lemma}[Pure and combination waves] \label{pure_combination} Cases (i) and (iii) and the case (iv) with $\kappa_1/\kappa_2\notin\mathbb{Q}\setminus\{1\}$ give rise to \textbf{pure waves} which, at $\mathcal{O}(\eps)$-order, up to a resclaing of $\eps$, reads
\be \label{phi1eta1}
\phi_1(x,y)=\s(\kappa x)\ch(\kappa y),\quad  \eta_1(x)= \sh(\kappa)\c(\kappa x),\quad\text{and}\quad  q_1=0.
\ee
The case (iv) with $\kappa_1/\kappa_2\in\mathbb{Q}\setminus\{1\}$ give rise to \textbf{combination $(N,M)$-wave} which, at $\mathcal{O}(\eps)$-order, up to a rescaling of $\eps$, reads
\ba \label{eta1psi1_wilton}
\phi_1(x,y)&=A\s(N\kappa x)\ch(N\kappa y)+B\s(M\kappa x)\ch(M\kappa y), \\
\eta_1(x)&= A\sh(N\kappa)\c(N\kappa x)+B\sh(M\kappa)\c(M\kappa x), \quad \text{and}\quad q_1=0,
\ea 
where $A$ and $B$ are constants to be determined and there hold $N,M\in \mathbb{N}^+$, ${\rm gcd}(N,M)=1$ and $\kappa_1=N\kappa$, $\kappa_2=M\kappa$.
\end{lemma}
\begin{proof}
In cases (i) and (iii) where there exists a unique $n\kappa\in(0,+\infty)$ such that $\mu_0=\mu_0(n\kappa;\beta)$ holds, set $\tilde{\kappa}=n\kappa$ and then $\phi_1$ and $\eta_1$ \eqref{def:stokes1} both consist of a single mode with a $x$-spatial wave number $\tilde{\kappa}$. When solving \eqref{stationary-periodic} with \eqref{eqn:stokes exp} at higher orders, we note that wave numbers of the inhomogeneous sources must be multiples of $\tilde{\kappa}$, justifying that higher order terms always have fundamental periods smaller than that of the leading $\phi_1$ and $\eta_1$ term. Recalling \eqref{fundamental_period}, we reach the fundamental period of $\eta_1$ and $\psi_1$, $2\pi/\tilde{\kappa}$, is equal to the fundamental period of the wave $2\pi/\kappa$, yielding $n=1$ for \eqref{def:stokes1} and \eqref{def:eqmu0kappa}. Up to a rescaling of $\eps$, \eqref{phi1eta1} follows. 
In case (iv) where $\mu_0=\mu_0(\cdot;\beta)$ admits two solutions $0<\kappa_1<\kappa_2$. If $\kappa_1/\kappa_2\in \mathbb{Q}$, then $n_1\kappa=\kappa_1$ and $n_2\kappa=\kappa_2$ for $n_1,n_2\in \mathbb{N}^+$. Solutions of \eqref{eqn:stokes1} can be a linear combination of two modes consisting of \eqref{def:stokes1} with $n=n_1$ and $n=n_2$. Let $n_1=Nn$ and $n_2=Mn$ for $N,M,n\in \mathbb{N}^+$ and ${\rm gcd}(N,M)=1$. We claim that $n=1$. If not, set $\tilde{\kappa}:=n\kappa$. Again, when solving the higher order equations, we must obtain the wave number of higher indexed $\phi_j$ and $\eta_j$ terms must be in $(N\mathbb{Z}\tilde{\kappa}+M\mathbb{Z}\tilde{\kappa})=\mathbb{Z}\tilde{\kappa}$, yielding the fundamental period of the wave is $2\pi/\tilde{\kappa}$ ($<2\pi/\kappa$). Contradiction. In the case $\kappa_1/\kappa_2\in\mathbb{Q}$, we thereby have shown there exist $N,M\in \mathbb{N}^+$ with ${\rm gcd}(N,M)=1$ such that $\kappa_1=N\kappa$ and $\kappa_2=M\kappa$, whence \eqref{eta1psi1_wilton} follows. If, on the other hand $\kappa_1/\kappa_2\notin\mathbb{Q}\setminus\{1\}$, then following a similar argument, we obtain that there either holds $\kappa=\kappa_1$ or $\kappa=\kappa_2$, giving rise to two pure waves with the wave number $\kappa=\kappa_1$ or $\kappa=\kappa_2$ and \eqref{phi1eta1} holds.
\end{proof}
\begin{figure}[htbp]
    \centering
    \includegraphics[scale=0.3]{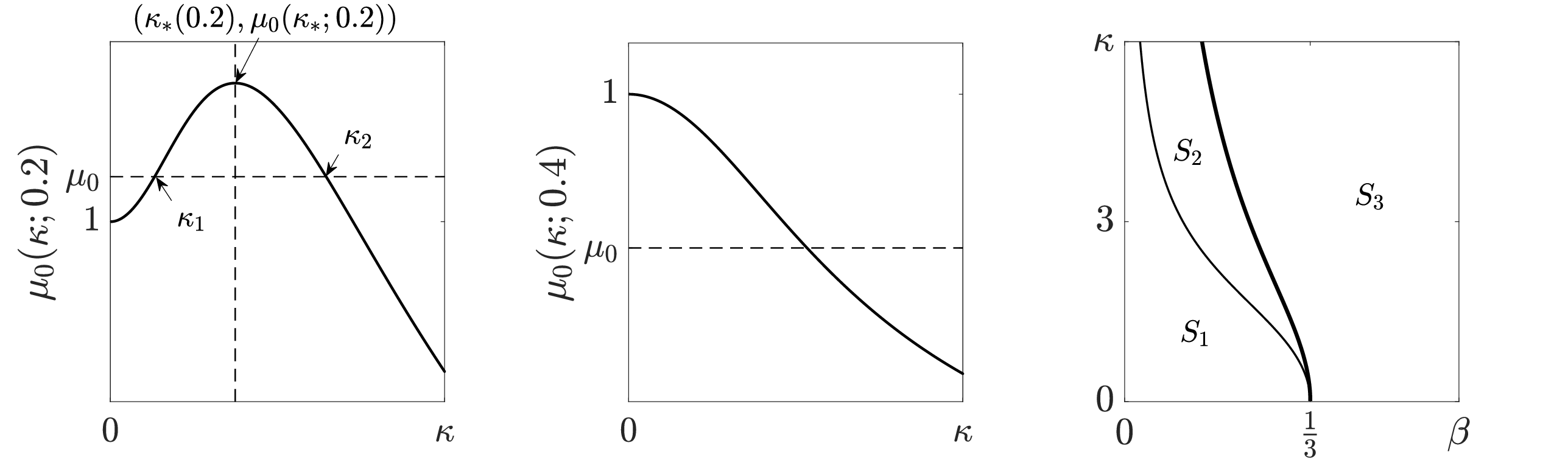}
    \caption{Left panel: The graph of $\mu_0(\kappa;\beta)$ when $\beta=0.2 (<1/3)$. Middle panel: The graph of $\mu_0(\kappa;\beta)$ when $\beta=0.4 (>1/3)$. Right panel: $\kappa$ versus $\beta$ for pure waves. The bold curve given by $\beta=\beta_{S_2,S_3}(\kappa)$ \eqref{boldcurve} and along which $\mu_0=1$ separates the $S_3$ region of sub-critical waves, for which $\mu_0<1$, from the region of super-critical waves, for which $\mu_0>1$. The thin curve given by $\beta=\beta_{S_1,S_2}(\kappa)$ \eqref{thincurve} and along which $\frac{d\mu_0}{d\kappa}(\kappa;\beta)=0$ further divides the region of super-critical waves into $S_1$ region of $2\pi/\kappa_1$ periodic waves and $S_2$ region of $2\pi/\kappa_1$ periodic waves, where $0<\kappa_1\leq\kappa_*\leq\kappa_2$.}
    \label{figure3}
\end{figure}
In the right panel of Figure~\ref{figure3}, we distinguish sub-critical waves in the $S_3$ region, for which $\mu_0<1$, from super-critical waves to the left of a bold curve, given by
\be \label{boldcurve}
\beta_{S_2,S_3}(\kappa)=\frac{\kappa-\th(\kappa)}{\kappa^2\th(\kappa)},
\ee 
for which $\mu_0>1$. Moreover, for super-critical waves, we separate, by a thin curve $\beta=\beta_{S_1,S_2}(\kappa)$, $2\pi/\kappa_1$ periodic waves in the $S_1$ region and $2\pi/\kappa_2$ periodic waves in the $S_2$ region, where $0<\kappa_1\leq\kappa_*\leq\kappa_2$.

To proceed with pure waves, at the order of $\eps^2$, we gather
\begin{equation}\label{eqn:stokes2}
\begin{aligned}
&{\phi_2}_{xx}+{\phi _{2}}_{yy}=2\eta_1{\phi_1}_{yy}+y{\eta_1}_{xx}{\phi_1}_y+2y{\eta_1}_x{\phi_1}_{xy}&&\text{for $0<y<1$}, \\
&{\phi_2}_y=0&&\text{at $y=0$}, \\
&{\eta_2}_x+{\phi_2}_y={\eta_1}_x{\phi_1}_x+\eta_1{\phi_1}_y &&\text{at $y=1$},\\
&\begin{aligned}&\mu_0\beta{\eta_2}_{xx}+{\phi_2}_x-\mu_0\eta_2+q_{2}=-\mu_1\beta{\eta_1}_{xx}\\
&+{\eta_1}_x{\phi_1}_y+\frac12({\phi_1}_x^2+{\phi_1}_y^2)+\mu_1\eta_1\end{aligned}&&\text{at $y=1$},
\end{aligned}
\end{equation}
where $\phi_1$, $\eta_1$ are given in \eqref{phi1eta1} and $\mu_0=\mu_0(\kappa;\beta)$ \eqref{mu0kappa}. Recall that $\phi_2$ and $\eta_2$ are $2\pi/\kappa$ periodic functions of $x$, $\phi_2$ is an odd function of $x$, $\eta_2$ is an even function and of mean zero over one period, and $q_2$ and $\mu_1$ are constants. We solve \eqref{eqn:stokes2}, for instance, by the method of undetermined coefficients, to obtain
\ba\label{def:stokes2}
\phi_2(x,y)=&-\frac{\kappa(2 \kappa +\mu _{0} \sh(2\kappa)-\kappa  \ch(2\kappa)+4 \mu_0\beta\kappa^2 \sh(2\kappa))}{4(\mu_0\sh(2\kappa)-2\kappa\ch(2\kappa)+4\mu_0\beta\kappa^2\sh(2\kappa))}\\
&\times\s(2\kappa x)\ch(2\kappa y)
+\frac{\kappa\sh(\kappa)}{2}y\s(2\kappa x)\sh(\kappa y),\\
\eta_2(x)=&-\frac{\kappa^2 \sh(2\kappa)(\ch(2\kappa)+2)}{4(\mu_0\sh(2\kappa)-2\kappa\ch(2\kappa)+4\mu_0\beta\kappa^2\sh(2\kappa))}\c(2\kappa x),\\q_{2}=&\frac{\kappa^2}{4},\quad \text{and}\quad  \mu_1=0,
\ea
provided that 
\begin{equation}\label{cond:Wilton2}
\mu_0\sh(2\kappa)-2\kappa\ch(2\kappa)+4\mu_0\beta\kappa^2\sh(2\kappa)\neq0
\end{equation}
or, equivalently,
\[
\frac{\kappa}{(1+\beta\kappa^2)\th(\kappa)}\neq\frac{2\kappa}{(1+4\beta\kappa^2)\th(2\kappa)}. 
\]
In the remainder of the subsection, we assume that \eqref{cond:Wilton2} holds true. We remark that when \eqref{cond:Wilton2} unholds, it gives rise to combination $(1,2)$-waves or Wilton ripples \cite{doi:10.1080/14786440508635350}. See Section~\ref{sec:Wilton ripples}.

At the order of $\eps^3$, we gather
\begin{equation}\label{eqn:stokes3}
\begin{aligned}
&\begin{aligned}&{\phi_3}_{xx}+{\phi_3}_{yy}
=2y{\eta_1}_x{\phi_2}_{xy}+2\eta_1{\phi_2}_{yy}+y{\eta_1}_{xx}{\phi_2}_y\\&+2y({\eta_2}_x-\eta_1{\eta_1}_x){\phi_1}_{xy}+(2\eta_2-3\eta_1^2-y^2{\eta_1}_x^2){\phi_1}_{yy}\\
&+y({\eta_2}_{xx}-2{\eta_1}_x^2-\eta_1{\eta_1}_{xx}){\phi_1}_{y}\end{aligned}&&\text{for }0<y<1,\\
&{\phi_3}_y=0 &&\text{at }y=0,\\ 
&\begin{aligned}&{\eta_3}_x+{\phi_3}_y={\eta_1}_x{\phi_2}_x+\eta_1{\phi_2}_y+{\eta_2}_x{\phi_1}_x\\
&+(\eta_2-{\eta_1}_x^2-\eta_1^2){\phi_1}_y\end{aligned}&&\text{at }y=1,\\
&\begin{aligned}&\mu_0\beta{\eta_3}_{xx}+{\phi_3}_x-\mu_0\eta_3+q_3=\\
&{\phi_1}_y({\phi_2}_y+{\eta_2}_x-\eta_1{\eta_1}_x-{\phi_1}_x{\eta_1}_x)\\&-\eta_1{\phi_1}_y^2+{\phi_1}_x{\phi_2}_x+{\eta_1}_x{\phi_2}_y\\
&+\tfrac32\mu_0\beta{\eta_1}_x^2{\eta_1}_{xx}+\mu_2\eta_1-\mu_2\beta{\eta_1}_{xx}\end{aligned}&&\text{at }y=1,
\end{aligned}
\end{equation}
where $\phi_1,\eta_1$ are given in \eqref{phi1eta1} and $\mu_0=\mu_0(\kappa;\beta)$ \eqref{mu0kappa}, $\phi_2,\eta_2$ are given in \eqref{def:stokes2}. We likewise solve \eqref{eqn:stokes3} by the method of undetermined coefficients to obtain
\ba \label{mu2pure}
\mu_2=&\kappa ^3\ch(\kappa)(2 \beta ^2 \kappa ^4 { \ch(2\kappa)}^2+\beta  \kappa ^2 { \ch(2\kappa)}^2+8 { \ch(2\kappa)}^2 \\
&+30 \beta ^2 \kappa ^4  \ch(2\kappa)+30 \beta  \kappa ^2  \ch(2\kappa)+52 \beta ^2 \kappa ^4+89 \beta  \kappa ^2+28)\\
&\times\big(32  \sh(\kappa) {(1+\beta  \kappa ^2)}^2 (2 \beta  \kappa ^2 { \sh(\kappa)}^2-{ \sh(\kappa)}^2+3 \beta  \kappa ^2)\big)^{-1}
\ea
and $q_3=0$. We do not include the formulas of $\phi_3$, $\eta_3$, $\phi_4$, $\eta_4$, $q_4$, $\mu_3$, $\ldots$.

\subsection{Wilton ripples}\label{sec:Wilton ripples}

The small amplitude asymptotics \eqref{eqn:stokes exp} with $\phi_1$ and $\eta_1$ given by a single mode \eqref{phi1eta1} can become singular in the infinite depth \cite{doi:10.1080/14786440508635350} and finite depth \cite{doi:10.1029/JB073i020p06545}, whereby the singularities give rise to combination $(N,M)$-waves. See also Lemma~\ref{pure_combination}. Combination $(1,2)$-waves are specifically referred to as Wilton ripples \cite{CSaffmam1979,Reeder1981part1,Reeder1981part2}. The existence of Wilton ripples was proved in the infinite depth \cite{Reeder1981part2,10.2307/2397696,Jones1986} and finite depth \cite{jones_1989}. The formulas \eqref{def:stokes2} confirm the occurrence of the singularities for $\phi_2$, $\eta_2$. In general, as shown (for instance) in \cite{CSaffmam1979} (infinite depth) and \cite{jones_1989} (finite depth), a general $(N,M)$-combination wave with ${\rm gcd}(N,M)=1$ arises, if $\kappa_1/\kappa_2\in \mathbb{Q}\setminus\{1\}$. See also Lemma~\ref{pure_combination}. We will not treat the general case for, as noted by Jones \cite{jones_1989}, ``the algebra would seem too complicated to make this worthwhile'', but focus on rather the combination $(1,M)$-wave for $M>1$, which we refer to as Wilton ripples of order $M$. Following the convention made in Lemma~\ref{pure_combination}, we have $\kappa=\kappa_1$ and the domain of Wilton ripples of order $M$ is
\ba
\label{wilton_con}
&\{(\kappa,\beta_{\text{Wilton},M}(\kappa)):\kappa>0\},\quad \text{where} \\
&\beta_{\text{Wilton},M}(\kappa):=\frac{\th(M\kappa) - M\th(\kappa)}{M\kappa^2(\th(\kappa) - M\th(M\kappa))}.
\ea 
Because $\kappa=\kappa_1$, all Wilton ripples of order $M$ are in the $S_1$ region. See FIGURE \ref{figure3} right panel. 

\subsubsection{Wilton ripples of order $2$} We begin by treating Wilton ripples of order $2$ with $N$ and $M$ set to be $1$ and $2$, respectively, in \eqref{eta1psi1_wilton} and \eqref{wilton_con}. Furthermore, let us assume $A\neq 0$ (provable) and set, without loss of generality, $A=1$ and $B=\alpha$ to obtain 
\ba  
\label{wilton2_1}
&\phi_1(x,y)=\s(\kappa  x)\ch(\kappa  y)+\alpha\s(2\kappa  x)\ch(2\kappa  y),\\ &\eta_1=\sh(\kappa )\c(\kappa x)+\alpha\sh(2\kappa )\c(2\kappa  x),\quad q_1=0.
\ea  
Solving \eqref{eqn:stokes2} with $\phi_1$ and $\eta_1$ given in \eqref{wilton2_1} and after algebraic simplification by 
\be 
\label{wilton_simplification}
\beta=\frac{\ch(2\kappa) - 1}{2\kappa^2(\ch(2\kappa) + 2)}\quad \text{and} \quad \mu_0=\frac{\kappa(2\ch(\kappa)^2 + 1)}{3\ch(\kappa)\sh(\kappa)},
\ee 
we obtain
\ba 
\label{A_squaremu1}
\alpha^2=\frac{1}{8\ch(2\kappa)},\;\;\mu_1=-\frac{\kappa^2(\ch(2\kappa) + 2)^2}{6\alpha\sh(4\kappa)},\;\;\text{and}\;\; q_2=(\alpha^2+\frac{1}{4})\kappa^2.
\ea 
Since $\alpha$ can be either positive or negative, there are two different Wilton ripples of order $2$. The other terms $\phi_2(x,y)$, $\eta_2(x)$, $\mu_2$ are collected in Appendix \ref{expansion_wilton}.

\subsubsection{Wilton ripples of order $\geq 3$}\label{wilton_ripple_profile} 
We proceed to construct solutions of \eqref{stationary-periodic} for $(\kappa,\beta)$ on the curve \eqref{wilton_con} with $M\geq 3$. Assuming $\phi_1$ and $\eta_1$ take the form
\ba  
\label{wiltonm_1}
&\phi_1(x,y)=\s(\kappa  x)\ch(\kappa  y)+\alpha\s(M\kappa  x)\ch(M\kappa  y),\\ 
&\eta_1(x)=\sh(\kappa )\c(\kappa x)+\alpha\sh(M\kappa )\c(M\kappa  x),
\ea  
we obtain 
$$q_1=0 \quad \text{and} \quad \mu_0=-\frac{M\kappa (\th(\kappa) - M\th(M\kappa))}{\th(\kappa)\th(M\kappa)(M^2 - 1)}.$$
Solving \eqref{eqn:stokes2} with $\phi_1$ and $\eta_1$ given in \eqref{wiltonm_1} yields
\ba 
\label{mu1q2}
\mu_1=0,\quad q_2=\frac{\alpha^2M^2+1}{4}\kappa^2.
\ea 
The formulas for $\phi_2$ and $\eta_2$ are given in Appendix~\ref{expansion_wilton}. We remark that $\alpha$ is not obtained in the course of solving \eqref{eqn:stokes2}. Indeed, as noted by \cite{vanden-broeck_2010} and others, computations of $\alpha$ for large $M$ could quickly become ``intractable'' as one needs to solve \eqref{eqn:stokes exp} in higher order. When $M=3$, we obtain $\alpha$ from solving \eqref{eqn:stokes exp} at the $\eps^3$-order, giving
\ba 
\label{wilton3A}
\alpha^3\big(&468+7962\ch(\kappa)^2 - 38316\ch(\kappa)^4 - 155343\ch(\kappa)^6\\& + 910974\ch(\kappa)^8 - 1514544\ch(\kappa)^{10}+ 1591680 \ch(\kappa)^{12} \\&- 2078208 \ch(\kappa)^{14} + 1460736 \ch(\kappa)^{16}\big)\\
+\alpha^2\big(&54\ch(\kappa)^2 + 684\ch(\kappa)^4 - 8541\ch(\kappa)^6+ 1566\ch(\kappa)^8 \\&+ 236376\ch(\kappa)^{10} - 831456\ch(\kappa)^{12} + 708480\ch(\kappa)^{14}\big)\\
+\alpha\big(&92+2102\ch(\kappa)^2 + 3692\ch(\kappa)^4 - 28841\ch(\kappa)^6\\
& + 40306\ch(\kappa)^8 - 40736\ch(\kappa)^{10} + 14880\ch(\kappa)^{12}\big) \\
-\ch(&\kappa)^2\big(36\ch(\kappa)^2 - 135\ch(\kappa)^4-854\ch(\kappa)^6\\& \quad\;\;+ 4920\ch(\kappa)^8+2\big)=0,
\ea
where we can show by a computer-assisted proof, analogous to the proof of Lemma~\ref{lemma:mu0kappa}, that the LHS of \eqref{wilton3A} is, for all $\kappa>0$, a cubic polynomial of $\alpha$ with positive discriminant, whence there are three different Wilton ripples of order $3$. We remark that, in contrast to the case $M=2$ where $\mu_1\neq 0$ \eqref{A_squaremu1}, $\mu_1=0$ for $M\geq 3$, which also holds in the case of non-resonant capillary-gravity waves (see \eqref{def:stokes2}). Vanishing of $\mu_1$ together with $M\geq 3$ will make monodromy matrices of Wilton ripples of order $\geq 3$ more comparable to that of non-resonant capillary-gravity waves than that of Wilton ripples of order $2$.

\section{The spectral stability problem}\label{sec:spec}

For $\beta>0$ for $\eps\in\mathbb{R}$ and $|\eps|\ll1$, suppose that $\phi(\eps)$, $\eta(\eps)$, $\mu(\eps)$ and $q(\eps)$ make a capillary-gravity wave of sufficiently small amplitude. We are interested in its spectral stability and instability. 

\subsection{The linearized problem}\label{sec:linearize}

Linearizing \eqref{eqn:ww} and \eqref{eqn:bd} about $\phi(\eps)$, $u(\eps)$, $\eta(\eps)$ and $z(\eps)$, and evaluating $\mu$ at $\mu(\eps)$ and $q$ at $q(\eps)$, after some algebra, we arrive at
\ba \label{eqn:linearize}
&\begin{aligned}&\phi_x-\frac{y\eta_x(\eps)}{1+\eta(\eps)}\phi_y +\frac{y(\eta_x\phi_y)(\eps)}{(1+\eta(\eps))^2}\eta\\
&-\frac{y\phi_y(\eps)(1+\eta_x(\eps)^2)^{3/2}}{\beta(1+\eta(\eps))} z-u=0\end{aligned}&&\text{for $0<y<1$,}\\
&\begin{aligned}&u_x-\frac{y\eta_x(\eps)}{1+\eta(\eps)}u_y+\frac{y(\eta_xu_y)(\eps)}{(1+\eta(\eps))^2}\eta\\
&-\frac{yu_y(\eps)(1+\eta_x(\eps)^2)^{3/2}}{\beta(1+\eta(\eps))} z \\
&+\frac{1}{(1+\eta(\eps))^2}\phi_{yy}-\frac{2\phi_{yy}(\eps)}{(1+\eta(\eps))^3}\eta=0\end{aligned}&&\text{for  $0<y<1$,}\\
&\eta_x-(1+\eta_x(\eps)^2)^{3/2}\frac{z}{\beta} =0&&\text{at $y=1$,}\\
&\begin{aligned}&\phi_t-\mu(\eps) z _x-u\\&+\frac{(u(\eps)-1)\phi_y(\eps)(1+\eta_x(\eps)^2)^{3/2}}{\beta(1+\eta(\eps))} z \\ &+\frac{(\phi_y\eta_x)(\eps)}{1+\eta(\eps)}u+u(\eps)u+\mu(\eps)\eta=0\end{aligned}&&\text{at $y=1$},\\
&\begin{aligned}&\eta_t+(u(\eps)-1)(1+\eta_x(\eps)^2)^{3/2} \frac{z}{\beta} \\&+\eta_x(\eps)u-\frac{1}{1+\eta(\eps)}\phi_y+\frac{\phi_y(\eps)}{(1+\eta(\eps))^2}\eta=0\end{aligned}&&\text{at $y=1$},\\
&\phi_y=0&&\text{at $y=0$},
\ea 
where the fifth equation of \eqref{eqn:linearize} follows from the linearization of the fourth equation of \eqref{eqn:ww} and \eqref{stationary-periodic:K}. 
Seeking a solution of \eqref{eqn:linearize} of the form 
\[
\begin{pmatrix}\phi(x,y,t) \\ u(x,y,t) \\ \eta(x,t)\\ z (x,t) \end{pmatrix}
=e^{\lambda t}\begin{pmatrix}\phi(x,y) \\ u(x,y) \\ \eta(x)\\  z (x) \end{pmatrix},
\qquad\lambda\in\mathbb{C},
\] 
we arrive at
\begin{subequations}\label{eqn:spec}
\begin{align}
&\begin{aligned}&\phi_x-\frac{y\eta_x(\eps)}{1+\eta(\eps)}\phi_y +\frac{y(\eta_x\phi_y)(\eps)}{(1+\eta(\eps))^2}\eta\\&-\frac{y\phi_y(\eps)(1+\eta_x(\eps)^2)^{3/2}}{\beta(1+\eta(\eps))} z-u=0\end{aligned}&&\text{for $0<y<1$,} \label{eqn:spec;Phi}\\
&\begin{aligned}\label{eqn:spec;u}
&u_x-\frac{y\eta_x(\eps)}{1+\eta(\eps)}u_y +\frac{y(\eta_xu_y)(\eps)}{(1+\eta(\eps))^2}\eta\\
&-\frac{yu_y(\eps)(1+\eta_x(\eps)^2)^{3/2}}{\beta(1+\eta(\eps))} z\\
&+\frac{1}{(1+\eta(\eps))^2}\phi_{yy}-\frac{2\phi_{yy}(\eps)}{(1+\eta(\eps))^3}\eta=0\end{aligned}&&\text{for $0<y<1$,}\\
&\eta_x-(1+\eta_x(\eps)^2)^{3/2} \frac{z}{\beta}=0&&\text{at $y=1$,}\label{eqn:spec;eta}\\
&\begin{aligned}&\lambda\phi-\mu(\eps) z_x-u\\
&+\frac{(u(\eps)-1)\phi_y(\eps)(1+\eta_x(\eps)^2)^{3/2}}{\beta(1+\eta(\eps))} z \\
&+\frac{(\phi_y\eta_x)(\eps)}{1+\eta(\eps)}u+u(\eps)u+\mu(\eps)\eta=0\end{aligned}&&\text{at $y=1$,}\label{eqn:spec;z}\\
\intertext{and}&\begin{aligned}&\lambda\eta+(u(\eps)-1)(1+\eta_x(\eps)^2)^{3/2} \frac{z}{\beta} \\ &+\eta_x(\eps)u-\frac{1}{1+\eta(\eps)}\phi_y+\frac{\phi_y(\eps)}{(1+\eta(\eps))^2}\eta=0\end{aligned}&&\text{at $y=1$},\label{eqn:spec;bdry1}\\
&\phi_y=0&&\text{at $y=0$}.\label{eqn:spec;bdry2}
\end{align}
\end{subequations}
By Floquet theory, roughly speaking, $\lambda\in\mathbb{C}$ is a spectrum if \eqref{eqn:spec} admits a nontrivial bounded solution in the function space considered, and a capillary-gravity wave of sufficiently small amplitude is spectrally stable if the spectrum does not intersect the right half plane of $\mathbb{C}$ for $\eps\in\mathbb{R}$ and $|\eps|\ll1$. We make these precise in Section~\ref{sec:L}.

Notice that \eqref{eqn:spec;bdry1} is {\em not} autonomous. Following \cite{HS;cg-solitary} we introduce
\ba\label{def:tildephi}
\varphi:=&\left(\frac{1}{1+\eta(\eps)}\phi-\eta_x(\eps)\int_0^yy'u(\cdot,y')dy'-\left(\lambda+\frac{\phi_y(\cdot,1;\eps)}{(1+\eta(\eps))^2}\right)\frac{ y^2\eta}{2}\right)\\
&\cdot\left((1-u(\cdot,1;\eps))(1+\eta_x(\eps)^2)^{3/2}\right)^{-1},
\ea
so that \eqref{eqn:spec;bdry1} and \eqref{eqn:spec;bdry2} become 
\be\label{eqn:bdry;new}
\varphi_y+\frac{z}{\beta}=0\quad\text{at $y=1$}\quad\text{and}\quad \varphi_y=0\quad\text{at $y=0$}.
\ee
Clearly, \eqref{def:tildephi} is well defined provided that $1+\eta(\eps)\neq0$, $1+\eta_x(\eps)\neq0$, and $1-u(\cdot,1;\eps)\neq0$, particularly when $\eps\in\mathbb{R}$ and $|\eps|\ll1$. Conversely,
\ba \label{def:phiinverse}
\phi=&(1-u(\cdot,1;\eps))(1+\eta(\eps))(1+\eta_x(\eps)^2)^{3/2}\varphi\\
&+\eta_x(\eps)(1+\eta(\eps))\int_0^yy' u(\cdot,y')dy'\\
&+\left(\lambda(1+\eta(\eps))+\frac{\phi_y(\cdot,1;\eps)}{1+\eta(\eps)}\right)\frac{ y^2\eta}{2}
\ea
is well defined for $|\eps|\ll 1$. 

It remains to rewrite \eqref{eqn:spec;Phi}-\eqref{eqn:spec;z} in terms of $\varphi$, $u$, $\eta$, $z$. The task is completed by 
replacing $\phi_y$ in \eqref{eqn:spec;Phi} with 
\be \label{phi_y}
\phi_y=f_1\varphi_y+f_2 yu+2f_3 y\eta,
\ee 
where 
\begin{align*}
f_1=&(1-u(\cdot,1;\eps))(1+\eta(\eps))(1+\eta_x(\eps)^2)^{3/2},\\
f_2=&\eta_x(\eps)(1+\eta(\eps)),\\
f_3=&\frac12\left(\lambda(1+\eta(\eps))+\frac{\phi_y(\cdot,1;\eps)}{1+\eta(\eps)}\right),
\end{align*}
$\phi_{yy}$ in \eqref{eqn:spec;u} with
\be\label{phi_yy}
\phi_{yy}=f_1\varphi_{yy}+f_2 (u+yu_y)+2f_3 \eta,
\ee 
$\phi(\cdot,1)$ in \eqref{eqn:spec;z} with 
\be \label{phiy1}
\phi(\cdot,1)=f_1\varphi(\cdot,1)+f_2 \int_0^1yu(\cdot,y)~dy+f_3 \eta,
\ee 
and $\phi_x$ in \eqref{eqn:spec;Phi} with
\ba \label{phix}
\phi_x=&f_1\varphi_x+{f_1}_x\varphi+f_2\int_0^yy'u_x(\cdot,y')~dy'\\
&+{f_2}_x\int_0^yy'u(\cdot,y')~dy'+f_3y^2\eta_x+{f_3}_xy^2\eta.
\ea 

\subsection{Spectral stability}\label{sec:L}

Let $\mathbf{u}=\begin{pmatrix} \varphi \\ u \\ \eta \\ z \end{pmatrix}$ and write \eqref{eqn:spec} as
\be \label{eqn:LB}
\mathbf{u}_x=\mathbf{L}(\lambda)\mathbf{u}+\mathbf{B}(x;\lambda,\eps)\mathbf{u},
\ee  
$\lambda\in\mathbb{C}$, $\eps\in\mathbb{R}$ and $|\eps|\ll1$,
\[
\mathbf{L}(\lambda): {\rm dom}(\mathbf{L}) \subset Y \to Y\quad\text{and}\quad
\mathbf{B}(x;\lambda,\eps): 
\mathbb{R}\times {\rm dom}(\mathbf{L}) \subset \mathbb{R}\times Y \to Y,
\] 
\be \label{def:L}
\mathbf{L}(\lambda)\mathbf{u}=\begin{pmatrix}  u-\dfrac{\lambda y^2z}{2\beta}  \\ -\varphi_{yy}-\lambda\eta\\
\dfrac{z}{\beta}\\ \dfrac{\lambda^2\eta}{2\mu_0} + \dfrac{\lambda\varphi(1)}{\mu_0} + \eta - \dfrac{u(1)}{\mu_0}\end{pmatrix},
\ee  
\begin{equation}\label{def:Y}
Y=H^1(0,1)\times L^2(0,1)\times \mathbb{C}\times  \mathbb{C},
\end{equation}
and
\begin{equation}\label{def:dom}
{\rm dom}(\mathbf{L})
=\Big\{\mathbf{u}\in H^2(0,1)\times H^1(0,1)\times \mathbb{C}\times \mathbb{C}:\varphi_y(1)+\frac{z}{\beta}=0, \varphi_y(0)=0\Big\}.
\end{equation}
Notice that $\mathbf{L}(\lambda)$ is the leading part of \eqref{eqn:spec;Phi}-\eqref{eqn:spec;z} and $\mathbf{B}(x;\lambda,\eps)$ is $\mathcal{O}(\eps)$ is the remainder. Notice that if $\mathbf{u}\in{\rm dom}(\mathbf{L})$ then \eqref{eqn:bdry;new} holds true. Also, $\mathbf{B}(x;\lambda,0)=\mathbf{0}$. Therefore when $\eps=0$, \eqref{eqn:LB} becomes $\mathbf{u}_x=\mathbf{L}(\lambda)\mathbf{u}$. 
We remark that $\mathbf{L}(\lambda)$ depends analytically on $\lambda$, $\mathbf{B}(x;\lambda,\eps)$ depends real analytically on $\eps$, and $\mathbf{B}(x;\lambda,\eps)$ is $T(=2\pi/\kappa)$ periodic in $x$. Also, $\mathbf{B}(x;\lambda,\eps)$ is smooth in $x$. Our proofs do not involve all the details of $\mathbf{B}(x;\lambda,\eps)$, whence we do not include the formula here. Clearly, ${\rm dom}(\mathbf{L})$ is dense in $Y$. 

Let 
\[
\mathcal{L}(\lambda,\eps):{\rm dom}(\mathcal{L})\subset X \to X,
\] 
where 
\begin{equation}\label{def:operator}
\mathcal{L}(\lambda,\eps)\mathbf{u}=\mathbf{u}_x-(\mathbf{L}(\lambda)+\mathbf{B}(x;\lambda,\eps))\mathbf{u},
\end{equation}
\begin{equation}\label{def:X}
X=L^2(\mathbb{R};Y)\quad\text{and}\quad 
{\rm dom}(\mathcal{L})=H^1(\mathbb{R};Y)\bigcap L^2(\mathbb{R};{\rm dom}(\mathbf{L}))
\end{equation}
is dense in $X$, so that \eqref{eqn:LB} becomes 
\begin{equation}\label{eqn:L}
\mathcal{L}(\lambda,\eps)\mathbf{u}=0.
\end{equation}
We regard $\mathcal{L}(\eps)$ as $\mathcal{L}(\lambda,\eps)$, parametrized by $\lambda\in\mathbb{C}$. 

\begin{definition}[The spectrum of $\mathcal{L}(\eps)$]\rm
For $\eps\in\mathbb{R}$ and $|\eps|\ll1$, 
\[
{\rm spec}(\mathcal{L}(\eps))=\{\lambda\in\mathbb{C}:
\text{$\mathcal{L}(\lambda,\eps):{\rm dom}(\mathcal{L})\subset X\to X$ is not invertible}\}.
\]
\end{definition}

We pause to remark that $\mathcal{L}(\lambda,\eps)$ makes sense for $\eps\in\mathbb{R}$ and $|\eps|\ll1$. 

\begin{definition}[Spectral stability and instability]\rm
A stationary periodic wave of sufficiently small amplitude is said to be {\em spectrally stable} if 
\[
{\rm spec}(\mathcal{L}(\eps)) \subset \{\lambda\in\mathbb{C}: {\rm Re} \lambda\leq 0\}
\]
for $\eps\in\mathbb{R}$ and $|\eps|\ll1$, and {\em spectrally unstable} otherwise.
\end{definition}

Since $\mathbf{B}(x;\lambda,\eps)$ and, hence, $\mathcal{L}(\lambda,\eps)$ are $T(=2\pi/\kappa)$ periodic in $x$, by Floquet theory, the set of point spectrum of $\mathcal{L}(\eps)$ is empty. Moreover, $\lambda$ is in the essential spectrum of $\mathcal{L}(\eps)$
if and only if \eqref{eqn:LB} admits a nontrivial solution $\mathbf{u}\in L^\infty(\mathbb{R};Y)$ satisfying
\[
\mathbf{u}(x+T)=e^{ik T}\mathbf{u}(x)
\quad \text{for some $k\in\mathbb{R}$},\quad\text{the Floquet exponent}.
\]
See \cite{Gardner;evans1,OS;evans}, for instance, for details.

In what follows, the asterisk means complex conjugation. 

\begin{lemma}[Symmetries of the spectrum]\label{lem:symm} 
If $\lambda\in{\rm spec}(\mathcal{L}(\eps))$ then $\lambda^*,-\lambda\in{\rm spec}(\mathcal{L}(\eps))$ and, hence, $-\lambda^*\in{\rm spec}(\mathcal{L}(\eps))$. 
In other words, ${\rm spec}(\mathcal{L}(\eps))$ is symmetric about the real and imaginary axes.
\end{lemma}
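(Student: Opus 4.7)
The plan is to handle the two generating symmetries $\lambda \mapsto \lambda^*$ and $\lambda \mapsto -\lambda$ separately and then compose them to obtain $\lambda \mapsto -\lambda^*$.

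For the conjugation symmetry, the key observation is that both $\mathbf{L}(\lambda)$ in \eqref{def:L} and $\mathbf{B}(x;\lambda,\eps)$ depend polynomially on $\lambda$ with real-valued coefficients, since the traveling-wave profile $(\phi(\eps), u(\eps), \eta(\eps), z(\eps))$ is real for real $\eps$. Conjugating the equation \eqref{eqn:LB} pointwise therefore sends a Floquet solution $\mathbf{u}\in L^\infty(\mathbb{R};Y)$ at parameter $\lambda$ with exponent $k\in\mathbb{R}$ to a bounded solution $\mathbf{u}^*$ at parameter $\lambda^*$ with exponent $-k\in\mathbb{R}$; in particular $\lambda^*\in\mathrm{spec}(\mathcal{L}(\eps))$.

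For the negation symmetry, I would exploit the spatial reversibility of the periodic wave. Recall from Section~\ref{sec:Stokes} that $\phi(\eps)$ is odd, $\eta(\eps)$ is even, $u(\eps)$ is even, and $z(\eps)$ is odd in $x$. I would propose the involution on $Y$ given by $\mathcal{R}(\phi,u,\eta,z):=(-\phi,u,\eta,-z)$, and first verify, component by component from \eqref{def:L}, that $\mathcal{R}$ preserves ${\rm dom}(\mathbf{L})$ \eqref{def:dom} and satisfies
\[
-\mathcal{R}\,\mathbf{L}(\lambda)\,\mathcal{R}^{-1}=\mathbf{L}(-\lambda).
\]
I would then argue the analogous identity
\[
-\mathcal{R}\,\mathbf{B}(-x;\lambda,\eps)\,\mathcal{R}^{-1}=\mathbf{B}(x;-\lambda,\eps).
\]
Granted both identities, if $\mathbf{u}(x)$ is a Floquet solution of \eqref{eqn:LB} at $\lambda$ with exponent $k$, then $\tilde{\mathbf{u}}(x):=\mathcal{R}\mathbf{u}(-x)$ is a Floquet solution at $-\lambda$ with exponent $-k$, by the chain-rule calculation
\[
\tilde{\mathbf{u}}_x=-\mathcal{R}\mathbf{u}_x(-x)=-\mathcal{R}[\mathbf{L}(\lambda)+\mathbf{B}(-x;\lambda,\eps)]\mathcal{R}^{-1}\tilde{\mathbf{u}}=[\mathbf{L}(-\lambda)+\mathbf{B}(x;-\lambda,\eps)]\tilde{\mathbf{u}},
\]
boundedness again being preserved. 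Hence $-\lambda\in\mathrm{spec}(\mathcal{L}(\eps))$.

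The main obstacle is the identity for $\mathbf{B}$, which is not written out explicitly in the paper. Rather than expanding $\mathbf{B}$ entry by entry, I would argue it structurally from the linearized system \eqref{eqn:linearize}: this system is invariant under the combined reflection $(x,t)\mapsto(-x,-t)$ composed with the sign changes $\phi\mapsto -\phi$, $z\mapsto -z$ on the perturbation, a symmetry directly inherited from the parities of the wave profile and the form of the water-wave equations. The change of variable \eqref{def:tildephi} (which is algebraic and local in $x$) transports this reversibility to the autonomous form \eqref{eqn:LB}, yielding exactly the required identity. Finally, composing the conjugation and negation symmetries gives $-\lambda^*\in\mathrm{spec}(\mathcal{L}(\eps))$, completing the argument.
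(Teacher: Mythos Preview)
Your proposal is correct and follows essentially the same approach as the paper: both use complex conjugation for $\lambda\mapsto\lambda^*$ and the reversibility involution $(\phi,u,\eta,z)(x)\mapsto(-\phi,u,\eta,-z)(-x)$ for $\lambda\mapsto-\lambda$, with the same Floquet-exponent bookkeeping. The only difference is that the paper verifies the invariances directly on the original spectral system \eqref{eqn:spec} (where they are immediate from the real coefficients and the parities of the profile), whereas you work on the transformed system \eqref{eqn:LB} and then argue the $\mathbf{B}$-identity by transporting the symmetry through the change of variables \eqref{def:tildephi}; the paper's route simply avoids that extra step.
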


\begin{remark*}\rm
A capillary-gravity wave of sufficiently small amplitude is spectrally stable if and only if ${\rm spec}(\mathcal{L}(\eps))\subset i\mathbb{R}$ for $\eps\in\mathbb{R}$ and $|\eps|\ll1$.
\end{remark*}

The proof of Lemma~\ref{lem:symm} is in Appendix~\ref{A:symm}. Also the symmetries of the spectrum may follow from that the capillary-gravity wave problem is Hamiltonian. 

In Section~\ref{sec:eps=0}, we focus the attention on $\eps=0$ and define the eigenspace of $\mathbf{L}(\lambda): {\rm dom}(\mathbf{L})\subset Y\to Y$ associated with its finitely many and purely imaginary eigenvalues. In Section~\ref{sec:reduction}, we turn to $\eps\neq0$ and take a center manifold reduction approach (see \cite{Mielke;reduction}, among others) to reduce \eqref{eqn:LB} to finite dimensions (see \eqref{eqn:LB;u1}), whereby we introduce Gardner's periodic Evans function (see \cite{Gardner;evans1}, for instance). In Section~\ref{sec:expansion_monodromy}, we make the power series expansion of the periodic Evans function to locate and track the spectrum of $\mathcal{L}(\eps)$ for $\eps\in\mathbb{R}$ and $|\eps|\ll1$. 

\subsection{The spectrum of \texorpdfstring{$\mathcal{L}(0)$}{Lg}. The reduced space}\label{sec:eps=0}

When $\eps=0$, 
$\lambda\in{\rm spec}(\mathcal{L}(0))$ if and only if 
\be \label{eigen_eq}
ik\mathbf{u}=\mathbf{L}(\lambda)\mathbf{u},
\quad\text{where}\quad \mathbf{L}(\lambda):{\rm dom}(\mathbf{L})\subset Y\to Y
\ee
is defined in \eqref{def:L}, \eqref{def:Y}, \eqref{def:dom}, admits a nontrivial solution for some $k\in\mathbb{R}$. In other words, $ik$ is an eigenvalue of $\mathbf{L}(\lambda)$. We remark that $\mathbf{L}(\lambda)$ has compact resolvent, so that the spectrum consists of discrete eigenvalues with finite multiplicities. A straightforward calculation reveals that \eqref{eigen_eq} forces
\begin{equation}\label{eqn:sigma}
\lambda=i\sigma,\quad\text{where}\quad
\sigma\in\mathbb{R}\quad\text{and}\quad(\sigma-k)^2=\mu_0k\th(k)(1+\beta k^2),
\end{equation}
where $\mu_0=\mu_0(\kappa;\beta)$ \eqref{mu0kappa}. Let
\begin{equation}\label{def:sigma}
\sigma_\pm(k;\beta,\mu_0)=k\pm\sqrt{\mu_0 k\th(k)(1+\beta k^2)} \quad\text{where}\quad k\in \mathbb{R}.
\end{equation}
Clearly, the range of $\sigma_\pm(\cdot;\beta,\mu_0)$ is $\mathbb{R}$, whence ${\rm spec}(\mathcal{L}(0))=i\mathbb{R}$, implying that the zero amplitude wave is spectrally stable.

\begin{lemma}\label{dispersionS1S2}
For $\beta\in(0,\frac{1}{3})$, let $\kappa_*>0$ be the unique number such that $\frac{d \mu_0}{d\kappa}(\kappa_*;\beta)=0$, as proven in Lemma~\ref{lemma:mu0kappa}. For $\mu_0\in(1,\mu_0(\kappa_*,\beta))$, let $\kappa_1$ and $\kappa_2$ {\normalfont ($0<\kappa_1<\kappa_*<\kappa_2$)} solve $\mu_0=\mu_0(\cdot;\beta)$ \eqref{mu0kappa}. Then, $$\sigma_-(\kappa_1;\beta,\mu_0),\sigma_-(\kappa_2;\beta,\mu_0)=0.$$ Moreover, there exist two critical points $k_{c,1}\in(0,\kappa_1)$ and $k_{c,2}\in(\kappa_1,\kappa_2)$ of $\sigma_-(\cdot;\beta,\mu_0)$ on $(0,\infty)$ such that $\sigma_-(\cdot;\beta,\mu_0)$ is strictly decreasing on $(0,k_{c,1})$ and $(k_{c,2},\infty)$ and is strictly increasing on $(k_{c,1},k_{c,2})$ and an inflection point $k_*\in(k_{c,1},k_{c,2})$ of $\sigma_-(\cdot;\beta,\mu_0)$ on $(0,\infty)$ such that $\sigma_-(\cdot;\beta,\mu_0)$ is convex on $(0,k_{*})$ and concave on $(k_*,\infty)$.
\end{lemma}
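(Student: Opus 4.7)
The plan is to translate every claim about $\sigma_-(\cdot;\beta,\mu_0)$ into a statement about $\mu_0(\cdot;\beta)$ already controlled by Lemma~\ref{lemma:mu0kappa}, and then carry out one additional second-derivative analysis.

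\emph{Step 1 (zeros, signs, and existence of critical points).} I will set $f(k):=\mu_0 k\th(k)(1+\beta k^2)$, so that \eqref{def:sigma} together with \eqref{mu0kappa} yields, for $k>0$,
\[
\sigma_-(k)=k-\sqrt{f(k)}=k\Bigl(1-\sqrt{\mu_0/\mu_0(k;\beta)}\,\Bigr).
\]
Since $\mu_0(\kappa_j;\beta)=\mu_0$ by \eqref{def:eqmu0kappa}, this gives $\sigma_-(\kappa_1)=\sigma_-(\kappa_2)=0$ at once. Lemma~\ref{lemma:mu0kappa} tells me that $\mu_0(\cdot;\beta)$ rises from $1$ to $\mu_0(\kappa_*;\beta)$ on $(0,\kappa_*)$ and then decreases back to $0$, so $\sigma_-<0$ on $(0,\kappa_1)\cup(\kappa_2,\infty)$ and $\sigma_->0$ on $(\kappa_1,\kappa_2)$. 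Because $f(k)\sim\mu_0 k^2$ near $0$, $\sigma_-$ extends smoothly through $k=0$ with $\sigma_-(0)=0$ and $\sigma_-'(0^+)=1-\sqrt{\mu_0}<0$, while $\sqrt{f(k)}\sim\sqrt{\mu_0\beta}\,k^{3/2}$ forces $\sigma_-(k)\to-\infty$ as $k\to\infty$. Rolle's theorem applied on $[0,\kappa_1]$ and on $[\kappa_1,\kappa_2]$ then delivers at least one critical point $k_{c,1}\in(0,\kappa_1)$ (a local minimum of $\sigma_-$) and at least one $k_{c,2}\in(\kappa_1,\kappa_2)$ (a local maximum).

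\emph{Step 2 (a single inflection point).} The decisive remaining step will be to show that $\sigma_-''$ has exactly one zero $k_*$ on $(0,\infty)$, with $\sigma_-''>0$ on $(0,k_*)$ and $\sigma_-''<0$ on $(k_*,\infty)$. From $\sigma_-''=-(\sqrt{f})''$ and the identity $4f^{3/2}(\sqrt{f})''=2ff''-(f')^2$, together with $f=\mu_0 g$ for
\[
g(k):=k\th(k)(1+\beta k^2),
\]
the sign of $\sigma_-''$ agrees with that of $-(2gg''-(g')^2)$ and is in particular independent of $\mu_0$. A power-series expansion at $k=0$ gives
\[
2g(k)g''(k)-g'(k)^2=12\bigl(\beta-\tfrac13\bigr)k^4+\mathcal{O}(k^6),
\]
which is negative for $\beta<1/3$, so $\sigma_-''>0$ near $0$; at infinity $g(k)\sim\beta k^3$ yields $2gg''-(g')^2\sim 3\beta^2k^4>0$, so $\sigma_-''<0$ for large $k$. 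I will secure uniqueness of this sign change by a computer-assisted argument in the spirit of Lemma~\ref{lemma:mu0kappa}: a Taylor expansion with explicit remainder on a small interval $[0,k_0]$, an elementary algebraic bound on $[k_\infty,\infty)$, and INTLAB-validated interval enclosures on a finite cover of $[k_0,k_\infty]$, with the residual $\beta\in(0,1/3)$ dependence handled by monotonicity where possible and by subdivision of the parameter range otherwise.

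\emph{Conclusion and main obstacle.} Once $k_*$ is known to be the unique zero of $\sigma_-''$, $\sigma_-'$ is strictly increasing on $(0,k_*)$ and strictly decreasing on $(k_*,\infty)$, so $\sigma_-'$ has at most two zeros; combined with Step~1, these are exactly $k_{c,1}<k_*<k_{c,2}$, and the claimed monotonicity of $\sigma_-$ together with the convexity on $(0,k_*)$ and concavity on $(k_*,\infty)$ follow immediately. The principal technical obstacle will be the uniqueness of the zero of $2gg''-(g')^2$: this transcendental expression mixes polynomials with hyperbolic factors and carries the parameter $\beta$, so a purely symbolic argument does not seem within reach and validated numerics appear to be the most reliable route, mirroring the strategy of Lemma~\ref{lemma:mu0kappa}.
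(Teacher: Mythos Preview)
Your overall framework coincides with the paper's: the zeros $\sigma_-(\kappa_j)=0$ come from the dispersion relation, Rolle produces critical points in $(0,\kappa_1)$ and $(\kappa_1,\kappa_2)$, and the crucial remaining fact is that $\sigma_-''$ (which you correctly observe is $\mu_0$-independent because it equals $-(2gg''-(g')^2)/4g^{3/2}$ with $g(k)=k\th(k)(1+\beta k^2)$) has a single sign change on $(0,\infty)$. Your endpoint asymptotics and the deduction of the monotonicity pattern from the unique inflection are also the same as the paper's.

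Where the paper does something sharper is in the execution of the ``unique inflection'' step. Since $g$ is affine in $\beta$, the numerator $2gg''-(g')^2$ is a quadratic $a(k)\beta^2+b(k)\beta+c(k)$; the paper verifies (with the same kind of computer assistance as in Lemma~\ref{lemma:mu0kappa}) that $a(k),b(k)>0$ and $c(k)<0$ for all $k>0$, so for each $k$ there is a unique positive root $\tilde\#(k)$ and the sign of $\sigma_-''$ is governed by whether $\beta\lessgtr\tilde\#(k)$. One more computer-assisted monotonicity check shows $\tilde\#$ strictly decreases from $1/3$ to $0$, whence for each fixed $\beta\in(0,1/3)$ the equation $\tilde\#(k)=\beta$ has exactly one solution $k_*$. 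This decouples the two parameters completely: the validated numerics are run on functions of $k$ alone, not on the $(k,\beta)$-rectangle you propose to subdivide. Your brute-force plan is in principle workable, but it faces a genuine boundary issue---$k_*\to 0$ as $\beta\to 1/3^-$ and $k_*\to\infty$ as $\beta\to 0^+$---so neither $k_0$ nor $k_\infty$ can be chosen uniformly in $\beta$, and the ``subdivision of the parameter range'' would have to be combined with matched asymptotics near both endpoints. The quadratic-in-$\beta$ reduction sidesteps this entirely and is worth adopting.
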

\begin{proof}
For $\kappa> 0$, the equation $\sigma_-(\kappa;\beta,\mu_0)=0$ amounts to $\mu_0=\mu_0(\kappa;\beta)$ \eqref{mu0kappa}, yielding $$\sigma_-(\kappa_1;\beta,\mu_0),\sigma_-(\kappa_2;\beta,\mu_0)=0.$$ Also, apparently $\sigma_-(0;\beta,\mu_0)=0$. The existence of critical points $k_{c,1}$ and $k_{c,2}$ of $\sigma_-(k)$ on $(0,\kappa_1)$ and $(\kappa_1,\kappa_2)$ is guaranteed by Rolle's Theorem. For the monotonicity of $\sigma_-(k)$, we compute 
$$\begin{aligned}
&\frac{d\sigma_-}{d k}(k;\beta,\mu_0)\\
=&\sqrt{\mu_0}\left(\frac{1}{\sqrt{\mu_0}}-\frac{- k\th(k)^2 + \th(k) + k+\beta(3k^2\th(k) - k^3\th(k)^2 + k^3)}{2\sqrt{k\th(k)(1+\beta k^2)}}\right)\\
=&:\sqrt{\mu_0}\left(\frac{1}{\sqrt{\mu_0}}-\#(k;\beta)\right).
\end{aligned}
$$
Direct computations yield $\#(0^+;\beta)=1$ and $\#(+\infty;\beta)=+\infty$. For $\beta\in(0,\frac{1}{3})$, we claim that there is a unique critical point (an inflection point) $k_*$ of $\#(\cdot;\beta)$ ($\sigma_-(\cdot;\beta,\mu_0)$) on $(0,\infty)$ and $\#(\cdot;\beta)$ ($\sigma_-(\cdot;\beta,\mu_0)$) is strictly decreasing (convex) on $(0,k_*)$ and strictly increasing (concave) on $(k_*,+\infty)$, which will complete the proof. To this end, we compute
$$
\frac{d\#}{d k}(k;\beta)=\frac{a(k)\beta^2+b(k)\beta+c(k)}{4(k\th(k)+\beta k^3\th(k))^{3/2}},
$$
where
$$
\begin{aligned}
a(k)&=k^4\big((3\th(k)^4 - 2\th(k)^2 - 1)k^2 -6\th(k)(\th(k)^2 - 1)k +3\th(k)^2\big),\\
b(k)&=2k^2\big((3\th(k)^4 - 2\th(k)^2 - 1)k^2 -4\th(k)(\th(k)^2 - 1)k+ 3\th(k)^2\big),\\
c(k)&=(3\th(k)^4 - 2\th(k)^2 - 1)k^2 -2\th(k)(\th(k)^2 - 1)k -\th(k)^2,
\end{aligned}
$$
where, by a computer assisted proof analogous to that of Lemma~\ref{lemma:mu0kappa}, we validate that
$$
a(k)>0,\quad b(k)>0,\quad \text{and}\quad c(k)<0,\quad\text{for $k>0$}. 
$$
Thus, the quadratic function $a(k)\beta^2+b(k)\beta+c(k)$ achieves a positive root and a negative root and it is positive (negative) if $\tilde{\#}(k)<\beta$ ($\tilde{\#}(k)>\beta$), where $\tilde{\#}(k):=\frac{-b+\sqrt{b^2-4ac}}{2a}(k)$ denotes the positive root. Clearly, $\tilde{\#}(+\infty)=0$, and, by L'Hospital rule, $\tilde{\#}(0^+)=\frac{1}{3}$. We now show $\tilde{\#}(k)$ is strictly decreasing on $(0,\infty)$. Direct computation shows
$$
\frac{d\tilde{\#}}{dk}(k)=\frac{(ba'-ab')\sqrt{b^2 - 4ac}+abb' - 2a^2c' - b^2a' + 2caa'}{2a^2\sqrt{b^2 - 4ac}}(k).
$$
By a computer assisted proof analogous to that of Lemma~\ref{lemma:mu0kappa}, we validate that 
$$
\begin{aligned}
&(ba'-ab')(k)>0,\quad (abb' - 2a^2c' - b^2a' + 2caa')(k)<0,\quad \text{and}\\
&\left((ba'-ab')\sqrt{b^2 - 4ac}\right)^2(k)-\left(abb' - 2a^2c' - b^2a' + 2caa'\right)^2(k)<0,
\end{aligned}
$$
for $k>0$, which justifies $\frac{d\tilde{\#}}{dk}(k)<0$, for $k>0$. Therefore there exists a unique $k_*>0$ where $\tilde{\#}(k_*)=\beta$ and which makes the claim above hold.
\end{proof}
Under the consideration of Lemma~\ref{dispersionS1S2}, let $\sigma_{c,j}=\sigma_-(k_{c,j})$, $j=1,2$. Figure~\ref{figure4} provides an example for the case $-\sigma_{c,1}<\sigma_{c,2}$. Notice that:
\begin{itemize}
\item[(i)] When $\sigma=0$, $\sigma_+(k)=0$ has two simple roots $k_1(0)=-\kappa_1$ and $k_5(0)=-\kappa_2$ and a double root $k_3(0)=k_4(0)=0$, and $\sigma_-(k)=0$ has two simple roots $k_2(0)=\kappa_1$ and $k_6(0)=\kappa_2$; 
\item[(ii)] When $0<\sigma<-\sigma_{c,1}$, $\sigma_+(k)=\sigma$ has four simple roots $k_1(\sigma)$, $k_3(\sigma)$, $k_4(\sigma)$, and $k_5(\sigma)$ satisfying $k_5(\sigma)<-\kappa_2$, $ -\kappa_1<k_1(\sigma)<k_3(\sigma)< 0< k_4(\sigma)$, and $\sigma_-(k)=\sigma$ has two simple roots $k_2(\sigma)$ and $k_6(\sigma)$ satisfying $\kappa_1<k_2(\sigma)<k_6(\sigma)<\kappa_2$; 
\item[(iii)] When $\sigma=-\sigma_{c,1}$, $\sigma_+(k)=-\sigma_{c,1}$ has two simple roots $k_4(-\sigma_{c,1})$ and $k_5(-\sigma_{c,1})$ satisfying $k_5(-\sigma_{c,1})<-\kappa_2$ and $0<k_4(-\sigma_{c,1})$ and a double root $k_1(-\sigma_{c,1})=k_3(-\sigma_{c,1})=-k_{c,1}$, and $\sigma_-(k)=-\sigma_{c,1}$ has two simple roots $k_2(-\sigma_{c,1})$ and $k_6(-\sigma_{c,1})$ satisfying $\kappa_1<k_2(-\sigma_{c,1})<k_6(-\sigma_{c,1})<\kappa_2$; 
\item[(iv)] When $-\sigma_{c,1}<\sigma<\sigma_{c,2}$, $\sigma_+(k)=\sigma$ has two simple roots $k_4(\sigma)$ and $k_5(\sigma)$ satisfying $k_5(\sigma)<-\kappa_2$, $0<k_4(\sigma)$, and $\sigma_-(k)=\sigma$ has two simple roots $k_2(\sigma)$ and $k_6(\sigma)$ satisfying $\kappa_1<k_2(\sigma)<k_6(\sigma)<\kappa_2$; 
\item[(v)] When $\sigma=\sigma_{c,2}$, $\sigma_+(k)=\sigma_{c,2}$ has two simple roots $k_4(\sigma_{c,2})$ and $k_5(\sigma_{c,2})$ satisfying $k_5(\sigma_{c,2})<-\kappa_2<0<k_4(\sigma_{c,2})$, and $\sigma_-(k)=\sigma_{c,2}$ has a double root $k_2(\sigma_{c,2})=k_6(\sigma_{c,2})=k_{c,2}$;
\item[(vi)] When $\sigma>\sigma_{c,2}$, $\sigma_+(k)=\sigma$ has two simple roots $k_4(\sigma)$ and $k_5(\sigma)$ satisfying $k_5(\sigma)<-\kappa_2<0<k_4(\sigma)$, and $\sigma_-(k)=\sigma$ has no root. 
\end{itemize} 
\begin{figure}[htbp]
    \centering
    \includegraphics[scale=0.3]{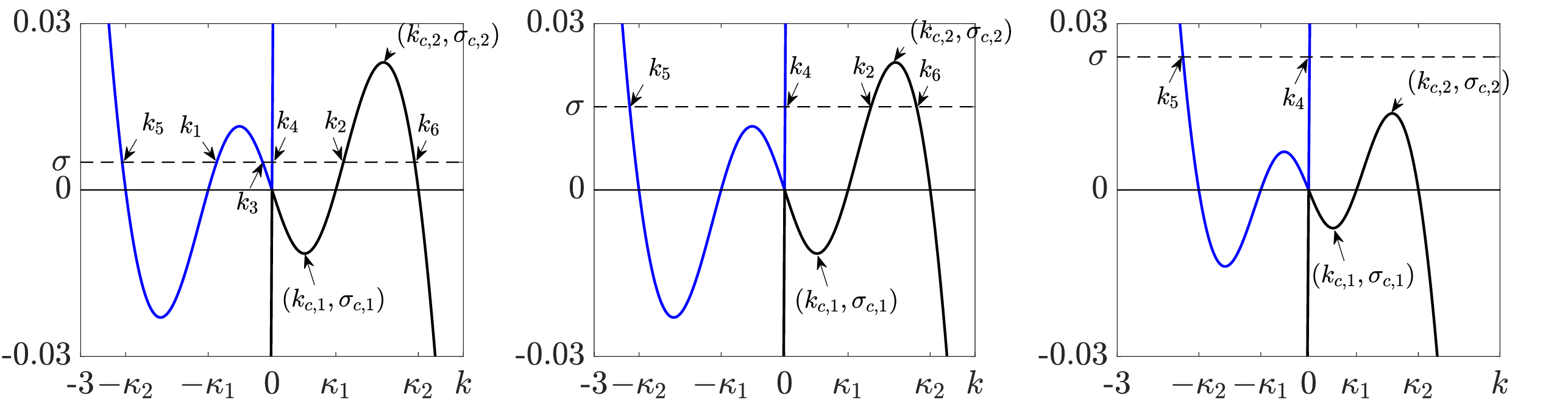}
    \caption{The graphs of $\sigma_+(k)$ (blue) and $\sigma_-(k)$ (black) when $\mu_0>1$ and $-\sigma_{c,1}<\sigma_{c,2}$. Left: When $0<\sigma<-\sigma_{c,1}$, $\sigma_\pm(k)=\sigma$ have six roots $k_j(\sigma)$, $j=1,2,\dots,6$. Middle: When $-\sigma_{c,1}<\sigma<\sigma_{c,2}$, $\sigma_\pm(k)=\sigma$ have four roots $k_j(\sigma)$, $j=2,4,5,6$. Right: When $\sigma>\sigma_{c,2}$, $\sigma_\pm(k)=\sigma$ have two roots $k_j(\sigma)$, $j=4,5$. }
    \label{figure4}
\end{figure}
On the other hand, Figure~\ref{figure5} provides an example for the case $-\sigma_{c,1}>\sigma_{c,2}$. We do not include the discussion for the roots of $\sigma_\pm(k)=\sigma$. See Figure~\ref{figure7} to distinguish the region where $-\sigma_{c,1}>\sigma_{c,2}$ and the region where $-\sigma_{c,1}<\sigma_{c,2}$. 
\begin{figure}[htbp]
    \centering
    \includegraphics[scale=0.3]{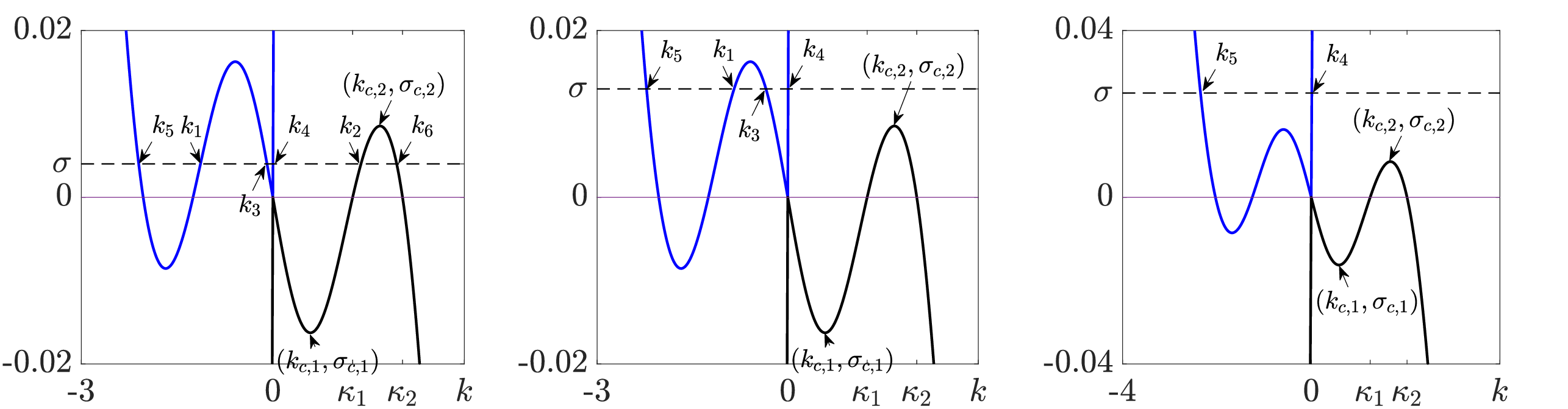}
    \caption{The graphs of $\sigma_+(k)$ (blue) and $\sigma_-(k)$ (black) when $\mu_0>1$ and $-\sigma_{c,1}>\sigma_{c,2}$. Left: When $0<\sigma<\sigma_{c,2}$, $\sigma_\pm(k)=\sigma$ have six roots $k_j(\sigma)$, $j=1,2,3,4,5,6$. Middle: When $\sigma_{c,2}<\sigma<-\sigma_{c,1}$, $\sigma_\pm(k)=\sigma$ have four roots $k_j(\sigma)$, $j=1,3,4,5$. Right: When $-\sigma_{c,1}<\sigma$, $\sigma_\pm(k)=\sigma$ have two roots $k_j(\sigma)$, $j=4,5$.}
    \label{figure5}
\end{figure}

\begin{lemma}\label{dispersionS3}
For $\mu_0\in(0,1)$, let $\kappa>0$ solve $\mu_0=\mu_0(\kappa;\beta)$ \eqref{mu0kappa}.  Then, $\sigma_-(\kappa;\beta,\mu_0)=0$. Moreover, there exists a critical point $k_{c}\in(0,\kappa)$ of $\sigma_-(\cdot;\beta,\mu_0)$ on $(0,\infty)$ such that $\sigma_-(\cdot;\beta,\mu_0)$ is strictly increasing on $(0,k_{c})$ and is strictly decreasing on $(k_{c},+\infty)$. If $\beta\in(0,\frac{1}{3})$, there exists an inflection point $k_*\in(0,k_{c})$ of $\sigma_-(\cdot;\beta,\mu_0)$ on $(0,\infty)$ such that $\sigma_-(\cdot;\beta,\mu_0)$ is convex on $(0,k_{*})$ and concave on $(k_*,\infty)$. If $\beta\in[\frac{1}{3},+\infty)$, $\sigma_-(\cdot;\beta,\mu_0)$ is concave on $(0,\infty)$.
\end{lemma}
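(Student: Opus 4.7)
The proof closely parallels that of Lemma \ref{dispersionS1S2} and reuses the auxiliary function $\#(\,\cdot\,;\beta)$ introduced there together with the identity $\sigma_-'(k;\beta,\mu_0)=\sqrt{\mu_0}\,\bigl(1/\sqrt{\mu_0}-\#(k;\beta)\bigr)$. The vanishing $\sigma_-(\kappa;\beta,\mu_0)=0$ is immediate from the dispersion identity \eqref{def:eqmu0kappa}, which gives $\sqrt{\mu_0\kappa\th(\kappa)(1+\beta\kappa^2)}=\kappa$.

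For the monotonicity, critical points of $\sigma_-$ are solutions of $\#(k;\beta)=1/\sqrt{\mu_0}$, and the key observation is that $\mu_0<1$ forces $1/\sqrt{\mu_0}>1=\#(0^+;\beta)$. When $\beta\geq 1/3$, I would use the sign analysis from the proof of Lemma \ref{dispersionS1S2} of the quadratic $a(k)\beta^2+b(k)\beta+c(k)$ governing the sign of $\#'(k;\beta)$: its positive root in $\beta$, namely $\tilde{\#}(k)=(-b+\sqrt{b^2-4ac})/(2a)$, is strictly decreasing on $(0,\infty)$ with $\tilde{\#}(0^+)=1/3$, so $\beta\geq 1/3$ entails $\beta>\tilde{\#}(k)$ for every $k>0$. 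Consequently $\#'(\,\cdot\,;\beta)>0$ on $(0,\infty)$, $\#(\,\cdot\,;\beta)$ strictly increases from $1$ to $+\infty$, and the equation $\#(k;\beta)=1/\sqrt{\mu_0}$ admits a unique solution $k_c$. When $\beta\in(0,1/3)$, Lemma \ref{dispersionS1S2}'s proof yields that $\#(\,\cdot\,;\beta)$ is strictly decreasing on $(0,k_*)$ and strictly increasing on $(k_*,\infty)$, with $\#(0^+;\beta)=1$ and $\#(+\infty;\beta)=+\infty$; since $1/\sqrt{\mu_0}>1>\#(k_*;\beta)$, the level $1/\sqrt{\mu_0}$ is still attained exactly once, at a point $k_c\in(k_*,\infty)$. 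In both cases the signs of $\sigma_-'$ before and after $k_c$ are opposite, giving the claimed monotonicity on $(0,\infty)$. That $k_c\in(0,\kappa)$ then follows from $\sigma_-(0)=\sigma_-(\kappa)=0$ together with $\sigma_-'(0^+)=1-\sqrt{\mu_0}>0$.

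For the concavity, differentiating the identity above once more yields $\sigma_-''(k;\beta,\mu_0)=-\sqrt{\mu_0}\,\#'(k;\beta)$, so the inflection structure of $\sigma_-$ is precisely dictated by the sign of $\#'$. Thus, for $\beta\geq 1/3$ the monotonicity of $\#$ gives $\sigma_-''<0$ throughout $(0,\infty)$, while for $\beta\in(0,1/3)$ the unique sign change of $\#'$ at $k_*$ gives convexity on $(0,k_*)$ and concavity on $(k_*,\infty)$, with inflection at the very same $k_*$ produced in Lemma \ref{dispersionS1S2}. Note that, because $\sigma_-'(0^+)>0$ and $\sigma_-''<0$ near $k_*$ in both cases, $k_*<k_c$ automatically.

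There is essentially no analytical obstacle here: the delicate sign analyses of $\tilde{\#}$ on $(0,\infty)$ and of the quadratic $a\beta^2+b\beta+c$, including the computer-assisted INTLAB verifications needed to justify $a>0$, $b>0$, $c<0$ and the monotonicity of $\tilde{\#}$, are already carried out in Lemma \ref{dispersionS1S2}. The only genuinely new observation is that $\tilde{\#}$ being strictly decreasing with limit $1/3$ at the origin lets the single threshold $\beta=1/3$ separate the monotone and non-monotone regimes of $\#$, and hence the pure-concave versus convex-then-concave regimes of $\sigma_-$ on $(0,\infty)$.
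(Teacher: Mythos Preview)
Your proof is correct and follows essentially the same approach as the paper: both recycle the analysis of $\#(\,\cdot\,;\beta)$ from Lemma~\ref{dispersionS1S2}, noting that the sub-critical condition $\mu_0<1$ places the level $1/\sqrt{\mu_0}$ above $\#(0^+;\beta)=1$, so that the shape of $\#$ (monotone for $\beta\geq1/3$, decreasing-then-increasing for $\beta<1/3$) yields a unique crossing $k_c$, and the concavity claims are read directly from $\sigma_-''=-\sqrt{\mu_0}\,\#'$. The paper's proof is terser (it invokes Rolle's theorem for the existence of $k_c\in(0,\kappa)$ and then simply defers to Lemma~\ref{dispersionS1S2}), but the substance is identical.
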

\begin{proof}
Again, the equation $\sigma_-(\kappa;\beta,\mu_0)=0$ amounts to $\mu_0=\mu_0(\kappa;\beta)$ \eqref{mu0kappa}. The existence of critical points $k_{c}\in(0,\kappa)$ is guaranteed by Rolle's Theorem. By the later proof of Lemma~\ref{dispersionS1S2}, $\frac{d\sigma_-}{dk}(k;\beta,\mu_0)$ is positive on $(0,k_c)$ and negative on $(k_c,\infty)$ and the concavity of $\sigma_-(\cdot;\beta,\mu_0)$ also follows easily.
\end{proof}

Under the consideration of Lemma~\ref{dispersionS3}, let $\sigma_c=\sigma_-(k_c)$ be the unique critical value of $\sigma_-$. See Figure~\ref{figure6}. We find that
\begin{itemize}
\item[(i)] When $0\leq\sigma<\sigma_c$, $\sigma_+(k)=\sigma$ has two simple roots $k_4(\sigma)$ and $k_5(\sigma)$ satisfying $k_5(\sigma)\leq-\kappa<0\leq k_4(\sigma)$, and $\sigma_-(k)=\sigma$ has two simple roots $k_2(\sigma)$ and $k_6(\sigma)$ satisfying $0\leq k_2(\sigma)<k_6(\sigma)\leq\kappa$; 
\item[(ii)] When $\sigma=\sigma_c$, $\sigma_-(k)=\sigma_c$ has one double root at $k_2(\sigma_c)=k_6(\sigma_c)=k_c$, and $\sigma_+(k)=\sigma_c$ has two simple roots  $k_5(\sigma_c)<-\kappa<0<k_4(\sigma_c)$; 
\item[(iii)] When $\sigma>\sigma_c$, $\sigma_+(k)=\sigma$ has two simple roots $k_5(\sigma)<-\kappa<0<k_4(\sigma)$, and $\sigma_-(k)=\sigma$ has no root. 
\end{itemize} 
\begin{figure}[htbp]
    \centering
    \includegraphics[scale=0.3]{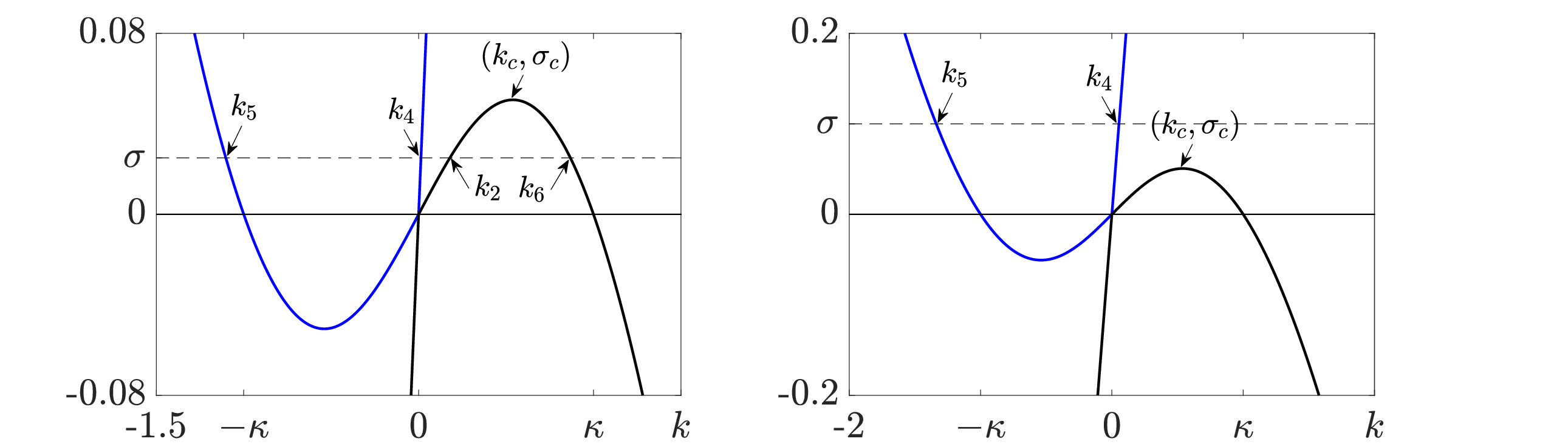}
    \caption{The graphs of $\sigma_+(k)$ (blue) and $\sigma_-(k)$ (black) when $\mu_0<1$. Left: When $0<\sigma<\sigma_c$, $\sigma_\pm(k)=\sigma$ have four roots $k_j(\sigma)$, $j=2,4,5,6$. Right: When $\sigma>\sigma_c$, $\sigma_\pm(k)=\sigma$ have two roots $k_j(\sigma)$, $j=4,5$.}
    \label{figure6}
\end{figure}

\begin{lemma}[Spectrum of $\mathbf{L}({i\sigma})$]\label{lem:eps=0} 
Let $(\beta,\kappa)$ be in the $S_3$ region of sub-critical waves. When $\sigma=0$, $ik_j(0)=(-1)^ji \kappa$, $j=5,6$, are simple eigenvalues of $\mathbf{L}(0):{\rm dom}(\mathbf{L})\subset Y\to Y$, and
\ba\label{def:phi12}
\ker(\mathbf{L}(0)-ik_j(0)\mathbf{1})&=\ker((\mathbf{L}(0)-ik_j(0)\mathbf{1})^2)={\rm span}\{\boldsymbol{\phi}_j(0)\},\\
\boldsymbol{\phi}_j(0)&=\begin{pmatrix}
\ch(k_j(0)y)\\ik_j(0)\ch(k_j(0)y)\\i\sh(k_j(0))\\-\beta k_j(0)\sh(k_j(0))\end{pmatrix},
\ea
where $\mathbf{1}$ denotes the identity operator.
Also, $ik_j(0)=0$, $j=2,4$, is an eigenvalue of $\mathbf{L}(0)$ with algebraic multiplicity $2$ and geometric multiplicity $1$, and 
\[
\ker(\mathbf{L}(0)^2)=\ker(\mathbf{L}(0)^3)
={\rm span}\{\boldsymbol{\phi}_2(0), \boldsymbol{\phi}_4(0)\},
\]
where
\begin{equation}\label{def:phi24}
\boldsymbol{\phi}_2(0)=\begin{pmatrix} 0 \\ 1 \\ \frac{1}{\mu_0}\\ 0 \end{pmatrix}\quad\text{and}\quad
\boldsymbol{\phi}_4(0)=\begin{pmatrix} 1 \\ 0 \\ 0\\0 \end{pmatrix}.
\end{equation}

When $0<\sigma<\sigma_c$, $ik_j(\sigma)$, $j=2,4,5,6$, are simple eigenvalues of $\mathbf{L}(i\sigma)$, and
$$
\ker(\mathbf{L}(i\sigma)-ik_j(\sigma)\mathbf{1})=\ker((\mathbf{L}(i\sigma)-ik_j(\sigma)\mathbf{1})^2)
={\rm span}\{\boldsymbol{\phi}_j(\sigma)\},$$
where
\be\label{varphi_non_zero}
\boldsymbol{\phi}_j(\sigma)=\begin{pmatrix}
\ch(k_j(\sigma)y)+\dfrac{k_j(\sigma)\sigma\sh(k_j(\sigma))y^2}{2(k_j(\sigma)-\sigma)}\\
ik_j(\sigma)\ch(k_j(\sigma)y)\\
\dfrac{ik_j(\sigma)\sh(k_j(\sigma))}{k_j(\sigma)-\sigma}\\
-\dfrac{\beta k_j(\sigma)^2\sh(k_j(\sigma))}{k_j(\sigma)-\sigma}\end{pmatrix}.
\ee

When $\sigma>\sigma_c$, $ik_j(\sigma)$, $j=4,5$, are simple eigenvalues of $\mathbf{L}(i\sigma)$, and \eqref{varphi_non_zero} holds. 

When $\sigma=\sigma_c$, $ik_j(\sigma_c)$, $j=4,5$, are simple eigenvalues of $\mathbf{L}(i\sigma_c)$, and \eqref{varphi_non_zero} holds. Also, $ik_c$ is an eigenvalue with algebraic multiplicity $2$ and geometric multiplicity $1$, and
\[
\ker((\mathbf{L}(i\sigma_c)-ik_c\mathbf{1})^2)=\ker((\mathbf{L}(i\sigma_c)-ik_c\mathbf{1})^3)
={\rm span}\{\boldsymbol{\phi}_2(\sigma_c), \boldsymbol{\phi}_6(\sigma_c)\}
\]
and $(\mathbf{L}(i\sigma_c)-ik_c\mathbf{1})\boldsymbol{\phi}_6(\sigma_c)=\boldsymbol{\phi}_2(\sigma_c)$, where $\boldsymbol{\phi}_2(\sigma_c)$ is given in \eqref{varphi_non_zero} and 
\ba 
\label{generalized_eigenvector}
&\boldsymbol{\phi}_6(\sigma_c)\\
=&\begin{pmatrix}
-iy\sh(k_c y)+\dfrac{i\sigma_c(\sigma_c\sh(k_c) - k_c^2\ch(k_c) + k_c \sigma_c\ch(k_c))y^2}{2(k_c - \sigma_c)^2}\\
\ch(k_c y) + k_cy\sh(k_cy)\\
-\dfrac{\sigma_c\sh(k_c) - k_c^2\ch(k_c) + k_c\sigma_c \ch(k_c)}{(k_c - \sigma_c)^2}\\
\dfrac{i\beta k_c(k_c\sh(k_c) - 2\sigma_c\sh(k_c) + k_c^2\ch(k_c) - k_c\sigma_c\ch(k_c))}{(k_c - \sigma_c)^2}\end{pmatrix}.
\ea 

Let $(\beta,\kappa)$ be in the super-critical region with  $-\sigma_{c,1}<\sigma_{c,2}$, when $\sigma=0$ $ik_j(0)=(-1)^ji \kappa$, $j=1,2,5,6$, are simple eigenvalues of $\mathbf{L}(0):{\rm dom}(\mathbf{L})\subset Y\to Y$, and \eqref{def:phi12} holds. Also, $ik_j(0)=0$, $j=3,4$, is an eigenvalue of $\mathbf{L}(0)$ with algebraic multiplicity $2$ and geometric multiplicity $1$, and 
\[
\ker(\mathbf{L}(0)^2)=\ker(\mathbf{L}(0)^3)
={\rm span}\{\boldsymbol{\phi}_3(0), \boldsymbol{\phi}_4(0)\},
\]
where
\begin{equation}\label{def:phi34}
\boldsymbol{\phi}_3(0)=\begin{pmatrix} 0 \\ 1 \\ 1/\mu_0\\ 0 \end{pmatrix}\quad\text{and}\quad
\boldsymbol{\phi}_4(0)=\begin{pmatrix} 1 \\ 0 \\ 0\\0 \end{pmatrix}.
\end{equation}

When $0<\sigma<-\sigma_{c,1}$, $ik_j(\sigma)$, $j=1,2,3,4,5,6$, are simple eigenvalues of $\mathbf{L}(i\sigma)$, and \eqref{varphi_non_zero} holds.

When $\sigma=-\sigma_{c,1}$, $ik_j(\sigma)$, $j=2,4,5,6$, are simple eigenvalues of $\mathbf{L}(i\sigma)$, and \eqref{varphi_non_zero} holds. $-ik_{c,1}$ is a double eigenvalue of $\mathbf{L}(i\sigma)$, and \eqref{generalized_eigenvector} holds with $k_c$ and $\sigma_c$ replaced by $-k_{1,c}$ and $-\sigma_{c,1}$, respectively.

When $-\sigma_{c,1}<\sigma<\sigma_{c,2}$, $ik_j(\sigma)$, $j=2,4,5,6$, are simple eigenvalues of $\mathbf{L}(i\sigma)$, and \eqref{varphi_non_zero} holds.

When $\sigma=\sigma_{c,2}$, $ik_j(\sigma)$, $j=4,5$, are simple eigenvalues of $\mathbf{L}(i\sigma)$, and \eqref{varphi_non_zero} holds. $ik_{c,2}$ is a double eigenvalue of $\mathbf{L}(i\sigma)$, and \eqref{generalized_eigenvector} holds with $k_c$ and $\sigma_c$ replaced by $k_{2,c}$ and $\sigma_{c,2}$, respectively.

When $\sigma_{c,2}<\sigma$, $ik_j(\sigma_c)$, $j=4,5$, are simple eigenvalues of $\mathbf{L}(i\sigma)$, and \eqref{varphi_non_zero} holds. 

We omit the discussion for $(\beta,\kappa)$ in the super-critical region with $-\sigma_{c,1}>\sigma_{c,2}$.
\end{lemma}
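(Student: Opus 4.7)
The plan is to reduce the eigenvalue equation $ik\mathbf{u}=\mathbf{L}(i\sigma)\mathbf{u}$ to a boundary value problem whose solvability condition is precisely the dispersion relation \eqref{eqn:sigma}, and then verify case-by-case that the asserted (generalized) eigenvectors span the appropriate generalized eigenspace. First, from \eqref{def:L}, the third component gives $z=ik\beta\eta$, the first component gives $u=ik\phi-\tfrac{k\sigma y^{2}\eta}{2}$, the second component then forces
\begin{equation*}
\phi_{yy}=k^{2}\phi+i\sigma\eta\Bigl(\tfrac{k^{2}y^{2}}{2}-1\Bigr),
\end{equation*}
and the fourth component, together with the two boundary conditions $\phi_{y}(1)=-z/\beta=-ik\eta$ and $\phi_{y}(0)=0$, supplies a scalar closure that, after elimination of $u(1)$ and $\phi(1)$, reduces to $(\sigma-k)^{2}=\mu_{0}k\th(k)(1+\beta k^{2})$. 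This shows that $ik\in\mathrm{spec}(\mathbf{L}(i\sigma))$ precisely when $k$ is a root of the dispersion relation \eqref{eqn:sigma}; since $\mathbf{L}(\lambda)$ has compact resolvent, the spectrum is discrete with finite multiplicities.

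For each simple real root $k_{j}(\sigma)$ with $\sigma\neq 0$, I would plug the ansatz \eqref{varphi_non_zero} into the four scalar equations and directly verify it solves them; the $y^{2}$ correction in the top entry is exactly what is needed to absorb the inhomogeneous term $i\sigma\eta(\tfrac{k^{2}y^{2}}{2}-1)$ in the ODE for $\phi$, and the last equation collapses to the dispersion relation, which holds by choice of $k=k_{j}(\sigma)$. Simplicity of the root forces the algebraic multiplicity to equal one, since a generalized eigenvector would require a nontrivial solution of the inhomogeneous version of the same BVP, and Fredholm theory combined with differentiating the dispersion relation in $k$ shows solvability precisely when $\tfrac{d}{dk}\bigl[(\sigma-k)^{2}-\mu_{0}k\th(k)(1+\beta k^{2})\bigr]\neq 0$ at $k=k_{j}(\sigma)$. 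The specializations at $\sigma=0$ with $k_{j}(0)=(-1)^{j}\kappa$, $j=5,6$ (and $j=1,2$ in the super-critical case) follow by taking $\sigma\to 0$ and noting the $y^{2}$ term drops out, producing \eqref{def:phi12}.

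The double-root cases are the main obstacle and require separate care. At $\sigma=0$ the dispersion relation factors as $k\bigl(k-\mu_{0}\th(k)(1+\beta k^{2})\bigr)=0$, which always has $k=0$ as a double root (picking up one factor from the explicit $k$ and a second from $1-\mu_{0}\th'(0)=0$ when ranked correctly; more directly, both branches $\sigma_{\pm}(k)$ vanish at $k=0$). The eigenvectors $\boldsymbol{\phi}_{2}(0)$ and $\boldsymbol{\phi}_{4}(0)$ (or $\boldsymbol{\phi}_{3}(0),\boldsymbol{\phi}_{4}(0)$ in the super-critical region) in \eqref{def:phi24}, \eqref{def:phi34} are verified by direct substitution into $\mathbf{L}(0)\mathbf{u}=0$, and one checks $\ker(\mathbf{L}(0))=\mathrm{span}\{\boldsymbol{\phi}_{4}(0)\}$ has dimension one while $\ker(\mathbf{L}(0)^{2})$ picks up $\boldsymbol{\phi}_{2}(0)$ because $\mathbf{L}(0)\boldsymbol{\phi}_{2}(0)=\boldsymbol{\phi}_{4}(0)\cdot\text{const}$, showing geometric multiplicity $1$ and algebraic multiplicity $2$. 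At the critical frequency $\sigma=\sigma_{c}$ the root $k_{c}$ of $\sigma_{-}(k)=\sigma_{c}$ is a double root because $\sigma_{-}'(k_{c})=0$ by definition of the critical point in Lemma~\ref{dispersionS1S2}/\ref{dispersionS3}; I would obtain the generalized eigenvector \eqref{generalized_eigenvector} by differentiating $\boldsymbol{\phi}_{j}(\sigma)$ along the dispersion curve, formally computing $\partial_{k}\boldsymbol{\phi}_{j}\big|_{k=k_{c}}$ and verifying that $(\mathbf{L}(i\sigma_{c})-ik_{c}\mathbf{1})$ applied to this vector yields $\boldsymbol{\phi}_{2}(\sigma_{c})$; the check that $\ker((\mathbf{L}(i\sigma_{c})-ik_{c}\mathbf{1})^{3})=\ker((\mathbf{L}(i\sigma_{c})-ik_{c}\mathbf{1})^{2})$ follows from the fact that $k_{c}$ is a double, not triple, root of the dispersion polynomial, i.e.\ $\sigma_{-}''(k_{c})\neq 0$, which is precisely the condition used in the proofs of Lemmas~\ref{dispersionS1S2} and \ref{dispersionS3} to distinguish critical points from inflection points. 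The remaining super-critical variants at $-\sigma_{c,1}$ and $\sigma_{c,2}$ are identical modulo relabeling via the root $k_{c,j}$, which concludes the proof.
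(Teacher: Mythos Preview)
Your proposal is correct and follows the same approach the paper takes: the paper simply states that the proof ``follows from a straightforward calculation and we do not include the details here,'' referring to \cite[Appendix~C]{HY2023} for the analogous gravity-wave case. Your outline---reducing $ik\mathbf{u}=\mathbf{L}(i\sigma)\mathbf{u}$ to the dispersion relation \eqref{eqn:sigma}, verifying the explicit (generalized) eigenvectors by direct substitution, and linking algebraic multiplicity to root multiplicity via differentiation along the dispersion curve---is precisely that omitted calculation.
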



The proof of Lemma~\ref{lem:eps=0} follows from a straightforward calculation and we do not include the details here. See \cite[Appendix~C]{HY2023} for gravity waves. 

\begin{definition}[Eigenspace and projection]\label{def:proj}\rm
Let $\sigma\geq0$, and $Y(\sigma)$ denote the {\em eigenspace} of $\mathbf{L}(i\sigma): {\rm dom}(\mathbf{L})\subset Y\to Y$ associated with its finitely many and purely imaginary eigenvalues. Let 
\[
\boldsymbol{\Pi}(\sigma): {\rm dom}(\mathbf{L})\subset Y \to Y(\sigma)
\]
be the {\em projection} of ${\rm dom}(\mathbf{L})$ onto $Y(\sigma)$, which commutes with $\mathbf{L}(i\sigma)$. 
Based on Lemma~\ref{lem:eps=0}, we choose an ordered basis $\mathcal{B}(\sigma)$ of $Y(\sigma)$ as follow:\\
In the sub-critical region,
\begin{itemize}
\item[(i)] when $\sigma=0$, $\mathcal{B}(0):=\{\boldsymbol{\phi}_j(0):j=5,6,2,4\}$, where $\boldsymbol{\phi}_j(0)$ are defined in \eqref{def:phi12} and \eqref{def:phi24};
\item[(ii)] when $0<\sigma<\sigma_c$, $\mathcal{B}(\sigma):=\{\boldsymbol{\phi}_j(\sigma):j=2,4,5,6\}$, where $\boldsymbol{\phi}_j(\sigma)$ are defined in \eqref{varphi_non_zero};
\item[(iii)] when $\sigma=\sigma_c$, $\mathcal{B}(\sigma):=\{\boldsymbol{\phi}_j(\sigma):j=2,4,5,6\}$, where, for $j=2,4,5$, $\boldsymbol{\phi}_{j}(\sigma)$ are defined in \eqref{varphi_non_zero} and $\boldsymbol{\phi}_{6}(\sigma)$ is defined in \eqref{generalized_eigenvector};
\item[(iv)] when $\sigma>\sigma_c$,  $\mathcal{B}(\sigma):=\{\boldsymbol{\phi}_j(\sigma):j=4,5\}$, where $\boldsymbol{\phi}_j(\sigma)$ is defined in \eqref{varphi_non_zero}.
\end{itemize}
In the super-critical region with $-\sigma_{c,1}<\sigma_{c,2}$,
\begin{itemize}
\item[(i)] when $\sigma=0$, $\mathcal{B}(0):=\{\boldsymbol{\phi}_j(0):j=1,2,3,4,5,6\}$, where $\boldsymbol{\phi}_j(0)$ are defined in \eqref{def:phi12} and \eqref{def:phi34};
\item[(ii)] when $0<\sigma<-\sigma_{c,1}$,  $\mathcal{B}(\sigma):=\{\boldsymbol{\phi}_j(\sigma):j=1,2,3,4,5,6\}$, where $\boldsymbol{\phi}_j(\sigma)$ are defined in \eqref{varphi_non_zero};
\item[(iii)] when $-\sigma_{c,1}<\sigma<\sigma_{c,2}$,  $\mathcal{B}(\sigma):=\{\boldsymbol{\phi}_j(\sigma_c):j=2,4,5,6\}$, where $\boldsymbol{\phi}_j(\sigma)$ are defined in \eqref{varphi_non_zero};
\item[(iv)] when $\sigma>\sigma_{c,2}$, $\mathcal{B}(\sigma):=\{\boldsymbol{\phi}_j(\sigma):j=4,5\}$, where $\boldsymbol{\phi}_j(\sigma)$ are defined in \eqref{varphi_non_zero}.
\end{itemize}
We omit the discussion for $\sigma=-\sigma_{c,1}$ and $\sigma=\sigma_{c,2}$ and the discussion for super-critical region with $-\sigma_{c,1}\geq\sigma_{c,2}$.
The formulas of $\boldsymbol{\Pi}(\sigma)$ are in Section~\ref{sec:proj}. 

By symmetry, $ik_j(\sigma)=-ik_j(-\sigma)$, whereby Lemma~\ref{lem:eps=0} and Definition~\ref{def:proj} extend to $\sigma<0$. 
\end{definition}



\subsection{Reduction of the spectral problem. The periodic Evans function}\label{sec:reduction}

We turn the attention to $|\eps|\neq0,\ll1$. Let
\[
\lambda=i\sigma+\delta,\quad\text{where}\quad \sigma\in\mathbb{R}, 
\quad\delta\in \mathbb{C}\quad\text{and}\quad |\delta|\ll1,
\] 
and we rewrite \eqref{eqn:LB} as 
\ba \label{eqn:LB;delta}
\mathbf{u}_x&=
\mathbf{L}(i\sigma)\mathbf{u}+(\mathbf{L}(i\sigma+\delta)-\mathbf{L}(i\sigma))\mathbf{u}
+\mathbf{B}(x;i\sigma+\delta,\eps)\mathbf{u}(x)
\\
&=:\mathbf{L}(i\sigma)\mathbf{u}+\mathbf{B}(x;\sigma,\delta,\eps)\mathbf{u}(x),
\ea 
where $\mathbf{L}(i\sigma):{\rm dom}(\mathbf{L})\subset Y\to Y$ is in \eqref{def:L}, \eqref{def:Y}, \eqref{def:dom}, and 
\[
\mathbf{B}(x;\sigma,\delta,\eps):\mathbb{R}\times {\rm dom}(\mathbf{L})\subset \mathbb{R}\times Y\to Y.
\]
Notice that $\mathbf{B}(x;\sigma,\delta,\eps)$ is smooth and $T(=2\pi/\kappa)$ periodic in $x$, and it depends analytically on $\sigma$, $\delta$ and $\eps$. Also, $\mathbf{B}(x;\sigma,0,0)=\mathbf{0}$. Our proofs do not involve all the details of $\mathbf{B}(x;\sigma,\delta,\eps)$, and rather its leading order terms as $\delta,\eps\to 0$, whence we do not include the formulas here. But see, for instance, \eqref{def:B;exp0} and Appendix~\ref{A:Bexp} \eqref{eqn:B1020}- \eqref{eqn:B02}.

For $\mathbf{u}(x)\in {\rm dom}(\mathbf{L})$, $x\in\mathbb{R}$, 
let 
\begin{equation*}
\mathbf{v}(x)=\boldsymbol{\Pi}(\sigma)\mathbf{u}(x)\quad\text{and}\quad 
\mathbf{w}(x)=(\mathbf{1}-\boldsymbol{\Pi}(\sigma))\mathbf{u}(x),
\end{equation*}
where $\mathbf{1}$ denotes the identity operator, 
and \eqref{eqn:LB;delta} becomes
\ba\label{eqn:LB;u12}
{\mathbf{v}}_x=&\mathbf{L}(i\sigma)\mathbf{v}
+\boldsymbol{\Pi}(\sigma)\mathbf{B}(x;\sigma,\delta,\eps)(\mathbf{v}(x)+\mathbf{w}(x)),\\
{\mathbf{w}}_x=&\mathbf{L}(i\sigma)\mathbf{w}
+(\mathbf{1}-\boldsymbol{\Pi}(\sigma))\mathbf{B}(x;\sigma,\delta,\eps)(\mathbf{v}(x)+\mathbf{w}(x)).
\ea 
Recall from discussion above that $Y(\sigma)$ is finite dimensional. Our setup \eqref{eqn:LB;delta}, or equivalently \eqref{eqn:LB;u12}, satisfies hypotheses (A1) and (A2) of Mielke's reduction theorem \cite[Theorem~1]{Mielke;reduction}. To verify the last critical hypothesis (A3) for resolvent estimates, we refer to, for instance, \cite[Lemma 2.1]{ik1992}, \cite[Prop. 3.2]{BGT}, \cite[Prop. 2]{MR1720395}, and \cite[Lemma 3.4]{GM} , to learn that the task is accomplished by the following lemma.
\begin{lemma}
For a fixed $\sigma\in\mathbb{R}$, a complex number $ik$ is an eigenvalue of $\mathbf{L}(i\sigma)$ if and only if $k$ solves the latter equation of \eqref{eqn:sigma}. There exist $k_0(\sigma)>0$ and $C(\sigma)>0$ both sufficiently large such that, for all $k>k_0(\sigma)$, $ik$ lies in the resolvent set of $\mathbf{L}(i\sigma)$ and 
\be \label{H3}
||(ik\Id -\mathbf{L}(i\sigma))^{-1}||_{Y\rightarrow Y}\leq \frac{C(\sigma)}{k}.
\ee 
\end{lemma}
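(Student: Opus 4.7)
The plan splits into the spectral characterization and the resolvent estimate.

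For the characterization, write $ik\mathbf{u}=\mathbf{L}(i\sigma)\mathbf{u}$ componentwise and eliminate $z$ and $u$ via the third and first equations, $z=ik\beta\eta$ and $u=ik(\phi+\tfrac{i\sigma}{2}y^2\eta)$. The second component then yields
\[
\phi_{yy}-k^2\phi=i\sigma\bigl(\tfrac{k^2y^2}{2}-1\bigr)\eta,\qquad \phi_y(0)=0,\quad \phi_y(1)=-ik\eta,
\]
whose unique solution is $\phi(y)=A\ch(ky)-\tfrac{i\sigma}{2}y^2\eta$ with $A=-i(k-\sigma)\eta/(k\sh(k))$. A nontrivial solution forces $\eta\neq 0$, and substituting $\phi(1)$, $u(1)$, $z$ into the fourth (dynamic) equation produces, after cancellation of the $\sigma^2/\mu_0$- and $k\sigma/\mu_0$-terms, exactly $(\sigma-k)^2=\mu_0k\th(k)(1+\beta k^2)$, i.e., the latter equation of \eqref{eqn:sigma}. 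Since its left side is $O(k^2)$ while its right side is $O(\beta k^3)$ as $|k|\to\infty$, only finitely many real $k$ satisfy it; choose $k_0(\sigma)$ to exceed all such real roots.

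For the resolvent estimate, repeat the elimination with data $\mathbf{f}=(f_1,f_2,f_3,f_4)\in Y$: now $z=ik\beta\eta-\beta f_3$, $u=ik\phi-\tfrac{k\sigma y^2}{2}\eta-\tfrac{i\sigma y^2}{2}f_3-f_1$, and the $\phi$-ODE picks up forcing $ikf_1-\tfrac{k\sigma y^2}{2}f_3+f_2$ together with modified boundary data $\phi_y(1)=-ik\eta+f_3$. Decompose $\phi=\eta\psi+\phi^{(2)}$ where $\psi$ solves the $\eta$-problem above and $\phi^{(2)}$ carries the forcing; substituting into the fourth equation produces a scalar equation $\mathcal{D}(k,\sigma)\,\eta=R(\mathbf{f})$ with
\[
\mathcal{D}(k,\sigma)=-\beta k^2-1+\frac{(k-\sigma)^2}{\mu_0 k\th(k)}\sim-\beta k^2\quad\text{as }k\to\infty,
\]
so $|\mathcal{D}|^{-1}\le Ck^{-2}$. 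Standard elliptic energy estimates for $\phi^{(2)}$ (multiply the ODE by $\bar\phi^{(2)}$, integrate, and use the boundary data) yield $k\|\phi^{(2)}\|_{L^2}+\|\phi^{(2)}_y\|_{L^2}\le C\|\mathbf{f}\|_Y$ and hence $|\phi^{(2)}(1)|\le Ck^{-1/2}\|\mathbf{f}\|_Y$ by trace, which together with the explicit form of $R$ gives $|\eta|\le Ck^{-1}\|\mathbf{f}\|_Y$.

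The main obstacle, and the reason the estimate is sharp, is extracting $O(k^{-1})$ control on the remaining components through internal cancellations rather than naive triangle inequalities. Indeed $z=ik\beta\eta-\beta f_3$ is naively $O(1)$ when $f_3\neq 0$; however the $f_3$-driven contribution to $R(\mathbf{f})$ forces $\eta=-if_3/k+O(k^{-2}\|\mathbf{f}\|_Y)$ at leading order, so that $ik\beta\eta$ cancels $\beta f_3$ up to $O(k^{-1})$ and $|z|\le Ck^{-1}\|\mathbf{f}\|_Y$. The same mechanism reduces the boundary-layer coefficient $A$ of $\ch(ky)$ in $\phi$ from $O((k\sh(k))^{-1})$ to $O((k^2\sh(k))^{-1})$, and the bulk balance $\phi^{(2)}\approx -if_1/k$ makes $ik\phi^{(2)}$ cancel the $-f_1$ in $u$ at leading order. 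Together these give $\|\phi\|_{H^1}\le Ck^{-1}\|\mathbf{f}\|_Y$ and $\|u\|_{L^2}\le Ck^{-1}\|\mathbf{f}\|_Y$, which summed over all four components yield the claimed $\|\mathbf{u}\|_Y\le C(\sigma)k^{-1}\|\mathbf{f}\|_Y$.
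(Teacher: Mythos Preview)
Your eigenvalue characterization is correct and more explicit than the paper's (which simply refers to computations in earlier sections). The reduction to the scalar equation $\mathcal{D}(k,\sigma)\eta=R(\mathbf{f})$ with $\mathcal{D}\sim-\beta k^2$ is also valid and gives $|\eta|\le Ck^{-1}\|\mathbf{f}\|_Y$ correctly.

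However, the cancellation arguments in your last paragraph do not close at the orders you claim, and this is a genuine gap. From your own estimate $|\phi^{(2)}(1)|\le Ck^{-1/2}\|\mathbf{f}\|_Y$, the term $-i(k-\sigma)\phi^{(2)}(1)/\mu_0$ in $R(\mathbf{f})$ is of size $O(k^{1/2})\|\mathbf{f}\|_Y$, so the best you can conclude is $\eta+if_3/k=O(k^{-3/2})\|\mathbf{f}\|_Y$, not $O(k^{-2})$. This yields only $|z|=k\beta|\eta+if_3/k|\le Ck^{-1/2}\|\mathbf{f}\|_Y$, one half-power short. The ``bulk balance $\phi^{(2)}\approx-if_1/k$'' is also problematic: substituting $\chi=\phi^{(2)}+if_1/k$ produces a forcing $ik^{-1}f_{1,yy}$, but $f_1\in H^1(0,1)$ only, so this is not in $L^2$ and the argument for $\|u\|_{L^2}\le Ck^{-1}$ cannot proceed as sketched. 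Finally, your energy estimate gives $\|\phi^{(2)}_y\|_{L^2}\le C\|\mathbf{f}\|_Y$ (order one), so $\|\phi\|_{H^1}\le Ck^{-1}$ would require a cancellation in $H^1$ between $\phi^{(2)}$ and $\eta\psi$ that you neither state nor prove.

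The paper's proof avoids all of this by an energy estimate on the undecoupled system. The first resolvent equation is squared in $H^1$, producing a cross-term $2k\,\Im\langle u_y,\phi_y\rangle$; integrating this by parts and substituting $u(1)$ from the fourth equation and $\phi_{yy}$ from the second brings in terms $-2k^2\|u\|_{L^2}^2$ and $-\tfrac{2\mu_0}{\beta}k^2|z|^2$ with the correct sign. Together with the third equation these yield $k^2(\|\phi\|_{H^1}^2+\|u\|_{L^2}^2+|\eta|^2+|z|^2)\le C\|\mathbf{f}\|_Y^2$ directly, without tracking any explicit cancellation. Your decomposition separates $\eta$ too early and loses precisely the system structure that the paper's integration by parts exploits.
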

\begin{proof}
Our computations in former sections have shown the foregoing claim. Recall $\mathbf{L}(i\sigma)$ has compact resolvent, so that the spectrum consists (entirely) of discrete eigenvalues with finite multiplicities, which are shown to be bounded on the imaginary axis. For all sufficiently large $k\in\mathbb{R}$, the pure imaginary number $ik$ thus belongs to the resolvent set. To obtain the bound \eqref{H3} for the inverse operator, consider the resolvent equations
$$(ik\Id -\mathbf{L}(i\sigma))\mathbf{u}=\mathbf{f}_*=\begin{pmatrix} \varphi_* \\ u_* \\ \eta_* \\ z_* \end{pmatrix}.$$
The first equation reads 
$$
-u+ik\varphi=\varphi_*-\frac{i\sigma z}{2\beta}y^2.
$$
Taking square of $||\cdot||_{H^1}$ norm on both hand sides and using 
$$
|a+ikb|^2=|a|^2+k^2|b|^2+2k\Im (a\bar{b}), \quad \text{for $a,b\in \mathbb{C},k\in\mathbb{R}$,}
$$
yields
$$
\begin{aligned}
&||u||_{H^1}^2+k^2||\varphi||_{H^1}^2-2k\Im\langle u,\varphi\rangle-2k\Im\langle u_y,\varphi_y\rangle\\
=&||-u+ik\varphi||_{H^1}^2=||\varphi_*-\frac{i\sigma z}{2\beta}y^2||_{H^1}^2\\
\leq& 2||\varphi_*||_{H^1}^2+2||\frac{i\sigma z}{2\beta}y^2||_{H^1}^2=2||\varphi_*||_{H^1}^2+\frac{23\sigma^2}{15\beta^2}|z|^2,
\end{aligned}
$$
whence
\be\label{estimate1}
||u||_{H^1}^2+k^2||\varphi||_{H^1}^2\leq 2||\varphi_*||_{H^1}^2+\frac{23\sigma^2}{15\beta^2}|z|^2+2k\Im\langle u,\varphi\rangle+2k\Im\langle u_y,\varphi_y\rangle.
\ee 
By Young's inequality, there holds
\be \label{estimate2}
2k\Im\langle u,\varphi\rangle\leq \frac{1}{\eps_1}||\varphi||_{L^2}^2+\eps_1k^2 ||u||_{L^2}^2.
\ee 
To bound $2k\Im\langle u_y,\varphi_y\rangle$, integration by part, we find
\ba \label{estimate3}
&2k\Im\langle u_y,\varphi_y\rangle=2k\Im\left(u(1)\overline{\Phi_y(1)}-\langle u,\varphi_{yy}\rangle\right)\\=&
-2k\Im \left(\mu_0z_*+\mu_0\eta+i\sigma\varphi(1)-\tfrac{1}{2}\sigma^2\eta-ik\mu_0z\right)\frac{\bar{z}}{\beta}\\
&-2k\Im\langle u,u_*-iku-i\sigma\eta\rangle\\
=&-2k\Im \left(\mu_0z_*+\mu_0\eta+i\sigma\varphi(1)-\tfrac{1}{2}\sigma^2\eta\right)\frac{\bar{z}}{\beta}\\&-2k\Im\langle u,u_*-i\sigma\eta\rangle-\frac{2\mu_0k^2}{\beta}|z|^2-2k^2||u||_{L^2}^2\\
\ea 
where we have made substitutions by (i) the fourth equation of the resolvent equations $u(1)=\mu_0z_*+\mu_0\eta+i\sigma\varphi(1)-\tfrac{1}{2}\sigma^2\eta-ik\mu_0z$, (ii) the second equation of the resolvent equations $
\varphi_{yy}=u_*-iku-i\sigma\eta
$, and the boundary condition $\varphi_y(1)=-\frac{z}{\beta}$. Applying Young's inequality to the right hand side of \eqref{estimate3} yields
\ba \label{estimate4}
2k\Im\langle u_y,\varphi_y\rangle\leq& \frac{\mu_0^2}{\beta^2\eps_2}|z_*|^2+\eps_2 k^2|z|^2+\frac{(\mu_0-\tfrac{1}{2}\sigma^2)^2}{\beta^2\eps_3}|\eta|^2\\
&+\eps_3 k^2|z|^2+\frac{\sigma^2}{\beta^2\eps_4}|\varphi(1)|^2+\eps_4 k^2|z|^2 \\
&+\frac{1}{\eps_5}||u_*||_{L^2}^2+\eps_5k^2||u||_{L^2}^2+\frac{\sigma^2}{\eps_6}|\eta|^2\\
&+\eps_6k^2||u||_{L^2}^2-\frac{2\mu_0k^2}{\beta}|z|^2-2k^2||u||_{L^2}^2\\
\leq & \frac{1}{\eps_5}||u_*||_{L^2}^2+\frac{\mu_0^2}{\beta^2\eps_2}|z_*|^2+\frac{\sigma^2c}{\beta^2\eps_4}||\varphi||_{H^1}^2\\
&+(-2+\eps_5+\eps_6)k^2||u||_{L^2}^2\\
&+\left(\frac{(\mu_0-\tfrac{1}{2}\sigma^2)^2}{\beta^2\eps_3}+\frac{\sigma^2}{\eps_6}\right)|\eta|^2\\
&+(-\frac{2\mu_0}{\beta}+\eps_2 +\eps_3+\eps_4)k^2|z|^2,
\ea 
where, in the last inequality, we have used Morrey's inequality $|\varphi(1)|\leq |\varphi|_{C([0,1])}\leq c||\varphi||_{H^1}$. Here $c$ is a constant.
Using \eqref{estimate2} and \eqref{estimate4} in \eqref{estimate1}, we obtain
\ba\label{estimate5}
&\left(k^2-\frac{1}{\eps_1}-\frac{\sigma^2c}{\beta^2\eps_4}\right)||\varphi||_{H^1}^2+(2-\eps_1-\eps_5-\eps_6)k^2||u||_{L^2}^2+||u||_{H^1}^2\\
&+
\left((\frac{2\mu_0}{\beta}-\eps_2-\eps_3-\eps_4)k^2-\frac{23\sigma^2}{15\beta^2}
\right)|z|^2\\
\leq& 2||\varphi_*||_{H^1}^2+\frac{1}{\eps_5}||u_*||_{L^2}^2+\frac{\mu_0^2}{\beta^2\eps_2}|z_*|^2+\left(\frac{(\mu_0-\tfrac{1}{2}\sigma^2)^2}{\beta^2\eps_3}+\frac{\sigma^2}{\eps_6}\right)|\eta|^2.
\ea 
By the third equation $-\frac{z}{\beta}+ik\eta=\eta_*$, we find 
$\frac{1}{\beta^2}|z|^2+k^2|\eta|^2-2k\Im(\frac{z}{\beta}\bar\eta)=|\eta_*|^2$, whence
\be 
\label{estimate6}
\frac{1}{\beta^2}|z|^2+k^2|\eta|^2\leq |\eta_*|^2+\frac{1}{\beta^2\eps_7}|z|^2+\eps_7k^2|\eta|^2.
\ee 
Taking the sum of \eqref{estimate5} and \eqref{estimate6} yields
\ba\label{estimate7}
&\left(k^2-\frac{1}{\eps_1}-\frac{\sigma^2c}{\beta^2\eps_4}\right)||\varphi||_{H^1}^2+(2-\eps_1-\eps_5-\eps_6)k^2||u||_{L^2}^2\\
&+\left((1-\eps_7)k^2-\frac{(\mu_0-\tfrac{1}{2}\sigma^2)^2}{\beta^2\eps_3}-\frac{\sigma^2}{\eps_6}\right)|\eta|^2\\
&+
\left((\frac{2\mu_0}{\beta}-\eps_2-\eps_3-\eps_4)k^2-\frac{23\sigma^2}{15\beta^2}+\frac{1}{\beta^2}-\frac{1}{\beta^2\eps_7}
\right)|z|^2\\
\leq& 2||\varphi_*||_{H^1}^2+\frac{1}{\eps_5}||u_*||_{L^2}^2+|\eta_*|^2+\frac{\mu_0^2}{\beta^2\eps_2}|z_*|^2.
\ea  
Choosing $\eps_1,\eps_5,\eps_6=\frac{1}{3}$, $\eps_2,\eps_3,\eps_4=\frac{\mu_0}{3\beta}$, and $\eps_7=\frac{1}{2}$, there exist $k_0(\sigma,\beta,\mu_0,c)$ and $C(\sigma,\beta,\mu_0,c)$ such that for $k>k_0(\sigma,\beta,\mu_0,c)$
\ba 
&k^2\left(||\varphi||_{H^1}^2+||u||_{L^2}^2+|\eta|^2+|z|^2\right)\\
\leq &C(\sigma,\beta,\mu_0,c)\left(||\varphi_*||_{H^1}^2+||u_*||_{L^2}^2+|\eta_*|^2+|z_*|^2\right),
\ea 
yielding \eqref{H3}.
\end{proof}
The reduction theorem of Mielke's \cite[Theorem~1]{Mielke;reduction} asserts, for $\sigma\in\mathbb{R}$, $\delta\in\mathbb{C}$ and $|\delta|\ll1$, for $\eps\in\mathbb{R}$ and $|\eps|\ll1$, there exists 
\begin{equation}\label{def:u2}
\mathbf{w}(x,\mathbf{v}(x);\sigma,\delta,\eps): \mathbb{R}\times Y(\sigma) \to {\rm dom}(\mathbf{L})
\end{equation}
such that $\mathbf{v}+\mathbf{w}$ makes a bounded solution of \eqref{eqn:LB;delta}, hence \eqref{eqn:LB}, if and only if $\mathbf{v}$ makes a bounded solution of the reduced system
\be \label{eqn:LB;u1}
{\mathbf{v}}_x=\mathbf{L}(i\sigma)\mathbf{v}
+\boldsymbol{\Pi}(\sigma)\mathbf{B}(x;\sigma,\delta,\eps)
(\mathbf{v}(x)+\mathbf{w}(x,\mathbf{v}(x);\sigma,\delta,\eps)).
\ee 
Therefore, we turn \eqref{eqn:LB;delta}, for which $\mathbf{u}(x)\in{\rm dom}(\mathbf{L})$, $x\in\mathbb{R}$, and $\dim({\rm dom}(\mathbf{L}))=\infty$, into \eqref{eqn:LB;u1}, for which $\mathbf{v}(x)\in Y(\sigma)$ and $\dim(Y(\sigma))<\infty$. 

For $\mathbf{v}(x)\in Y(\sigma)$, $x\in\mathbb{R}$, let $\mathbf{a}(x)$ be the coordinate of $\mathbf{v}(x)$ with respect to the ordered basis $\mathcal{B}(\sigma)$ of $Y(\sigma)$, i.e.,
\begin{equation*}
\mathbf{v}(x)=\mathcal{B}(\sigma) \mathbf{a}(x),
\end{equation*}
and we may further rewrite \eqref{eqn:LB;u1} as
\be\label{eqn:A}
\mathbf{a}_x=\mathbf{A}(x;\sigma,\delta,\eps)\mathbf{a},
\ee 
where $\mathbf{A}(x;\sigma,\delta,\eps)$ is a square matrix of order $2$ or $4$ or $6$ depending on $\sigma$ and $\mu_0$. Notice that $\mathbf{A}(x;\sigma,\delta,\eps)$ is smooth and $T(=2\pi/k)$ periodic in $x$, and it depends analytically on $\delta$ and $\eps$. Our proofs do not involve all the details of $\mathbf{A}(x;\sigma,\delta,\eps)$, and rather its leading order terms as $\delta,\eps\to 0$, whence we do not include the formula here. But see, for instance, \eqref{eqn:a}, \eqref{def:f0} and \eqref{eqn:a:sigma}, \eqref{def:fsigma}. By Floquet theory, if $\mathbf{a}$ is a bounded solution of \eqref{eqn:A} then, necessarily, 
\[
\mathbf{a}(x+T)=e^{ik T}\mathbf{a}(x)\quad\text{for some $k\in\mathbb{R}$},
\quad\text{where $T=2\pi/\kappa$}
\]
is the period of the underlying wave. 
 
Following \cite{Gardner;evans1,Gardner;evans2} and others, we take a periodic Evans function approach. 

\begin{definition}[The periodic Evans function]\label{def:Evans}\rm
For $\lambda=i\sigma+\delta$, $\sigma\in \mathbb{R}$, $\delta\in\mathbb{C}$ and $|\delta|\ll1$, for $\eps\in\mathbb{R}$ and $|\eps|\ll1$, let $\mathbf{X}(x;\sigma,\delta,\eps)$ denote the fundamental solution of \eqref{eqn:A} such that $\mathbf{X}(0;\sigma,\delta,\eps)=\mathbf{I}$, where $\mathbf{I}$ is the identity matrix. Let $\mathbf{X}(T;\sigma,\delta,\eps)$ be the {\em monodromy matrix} for \eqref{eqn:A}, and for $k\in\mathbb{R}$,
\be\label{def:Delta}
\Delta(\lambda,k;\eps)=\det(e^{ikT}\mathbf{I}-\mathbf{X}(T;\sigma,\delta,\eps))
\ee 
the {\em periodic Evans function}, where $T=2\pi/\kappa$ is the period of the wave.
\end{definition}

Since $\mathbf{A}(x;\sigma,\delta,\eps)$ depends analytically on $\sigma$, $\delta$ and $\epsilon$ for any $x\in\mathbb{R}$, 
so do $\mathbf{X}(T;\sigma,\delta,\eps)$ and, hence, $\Delta(\lambda,k;\eps)$ depends analytically on $\lambda$, $k$ and $\eps$, where $\lambda=i\sigma+\delta$ and $k\in\mathbb{R}$.  
By Floquet theory and \cite[Theorem~1]{Mielke;reduction}, for instance, for $\eps\in\mathbb{R}$ and $|\eps|\ll1$, $\lambda\in{\rm spec}(\mathcal{L}(\eps))$ if and only if 
\[
\Delta(\lambda,k;\eps)=0\quad\text{for some $k\in\mathbb{R}$}.
\]
See \cite{Gardner;evans1}, for instance, for more details. 

\begin{remark*}\rm
A $(\beta,\kappa)$-wave of sufficiently small amplitude is spectrally stable if and only if 
\begin{equation*}
{\rm spec}(\mathcal{L}(\eps))=\{\lambda\in\mathbb{C}: \Delta(\lambda,k;\eps)=0\quad\text{for some $k\in\mathbb{R}$}\}\subset i\mathbb{R}
\end{equation*}
for $\eps\in\mathbb{R}$ and $|\eps|\ll1$.
\end{remark*}

In what follows, we identify ${\rm spec}(\mathcal{L}(\eps))$ with the roots of the periodic Evans function.

One should not expect to be able to evaluate the periodic Evans function except for few cases, for instance, completely integrable PDEs. When $\eps\in\mathbb{R}$ and $|\eps|\ll1$, on the other hand, we shall use the result of Section~\ref{sec:Stokes} and determine \eqref{def:Delta} for $|\eps|\ll 1$. 
\begin{corollary}[spectrum of $\mathcal{L}(0)$, dispersion relation]\label{cor:dispersion}
For constant wave $\eps=0$, the periodic Evans function \eqref{def:Delta} satisfies
\be 
\Delta(i\sigma,k_j(\sigma);0)=0,\quad \text{for index $j$ satisfying $\boldsymbol{\phi}_j(\sigma)\in \mathcal{B}(\sigma)$.}
\ee 
\end{corollary}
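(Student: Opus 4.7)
The plan is to use the fact that when $\eps=0$, the system \eqref{eqn:LB} degenerates to an autonomous linear ODE, so that the monodromy matrix can be computed explicitly as a matrix exponential and its spectrum read off directly from Lemma~\ref{lem:eps=0}.

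First, observe that $\mathbf{B}(x;\lambda,0)=\mathbf{0}$, so at $\eps=0$ the equation \eqref{eqn:LB;delta} reads $\mathbf{u}_x=\mathbf{L}(i\sigma)\mathbf{u}$. Applying the projection $\boldsymbol{\Pi}(\sigma)$ and using that $\boldsymbol{\Pi}(\sigma)$ commutes with $\mathbf{L}(i\sigma)$, the reduced equation \eqref{eqn:LB;u1} reduces to $\mathbf{v}_x=\mathbf{L}(i\sigma)\mathbf{v}$ on the finite-dimensional invariant subspace $Y(\sigma)$. Rewriting in the coordinates $\mathbf{a}$ of the ordered basis $\mathcal{B}(\sigma)$, equation \eqref{eqn:A} at $(\delta,\eps)=(0,0)$ becomes $\mathbf{a}_x=\mathbf{A}_0\,\mathbf{a}$, where $\mathbf{A}_0:=\mathbf{A}(x;\sigma,0,0)$ is the \emph{constant} matrix representing $\mathbf{L}(i\sigma)|_{Y(\sigma)}$ with respect to $\mathcal{B}(\sigma)$.

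Next, by Lemma~\ref{lem:eps=0} and Definition~\ref{def:proj}, $\mathbf{A}_0$ is either diagonal with entries $ik_j(\sigma)$ (when all eigenvalues associated to indices $j$ with $\boldsymbol{\phi}_j(\sigma)\in\mathcal{B}(\sigma)$ are simple), or upper-triangular with a single $2\times2$ Jordan block at the critical frequencies $\sigma=\sigma_c,-\sigma_{c,1},\sigma_{c,2}$ (where $\boldsymbol{\phi}_6(\sigma)$ is the generalized eigenvector satisfying $(\mathbf{L}(i\sigma)-ik_c\mathbf{1})\boldsymbol{\phi}_6=\boldsymbol{\phi}_2$). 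In either case the diagonal of $\mathbf{A}_0$ consists of the values $\{ik_j(\sigma):\boldsymbol{\phi}_j(\sigma)\in\mathcal{B}(\sigma)\}$, counted with multiplicity.

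Since $\mathbf{A}_0$ is constant in $x$, the fundamental solution is the matrix exponential $\mathbf{X}(x;\sigma,0,0)=e^{x\mathbf{A}_0}$, and the monodromy matrix is $\mathbf{X}(T;\sigma,0,0)=e^{T\mathbf{A}_0}$. Its eigenvalues are exactly $\{e^{iTk_j(\sigma)}:\boldsymbol{\phi}_j(\sigma)\in\mathcal{B}(\sigma)\}$, so from Definition~\ref{def:Evans},
\begin{equation*}
\Delta(i\sigma,k;0)=\det\bigl(e^{ikT}\mathbf{I}-e^{T\mathbf{A}_0}\bigr)=\prod_{j:\boldsymbol{\phi}_j(\sigma)\in\mathcal{B}(\sigma)}\bigl(e^{ikT}-e^{iTk_j(\sigma)}\bigr).
\end{equation*}
Evaluating at $k=k_j(\sigma)$ for any admissible index $j$ makes the corresponding factor vanish, and the conclusion follows. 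There is no genuine obstacle; the only subtlety is verifying that the Jordan block at a critical frequency does not alter the eigenvalues of $e^{T\mathbf{A}_0}$, which is immediate since the eigenvalues of $e^{T\mathbf{A}_0}$ are $e^{T\lambda}$ for each eigenvalue $\lambda$ of $\mathbf{A}_0$, counted with algebraic multiplicity.
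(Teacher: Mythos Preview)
Your proof is correct and follows essentially the same approach as the paper. The paper's proof is a terse one-liner observing that at $\delta=\eps=0$ the reduced equation becomes $\mathbf{v}_x=\mathbf{L}(i\sigma)\mathbf{v}$ and deferring to Section~\ref{sec:eps=0}; you have simply made explicit what that reference entails, namely that the monodromy matrix is $e^{T\mathbf{A}_0}$ with eigenvalues $e^{iTk_j(\sigma)}$ (cf.\ the paper's \eqref{eqn:a00} and \eqref{eqn:a00:sigma}, which record $\mathbf{a}^{(0,0)}(x;\sigma)=e^{x\mathbf{A}_0}$ directly).
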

\begin{proof}
When setting $\delta=\eps=0$, because $\mathbf{B}(x;\sigma,0,0)=0$, \eqref{eqn:LB;u1} reduces to ${\mathbf{v}}_x=\mathbf{L}(i\sigma)\mathbf{v}$ whose solutions are discussed in Section \ref{sec:eps=0}.
\end{proof}
We thereby study the nearby root $(i\sigma+\delta,k_j(\sigma)+\gamma,\eps)$ of the periodic Evans function $\Delta$ \eqref{def:Delta} for $\delta\in \mathbb{C}$, $\gamma,\eps\in \mathbb{R}$ and $|\delta|,|\gamma|,|\eps|\ll 1$. To prepare for the expansion of the periodic Evans function $\Delta(i\sigma+\delta,k_j(\sigma)+\gamma,\eps)$ as series of $\delta$, $\gamma$, $\eps$, we first make some necessary computations below.
\subsection{Computation of \texorpdfstring{$\boldsymbol{\Pi}(\sigma)$}{Lg}}\label{sec:proj}

For $\mathbf{u}_1:=\begin{pmatrix}\varphi_1\\ u_1 \\ \eta_1\\ z_1\end{pmatrix}, 
\mathbf{u}_2:=\begin{pmatrix}\varphi_2\\ u_2\\ \eta_2 \\ z_2\end{pmatrix} \in Y$,  
we take the standard inner product on $Y$:
\begin{equation}\label{def:inner}
\langle \mathbf{u}_1, \mathbf{u}_2\rangle
=\int^1_0(\varphi_1\varphi_2^*+{\varphi_1}_y{\varphi_2}_y^*)~dy+\int^1_0 u_1u_2^*~dy+\eta_1\eta_2^*+z_1z_2^*,
\end{equation}
where the asterisk means complex conjugation. We then make a straightforward calculation and obtain, for $\mathbf{u}:=\begin{pmatrix}\varphi\\ u \\  \eta\\ z\end{pmatrix},$ 
\[
\mathbf{L}(\lambda)^\dag:{\rm dom}(\mathbf{L}^\dag) \subset Y\to Y,
\]
where 
\[
L(\lambda)^\dag\mathbf{u}
=\begin{pmatrix} u+\varphi_p \\ -\varphi_{yy}+\varphi \\\left(1+\frac{{\lambda^*}^2}{2\mu_0}\right)z-\lambda^*\int_0^1u\\\frac{u(1)+\eta}{\beta}-\frac{\lambda^*}{2\beta}\int_0^1\left(y^2\varphi+2y{\varphi_2}_y\right) \end{pmatrix},
\]
\ba \label{def:up}
\varphi_p(y)=&\left(\lambda^*z/\mu_0-\int_0^1\ch(1-y)u(y)~dy\right)\frac{\ch(y)}{\sh(1)}\\
&+\int_0^y\sh(y-y')u(y')~dy',
\ea 
and
\[
{\rm dom}(\mathbf{L}^\dag)=\{\mathbf{u}\in H^2(0,1)\times H^1(0,1)\times \mathbb{C}\times \mathbb{C}:
\varphi_y(1)-z/\mu_0=0, \varphi_y(0)=0\},
\]
which is dense in $Y$. We proceed to compute the projection $\boldsymbol{\Pi}(\sigma)$ in the sub-critical region. 

When $\sigma=0$, 
\ba\label{def:Pi0} 
\boldsymbol{\Pi}(0)\mathbf{u}=&\langle \mathbf{u},\boldsymbol{\psi}_2(0)\rangle\boldsymbol{\phi}_2(0)+\langle \mathbf{u},\boldsymbol{\psi}_4(0)\rangle\boldsymbol{\phi}_4(0)
\\
&+\langle \mathbf{u},\boldsymbol{\psi}_5(0)\rangle\boldsymbol{\phi}_5(0)
+\langle \mathbf{u},\boldsymbol{\psi}_6(0)\rangle\boldsymbol{\phi}_6(0),
\ea 
where, for $j=5,6$,
\be 
\label{def:psi56}
\boldsymbol{\psi}_j(0)=\begin{pmatrix} \frac{ k_j\sh(k_j)  \ch(y)  -  k_j^2 \sh(1) \ch(k_j y)}{\sh(1) ( k_j^2 - 1) ( k_j^2 - \mu_0  \sh(k_j)^2 +  \beta  k_j^2 \mu_0  \sh(k_j)^2)}\\\frac{ik_j \ch(k_j y)}{k_j^2 - \mu_0  \sh(k_j)^2 +  \beta  k_j^2 \mu_0  \sh(k_j)^2}\\-\frac{i\mu_0  \sh(k_j)}{ k_j^2 - \mu_0  \sh(k_j)^2 +  \beta  k_j^2 \mu_0  \sh(k_j)^2}\\-\frac{ k_j \mu_0  \sh(k_j)}{ k_j^2 - \mu_0  \sh(k_j)^2 +  \beta  k_j^2 \mu_0  \sh(k_j)^2}
\end{pmatrix}, 
\ee 
and
\be 
\boldsymbol{\psi}_2=\begin{pmatrix}0\\\mu_0/(\mu_0-1)\\-\mu_0/(\mu_0-1)\\0\end{pmatrix},\quad\boldsymbol{\psi}_4=\begin{pmatrix}\frac{\mu_0}{\mu_0 - 1} - \frac{\ch(y)}{\sh(1)(\mu_0 - 1)}\\0\\0\\-\frac{\mu_0}{\mu_0 - 1}\end{pmatrix}.
\ee 
\begin{remark*}
The first entry of \eqref{def:psi56} appears to be not defined when $\kappa=1$. However, a straightforward calculation leads to that
\begin{align*}
\lim_{\kappa\to1}&\frac{\displaystyle \sh(\kappa)\ch(y)-\kappa\sh(1)\ch(\kappa y)}{1-\kappa}=y\sh(1)\sh(y) +\sh(1)\ch(y) - \ch(1)\ch(y)
\end{align*}
is well defined. Thus we may define $\boldsymbol{\psi}_j(0)={\displaystyle \lim_{\kappa\to1}\boldsymbol{\psi}_j(0)}$, $j=5,6$, when $\kappa=1$, and verify that $\langle \boldsymbol{\phi}_{j}(0),\boldsymbol{\psi}_{j'}(0)\rangle=\delta_{jj'}$, $j,j'=2,4,5,6$.
\end{remark*}

When $0<\sigma\neq \sigma_c$, we infer from Lemma~\ref{lem:eps=0} that $-ik_j(\sigma)$, for indexes $j$ satisfying $\boldsymbol{\phi}_j(\sigma)\in \mathcal{B}(\sigma)$, are simple eigenvalues of $\mathbf{L}(i\sigma)^\dag$, and a straightforward calculation reveals that the corresponding eigenfunctions are
\be \label{def:psi24}
\boldsymbol{\psi}_j(\sigma)=\begin{pmatrix} 
p_{1,j}\ch(k_jy)+p_{2,j} \ch(y)\\
i(1-k_j^2)p_{1,j}\ch(k_jy)/k_j\\
p_{3,j}\\\mu_0(k_j p_{1,j}\sh(k_j)+p_{2,j}\sh(1))\end{pmatrix},
\ee 
where 
\ba\label{def:cj24}
p_{1,j}=&k_j \sh(k_j) (k_j - \sigma) (\beta k_j^2 + 1)\cdot\big((k_j^2 - 1) (\sigma \ch(k_j)^3\\&- k_j^2 \sh(k_j) - k_j \ch(k_j) - \sigma \ch(k_j) + k_j \ch(k_j)^3 \\&+ k_j \sigma \sh(k_j) + \beta k_j^3 \ch(k_j) - \beta k_j^4 \sh(k_j) - \beta k_j^3 \ch(k_j)^3 \\& + 3 \beta k_j^2 \sigma \ch(k_j)^3 - 3 \beta k_j^2 \sigma \ch(k_j) + \beta k_j^3 \sigma \sh(k_j))\big)^{-1},\\
p_{2,j}=&k_j \sh(k_j)^2 (k_j \sigma - 1) (\beta k_j^2 + 1)\cdot\big(\sh(1) (k_j^2 - 1) (\sigma \ch(k_j)^3 \\&- k_j^2 \sh(k_j) - k_j \ch(k_j) - \sigma \ch(k_j) + k_j \ch(k_j)^3 \\&+ k_j \sigma \sh(k_j) + \beta k_j^3 \ch(k_j) - \beta k_j^4 \sh(k_j) - \beta k_j^3 \ch(k_j)^3 \\&+ 3 \beta k_j^2 \sigma \ch(k_j)^3 - 3 \beta k_j^2 \sigma \ch(k_j) + \beta k_j^3 \sigma \sh(k_j))\big)^{-1},
\ea 
$$\begin{aligned}
p_{3,j}=&-i\sh(k_j) (2\sigma^2 \sh(k_j)  - 2k_j^3 \ch(k_j)  - 2k_j \sigma \sh(k_j) \\& - 2k_j \sigma^2 \ch(k_j)  + 4k_j^2 \sigma \ch(k_j)  + k_j^2 \sigma^2 \sh(k_j) \\& + 2\beta k_j^2 \sigma^2 \sh(k_j)  + \beta k_j^4 \sigma^2 \sh(k_j)  - 2\beta k_j^3 \sigma \sh(k_j) )\\
&\cdot\big(2 k_j^2 (\sigma \ch(k_j)^3 - k_j^2 \sh(k_j) - k_j \ch(k_j) - \sigma \ch(k_j) \\&+ k_j \ch(k_j)^3 + k_j \sigma \sh(k_j) + \beta k_j^3 \ch(k_j) - \beta k_j^4 \sh(k_j)\\& - \beta k_j^3 \ch(k_j)^3 + 3 \beta k_j^2 \sigma \ch(k_j)^3 - 3 \beta k_j^2 \sigma \ch(k_j) + \beta k_j^3 \sigma \sh(k_j))\big)^{-1},
\end{aligned}$$
so that  $\langle \boldsymbol{\phi}_{j}(\sigma),\boldsymbol{\psi}_{j'}(\sigma)\rangle=\delta_{jj'}$, for indexes $j,j'$ satisfying $\boldsymbol{\phi}_j,\boldsymbol{\phi}_{j'}\in \mathcal{B}(\sigma)$. Thus
\begin{equation}\label{def:Pi;high}
\boldsymbol{\Pi}(\sigma)\mathbf{u}=\sum_{\boldsymbol{\phi}_j(\sigma)\in\mathcal{B}(\sigma)}\langle \mathbf{u},\boldsymbol{\psi}_j(\sigma)\rangle\boldsymbol{\phi}_j(\sigma).
\end{equation}
When $\sigma=\sigma_c$, $-ik_{4,5}(\sigma_c)$ are simple eigenvalue of $\mathbf{L}(i\sigma)^\dag$ and we choose $\boldsymbol{\psi}_{4,5}(\sigma_c)$ by \eqref{def:psi24} and $-ik_c$ is a double eigenvalue of $\mathbf{L}(i\sigma)^\dag$. Setting 
\begin{subequations}\label{generalized_eigenvector_L*}
\begin{align*}
\boldsymbol{\psi}_{2}(\sigma_c)=&\begin{pmatrix}p_{1,2}\ch\left(k_{2}y\right)+p_{2,2}\ch\left(y\right)+p_{3,2}y\sh\left(k_{2}y\right)\\ p_{4,2}\ch\left(k_{2}y\right)+p_{5,2}y\sh\left(k_{2}y\right)\\ p_{6,2}\\ \mu _0\left(p_{2,2}\sh\left(1\right)+p_{3,2}\sh\left(k_{2}\right)+k_{2}p_{3,2}\ch\left(k_{2}\right)+k_{2}p_{1,2}\sh\left(k_{2}\right)\right) \end{pmatrix}\\
\intertext{and}
\boldsymbol{\psi}_{6}(\sigma_c)=&\begin{pmatrix}p_{1,6}\ch\left(k_{2}y\right)+p_{2,6}\ch\left(y\right)\\ p_{3,6}\ch\left(k_{2}y\right)\\ p_{4,6}\\ \mu _{0}\left(p_{2,6}\sh\left(1\right)+k_{2}p_{1,6}\sh\left(k_{2}\right)\right)\end{pmatrix},
\end{align*}
\end{subequations}
where $p_{i,j}$ are constants (omitted), we verify that $\langle \boldsymbol{\phi}_{j}(\sigma),\boldsymbol{\psi}_{j'}(\sigma)\rangle=\delta_{jj'}$, for $j,j'=2,4,5,6$ and, hence, \eqref{def:Pi;high} holds. 

In the super-critical region, we may compute $\boldsymbol{\Pi}(\sigma)$ similarly. We do not include the formulas here. 
\subsection{Expansion of the monodromy matrix}\label{sec:expansion_monodromy}
Expand the fundamental solution $\mathbf{X}(x;\sigma,\delta,\eps)$ of \eqref{eqn:A} as
\be 
\label{def:X;exp}
\mathbf{X}(x;\sigma,\delta,\eps)=\sum_{m+n=0}^{\infty}\mathbf{a}^{(m,n)}(x;\sigma)\delta^m\eps^n\in \mathbb{C}^{\dim(Y(\sigma))\times \dim(Y(\sigma))}.
\ee 
where $\mathbf{a}^{(m,n)}=(a_{jk}^{(m,n)}(x))$, $j,k=1,2,\ldots,\dim(Y(\sigma))$ and $m,n=0,1,2,\dots$, are to be determined. We pause to remark that $\mathbf{X}(x;\sigma,\delta,\eps)$ depends analytically on $\delta$ and $\eps$ for any $x\in[0,T]$, for $\delta\in\mathbb{C}$ and $|\delta|\ll1$ for $\eps\in\mathbb{R}$ and $|\eps|\ll1$, whence \eqref{def:X;exp} converges for any $x\in[0,T]$ for $|\delta|,|\eps|\ll 0$. 
Let $\mathbf{X}_k(x;\sigma,\delta,\eps)$ denote the $k^{th}$ column of $\mathbf{X}(x;\sigma,\delta,\eps)$ and write
\be \label{def:a;exp0}
\mathbf{v}_k(x;\sigma,\delta,\eps)=\mathcal{B}(\sigma)\mathbf{X}_k(x;\sigma,\delta,\eps)
\ee 
where $\mathcal{B}(\sigma)$ is the order basis of $Y(\sigma)$ in the Definition \ref{def:proj}. 
By Definition \ref{def:Evans} for the periodic Evans function,
\[
\mathbf{X}(0;\sigma,\delta,\eps)=\mathbf{I},
\] 
and hence
\be \label{cond:a}
\mathbf{a}^{(0,0)}(0;\sigma)=\mathbf{I}\quad\text{and}\quad 
\mathbf{a}^{(m,n)}(0;\sigma)=\mathbf{0}\quad \text{for $m+n\geq 1$}.
\ee
Our task is to evaluate $\mathbf{a}^{(m,n)}(T;\sigma)$, $m,n=0,1,2,\dots$. We write \eqref{def:u2} as
\be\label{def:b;exp0}
\mathbf{w}(x,\mathbf{v}_k(x);\sigma,\delta,\eps)=\sum_{m+n=1}^\infty \mathbf{w}_{k}^{(m,n)}(x;\sigma)\delta^m\eps^n
\ee
for $|\delta|,|\eps|\ll 1$, where $\mathbf{w}_{k}^{(0,0)}(x;\sigma)=\mathbf{0}$, $k=1,2,\ldots,\dim(Y(
\sigma))$, and $\mathbf{w}_{k}^{(m,n)}(x;\sigma)$, $k=1,2,\ldots,\dim(Y(\sigma))$ and $m+n\geq1$, are to be determined. Recall \eqref{eqn:LB;delta}, and we write 
\be\label{def:B;exp0}
\mathbf{B}(x;\sigma,\delta,\eps)=\sum_{m+n=1}^\infty\mathbf{B}^{(m,n)}(x;\sigma)\delta^m\eps^n
\ee
for $|\delta|,|\eps|\ll 1$, where $\mathbf{B}^{(0,0)}(x;\sigma)=\mathbf{0}$, and $\mathbf{B}^{(m,n)}(x;\sigma)$, $1\leq m+n\leq2$, are in Appendix~\ref{A:Bexp}. Notice that $\mathbf{B}^{(m,0)}(x;\sigma)$, $m\geq1$, do not involve $x$. 

In the sub-critical region, inserting \eqref{def:a;exp0}, \eqref{def:b;exp0} and \eqref{def:B;exp0} into the former equation of \eqref{eqn:LB;u12}, we recall Lemma~\ref{lem:eps=0} and Definition~\ref{def:proj} and make a straightforward calculation to obtain, when $\sigma=0$,
\be \label{eqn:a00}
\mathbf{a}^{(0,0)}(x;0)=\begin{pmatrix} 
e^{-i\kappa x}&0&0&0 \\ 0&e^{i\kappa x}&0&0\\0&0&1&0\\0&0&x&1\end{pmatrix},
\ee 
and for $m+n\geq1$, we arrive at
\ba \label{eqn:a}
&\mathcal{B}(0)\frac{d}{dx}\mathbf{a}^{(m,n)}_k(x;0)\\=&
-i\kappa a_{1k}^{(m,n)}(x;0)\boldsymbol{\phi}_5(0)
+i\kappa a_{2k}^{(m,n)}(x;0)\boldsymbol{\phi}_6(0)
\\&+a_{3k}^{(m,n)}(x;0)\boldsymbol{\phi}_4(0)+\boldsymbol{\Pi}(0)\mathbf{f}_{k}^{(m,n)}(x;0),
\ea 
where $\boldsymbol{\Pi}(0)$ are defined in \eqref{def:Pi0} and
\ba \label{def:f0}
\mathbf{f}_{k}^{(m,n)}(x;0)=&\sum_{\substack{0\leq m'\leq m\\ 0\leq n' \leq n}}
\mathbf{B}^{(m',n')}(x;0)\big(\mathbf{w}_{k}^{(m-m',n-n')}(x;0)
\\
&+\mathcal{B}(0)\mathbf{a}^{(m-m',n-n')}_k(x;0)\big).
\ea 
When $0<\sigma\neq \sigma_c$,
\be \label{eqn:a00:sigma}
\mathbf{a}^{(0,0)}(x;\sigma)=\diag\big(e^{ik_j(\sigma) x}:\boldsymbol{\phi}_j(\sigma)\in \mathcal{B}(\sigma)\big),
\ee 
and for $m+n\geq 1$, we arrive at
\be \label{eqn:a:sigma}
\mathcal{B}(\sigma)\frac{d}{dx}\mathbf{a}^{(m,n)}_k(x;\sigma)=\mathbf{L}(i\sigma)\mathcal{B}(\sigma)\mathbf{a}^{(m,n)}_k(x;\sigma)+\boldsymbol{\Pi}(\sigma)\mathbf{f}_{k}^{(m,n)}(x;\sigma),
\ee  
where $\boldsymbol{\Pi}(\sigma)$ is given in \eqref{def:Pi;high} and
\ba \label{def:fsigma}
\mathbf{f}_{k}^{(m,n)}(x;\sigma)=&\sum_{\substack{0\leq m'\leq m\\ 0\leq n' \leq n}}
\mathbf{B}^{(m',n')}(x;\sigma)\big(\mathbf{w}_{k}^{(m-m',n-n')}(x;\sigma)\\
&+\mathcal{B}(\sigma)\mathbf{a}^{(m-m',n-n')}_k(x;\sigma)\big).
\ea 
In the super-critical region, similar expansions and formulas can be defined and derived.\\
Inserting \eqref{def:a;exp0}, \eqref{def:b;exp0} and \eqref{def:B;exp0} into the latter equation of \eqref{eqn:LB;u12}, at the order of $\delta^m\eps^n$, $m+n\geq 1$, let $\mathbf{w}_{k}^{(m,n)}(x;\sigma)=\begin{pmatrix}\varphi\\u\\ \eta\\z\end{pmatrix}$, by abuse of notation, and we arrive at
\ba \label{eqn:red0}
&\varphi_{x}=u-i\sigma y^2z/(2\beta)+{((\mathbf{1}-\boldsymbol{\Pi}(\sigma))\mathbf{f}_{k}^{(m,n)}(x;\sigma))_1}&&\text{for $0<y<1$},\\
&u_x=-\varphi_{yy}-i\sigma\eta+((\mathbf{1}-\boldsymbol{\Pi}(\sigma))\mathbf{f}_{k}^{(m,n)}(x;\sigma))_2&&\text{for $0<y<1$},\\
&\eta_x=z/\beta+((\mathbf{1}-\boldsymbol{\Pi}(\sigma))\mathbf{f}_{k}^{(m,n)}(x;\sigma))_3&&\text{at $y=1$},\\
&\begin{aligned}z_x=&\big(1-\sigma^2/(2\mu_0)\big)\eta+i\sigma\varphi(1)/\mu_0-u(1)/\mu_0\\&+((\mathbf{1}-\boldsymbol{\Pi}(\sigma))\mathbf{f}_{k}^{(m,n)}(x;\sigma))_4\end{aligned}&&\text{at $y=1$},\\
&\varphi_y=0&&\text{at $y=0$},\\
&\beta\varphi_y+z=0&&\text{at $y=1$}.
\ea
Notice that since $\mathbf{B}^{(0,0)}(x;\sigma)=\mathbf{0}$, the right sides of \eqref{def:f0} and \eqref{def:fsigma} do not involve $\mathbf{w}_{k}^{(m,n)}(x;\sigma)$, and they are made up of lower order terms. Also notice that the fifth and sixth equations of \eqref{eqn:red0} ensure that $\mathbf{w}_{k}^{(m,n)}(x;\sigma)\in {\rm dom}(\mathbf{L})$ (see \eqref{def:dom}). We use the result of Appendix~\ref{A:Bexp}, and solve \eqref{eqn:red0} by the method of undetermined coefficients, subject to that $\mathbf{w}_{k}^{(m,n)}(x;\sigma)\in(\mathbf{1}-\boldsymbol{\Pi}(\sigma))Y(\sigma)$, so that $\boldsymbol{\Pi}(\sigma)\mathbf{w}_{k}^{(m,n)}(x;\sigma)=\mathbf{0}$.

\section{Resonance}\label{resonances}

For stability of Stokes waves, numerical investigations (see \cite{McLean;finite-depth, FK, DO}, among others) report spectral instability in the vicinity of ``resonant frequencies''. See Definition~\ref{def:resonant_fre}. Also, in view of Hamiltonian systems, MacKay and Saffman \cite{MS} argued that spectral instability can \textbf{only} happen near resonant frequencies. Such fact is also suggested by our previous analysis for Stokes waves \cite{HY2023}, but without a proof. In current work, we establish a proof of the fact based on a two-stage Weierstrass preparation manipulation and Lemma~\ref{lem:symm}. See Theorems~\ref{thm:stability-non-resonant} and \ref{thm:instability-resonant}. The theorems make it sufficient and necessary to study resonant frequencies associated to a capillary-gravity wave, for which we make a discussion. 

We now define resonant frequencies for $(\beta,\kappa)$-waves.

\begin{definition}[Resonant frequencies]\label{def:resonant_fre}
For a $(\beta,\kappa)$-wave, let $ik_j(\sigma)$ be eigenvalues of $\mathbf{L}(i\sigma)$ defined in Section \ref{sec:eps=0}. We call $(k_j(\sigma),k_{j'}(\sigma),N)$ a pair of $N$-resonant eigenvalues of $\mathbf{L}(i\sigma)$ provided that $k_j(\sigma)-k_{j'}(\sigma)=N\kappa$. And, we call the following set $\mathcal{R}(\sigma)$ the set of pairs of $N$-resonant eigenvalues of $\mathbf{L}(i\sigma)$.
\be
\mathcal{R}(\sigma):=\{(k_j(\sigma),k_{j'}(\sigma),N):k_j(\sigma)-k_{j'}(\sigma)=N\kappa, \;\text{for some order $N\in \mathbb{N}^+$}\}.
\ee
If $\mathcal{R}(\sigma)\neq \emptyset$, we call $i\sigma$ a resonant frequency of the wave, otherwise, we call $i\sigma$ a non-resonant frequency.
\end{definition}
We now turn the attention to proving instability can {\bf only} happen near resonant frequencies. 

\medskip

\noindent{\bf First Weierstrass preparation manipulation.} Recall from Corollary~\ref{cor:dispersion} that $\Delta(i\sigma,k_j(\sigma);0)=0$. By analyticity of $\Delta(\cdot,k_j(\sigma);0)$, let $m\in\mathbb{N}^+$ be the multiplicity of the zero $\lambda=i\sigma$, i.e., 
$$
\Delta(i\sigma,k_j(\sigma);0),\,\partial_\lambda\Delta(i\sigma,k_j(\sigma);0),\,\ldots,\,\partial^{m-1}_{\lambda^{m-1}}\Delta(i\sigma,k_j(\sigma);0)=0,
$$
and
$$\partial^{m}_{\lambda^{m}}\Delta(i\sigma,k_j(\sigma);0)\neq0.
$$
At $(i\sigma,k_j(\sigma)+p\kappa;0)$\footnote{By periodicity of the periodic Evans function \eqref{def:Delta} with respect to the Floquet exponent $k$, expanding the function near $(i\sigma,k_j(\sigma)+p\kappa;0)$ is equivalent to expanding it near $(i\sigma,k_j(\sigma);0)$. We add $+p\kappa$ from now on to make it general.}, for $|\delta|,|\gamma|,|\eps|\ll 1$, the periodic Evans function then expands as
\ba \label{general_expansion}
&\Delta(i\sigma+\delta,k_j(\sigma)+p\kappa+\gamma;\eps)\\=&\frac{\partial^{m}_{\lambda^{m}}\Delta(i\sigma,k_j(\sigma);0)}{m!}\delta^m+\mathcal{O}(|\delta|^{m+1}+|\gamma|+|\eps|).
\ea 
By Weierstrass preparation theorem, \eqref{general_expansion} can be factored as
\be \label{general_factorization}
\Delta(i\sigma+\delta,k_{j}(\sigma)+p\kappa+\gamma;\eps)=W(\delta,\gamma,\eps)h(\delta,\gamma,\eps), 
\ee
where
\ba 
\label{weierstrass_m}
W(\delta,\gamma,\eps)=&\delta^m+a_{m-1}(\gamma,\eps)\delta^{m-1}+\ldots+a_0(\gamma,\eps)
\ea 
is a Weierstrass polynomial and $h(\delta,\gamma,\eps)$ is analytic at $(0,0,0)$ and satisfies $$h(0,0,0)=\frac{\partial^{m}_{\lambda^{m}}\Delta(i\sigma,k_j(\sigma);0)}{m!}\neq 0.$$ Therefore, for $|\delta|$, $|\gamma|$, and $|\eps|$ sufficiently small, $\Delta(i\sigma+\delta,k_{j}(\sigma)+p\kappa+\gamma;\eps)=0$ if and only if $W(\delta,\gamma,\eps)=0$, which admits $m$ roots $\delta_j(\gamma,\eps)$, $j=1,2,\ldots,m$. 
We pause to note that $\delta_j(\gamma,\eps)$, $j=1,2,\ldots,m$ are continuous for $|\gamma|,|\eps|\ll 1$ since roots of a polynomial depend continuously on the its coefficients and $a_{m-1}(\gamma,\eps),\ldots,a_0(\gamma,\eps)$ are analytic in $\gamma,\eps$.
\begin{lemma}[Symmetry of roots of Weierstrass polynomials]\label{lem:symm:weierstrass}
Let $W(\delta,\gamma,\eps)$ \eqref{weierstrass_m} be the Weierstrass polynomial associated to the expansion \eqref{general_expansion} at $(i\sigma,k_j(\sigma)+p\kappa;0)$. There holds 
\be 
\label{W_symmetric}
W(\delta,\gamma,\eps)=0\Leftrightarrow W(-\delta^*,\gamma,\eps)=0.
\ee 
Consequently, non purely imaginary roots of the Weierstrass polynomial \eqref{weierstrass_m} are symmetric about the imaginary axis and the coefficients of \eqref{weierstrass_m} satisfy
\be 
\label{weierstrass_coeff}
i^ja_{m-j}(\gamma,\eps)\in \mathbb{R},\quad \text{for $j=1,2,\ldots,m$.}
\ee 
\end{lemma}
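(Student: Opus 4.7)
The plan is to first upgrade Lemma~\ref{lem:symm} to a pointwise statement for the Evans function that fixes the Floquet exponent, then transfer the resulting symmetry to the Weierstrass polynomial, and finally read off \eqref{weierstrass_coeff} by Vieta's formulas. The spectral symmetry $\mathrm{spec}(\mathcal{L}(\eps)) = -\mathrm{spec}(\mathcal{L}(\eps))^*$ of Lemma~\ref{lem:symm} decomposes into two independent symmetries of \eqref{eqn:LB}: complex conjugation of the Floquet solution $\mathbf{u}(x)$, which sends $(\lambda,k) \mapsto (\lambda^*, -k)$, and the spatial reversal $x \mapsto -x$ accompanied by the sign-flips $(\phi, u, \eta, z) \mapsto (-\phi, u, \eta, -z)$ dictated by the $x$-parities of the background profile ($\phi(\eps)$ odd, $\eta(\eps)$ and $u(\eps)$ even, $z(\eps)$ odd in $x$), which sends $(\lambda,k) \mapsto (-\lambda, -k)$. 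Composing the two yields a symmetry with $(\lambda, k) \mapsto (-\lambda^*, k)$ keeping the Floquet exponent fixed, and therefore
\begin{equation*}
\Delta(\lambda, k; \eps) = 0 \iff \Delta(-\lambda^*, k; \eps) = 0.
\end{equation*}

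Setting $\lambda = i\sigma + \delta$ and $k = k_j(\sigma) + p\kappa + \gamma$ gives $-\lambda^* = i\sigma - \delta^*$. Because \eqref{general_factorization} has $h(0,0,0) \neq 0$, there is a neighborhood of the origin on which $h(\delta, \gamma, \eps)$ and $h(-\delta^*, \gamma, \eps)$ are simultaneously nonvanishing, so the $\Delta$-symmetry above reduces to
\begin{equation*}
W(\delta, \gamma, \eps) = 0 \iff W(-\delta^*, \gamma, \eps) = 0,
\end{equation*}
which is \eqref{W_symmetric}. In particular, any non-imaginary root $\delta$ of $W(\cdot,\gamma,\eps)$ is paired with $-\delta^*$, and the two are reflections of each other across $i\mathbb{R}$.

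For the coefficient identity, observe that by \eqref{W_symmetric} the $m$ roots $\{\delta_\ell(\gamma, \eps)\}_{\ell=1}^m$ coincide as a multiset with $\{-\delta_\ell(\gamma, \eps)^*\}_{\ell=1}^m$, so the $j$-th elementary symmetric polynomial satisfies
\begin{equation*}
e_j(\delta_1,\ldots,\delta_m) = e_j(-\delta_1^*,\ldots,-\delta_m^*) = (-1)^j \overline{e_j(\delta_1,\ldots,\delta_m)}.
\end{equation*}
Vieta's formula $a_{m-j}(\gamma,\eps) = (-1)^j e_j$ then yields $\overline{a_{m-j}} = (-1)^j a_{m-j}$, equivalent to $i^j a_{m-j} \in \mathbb{R}$. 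The main obstacle in this plan is the first step: verifying that the spatial reversal with the proposed sign-flips is indeed a symmetry of the full spectral problem \eqref{eqn:spec} and its boundary conditions, preserving the Floquet exponent. This is a routine but coefficient-by-coefficient check, relying on the $x$-parities of the background recorded above — which stem from our convention in \eqref{eqn:stokes exp} of choosing $\phi_n$ odd and $\eta_n$ even — and can be read off by inspection from \eqref{eqn:linearize}. Once this symmetry is in hand, the remainder of the argument is an immediate consequence of the Weierstrass preparation \eqref{general_factorization}.
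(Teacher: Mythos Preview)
Your proof is correct and, for the first half, essentially mirrors the paper: both derive the Evans-function symmetry $\Delta(\lambda,k;\eps)=0 \Leftrightarrow \Delta(-\lambda^*,k;\eps)=0$ by composing the two involutions underlying Lemma~\ref{lem:symm} (conjugation sends $k\to -k$, spatial reversal sends $k\to -k$, so the composite fixes $k$), then pass to $W$ via the nonvanishing factor $h$. The spatial-reversal verification you flag as the ``main obstacle'' is already carried out in the paper's Appendix~\ref{A:symm}, so no additional work is needed there.

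Where you diverge is in extracting \eqref{weierstrass_coeff}. The paper substitutes $\tilde\delta=\delta/i$ to turn the root symmetry into conjugation-invariance of the root set of $\tilde W$, then argues by induction on $m$, peeling off real roots or conjugate pairs as real-coefficient linear or quadratic factors. Your route via Vieta's formulas---observing that $e_j(\delta_1,\ldots,\delta_m)=(-1)^j\,\overline{e_j(\delta_1,\ldots,\delta_m)}$ forces $\overline{a_{m-j}}=(-1)^j a_{m-j}$---is more direct and avoids the induction entirely. Both arguments are elementary; yours is shorter and perhaps more transparent, while the paper's factorization viewpoint has the minor advantage of making explicit the real-quadratic-factor structure that is later invoked in Section~\ref{low_capillary-gravity} when discussing the quartic and sextic cases.
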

\begin{proof}
By Lemma~\ref{lem:symm} and its proof, we find, for $\Delta(\lambda,k;\eps)$ \eqref{def:Delta}, $\Delta(\lambda,k;\eps)=0\Leftrightarrow \Delta(-\lambda^*,k;\eps)=0$, yielding, for \eqref{general_expansion}, $\Delta(i\sigma+\delta,k_j(\sigma)+p\kappa+\gamma;\eps)=0\Leftrightarrow\Delta(i\sigma-\delta^*,k_j(\sigma)+p\kappa+\gamma;\eps)=0$, where we have used $-(i\sigma+\delta)^*=i\sigma-\delta^*$, whence, by \eqref{general_factorization}, \eqref{W_symmetric} follows. 

To prove \eqref{weierstrass_coeff}, let 
\be\label{weierstrass_wtilde} \tilde{\delta}:=\delta/i\quad \text{and} \quad \tilde{W}(\tilde{\delta},\gamma,\eps):=i^mW(\delta/i,\gamma,\eps).
\ee 
It suffices to show the coefficients of $\tilde{W}$ are all real. Apparently, roots of the polynomial $\tilde{W}(\cdot,\gamma,\eps)$ are symmetric about the real axis. The proof is then completed by induction. For $m=1$, certainly, coefficients of $\tilde{W}(\cdot,\gamma,\eps)$ must be real. For $m\geq 2$, let $\tilde {\delta}_m(\gamma,\eps)$ be a root of $\tilde{W}(\cdot,\gamma,\eps)$. If $\tilde{\delta}_m(\gamma,\eps)\in \mathbb{R}$, then we conclude from the factorization 
$$
\tilde{W}(\tilde {\delta},\gamma,\eps)=(\tilde {\delta}-\tilde {\delta}_m(\gamma,\eps))\frac{\tilde{W}(\tilde {\delta},\gamma,\eps)}{(\tilde {\delta}-\tilde {\delta}_m(\gamma,\eps))}
$$
that coefficients of $\tilde{W}(\cdot,\gamma,\eps)$ must be real. If instead $\tilde{\delta}_m(\gamma,\eps)\in \mathbb{C}\setminus\mathbb{R}$, then $\tilde{\delta}_{m-1}(\gamma,\eps):=\tilde{\delta}_{m}(\gamma,\eps)^*$ must be a zero of $\tilde{W}(\cdot,\gamma,\eps)$ also and we conclude from the factorization 
$$
\tilde{W}(\tilde {\delta},\gamma,\eps)=(\tilde {\delta}^2-2\Re\tilde {\delta}_m(\gamma,\eps)\tilde {\delta}+|\tilde {\delta}_m(\gamma,\eps)|^2)\frac{\tilde{W}(\tilde {\delta},\gamma,\eps)}{(\tilde {\delta}^2-2\Re\tilde {\delta}_m(\gamma,\eps)\tilde {\delta}+|\tilde {\delta}_m(\gamma,\eps)|^2)}
$$
that coefficients of $\tilde{W}(\cdot,\gamma,\eps)$ must be real.
\end{proof}

\begin{lemma}\label{weierstrass_kj}
Consider the Weierstrass polynomial $W(\delta,\gamma,\eps)$ \eqref{weierstrass_m} associated to the expansion \eqref{general_expansion} at $(i\sigma,k_j(\sigma)+p\kappa;0)$. If there is no index $j'\neq j$ with $\boldsymbol{\phi}_{j'}(\sigma)\in \mathcal{B}(\sigma)$ such that $k_j\equiv k_{j'} \pmod{\kappa}$, then the Weierstrass polynomial $W(\delta,\gamma,\eps)$ \eqref{weierstrass_m} is first order and the unique root $\delta_1(\gamma,\eps)\in i\mathbb{R}$ for $|\gamma|,|\eps|\ll 1$. 
\end{lemma}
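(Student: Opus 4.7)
The strategy is to show the Weierstrass polynomial $W(\delta,\gamma,\eps)$ in \eqref{weierstrass_m} has degree $m=1$, after which the imaginary-axis conclusion for its unique root follows immediately from Lemma~\ref{lem:symm:weierstrass}. To pin down $m$, I would exploit the autonomy of the $\eps=0$ problem: at $\eps=0$ the coupling term reduces to $\mathbf{B}(x;\sigma,\delta,0)=\mathbf{L}(i\sigma+\delta)-\mathbf{L}(i\sigma)$, which is independent of $x$. Consequently the reduced matrix $\mathbf{A}(\sigma,\delta,0)$ is constant in $x$, the monodromy matrix is $\mathbf{X}(T;\sigma,\delta,0)=e^{T\mathbf{A}(\sigma,\delta,0)}$, and its eigenvalues are the local analytic branches $ik_{j'}^{*}(\lambda)$ of the dispersion relation $(\lambda-ik)^{2}=-\mu_{0}k\th(k)(1+\beta k^{2})$, normalized by $k_{j'}^{*}(i\sigma)=k_{j'}(\sigma)$ for each $j'$ with $\boldsymbol{\phi}_{j'}(\sigma)\in\mathcal{B}(\sigma)$. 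This yields the explicit product
\[
\Delta(\lambda,k;0)=\prod_{j':\boldsymbol{\phi}_{j'}(\sigma)\in\mathcal{B}(\sigma)}\bigl(e^{ikT}-e^{iT k_{j'}^{*}(\lambda)}\bigr).
\]

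Evaluating at $(i\sigma,k_{j}(\sigma)+p\kappa,0)$ and using $e^{ip\kappa T}=1$, the factor with $j'=j$ vanishes, while a factor with $j'\neq j$ vanishes iff $k_{j'}(\sigma)\equiv k_{j}(\sigma)\pmod{\kappa}$, which the non-resonance hypothesis precisely excludes. A single $\lambda$-derivative therefore leaves
\[
\partial_{\lambda}\Delta\bigl|_{(i\sigma,k_{j}+p\kappa,0)}=-iT\,e^{iT k_{j}(\sigma)}\,\frac{dk_{j}^{*}}{d\lambda}(i\sigma)\prod_{j'\neq j}\bigl(e^{iT k_{j}(\sigma)}-e^{iT k_{j'}(\sigma)}\bigr).
\]
The product is non-zero by the hypothesis, and implicit differentiation of the dispersion relation gives $\tfrac{dk_{j}^{*}}{d\lambda}(i\sigma)\neq 0$ because $k_{j}(\sigma)$ is a simple root of the dispersion relation---the Jordan-block coincidences $k_{j}=k_{j'}$ that arise at critical frequencies are themselves excluded by the hypothesis (they correspond to $k_{j}\equiv k_{j'}\pmod{\kappa}$ with offset zero). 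Hence $m=1$.

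With $W(\delta,\gamma,\eps)=\delta+a_{0}(\gamma,\eps)$ and $a_{0}(0,0)=0$, the coefficient-symmetry relation \eqref{weierstrass_coeff} specialized to $m=1$ reads $ia_{0}(\gamma,\eps)\in\mathbb{R}$, i.e., $a_{0}\in i\mathbb{R}$, so the unique root $\delta_{1}(\gamma,\eps)=-a_{0}(\gamma,\eps)\in i\mathbb{R}$ for $|\gamma|,|\eps|\ll 1$. The main technical point to verify is the explicit product form of $\Delta(\lambda,k;0)$: one must check that the $x$-independence of $\mathbf{B}$ at $\eps=0$ propagates through Mielke's reduction, so that $\mathbf{w}(\cdot;\sigma,\delta,0)$ is itself $x$-independent and the eigenvalues of the resulting constant reduced matrix are precisely the local dispersion branches $k_{j'}^{*}(\lambda)$.
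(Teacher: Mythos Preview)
Your proposal is correct and follows the same two-step strategy as the paper: establish $\partial_\lambda\Delta(i\sigma,k_j(\sigma);0)\neq 0$ so that $m=1$, then apply \eqref{weierstrass_coeff} from Lemma~\ref{lem:symm:weierstrass} to conclude the unique root lies on $i\mathbb{R}$. The only difference is in how the nonvanishing of $\partial_\lambda\Delta$ is verified: the paper computes directly from the monodromy expansion to obtain
\[
\partial_\lambda\Delta(i\sigma,k_j(\sigma);0)=-a_{jj}^{(1,0)}\prod_{j'\neq j}\bigl(e^{ik_jT}-e^{ik_{j'}T}\bigr)
\]
and then records the explicit closed form \eqref{a10_high} for $a_{jj}^{(1,0)}$, whereas you exploit the autonomy of the $\eps=0$ problem to replace $a_{jj}^{(1,0)}$ by $iTe^{ik_jT}\,dk_j^{*}/d\lambda(i\sigma)$ and argue nonvanishing via the dispersion relation. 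These are the same quantity; your route is slightly cleaner here, but the paper's explicit $a_{jj}^{(1,0)}$ is reused downstream in the index-function formulas. Your technical caveat about $x$-independence of the reduction at $\eps=0$ is legitimate but routine, since at $\eps=0$ the full problem $\mathbf{u}_x=\mathbf{L}(i\sigma+\delta)\mathbf{u}$ is autonomous and the slaving equation inherits this.
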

\begin{proof}
A Straightforward computation shows 
$$
\partial_\lambda\Delta(i\sigma,k_j(\sigma);0)=-a^{(1,0)}_{jj}\times \prod_{j'\neq j, \;\boldsymbol{\phi}_{j'}(\sigma)\in \mathcal{B}(\sigma)}\left(e^{ik_jT}-e^{ik_{j'}T}\right),
$$
where, since there is no index $j'\neq j$ such that $k_j\equiv k_{j'} \pmod{\kappa}$,
$$
\prod_{j'\neq j, \;\boldsymbol{\phi}_{j'}(\sigma)\in \mathcal{B}(\sigma)}\left(e^{ik_jT}-e^{ik_{j'}T}\right)\neq 0,
$$
 and
\ba\label{a10_high}
a^{(1,0)}_{jj}=&2Tk_j\sh(2k_j)e^{ik_jT}(\beta k_j^2 + 1)\cdot\big((2k_j + \sh(2k_j) + 2\beta k_j^3\\
&+ 3\beta k_j^2\sh(2k_j))\sigma+ k_j \sh(2k_j) \\&- 2\beta k_j^4 - 2k_j^2 - \beta k_j^3\sh(2k_j)\big)^{-1}\neq 0.
\ea
The multiplicity $m$ \eqref{general_expansion} is then $1$, yielding $W(\cdot,\gamma,\eps)$ \eqref{weierstrass_m} is first order. By \eqref{weierstrass_coeff} in Lemma~\ref{lem:symm:weierstrass}, the unique root $\delta_1=-a_0(\gamma,\eps)\in i\mathbb{R}$ for $|\gamma|,|\eps|\ll 1$.
\end{proof}
Apparently, the set of resonant frequencies is a closed subset of $i\mathbb{R}$ for it is the finite union of the following close sets
$$
i(k_j-k_{j'})^{-1}(\kappa\mathbb{Z}),\quad 1\leq j<j'\leq 6.
$$
Therefore, the set of non-resonant frequencies is an open subset of $i\mathbb{R}$.
\begin{theorem}[Stability near a non-resonant frequency]\label{thm:stability-non-resonant}
At a non-resonant frequency $i\sigma$ of a $(\beta,\kappa)$-wave, let $\delta_0>0$ be sufficiently small such that there is no resonant frequency in $\overline{B(i\sigma,\delta_0)}$. Then, there exists a $\eps_0>0$ such that there exists no spectrum in $\overline{B(i\sigma,\delta_0)}\setminus i\mathbb{R}$ for the wave with amplitude parameter $\eps$ satisfying $0<\eps<\eps_0$.
\end{theorem}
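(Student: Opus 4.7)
The plan is to combine a compactness argument on the Floquet parameter $k$ with the local Weierstrass factorization of Lemma~\ref{weierstrass_kj}. Since $\Delta(\lambda,k;\eps)$ is $\kappa$-periodic in $k$ through the factor $e^{ikT}$ (where $T=2\pi/\kappa$), we may restrict $k$ to the compact interval $[0,\kappa]$ and work on the compact set $K:=\overline{B(i\sigma,\delta_0)}\times[0,\kappa]$. A point $\lambda\in\overline{B(i\sigma,\delta_0)}$ lies in ${\rm spec}(\mathcal{L}(\eps))$ precisely when $\Delta(\lambda,k;\eps)=0$ for some $k\in[0,\kappa]$, so it suffices to show that all zeros of $\Delta(\cdot,\cdot;\eps)$ in $K$ lie on $i\mathbb{R}\times[0,\kappa]$ for all sufficiently small $\eps>0$.

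First I would stratify $K$ according to whether $\Delta(\cdot,\cdot;0)$ vanishes. At any $(\lambda_0,k_0)\in K$ with $\Delta(\lambda_0,k_0;0)\neq0$, joint continuity of $\Delta$ in $(\lambda,k,\eps)$ furnishes an open neighborhood and an $\eps_U>0$ on which $\Delta$ never vanishes. By Corollary~\ref{cor:dispersion}, the only zeros of $\Delta(\cdot,\cdot;0)$ in $K$ take the form $(i\sigma_0,k_j(\sigma_0)+p\kappa)$ with $\sigma_0\in[\sigma-\delta_0,\sigma+\delta_0]$, $p\in\mathbb{Z}$, and $\boldsymbol{\phi}_j(\sigma_0)\in\mathcal{B}(\sigma_0)$. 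At each such point, since $i\sigma_0\in\overline{B(i\sigma,\delta_0)}$ is non-resonant by the choice of $\delta_0$, no other index $j'\neq j$ satisfies $k_{j'}(\sigma_0)\equiv k_j(\sigma_0)\pmod{\kappa}$. Lemma~\ref{weierstrass_kj} then applies and yields a first-order Weierstrass factorization whose unique root $\delta_1(\gamma,\eps)$ is purely imaginary for $|\gamma|,|\eps|\ll1$, by the symmetry Lemma~\ref{lem:symm:weierstrass}. Hence in a neighborhood of each such $(\lambda_0,k_0)$, every zero of $\Delta(\cdot,\cdot;\eps)$ lies on the imaginary axis for all small $\eps$.

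Finally, I would extract a finite subcover of $K$ from the union of the two kinds of neighborhoods, and set $\eps_0$ to be the minimum of the finitely many associated thresholds. For $0<\eps<\eps_0$, no zero of $\Delta(\cdot,k;\eps)$ with $k\in[0,\kappa]$ can escape from $i\mathbb{R}$ while remaining in $\overline{B(i\sigma,\delta_0)}$, which is exactly the conclusion of the theorem. The only mildly delicate point is ensuring that the radii of the local Weierstrass neighborhoods along the dispersion curves are uniformly bounded below; this is automatic because the transversality quantity from the proof of Lemma~\ref{weierstrass_kj},
\[
\partial_\lambda\Delta(i\sigma_0,k_j(\sigma_0)+p\kappa;0)=-a^{(1,0)}_{jj}\prod_{j'\neq j,\ \boldsymbol{\phi}_{j'}(\sigma_0)\in\mathcal{B}(\sigma_0)}\bigl(e^{ik_j(\sigma_0)T}-e^{ik_{j'}(\sigma_0)T}\bigr),
\]
is continuous in $\sigma_0$ and bounded away from zero on the compact non-resonant ball $\overline{B(i\sigma,\delta_0)}$, guaranteeing that the implicit-function-style radii in the Weierstrass preparation can be chosen uniformly.
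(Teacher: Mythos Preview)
Your proof is correct and rests on the same two ingredients as the paper's: the local first-order Weierstrass factorization of Lemma~\ref{weierstrass_kj} at non-resonant points, and compactness. The difference is purely in how compactness is deployed. The paper argues by contradiction and sequential compactness: it assumes a sequence of unstable spectra $\lambda_n\in\overline{B(i\sigma,\delta_0)}\setminus i\mathbb{R}$ with $\eps_n\to0$, passes to a subsequence converging to some $\lambda_\infty$, observes that $\lambda_\infty\in i\mathbb{R}$ since the zero-amplitude wave is stable, and then applies Lemma~\ref{weierstrass_kj} once at $\lambda_\infty$ to obtain the contradiction. You instead run a direct open-cover argument on the product $K=\overline{B(i\sigma,\delta_0)}\times[0,\kappa]$ and take a finite subcover.

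Each formulation has a small advantage. The paper's sequential argument needs only a single invocation of Lemma~\ref{weierstrass_kj} at the limit point, so it sidesteps entirely the question of uniform lower bounds on the Weierstrass radii along the dispersion curves; on the other hand it leaves implicit the passage to a convergent subsequence of Floquet exponents $k_n$. Your open-cover argument makes the role of the Floquet variable $k$ explicit from the outset, at the cost of having to justify uniformity, which you do correctly by noting that the transversality derivative $\partial_\lambda\Delta$ is continuous and nonvanishing on the non-resonant ball. The two routes are logically equivalent reformulations of the same compactness principle.
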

\begin{proof}
Assume on the contrary there is a sequence ${(\lambda_n,\eps_n)}_{n=1}^\infty$ with $\eps_n=\min(1/n,\eps_0)\rightarrow 0$ and $\lambda_n\in\overline{B(i\sigma,\delta_0)}\setminus i\mathbb{R}$ being spectrum of the wave with amplitude parameter $\eps_n$. Passing to a sub-sequence, we may assume $\lambda_n\rightarrow \lambda_\infty\in \overline{B(i\sigma,\delta_0)}$. Because zero-amplitude constant wave is spectral stable, there holds $\lambda_\infty\in i\mathbb{R}$, whence the sequence converges to a non-resonant frequency $\lambda_\infty$. At the non-resonant frequency $\lambda_\infty$, $\mathcal{R}(\Im\lambda_\infty)=\emptyset$ and there holds, for any $j\neq j'$ with $\boldsymbol{\phi}_{j}(\Im\lambda_\infty),\boldsymbol{\phi}_{j'}(\Im\lambda_\infty)\in \mathcal{B} (\Im\lambda_\infty)$, $$k_j(\Im\lambda_\infty)\not\equiv k_{j'}(\Im\lambda_\infty) \;\pmod{\kappa} .
$$
Therefore, for every index $j$ with $\boldsymbol{\phi}_{j}(\Im\lambda_\infty)\in \mathcal{B}(\Im\lambda_\infty)$, the assumption made in Lemma~\ref{weierstrass_kj} is satisfied at $(\lambda_\infty,k_j(\Im\lambda)+p\kappa;0)$. The lemma implies every nearby spectrum $\lambda_\infty+\delta(\gamma,\eps)$ is on the imaginary axis for $|\gamma|,|\eps|\ll 1$. Contradiction.
\end{proof}
\begin{theorem}[Possible instability near a resonant frequency]\label{thm:instability-resonant} Near a resonant frequency $i\sigma$ of a $(\beta,\kappa)$-wave, there possibly exist spectra sitting off the imaginary axis, giving instability. Moreover, such spectra are necessarily roots of the Weierstrass polynomial \eqref{weierstrass_m} associated to the expansion \eqref{general_expansion} at $(i\sigma,k_{j}(\sigma)/k_{j'}(\sigma);0)$ where $(k_j(\sigma),k_{j'}(\sigma),N)\in \mathcal{R}(\sigma)$ for some order $N\in \mathbb{N}^+$.
\end{theorem}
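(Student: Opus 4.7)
The plan is to mimic the proof of Theorem~\ref{thm:stability-non-resonant}, replacing Lemma~\ref{weierstrass_kj} by a careful accounting of the multiplicity $m$ that appears in the first Weierstrass preparation manipulation \eqref{general_expansion}-\eqref{weierstrass_m} at a resonant frequency. Fix $i\sigma$ resonant. For each index $j$ with $\boldsymbol{\phi}_j(\sigma)\in\mathcal{B}(\sigma)$, group the basis indices by $J_j:=\{j':\boldsymbol{\phi}_{j'}(\sigma)\in\mathcal{B}(\sigma),\,k_{j'}(\sigma)\equiv k_j(\sigma)\pmod{\kappa}\}$ and set $m_j:=|J_j|$. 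Since $\Delta(\lambda,\cdot;\eps)$ is $\kappa$-periodic in the Floquet exponent, the local factorization at $(i\sigma,k_j(\sigma);0)$ coincides with the one at $(i\sigma,k_{j'}(\sigma);0)$ for every $j'\in J_j$; this is what the notation $k_j(\sigma)/k_{j'}(\sigma)$ in the statement encodes.

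The first key step is the multiplicity formula $m=m_j$. Using $\mathbf{a}^{(0,0)}(T;\sigma)=\diag(e^{ik_{j'}(\sigma)T})$ from \eqref{eqn:a00:sigma}, one computes
\[
\Delta(i\sigma,k_j(\sigma);0)=\prod_{\boldsymbol{\phi}_{j'}(\sigma)\in\mathcal{B}(\sigma)}\bigl(e^{ik_j(\sigma)T}-e^{ik_{j'}(\sigma)T}\bigr),
\]
so exactly $m_j$ factors vanish. Differentiating in $\lambda=i\sigma+\delta$ and iterating the Leibniz rule, using the expansion $\mathbf{X}(T;\sigma,\delta,0)=\sum_{p\ge 0}\mathbf{a}^{(p,0)}(T;\sigma)\delta^p$ together with the generalization of \eqref{a10_high} for the diagonal entries of $\mathbf{a}^{(1,0)}$, each derivative $\partial_\lambda^p\Delta(i\sigma,k_j(\sigma);0)$ with $p<m_j$ retains at least one vanishing factor $e^{ik_j(\sigma)T}-e^{ik_{j'}(\sigma)T}$ with $j'\in J_j\setminus\{j\}$, while $\partial_\lambda^{m_j}\Delta(i\sigma,k_j(\sigma);0)$ reduces to a product of the $m_j$ nonzero diagonal entries of $\mathbf{a}^{(1,0)}$ times $\prod_{j''\notin J_j}(e^{ik_j(\sigma)T}-e^{ik_{j''}(\sigma)T})$, which is nonzero. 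Thus the Weierstrass polynomial $W_j(\delta,\gamma,\eps)$ in \eqref{weierstrass_m} has degree exactly $m_j$, and for $|\gamma|,|\eps|\ll 1$ every spectrum of $\mathcal{L}(\eps)$ near $(i\sigma,k_j(\sigma))$ is a root of some $W_j$.

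Now the dichotomy. If $m_j=1$, Lemma~\ref{weierstrass_kj} applies verbatim and forces the unique root into $i\mathbb{R}$. If $m_j\geq 2$, then picking any $j'\in J_j\setminus\{j\}$ shows $(k_j(\sigma),k_{j'}(\sigma),N)\in\mathcal{R}(\sigma)$ with $N=|k_j(\sigma)-k_{j'}(\sigma)|/\kappa\in\mathbb{N}^+$, and the symmetry \eqref{W_symmetric}-\eqref{weierstrass_coeff} of Lemma~\ref{lem:symm:weierstrass} only constrains roots of $W_j$ to be symmetric about $i\mathbb{R}$ without forcing them onto it. Combining the two cases exactly as in the proof of Theorem~\ref{thm:stability-non-resonant} (contradiction argument via a convergent subsequence of off-axis spectra), any spectrum of $\mathcal{L}(\eps)$ in a small neighborhood of $i\sigma$ that sits off the imaginary axis must be a root of a Weierstrass polynomial $W_j$ with $m_j\geq 2$, which is by definition attached to a pair of resonant eigenvalues, yielding the moreover part.

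The ``possibly exist'' phrasing is then the observation that $m_j\geq 2$ is only a \emph{necessary} condition for leaving $i\mathbb{R}$; whether actual off-axis roots of $W_j$ emerge depends on the lower-order coefficients $a_0(\gamma,\eps),\ldots,a_{m_j-1}(\gamma,\eps)$, which in turn depend on the higher $\mathbf{a}^{(m,n)}$ computed in Section~\ref{sec:expansion_monodromy}, and examining this is precisely the content of Sections~\ref{ncg_stability_result:high} and~\ref{wilton_result:high}. The main obstacle in the present argument is the bookkeeping of $\partial_\lambda^p\Delta(i\sigma,k_j(\sigma);0)$ for $p\le m_j$ when several diagonal entries of the monodromy matrix coalesce; this is handled by the diagonal structure of $\mathbf{a}^{(0,0)}(T;\sigma)$ supplied by Lemma~\ref{lem:eps=0} away from critical frequencies, and by the generalized-eigenvector refinement using \eqref{generalized_eigenvector} at critical frequencies, for which the single-Jordan-block modification to $\mathbf{a}^{(0,0)}(T;\sigma)$ only enlarges $m_j$ by one and does not affect the conclusion.
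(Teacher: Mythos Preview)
Your approach is essentially the same dichotomy the paper uses: for each $j$, either no other $j'$ satisfies $k_{j'}\equiv k_j\pmod\kappa$ and Lemma~\ref{weierstrass_kj} forces the unique root onto $i\mathbb{R}$, or some such $j'$ exists, the Weierstrass polynomial has degree $\geq 2$, and the symmetry \eqref{W_symmetric} no longer pins roots to the axis. The paper's proof stops there, observing only that $\partial_\lambda\Delta(i\sigma,k_j(\sigma);0)=0$ in the resonant case, hence $m\geq 2$. That inequality is all the theorem requires.

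Where you go further is the claim $m=m_j$, i.e.\ $\partial_\lambda^{m_j}\Delta(i\sigma,k_j(\sigma);0)\neq 0$, and here your justification has a gap. Expanding $\det\big(e^{ik_jT}\mathbf{I}-\sum_{p\geq 0}\mathbf{a}^{(p,0)}(T;\sigma)\delta^p\big)$, the $\delta^{m_j}$ coefficient is $(-1)^{m_j}\det\big(\mathbf{a}^{(1,0)}(T;\sigma)|_{J_j\times J_j}\big)\prod_{j''\notin J_j}(e^{ik_jT}-e^{ik_{j''}T})$, not simply the product of diagonal entries; off-diagonal entries of the $J_j\times J_j$ block of $\mathbf{a}^{(1,0)}(T)$ contribute in general. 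It happens that, away from critical frequencies, these off-diagonal entries vanish (solving the first-order variation equation with $x$-independent forcing $\mathbf{B}^{(1,0)}$, one finds $a^{(1,0)}_{jk}(T)\propto e^{ik_kT}-e^{ik_jT}=0$ for $j\neq k$ in $J_j$), so your conclusion is correct there, but you do not supply this step. The paper sidesteps the issue entirely by only claiming $m\geq 2$.

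Two smaller points: the compactness/contradiction argument from Theorem~\ref{thm:stability-non-resonant} is not needed here, since the statement is local at a fixed $i\sigma$ and the Weierstrass factorization already captures all nearby spectra with Floquet exponent near $k_j(\sigma)$; and your remark that the Jordan block at critical frequencies ``enlarges $m_j$ by one'' is not right as stated, since at $\sigma=\sigma_c$ the coincident eigenvalues $k_2(\sigma_c)=k_6(\sigma_c)$ are already both counted in $J_j$.
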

\begin{proof}
Let $j$ be an index with $\boldsymbol{\phi}_{j}(\sigma)\in \mathcal{B}(\sigma)$. If $ik_j$ is not a resonant eigenvalue, i.e., if there is no index $j'\neq j$ with $\boldsymbol{\phi}_{j'}(\sigma)\in \mathcal{B}(\sigma)$ such that $k_j\equiv k_{j'} \pmod{\kappa}$, then, by Lemma~\ref{weierstrass_kj}, the Weierstrass polynomial is first order and gives no instability. If instead $ik_j$ is a resonant eigenvalue, i.e., there exists $\tilde{j}\neq j$ with $\boldsymbol{\phi}_{\tilde{j}}(\sigma)\in \mathcal{B}(\sigma)$ such that $k_j\equiv k_{\tilde{j}} \pmod{\kappa}$, then necessarily 
$$
\prod_{j'\neq j, \;\boldsymbol{\phi}_{j'}(\sigma)\in \mathcal{B}(\sigma)}\left(e^{ik_jT}-e^{ik_{j'}T}\right)= 0,
$$
yielding $\partial_\lambda\Delta(i\sigma,k_j(\sigma);0)=0$. The multiplicity $m$ of the zero $\lambda=i\sigma$ is at least $2$ and, correspondingly, the $m$-order Weierstrass polynomial \eqref{weierstrass_m} possibly achieves pairs of roots symmetric about and off the imaginary axis.
\end{proof}
\begin{remark}\label{remark:resonance}
Theorems~\ref{thm:stability-non-resonant} and \ref{thm:instability-resonant} together make it sufficient and necessary to study resonant frequencies $i\sigma$ and the expansions at the resonant eigenvalues $ik_j(\sigma)$. The theorems hold also for the Stokes Waves, whereby we compensate additional justifications for our previous work \cite{HY2023}. 
\end{remark}
\begin{remark}\label{Kreincondition}
As shown by Sun and Wahlen \cite{sun2025}, for non-resonant waves, a further necessary condition for spectral instability near a non-zero resonant frequency requires that the colliding modes carry opposite Krein signatures, whereas identical signatures preclude instability and yield local spectral stability.
\end{remark}

By \eqref{mu0kappa} and \eqref{eqn:sigma}, $0$ is certainly a resonant frequency. The spectral stability in vicinity of the origin corresponds to the formal modulational stability. For the spectral stability away from the origin, our previous work \cite{HY2023} for Stokes waves treated the resonant frequency with a pair of $2$-resonant eigenvalues and left the treatment of resonant frequencies with $3,4,5,\ldots$-resonant eigenvalues for future investigation. It is shown in Section~\ref{RFNg3} that the work left for future requires the computations of $\mathbf{a}^{(m,n)}$ \eqref{def:X;exp} for $m+n\geq 3$, a tedious work. We will still only compute $\mathbf{a}^{(m,n)}$ up to $m+n=2$. As we shall show later, the information gathered for $\mathbf{a}^{(m,n)}$ is, for stability of capillary-gravity waves, enough near resonant frequencies with $N=1,2$ and, for Wilton ripples of order $M\geq 2$, enough at those with $N=1,2,M-1,M,M+1,2M$. The information gathered applies to more resonant frequencies for Wilton ripple since  additional terms in the expansion of Wilton ripples, e.g., $\alpha\s(M\kappa x)\ch(M\kappa y)$ in $\phi_1$ \eqref{wiltonm_1}, cause more wave-wave interactions.

We now discuss waves admitting a resonant pair $(k_j(\sigma),k_{j'}(\sigma),N)$ at a resonant frequency $i\sigma$, $\sigma>0$. The presence of $\beta$ in the dispersion relation \eqref{eqn:sigma} complicates the discussion. By Remark~\ref{Kreincondition}, only pairs with opposite Krein signatures are relevant for instability; in our notation this means that $k_j(\sigma)$ and $k_{j'}(\sigma)$ lie on different dispersion branches of \eqref{def:sigma}.
\begin{lemma}\label{monotonykikj}
There hold 
\begin{itemize}
\item[i.] for super-critical waves, $(k_6-k_5)(\sigma)$ is strictly decreasing on $[0,\sigma_{c,2}]$, and, for sub-critical waves, $(k_6-k_5)(\sigma)$ is strictly decreasing on $[0,\sigma_{c}]$;
\item[ii.] for super-critical waves, $(k_4-k_1)(\sigma)$ is strictly decreasing on $[0,-\sigma_{c,1}]$;
\item[iii.] for super-critical waves, $(k_2-k_4)(\sigma)$ is strictly increasing on $[0,\sigma_{c,2}]$.
\end{itemize}
\end{lemma}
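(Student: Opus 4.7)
The plan is to differentiate implicitly through $\sigma_\pm(k_j(\sigma))=\sigma$, so that $k_j'(\sigma)=1/\sigma_\pm'(k_j(\sigma))$. Writing $\sigma_\pm(k)=k\pm T(|k|)$ with $T(u):=\sqrt{\mu_0\,u\,\th u\,(1+\beta u^2)}$, which is smooth, positive and strictly increasing on $(0,\infty)$, one obtains $\sigma_\pm'(k)=1\pm\mathrm{sgn}(k)\,T'(|k|)$. Crucially, $T'(u)=1-\sigma_-'(u)$ on $(0,\infty)$, so Lemmas~\ref{dispersionS1S2} and \ref{dispersionS3} translate directly into facts about $T'$: $T'(u)>1$ on intervals where $\sigma_-$ is strictly decreasing, $T'(u)<1$ on intervals where it is strictly increasing, and $T'$ is itself strictly increasing on the concavity region $(k_*,\infty)$ of $\sigma_-$.

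For (ii) and (iii) the crude bound $T'(u)>0$ already suffices. In (ii), $k_4>0$ lies on the monotone branch of $\sigma_+|_{(0,\infty)}$ while $-k_1\in(k_{c,1},\kappa_1)\subset(k_{c,1},k_{c,2})$ lies in a region where $\sigma_-$ is increasing, so
\[
\sigma_+'(k_4)=1+T'(k_4)>1>1-T'(-k_1)=\sigma_+'(k_1)>0,
\]
which immediately yields $(k_4-k_1)'(\sigma)=1/\sigma_+'(k_4)-1/\sigma_+'(k_1)<0$. In (iii), $k_2\in(\kappa_1,k_{c,2})$ lies on an increasing leg of $\sigma_-$ (so $T'(k_2)<1$), and the analogous comparison $\sigma_+'(k_4)>1>\sigma_-'(k_2)>0$ gives $(k_2-k_4)'(\sigma)>0$.

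For (i), both $k_6$ and $-k_5$ lie in the concave region $(k_*,\infty)$ of $\sigma_-$: super-critically, $k_6\in(k_{c,2},\kappa_2)$ and $-k_5\in(\kappa_2,\infty)$ with $k_{c,2},\kappa_2>k_*$; sub-critically, $k_6\in(k_c,\kappa)$ and $-k_5\in(\kappa,\infty)$ with $k_c,\kappa>k_*$. Since $k_6\leq\kappa\leq -k_5$, with strict inequalities for $\sigma>0$, the strict monotonicity of $T'$ on $(k_*,\infty)$ forces $T'(k_6)\leq T'(-k_5)$, hence
\[
\sigma_-'(k_6)=1-T'(k_6)\geq 1-T'(-k_5)=\sigma_+'(k_5),
\]
with both sides negative. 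Taking reciprocals flips this to $1/\sigma_-'(k_6)\leq 1/\sigma_+'(k_5)$ with strict inequality for $\sigma>0$, i.e.\ $(k_6-k_5)'(\sigma)<0$ on the interior; integration then gives strict decrease on the closed interval $[0,\sigma_{c,2}]$ (resp.\ $[0,\sigma_c]$).

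The main obstacle I anticipate is organizational rather than analytical: one must correctly identify, for each $\sigma$ in the relevant range, the branch of $\sigma_\pm$ on which each $k_j(\sigma)$ lives and check it falls in the appropriate monotonicity/concavity region — data that is already largely assembled in the case analyses of Section~\ref{sec:eps=0}. The endpoint $\sigma=0$, where the derivative in (i) vanishes, and the upper critical value, where $k_j'(\sigma)$ blows up to $\pm\infty$, both cause no real problem since in each case the signed blow-up is in the direction compatible with strict monotonicity on the closed interval.
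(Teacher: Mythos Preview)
Your proof is correct and follows essentially the same route as the paper's. For parts (ii) and (iii), the paper uses the auxiliary function $\#(k;\beta)$ from the proof of Lemma~\ref{dispersionS1S2}, which is precisely your $T'(\,\cdot\,)/\sqrt{\mu_0}$, and the two arguments are identical up to notation. For part (i), the paper exploits the symmetry $k_5'(\sigma)=k_6'(-\sigma)$ to write $(k_6-k_5)'(\sigma)=\int_{-\sigma}^{\sigma}k_6''\,d\tilde\sigma$ and then invokes concavity of $\sigma_-$ to make $k_6''<0$; your direct comparison $T'(k_6)<T'(-k_5)$ via monotonicity of $T'$ on the concave region $(k_*,\infty)$ is an equivalent use of the same concavity, arguably a touch cleaner since it avoids extending $k_6$ to negative $\sigma$.
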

\begin{proof}
Proof of [i]. For super-critical waves, by Lemma~\ref{dispersionS1S2}, $\sigma_-(\cdot;\beta,\mu_0)$ is concave on $(k_{c,2},\infty)$. Therefore,
$$
\frac{d(k_6-k_5)}{d\sigma}(\sigma)=\frac{d(k_6)}{d\sigma}(\sigma)-\frac{d(k_6)}{d\sigma}(-\sigma)=\int_{-\sigma}^{\sigma}k_6''(\tilde{\sigma})d\tilde{\sigma}<0,\;\; \text{for $\sigma\in[0,\sigma_{c,2}]$.}
$$
Similarly, for sub-critical waves, by Lemma~\ref{dispersionS3}, $\sigma_-(\cdot;\beta,\mu_0)$ is concave on $(k_{c},\infty)$.  The inequality above still holds.

Proof of [ii]. By symmetry and Lemma~\ref{dispersionS1S2}, 
$$
\frac{dk_1}{d\sigma}(\sigma)=\frac{1}{\frac{d\sigma_-(k)}{dk}(-k_1(\sigma))}>0\quad \text{and}\quad\frac{dk_4}{d\sigma}(\sigma)=\frac{1}{\frac{d\sigma_+(k)}{dk}(k_4(\sigma))}>0.
$$
We further infer from the proof of Lemma~\ref{dispersionS1S2} that
$$
\begin{aligned}
&\frac{d\sigma_+(k)}{dk}(k_4(\sigma))-\frac{d\sigma_-(k)}{dk}(-k_1(\sigma))\\
=&(1+\sqrt{\mu_0}\#(k_4(\sigma);\beta))-(1-\sqrt{\mu_0}\#(-k_1(\sigma);\beta))\\
=&\sqrt{\mu_0}\#(k_4(\sigma);\beta)+\sqrt{\mu_0}\#(-k_1(\sigma);\beta)>0,
\end{aligned}
$$
where the last inequality is obtained from the fact $\#(k;\beta)>0$ for $k\ge 0$.
Therefore, 
$\frac{d(k_4-k_1)}{d\sigma}(\sigma)<0$, for $\sigma\in[0,-\sigma_{c,1}]$.

Proof of [iii]. By Lemma~\ref{dispersionS1S2}, 
$$
\frac{dk_2}{d\sigma}(\sigma)=\frac{1}{\frac{d\sigma_-(k)}{dk}(k_2(\sigma))}>0\quad \text{and}\quad\frac{dk_4}{d\sigma}(\sigma)=\frac{1}{\frac{d\sigma_+(k)}{dk}(k_4(\sigma))}>0.
$$
We further infer from the proof of Lemma~\ref{dispersionS1S2} that
$$
\begin{aligned}
&\frac{d\sigma_+(k)}{dk}(k_4(\sigma))-\frac{d\sigma_-(k)}{dk}(k_2(\sigma))\\
=&(1+\sqrt{\mu_0}\#(k_4(\sigma);\beta))-(1-\sqrt{\mu_0}\#(k_2(\sigma);\beta))\\
=&\sqrt{\mu_0}\#(k_4(\sigma);\beta)+\sqrt{\mu_0}\#(k_2(\sigma);\beta)>0,
\end{aligned}
$$
where the last inequality is obtained from the fact $\#(k;\beta)>0$ for $k\ge 0$.
Therefore, 
$\frac{d(k_2-k_4)}{d\sigma}(\sigma)>0$, for $\sigma\in[0,\sigma_{c,2}]$.
\end{proof}
\subsection{Sub-critical waves}\label{S3region}
Consider waves in the $S_3$ region. For $\sigma\in(0, \sigma_c]$, there hold (a) $\kappa<(k_4-k_5)(\sigma)<(k_2-k_5)(\sigma)\leq (k_6-k_5)(\sigma)<2\kappa$,  where, by Lemma~\ref{monotonykikj}[i], the last inequality holds; (b) $0<(k_2-k_4)(\sigma)<(k_6-k_4)(\sigma)<\kappa$; and (c) $0<(k_6-k_2)(\sigma)<\kappa$. Therefore, there is no resonant frequency on $i(0, \sigma_c]$. Since $(k_4-k_5)(\sigma)$ is strictly increasing on $[\sigma_c,+\infty)$ and unbounded, there exists a unique pair of N-resonant eigenvalues between eigenvalues $ik_4$ and $ik_5$ at $i\sigma_N$, $\sigma_N>\sigma_c$ for $N\in\mathbb{Z}$, $N\geq2$.

We conclude that, analogous to the case of Stokes waves \cite{HY2023}, waves in the $S_3$ region admit $(k_4(\sigma),k_5(\sigma),N)\in \mathcal{R}(\sigma)$ for some $\sigma>\sigma_c$ $N\in\mathbb{Z}$, $N\geq 2$ and no other pairs of resonant eigenvalues. However, since $k_4(\sigma)$ and $k_5(\sigma)$ both lie on the $\sigma_+(k)$ branch, they have identical Krein signatures, implying stability.

\subsection{Super-critical waves}\label{S1S2region}
 For a wave in the super-critical region, its dispersion relation $\sigma_-(k;\beta,\mu_0)$, by Lemma~\ref{dispersionS1S2}, always achieves two critical points $(k_{c,1},\sigma_{c,1})$ and $(k_{c,2},\sigma_{c,2})$ and either $-\sigma_{c,1}>\sigma_{c,2}$ or $-\sigma_{c,1}\leq\sigma_{c,2}$. See Figures \ref{figure4} and \ref{figure5}. We then distinguish waves satisfying $-\sigma_{c,1}>\sigma_{c,2}$ from those satisfying $-\sigma_{c,1}<\sigma_{c,2}$ by boundaries on which $-\sigma_{c,1}=\sigma_{c,2}$. See FIGURE \ref{figure7} left panel.

\begin{figure}[htbp]
\begin{center}
    \includegraphics[scale=0.3]{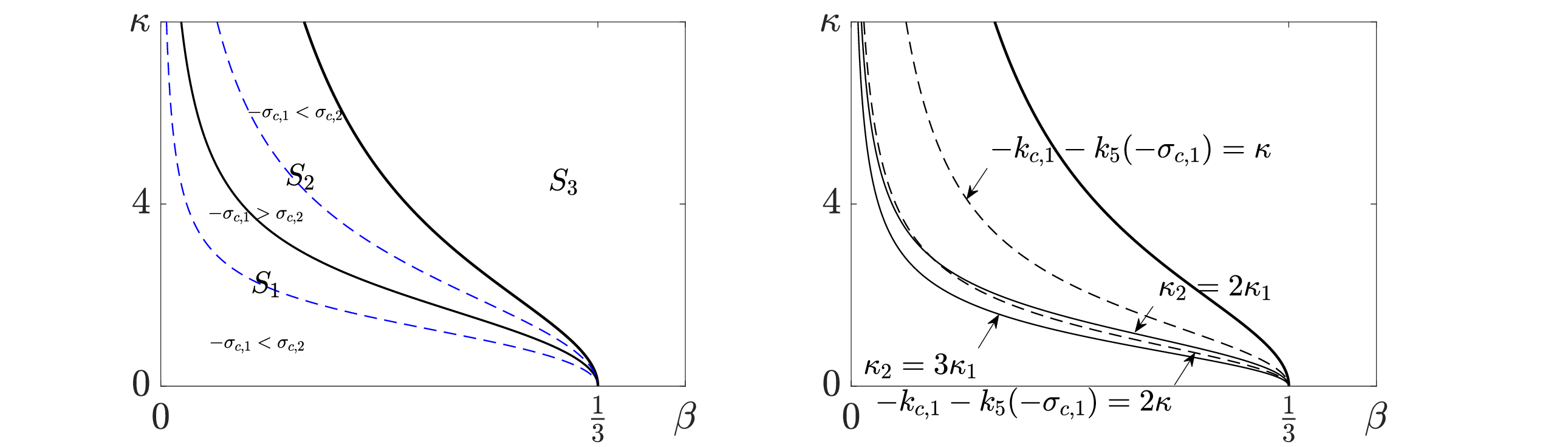}
    \end{center}
    \caption{Left panel: In the region bounded by the two blue dash curves, there holds $-\sigma_{c,1}>\sigma_{c,2}$, and in the complement of the former region with respect to the super-critical region there satisfy $-\sigma_{c,1}<\sigma_{c,2}$. Right panel: The region of waves admitting $(k_1(\sigma),k_5(\sigma),1)\in \mathcal{R}(\sigma)$ for some $\sigma>0$ is bounded from above by the dash curve on which $-k_{c,1}-k_5(-\sigma_{c,1})=\kappa$ and below by the solid curve on which $\kappa_2=2\kappa_1$. The region of waves admitting $(k_1(\sigma),k_5(\sigma),2)\in \mathcal{R}(\sigma)$ for some $\sigma>0$ is bounded from above by the dash curve on which $-k_{c,1}-k_5(-\sigma_{c,1})=2\kappa$ and below by the solid curve on which $\kappa_2=3\kappa_1$. The region of waves admitting $(k_6(\sigma),k_2(\sigma),1)\in \mathcal{R}(\sigma)$ for some $\sigma>0$ is bounded from above by the solid curve on which $\kappa_2=2\kappa_1$. The region of waves admitting $(k_6(\sigma),k_2(\sigma),2)\in \mathcal{R}(\sigma)$ for some $\sigma>0$ is bounded from above by the solid curve on which $\kappa_2=3\kappa_1$. The region of waves admitting $(k_4(\sigma),k_5(\sigma),2)\in \mathcal{R}(\sigma)$ for some $\sigma>0$ is bounded from below by the solid curve on which $\kappa_2=2\kappa_1$. The region of waves admitting $(k_4(\sigma),k_5(\sigma),3)\in \mathcal{R}(\sigma)$ for some $\sigma>0$ is bounded from below by the solid curve on which $\kappa_2=3\kappa_1$.}
    \label{figure7}
\end{figure}
\begin{figure}[htbp]
    \centering
    \includegraphics[scale=0.3]{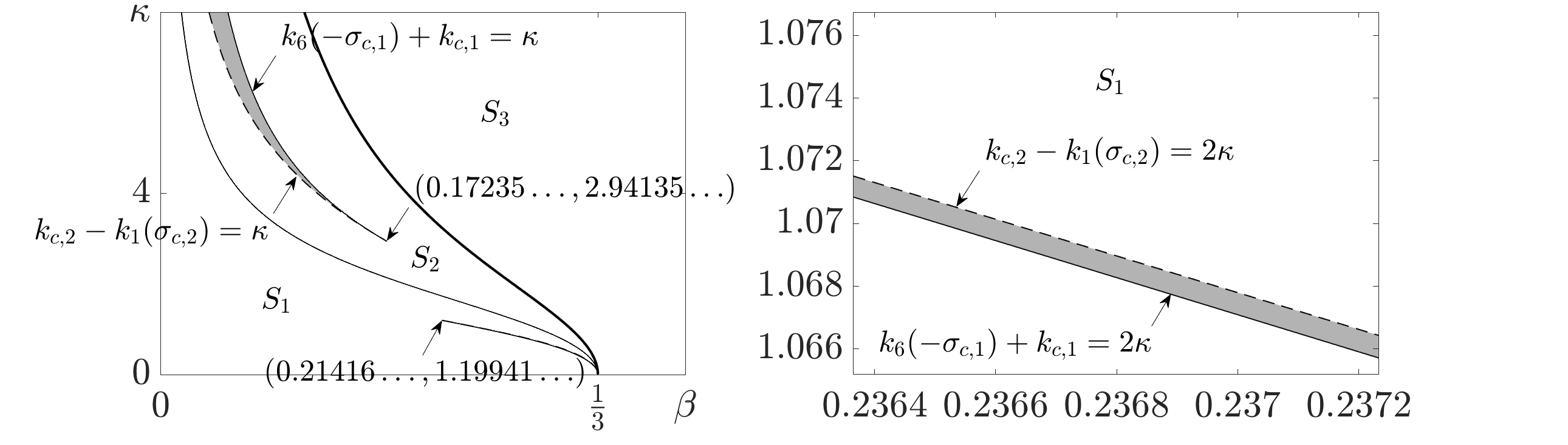}
    \caption{In the $S_1$ region, the region of waves admitting $(k_6(\sigma),k_1(\sigma),2)\in \mathcal{R}(\sigma)$ for some $\sigma>0$ is bounded from above by the dash curve on which $k_{c,2}-k_1(\sigma_{c,2})=2\kappa$ and below by the solid curve on which $k_6(-\sigma_{c,1})+k_{c,1}=2\kappa$. The dash curve and solid curve intercept at $(0.21416\ldots, 1.19941\ldots)$ where $\sigma_1=\sigma_2$ and $k_{c,1}+k_{c,2}=2\kappa$. See right panel for a blow-up of the dash and solid curves. In the $S_2$ region, the region of waves admitting $(k_6(\sigma),k_1(\sigma),1)\in \mathcal{R}(\sigma)$ for some $\sigma>0$ is bounded from below by the dash curve on which $k_{c,2}-k_1(\sigma_{c,2})=\kappa$ and above by the solid curve on which $k_6(-\sigma_{c,1})+k_{c,1}=\kappa$. The dash curve and solid curve intercept at $(0.17235\ldots, 2.94135\ldots)$ where $\sigma_1=\sigma_2$ and $k_{c,1}+k_{c,2}=\kappa$.}
    \label{figure8}
\end{figure}
\begin{figure}[htbp]
    \centering
    \includegraphics[scale=0.3]{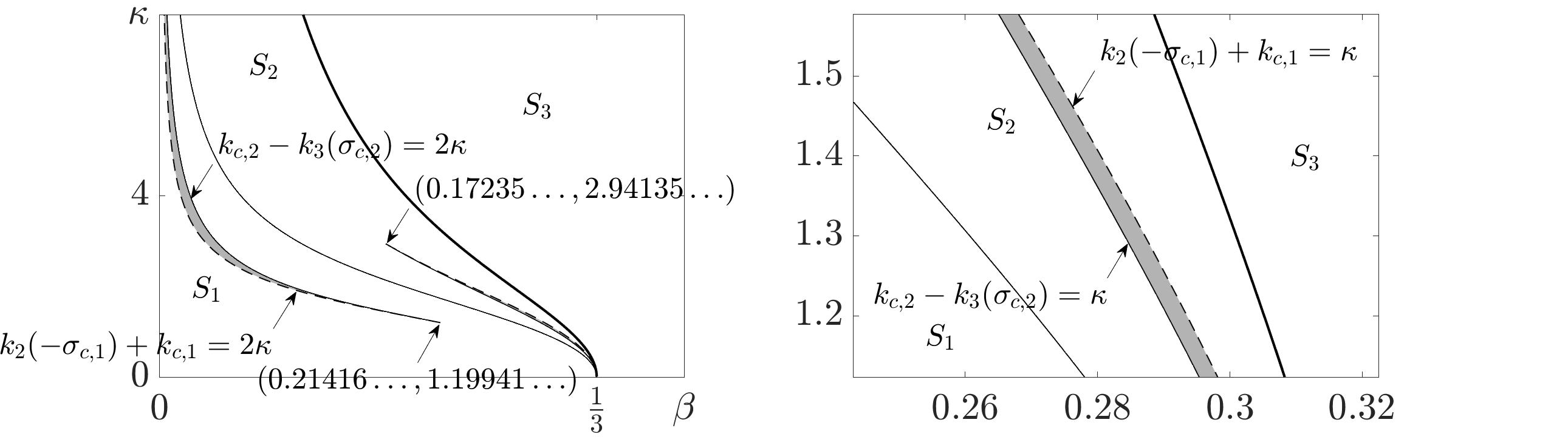}
    \caption{In the $S_1$ region, the region of waves admitting $(k_2(\sigma),k_3(\sigma),2)\in \mathcal{R}(\sigma)$ for some $\sigma>0$ is bounded from above by the solid curve on which $k_{c,2}-k_3(\sigma_{c,2})=2\kappa$ and below by the dash curve on which $k_2(-\sigma_{c,1})+k_{c,1}=2\kappa$. The dash curve and solid curve intercept at $(0.21416\ldots, 1.19941\ldots)$ where $\sigma_1=\sigma_2$ and $k_{c,1}+k_{c,2}=2\kappa$. In the $S_2$ region, the region of waves admitting $(k_2(\sigma),k_3(\sigma),1)\in \mathcal{R}(\sigma)$ for some $\sigma>0$ is bounded from below by the solid curve on which $k_{c,2}-k_3(\sigma_{c,2})=\kappa$ and above by the dash curve on which $k_2(-\sigma_{c,1})+k_{c,1}=\kappa$. The dash curve and solid curve intercept at $(0.17235\ldots, 2.94135\ldots)$ where $\sigma_1=\sigma_2$ and $k_{c,1}+k_{c,2}=\kappa$. For a blow-up of the dash and solid curves see right panel.}
    \label{figure9}
\end{figure}
\begin{figure}[htbp]
    \centering
    \includegraphics[scale=0.3]{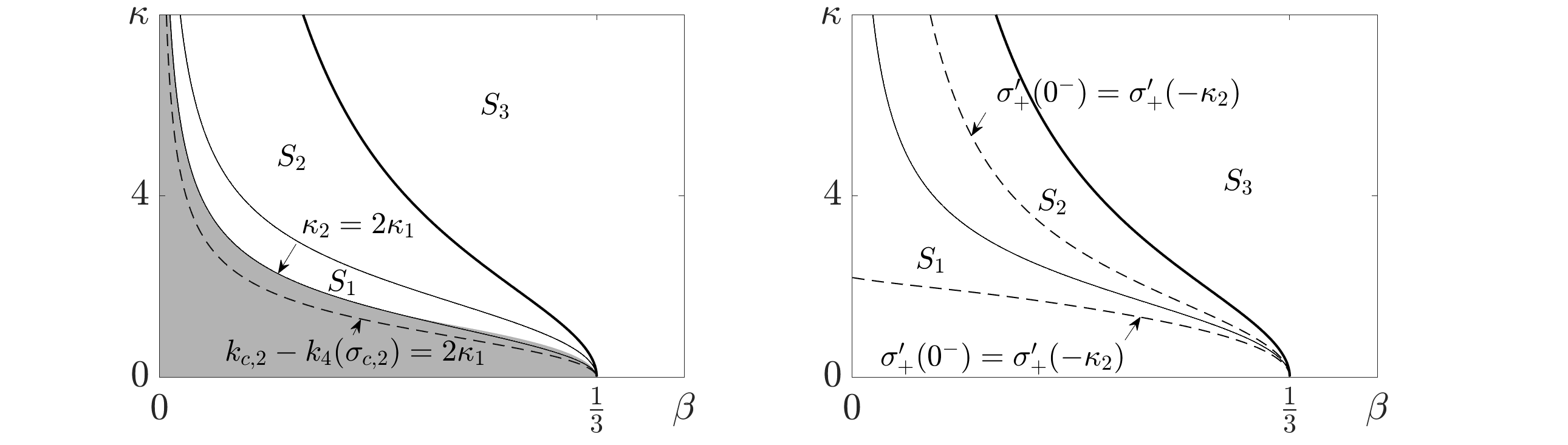}
    \caption{Left panel: The region of waves admitting $(k_6(\sigma),k_4(\sigma),2)\in \mathcal{R}(\sigma)$ for some $\sigma>0$ is bounded from above by the domain of Wilton ripples of order $2$ and below by the dash curve on which $k_{c,2}-k_4(\sigma_{c,2})=2\kappa_1$. The region of waves admitting $(k_2(\sigma),k_4(\sigma),2)\in \mathcal{R}(\sigma)$ for some $\sigma>0$ is bounded from above by the dash curve on which $k_{c,2}-k_4(\sigma_{c,2})=2\kappa_1$. Right panel: There is a dash curve in the $S_1$ region on which $\sigma_+'(0^-)=\sigma_+'(-\kappa_2)$ and a dash curve in the $S_2$ region on which $\sigma_+'(0^-)=\sigma_+'(-\kappa_2)$. For waves between the two dash curves, $(k_3-k_5)(\sigma)$ is first increasing then decreasing. For waves above the upper dash curve and in the $S_2$ region and waves below the lower dash curve and in the $S_1$ region, $(k_3-k_5)(\sigma)$ is strictly decreasing.}
    \label{figure10}
\end{figure}
\begin{figure}[htbp]
    \centering
    \includegraphics[scale=0.3]{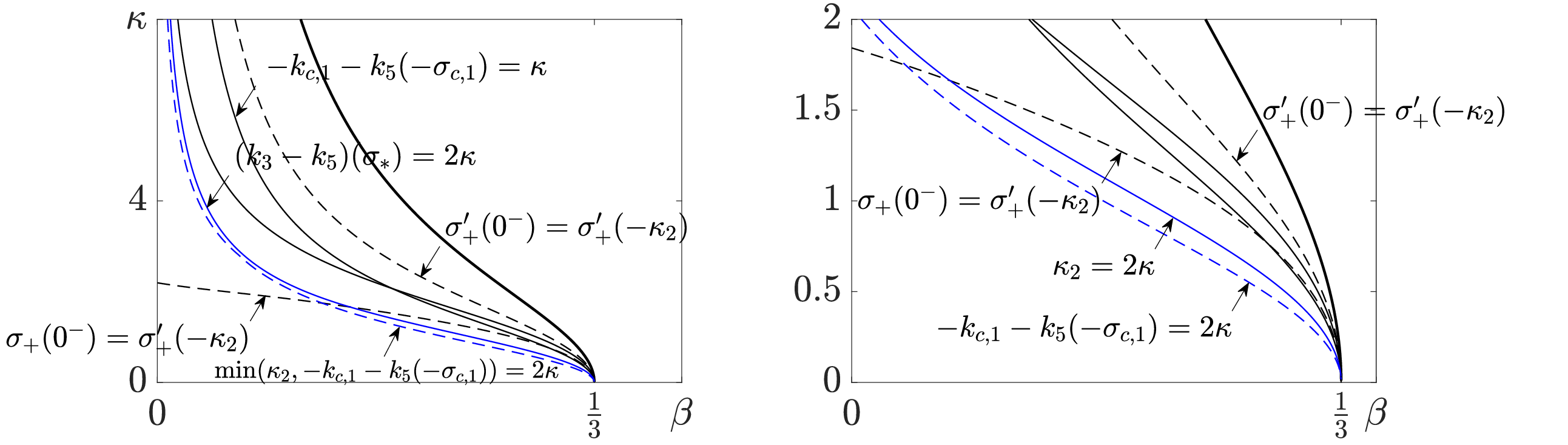}
    \caption{The region of waves admitting $$(k_3(\sigma),k_5(\sigma),1)\in \mathcal{R}(\sigma)$$ for some $\sigma>0$ is bounded from above by the upper dash black curve on which $\sigma_+'(0^-)=\sigma_+'(-\kappa_2)$ and below by the solid black curve on which $-k_{c,1}-k_5(-\sigma_{c,1})=\kappa$. These wave are not necessarily all $\kappa_2$-waves for clearly in the right panel we see the solid black curve cross the boundary separating $\kappa_1$- and $\kappa_2$- waves. In the $S_1$ region, we find waves admitting $(k_3(\sigma),k_5(\sigma),2)\in \mathcal{R}(\sigma)$ for some $\sigma>0$. In the region where $(k_3-k_5)(\sigma)$ is first increasing and then decreasing, the region of the waves is bounded from above by the solid blue curve on which $(k_3-k_5)(\sigma_*)=2\kappa_1$ ($\sigma_*$ is the critical point for $(k_3-k_5)(\sigma)$, $0<\sigma_*<-\sigma_{c,1}$) and below by the dash blue curve on which $\min(\kappa_2,-k_{c,1}-k_5(-\sigma_{c,1}))=2\kappa$. On the right panel, we show, in the region where $(k_3-k_5)(\sigma)$ is strictly decreasing, the region of the waves is bounded from above by the solid blue curve on which $\kappa_2=2\kappa_1$ and below by the dash blue curve on which $-k_{c,1}-k_5(-\sigma_{c,1})=2\kappa$.}
    \label{figure11}
\end{figure}
\begin{figure}[htbp]
\centering
    \includegraphics[scale=0.3]{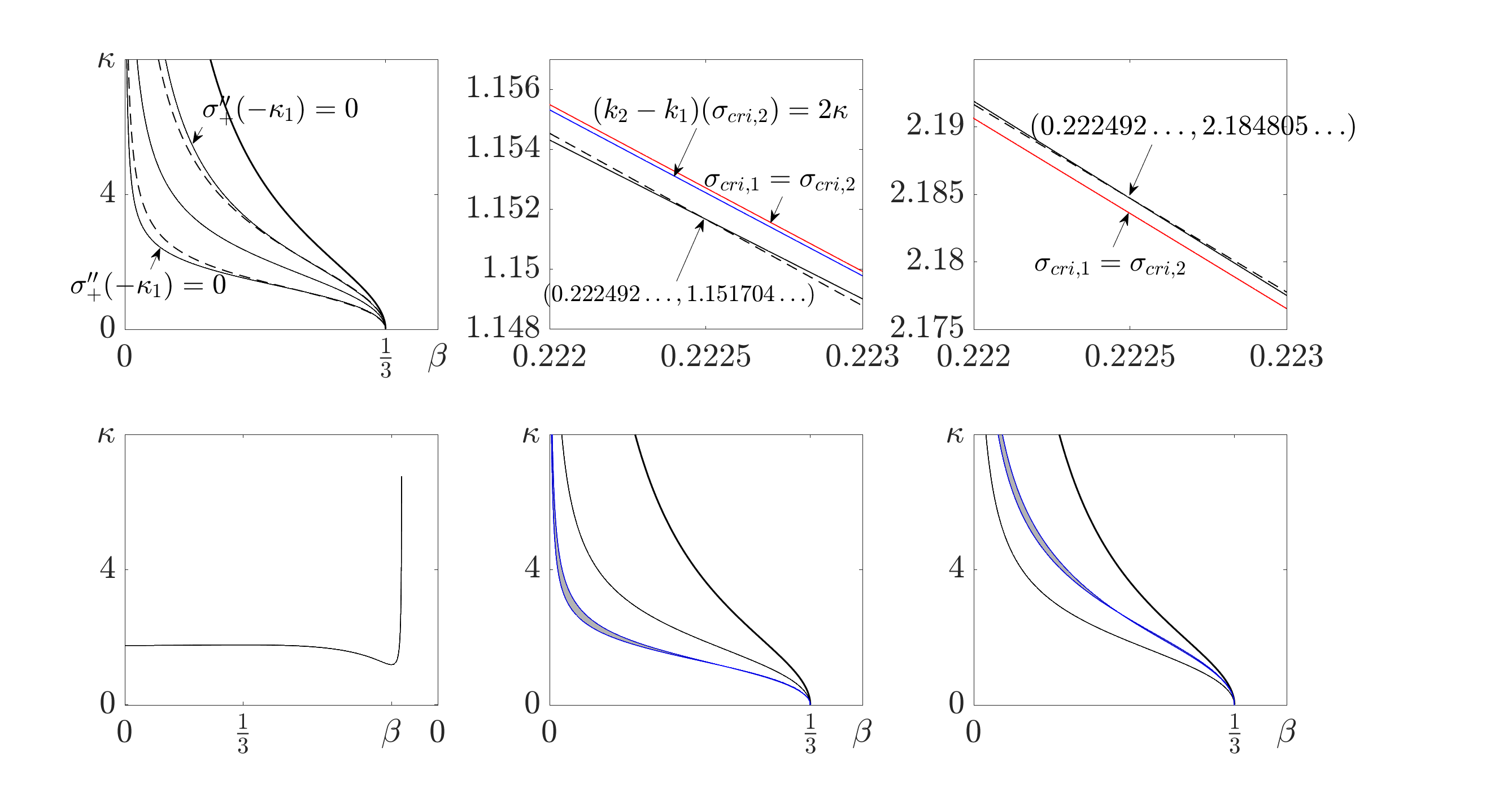}
    \caption{There are two solid curves on which $\sigma_+''(-\kappa_1)=0$. See top left panel. One curve is in the $S_1$ region and crosses the $-\sigma_{c,1}=\sigma_{c,2}$ dash curve at $(0.222492\ldots,1.151704\ldots)$. See top middle panel. The other one is in the $S_2$ region and crosses the $-\sigma_{c,1}=\sigma_{c,2}$ dash curve at $(0.222492\ldots,2.184805\ldots)$. See top right panel. For waves between these two solid curve, $\sigma_+''(-\kappa_1)>0$ and hence, $(k_2-k_1)(\sigma)$ is increasing for small $\sigma>0$. In the $S_1$ region, for waves that are between the dash $-\sigma_{c,1}=\sigma_{c,2}$ curve and solid $\sigma_+''(-\kappa_1)=0$ curve, $(k_2-k_1)(\sigma)$ is first increasing then decreasing for $\beta<0.222492\ldots$ and first decreasing then increasing for $\beta>0.222492\ldots$. Above both the dash curve and the solid $\sigma_+''(-\kappa_1)=0$ curve is a region where $(k_2-k_1)(\sigma)$ is first increasing then decreasing and then increasing. A typical $(k_2-k_1)(\sigma)$ versus $\sigma$ graph is shown in the bottom left panel. For waves in this region, $(k_2-k_1)(\sigma)$ is increasing for $0<\sigma<\sigma_{cri,1}$ then decreasing for $\sigma_{cri,1}<\sigma<\sigma_{cri,2}$ and then increasing for $\sigma_{cri,2}<\sigma<\sigma_{c,2}$. Such region is bounded from above by a red curve on which $\sigma_{cri,1}=\sigma_{cri,2}$. See top middle panel. In the $S_2$ region, for waves that are between the dash $-\sigma_{c,1}=\sigma_{c,2}$ curve and solid $\sigma_+''(-\kappa_1)=0$ curve, $(k_2-k_1)(\sigma)$ is first increasing then decreasing for $\beta<0.222492\ldots$ and first decreasing then increasing for $\beta>0.222492\ldots$. Below both the dash curve and the solid $\sigma_+''(-\kappa_1)=0$ curve, again, is a region where $(k_2-k_1)(\sigma)$ is first increasing then decreasing and then increasing. Such region is bounded from below by a red curve on which $\sigma_{cri,1}=\sigma_{cri,2}$. See top right panel. }
    \label{figure12}
\end{figure}
\addtocounter{figure}{-1}
\begin{figure}[htbp]
  \caption{(Previous page.) Based on the monotonicity of $(k_2-k_1)(\sigma)$, we find, in the $S_1$ region, the region of waves admitting $(k_2-k_1)(\sigma)=2\kappa$ is bounded from above and below by two blue boundary curves shown in the bottom middle panel. The upper boundary crosses the dash curve $-\sigma_{c,1}=\sigma_{c,2}$ from left to right when $\beta$ increases and passes $0.214164\ldots$. For $\beta<0.214164\ldots$, the curve is given by tracing $k_2(-\sigma_{c,1})+k_{c,1}=2\kappa$ and, for $\beta>0.214164\ldots$, the curve is given by tracing $(k_2-k_1)(\sigma_{cri,2})=2\kappa$. The lower boundary curve is given by the part of solid $\sigma_+''(-\kappa_1)=0$ curve with $\beta<0.222603\ldots$ and given by tracing $k_{c,2}-k_1(\sigma_{c,2})=2\kappa$ for $\beta>0.222603\ldots$. In the $S_2$ region, the region of waves admitting $(k_2-k_1)(\sigma)=\kappa$ is bounded from above and below by two blue boundary curves shown in the bottom right panel. We omit details on how we trace these two boundaries.}
\end{figure}
\begin{figure}[htbp]
    \centering
    \includegraphics[scale=0.3]{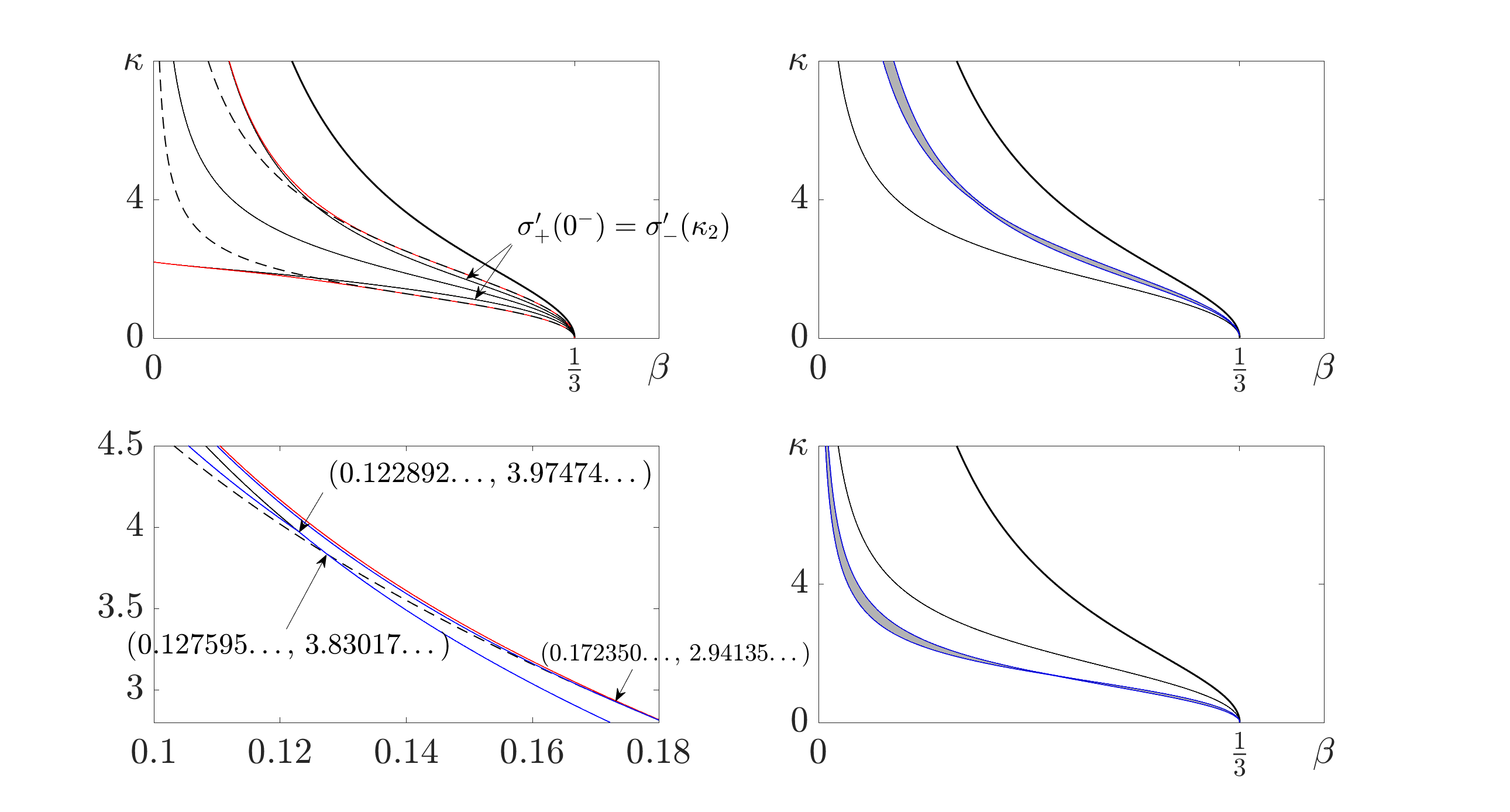}
    \caption{There are two curves on which $\sigma_+'(0^-)=\sigma_-'(\kappa_2)$. See top left panel. One is in the $S_1$ region and the other one is in the $S_2$ region. For waves between these two solid curve, $\sigma_+'(0^-)-\sigma_-'(\kappa_2)<0$ and hence, $(k_6-k_3)(\sigma)$ is decreasing for small $\sigma>0$. In the $S_2$ region, for waves that are between the dash $-\sigma_{c,1}=\sigma_{c,2}$ curve and solid $\sigma_+'(0^-)=\sigma_-'(\kappa_2)$ curve, $(k_6-k_3)(\sigma)$ is first decreasing then increasing for $\beta<0.127595\ldots$ and first increasing then decreasing for $\beta>0.127595\ldots$. Above both the dash curve and the solid $\sigma_+'(0^-)=\sigma_-'(\kappa_2)$ curve is a region where $(k_6-k_3)(\sigma)$ is first increasing then decreasing and then increasing. For waves in this region, $(k_6-k_3)(\sigma)$ is increasing for $0<\sigma<\sigma_{cri,1}$ then decreasing for $\sigma_{cri,1}<\sigma<\sigma_{cri,2}$ and then increasing for $\sigma_{cri,2}<\sigma<\sigma_{c,1}$. Such region is bounded from above by a red curve on which $\sigma_{cri,1}=\sigma_{cri,2}$. In the $S_1$ region, below both the dash $-\sigma_{c,1}=\sigma_{c,2}$ curve and the solid $\sigma_+'(0^-)=\sigma_-'(\kappa_2)$ curve, again, is a region where $(k_6-k_3)(\sigma)$ is first increasing then decreasing and then increasing. Such region is bounded from below by a red curve on which $\sigma_{cri,1}=\sigma_{cri,2}$. Based on the monotonicity of $(k_6-k_3)(\sigma)$, we find, in the $S_2$ region, the region of waves admitting $(k_6-k_3)(\sigma)=\kappa$ is bounded from above and below by two blue boundary curves shown in the top right panel. The upper boundary crosses the $-\sigma_{c,1}=\sigma_{c,2}$ dash curve from left to right when $\beta$ increases and passes $0.172350\ldots$. For $\beta<0.172350\ldots$, the curve is given by tracing $(k_6-k_3)(\sigma_{cri,2})=\kappa$ and, for $\beta>0.172350\ldots$, the curve is given by tracing $k_{c,2}-k_3(\sigma_{cri,2})=\kappa$.}
    \label{figure13}
\end{figure}
\addtocounter{figure}{-1}
\begin{figure}[htbp]
\centering
 \caption{(Previous page.) The lower boundary curve is given by the part of solid $\sigma_+'(0^-)=\sigma_-'(\kappa_2)$ curve with $\beta>0.122892\ldots$ and given by tracing $k_6(-\sigma_{c,1})+k_{c,1}=\kappa$ for $\beta<0.172350\ldots$. In the $S_1$ region, the region of waves admitting $(k_6-k_3)(\sigma)=2\kappa$ is bounded from above and below by two blue boundary curves shown in the bottom right panel. We omit details on how we trace these two boundaries.}
\end{figure}
We note, as we will see from below, that the critical frequencies $-i\sigma_{c,1},i\sigma_{c,2}$ can also be resonant frequencies, which does not happen for Stokes waves \cite{HY2023}.

We now make a thorough discussion about resonant frequencies for super-critical waves. Again, by Remark~\ref{Kreincondition}, only cases (3), (6), (7), (9), (10), (12), (14), (15), (17), (18), and (19) below are relevant for instability. 

\begin{itemize}
    \item[(1)] Waves admitting $(k_1(\sigma),k_5(\sigma),N)\in \mathcal{R}(\sigma)$ for some $\sigma>0$. Because $(k_1-k_5)(\sigma)$ is strictly increasing, we find $\kappa_2-\kappa_1< (k_1-k_5)(\sigma)<-k_{c,1}-k_5(-\sigma_{c,1})$. The region of these waves is bounded from below by the domain of Wilton ripples of order $N+1$ and above by a curve satisfying $-k_{c,1}-k_5(-\sigma_{c,1})=N\kappa$. See FIGURE \ref{figure7} right panel for cases $N=1,2$.
    \item[(2)] Waves admitting $(k_6(\sigma),k_2(\sigma),N)\in \mathcal{R}(\sigma)$ for some $\sigma>0$. Because $(k_6-k_2)(\sigma)$ is strictly decreasing, we find $0\leq(k_6-k_2)(\sigma)<\kappa_2-\kappa_1$. The region of these waves is bounded from above by the domain of Wilton ripples of order $N+1$. See FIGURE \ref{figure7} right panel for cases $N=1,2$.
    \item[(3)] Waves admitting $(k_2(\sigma),k_5(\sigma),N)\in\mathcal{R}(\sigma)$ for some $\sigma>0$. Because $(k_2-k_5)(\sigma)$ is strictly increasing, we find $2\kappa_1<\kappa_1+\kappa_2<(k_2-k_5)(\sigma)\leq k_{c,2}-k_5(\sigma_{c,2})<2\kappa_2$. Here the last inequality follows from Lemma~\ref{monotonykikj}[i]. In the $S_1$ region, waves may admit $(k_2(\sigma),k_5(\sigma),N)\in \mathcal{R}(\sigma)$ for some $\sigma>0$, but only for $N\geq 3$. In the $S_2$ region, no wave admits $(k_2(\sigma),k_5(\sigma),N)\in\mathcal{R}(\sigma)$ for any $N\geq 1$ and any $\sigma>0$.
    \item[(4)] Waves admitting $(k_4(\sigma),k_5(\sigma),N)\in \mathcal{R}(\sigma)$ for some $\sigma>0$. Because $(k_4-k_5)(\sigma)>\kappa_2$ is strictly increasing and unbounded, the region of these waves is bounded from below by the domain of Wilton ripples of order $N$. See FIGURE \ref{figure7} right panel for cases $N=2,3$.
    \item[(5)] No wave admits $(k_3(\sigma),k_1(\sigma),N)\in\mathcal{R}(\sigma)$ for any $N\geq 1$ and any $\sigma>0$. This is because $(k_3-k_1)(\sigma)$ is strictly decreasing and we find $0\leq(k_3-k_1)(\sigma)<\kappa_1<\kappa_2$.
    \item[(6)] Waves admitting $(k_6(\sigma),k_1(\sigma),N)\in \mathcal{R}(\sigma)$ for some $\sigma>0$. Because $(k_6-k_1)(\sigma)$ is strictly decreasing, we find, if $-\sigma_{c,1}\leq\sigma_{c,2}$, then $\kappa_1<k_6(-\sigma_{c,1})+k_{c,1}\leq(k_6-k_1)(\sigma)<\kappa_1+\kappa_2<2\kappa_2$, and if $-\sigma_{c,1}> \sigma_{c,2}$, then $\kappa_1<k_{c,2}-k_1(\sigma_{c,2})\leq(k_6-k_1)(\sigma)<\kappa_1+\kappa_2<2\kappa_2$. In the $S_1$ region, the region of waves admitting $(k_6(\sigma),k_1(\sigma),N)\in \mathcal{R}(\sigma)$ for some $\sigma>0$ and $N\geq 2$ is bounded by a curve satisfying $k_6(-\sigma_{c,1})+k_{c,1}=N\kappa_1$ in the $-\sigma_{c,1}< \sigma_{c_2}$ region and a curve satisfying $k_{c,2}-k_1(\sigma_{c,2})=N\kappa_1$ in the $-\sigma_{c,1}> \sigma_{c_2}$ region.  See FIGURE \ref{figure8} for the case $N=2$. In the $S_2$ region, the region of waves admitting $(k_6(\sigma),k_1(\sigma),1)\in \mathcal{R}(\sigma)$ for some $\sigma>0$ is bounded by a curve satisfying $k_6(-\sigma_{c,1})+k_{c,1}=\kappa_2$ in the $-\sigma_{c,1}< \sigma_{c,2}$ region and a curve satisfying $k_{c,2}-k_1(\sigma_{c,2})=\kappa_2$ in the $-\sigma_{c,1}> \sigma_{c,2}$ region. See FIGURE \ref{figure8}. Moreover, no wave in $S_2$ admits $(k_6(\sigma),k_1(\sigma),N)\in\mathcal{R}(\sigma)$ for any $N\geq 2$ and any $\sigma>0$. 
\item[(7)] Waves admitting $(k_2(\sigma),k_3(\sigma),N)\in \mathcal{R}(\sigma)$ for some $\sigma>0$. Because $(k_2-k_3)(\sigma)$ is strictly increasing, we find, if $-\sigma_{c,1}\leq\sigma_{c,2}$, then $\kappa_1<(k_2-k_3)(\sigma)\leq k_2(-\sigma_{c,1})+k_{c,1}\leq k_6(-\sigma_{c,1})+k_{c,1} <\kappa_1+\kappa_2<2\kappa_2$, and if $-\sigma_{c,1}> \sigma_{c,2}$, then $\kappa_1<(k_2-k_3)(\sigma)\leq k_{c,2}-k_3(\sigma_{c,2})< k_{c,2}-k_1(\sigma_{c,2})<\kappa_1+\kappa_2<2\kappa_2$. In the $S_1$ region, the region of waves admitting $(k_2(\sigma),k_3(\sigma),N)\in \mathcal{R}(\sigma)$ for some $\sigma>0$ and $N\geq 2$ is bounded by a curve satisfying $k_2(-\sigma_{c,1})+k_{c,1}=N\kappa_1$ in the $-\sigma_{c,1}< \sigma_{c_2}$ region and a curve satisfying $k_{c,2}-k_3(\sigma_{c,2})=N\kappa_1$ in the $-\sigma_{c,1}> \sigma_{c,2}$ region. See FIGURE \ref{figure9} left panel for the case $N=2$. In the $S_2$ region, the region of waves admitting $(k_2(\sigma),k_3(\sigma),1)\in \mathcal{R}(\sigma)$ for some $\sigma>0$ is bounded by a curve satisfying $k_2(-\sigma_{c,1})+k_{c,1}=\kappa_2$ in the $-\sigma_{c,1}< \sigma_{c,2}$ region and a curve satisfying $k_{c,2}-k_3(\sigma_{c,2})=\kappa_2$ in the $-\sigma_{c,1}> \sigma_{c,2}$ region. See FIGURE \ref{figure9}. Moreover, no wave in $S_2$ admits $(k_2(\sigma),k_3(\sigma),N)\in\mathcal{R}(\sigma)$ for any $N\geq 2$ and any $\sigma>0$.
     \item[(8)] No wave admits $(k_4(\sigma),k_3(\sigma),N)\in\mathcal{R}(\sigma)$ for any $N\geq 1$ and any $\sigma>0$. This is because $(k_4-k_3)(\sigma)$ is strictly increasing and we find $0<(k_4-k_3)(\sigma)<k_4(-\sigma_{c,1})+k_{c,1}<\kappa_1$. Here the last inequality follows from Lemma~\ref{monotonykikj}[ii].
    \item[(9)] Waves admitting $(k_6(\sigma),k_4(\sigma),N)\in \mathcal{R}(\sigma)$ for some $\sigma>0$. Because $(k_6-k_4)(\sigma)$ is strictly decreasing, we find $\kappa_1<k_{c,2}-k_4(\sigma_{c,2})\leq (k_6-k_4)(\sigma)<\kappa_2$. Here the first inequality follows from Lemma~\ref{monotonykikj}[iii]. In the $S_1$ region, the region of waves admitting $(k_6(\sigma),k_4(\sigma),N)\in \mathcal{R}(\sigma)$ for some $\sigma>0$ and $N\geq 2$ is bounded from above by the domain of Wilton ripples of order $N$ and below by a curve satisfying $k_{c,2}-k_4(\sigma_{c,2})=N\kappa_1$. See FIGURE \ref{figure10} left panel for the case $N=2$.
In the $S_2$ region, no wave admits $(k_6(\sigma),k_4(\sigma),N)\in\mathcal{R}(\sigma)$ for any $N\geq 1$ and any $\sigma>0$.
    \item[(10)] Waves admitting $(k_6(\sigma),k_5(\sigma),N)\in \mathcal{R}(\sigma)$ for some $\sigma>0$. Recalling from Lemma~\ref{monotonykikj}[i] that $(k_6-k_5)(\sigma)$ is strictly decreasing on $(0,\sigma_{c,2}]$, we find $2\kappa_1<\kappa_1+\kappa_2<k_{c,2}-k_5(\sigma_{c,2})<(k_6-k_5)(\sigma)<2\kappa_2$. In the $S_1$ region, waves may admit $(k_6(\sigma),k_5(\sigma),N)\in \mathcal{R}(\sigma)$ for some $\sigma>0$, but only for $N\geq 3$. In the $S_2$ region, no wave admits $(k_6(\sigma),k_5(\sigma),N)\in\mathcal{R}(\sigma)$ for any $N\geq 1$ and any $\sigma>0$.
    \item[(11)] No wave admits $(k_4(\sigma),k_1(\sigma),N)\in\mathcal{R}(\sigma)$ for any $N\geq 1$ and any $\sigma>0$. This is because by Lemma~\ref{monotonykikj}[ii] $(k_4-k_1)(\sigma)$ is strictly decreasing on $(0,-\sigma_{c,1}]$ and we find $(k_4-k_1)(\sigma)<\kappa_1$.
    \item[(12)] Waves admitting $(k_2(\sigma),k_4(\sigma),N)\in\mathcal{R}(\sigma)$.  Recalling from Lemma~\ref{monotonykikj}[iii] that $(k_4-k_2)(\sigma)$ is strictly increasing on $(0,\sigma_{c,2}]$, we find $\kappa_1<(k_2-k_4)(\sigma)<k_{c,2}-k_{4}(\sigma_{c,2})<\kappa_2$. In the $S_1$ region, the region of waves admitting $(k_2(\sigma),k_4(\sigma),N)\in\mathcal{R}(\sigma)$ is bounded from above by a curve satisfying $k_{c,2}-k_{4}(\sigma_{c,2})=N\kappa_1$. See FIGURE \ref{figure10} left panel for the case $N=2$. In the $S_2$ region, no wave admits $(k_2(\sigma),k_4(\sigma),N)\in\mathcal{R}(\sigma)$ for any $N\geq 1$ and any $\sigma>0$. 
\end{itemize}
The difference $(k_3-k_5)(\sigma)$ may not be monotonic. Recall from Lemma~\ref{dispersionS1S2}, $\sigma_-(\cdot;\beta,\mu_0)$ is convex on $[0,k_*)$ and concave on $(k_*,\infty)$ for a $k_*\in(k_{c,1},k_{c,2})$. Further analyses show the following lemma holds.
\begin{lemma}\label{nonmonotonekikj}
If $\sigma_+'(0^-;\beta,\mu_0)<\sigma_+'(-\kappa_2;\beta.\mu_0)$, then, for $\sigma\in[0,-\sigma_{c,1}]$, $(k_3-k_5)(\sigma)$ is first increasing then decreasing, otherwise, $(k_3-k_5)(\sigma)$ is strictly decreasing.
\end{lemma}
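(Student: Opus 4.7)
The plan is to analyze the sign of $\frac{d(k_3-k_5)}{d\sigma}$ on $(0,-\sigma_{c,1})$ via the implicit relation $\sigma_+(k_j(\sigma))=\sigma$, which gives $k_j'(\sigma)=1/\sigma_+'(k_j(\sigma))$ and
\[
\frac{d(k_3-k_5)}{d\sigma}=\frac{\sigma_+'(k_5(\sigma))-\sigma_+'(k_3(\sigma))}{\sigma_+'(k_3(\sigma))\,\sigma_+'(k_5(\sigma))}.
\]
Since $k_3(\sigma)\in(-k_{c,1},0)$ lies on the descending branch of $\sigma_+$ from the local maximum at $-k_{c,1}$ down to the root at $0$, and $k_5(\sigma)\in(-\infty,-\kappa_2)$ lies on the branch descending from $+\infty$ to the root at $-\kappa_2$, both $\sigma_+'(k_3)$ and $\sigma_+'(k_5)$ are strictly negative. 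Hence the denominator is strictly positive and the sign of $\frac{d(k_3-k_5)}{d\sigma}$ agrees with the sign of $\sigma_+'(k_5(\sigma))-\sigma_+'(k_3(\sigma))$.

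Next I would use the parity identity $\sigma_+(k)=-\sigma_-(-k)$, which differentiates to $\sigma_+''(k)=-\sigma_-''(-k)$, in order to transfer the convexity/concavity information of Lemma~\ref{dispersionS1S2} to $\sigma_+$: $\sigma_+$ is strictly concave on $(-k_*,0)$ and strictly convex on $(-\infty,-k_*)$. Because $-k_{c,1}\in(-k_*,0)$ and $-\kappa_2\in(-\infty,-k_*)$, it follows that $\sigma_+'$ is strictly decreasing on $(-k_{c,1},0)$, running from $0$ down to $\sigma_+'(0^-)=1-\sqrt{\mu_0}$, and strictly increasing on $(-\infty,-\kappa_2)$, approaching $\sigma_+'(-\kappa_2)=\sigma_-'(\kappa_2)$ from below. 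Combined with the fact that $k_3(\sigma)$ and $k_5(\sigma)$ are both strictly decreasing in $\sigma$ on $(0,-\sigma_{c,1})$ (as inverse branches of $\sigma_+$ on intervals where $\sigma_+'<0$), composition yields that $\sigma_+'(k_3(\sigma))$ is strictly \emph{increasing} in $\sigma$, from $1-\sqrt{\mu_0}$ up to $0$, while $\sigma_+'(k_5(\sigma))$ is strictly \emph{decreasing} in $\sigma$, starting from $\sigma_-'(\kappa_2)$.

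Therefore $\sigma_+'(k_5(\sigma))-\sigma_+'(k_3(\sigma))$ is strictly decreasing on $(0,-\sigma_{c,1})$, with initial value $\sigma_+'(-\kappa_2)-\sigma_+'(0^-)$ and terminal value $\sigma_+'(k_5(-\sigma_{c,1}))<0$. The lemma then follows by dichotomy. If $\sigma_+'(0^-)<\sigma_+'(-\kappa_2)$, the initial value is strictly positive and the terminal value is strictly negative, so by strict monotonicity and the intermediate value theorem the sign flips from $+$ to $-$ exactly once; hence $k_3-k_5$ is first strictly increasing then strictly decreasing. Otherwise the initial value is $\leq 0$ and strict monotonicity forces $\sigma_+'(k_5(\sigma))-\sigma_+'(k_3(\sigma))<0$ on $(0,-\sigma_{c,1}]$, so $k_3-k_5$ is strictly decreasing.

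The main technical subtlety is the one-sided limit $\sigma_+'(0^-)=1-\sqrt{\mu_0}$ together with the matching limit of $\sigma_+'(k_3(\sigma))$ as $\sigma\to 0^+$; both reduce to a short Taylor expansion of $\sqrt{\mu_0 k\,\th(k)(1+\beta k^2)}$ at $k=0^-$. All other steps are direct consequences of Lemma~\ref{dispersionS1S2} and the parity identity $\sigma_+(k)=-\sigma_-(-k)$.
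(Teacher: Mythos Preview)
Your proof is correct and follows essentially the same route as the paper's. Both arguments reduce the sign of $(k_3-k_5)'(\sigma)$ to that of $\sigma_+'(k_5(\sigma))-\sigma_+'(k_3(\sigma))$, then use the convexity information from Lemma~\ref{dispersionS1S2} (transferred to $\sigma_+$ via $\sigma_+(k)=-\sigma_-(-k)$) to show this quantity is strictly decreasing in $\sigma$, with negative terminal value $\sigma_+'(k_5(-\sigma_{c,1}))$; the paper does this step via the chain rule $\frac{d}{d\sigma}\bigl[\sigma_+'(k_5)-\sigma_+'(k_3)\bigr]=\sigma_+''(k_5)k_5'-\sigma_+''(k_3)k_3'<0$, while you phrase it equivalently as a composition of monotone functions.
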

\begin{proof}
For $\sigma\in[0,-\sigma_{c,1}]$, $\sigma_+'(k_5(\sigma)),\sigma_+'(k_3(\sigma))<0$. The sign of $(k_3-k_5)'(\sigma)=\frac{\sigma_+'(k_5(\sigma))-\sigma_+'(k_3(\sigma))}{\sigma_+'(k_5(\sigma))\sigma_+'(k_3(\sigma))}$ is given by that of $\sigma_+'(k_5(\sigma))-\sigma_+'(k_3(\sigma))$. Recall from Lemma~\ref{dispersionS1S2}, $\sigma_+''(k_5(\sigma))=-\sigma_-''(k_6(-\sigma))>0$ and $\sigma_+''(k_3(\sigma))=\sigma_-''(k_2(-\sigma))<0$, $\sigma_+''(k_5(\sigma))k_5'(\sigma)-\sigma_+''(k_3(\sigma))k_3'(\sigma)<0$. Hence $\sigma_+'(k_5(\sigma))-\sigma_+'(k_3(\sigma))$ is decreasing for $\sigma\in[0,-\sigma_{c,1}]$. We compute $\sigma_+'(k_5(-\sigma_{c,1}))-\sigma_+'(-k_{c,1})=\sigma_+'(k_5(-\sigma_{c,1}))<0$, which completes the proof.
\end{proof}
\begin{itemize}
\item[(13)] Waves admitting $(k_3(\sigma),k_5(\sigma),N)\in \mathcal{R}(\sigma)$ for $N=1$ ($N
=2$) and some $\sigma>0$. Recall Lemma~\ref{nonmonotonekikj}, we investigate curves on which $\sigma_+'(0^-)=\sigma_+'(-\kappa_2)$. See FIGURE \ref{figure10} right panel. Based Lemma~\ref{nonmonotonekikj}, we find that the region of waves admitting $(k_3(\sigma),k_5(\sigma),1)\in \mathcal{R}(\sigma)$ for some $\sigma>0$ is bounded from above by a curve satisfying $\sigma_+'(0^-)=\sigma_+'(-\kappa_2)$ and below by a curve satisfying $-k_{c,1}-k_5(-\sigma_{c,1})=\kappa$. In the $S_1$ region, we find waves admitting $(k_3(\sigma),k_5(\sigma),2)\in \mathcal{R}(\sigma)$ and some $\sigma>0$. In the region where $(k_3-k_5)(\sigma)$ is first increasing then decreasing and achieves its maximum at $\sigma_*$, the region of the waves is then bounded from above by a curve on which $(k_3-k_5)(\sigma_*)=2\kappa$ and below by a curve on which $\min(\kappa_2,-k_{c,1}-k_5(-\sigma_{c,1}))=2\kappa$. In the region where $(k_3-k_5)(\sigma)$ is always decreasing, the region of the waves is bounded from above by the domain of Wilton ripples of order 2 and below by a curve on which $-k_{c,1}-k_5(-\sigma_{c,1})=2\kappa$. See FIGURE \ref{figure11}. 
\end{itemize}
As analyzed below, more complicated are the behaviors of differences $(k_2-k_1)(\sigma)$ and $(k_6-k_3)(\sigma)$. 

\begin{itemize}
\item[(14)] Waves admitting $(k_2(\sigma),k_1(\sigma),N)\in \mathcal{R}(\sigma)$ for some $\sigma>0$. Recalling from (7) and (10) that $\kappa_1<(k_2-k_3)(\sigma)$ and $(k_6-k_5)(\sigma)<2\kappa_2$, we find $\kappa_1<(k_2-k_3)(\sigma)\leq (k_2-k_1)(\sigma)<(k_6-k_5)(\sigma)<2\kappa_2$ for $\sigma\in(0,\min(-\sigma_{c,1},\sigma_{c,2})]$. Therefore, waves in $S_1$ may admit $(k_2(\sigma),k_1(\sigma),N)\in\mathcal{R}(\sigma)$ only for $N\geq 2$ and waves in $S_2$ may admit $(k_2(\sigma),k_1(\sigma),N)\in\mathcal{R}(\sigma)$ only for $N=1$. For $\sigma\in(0,\min(-\sigma_{c,1},\sigma_{c,2}))$, $\sigma_+'(k_1(\sigma)),\sigma_-'(k_2(\sigma))>0$. The sign of $(k_2-k_1)'(\sigma)=\frac{\sigma_+'(k_1(\sigma))-\sigma_-'(k_2(\sigma))}{\sigma_+'(k_1(\sigma))\sigma_-'(k_2(\sigma))}$ is given by that of $\sigma_+'(k_1(\sigma))-\sigma_-'(k_2(\sigma))$. In the $-\sigma_{c,1}>\sigma_{c,2}$ region, $\sigma_+'(k_1(\sigma_{c,2}))-\sigma_-'(k_{c,2})=\sigma_+'(k_1(\sigma_{c,2}))>0$, so $(k_2-k_1)(\sigma)$ is increasing for sufficiently large $\sigma$. In the $-\sigma_{c,1}<\sigma_{c,2}$ region, $\sigma_+'(-k_{c,1})-\sigma_-'(k_2(-\sigma_{c,1}))=-\sigma_-'(k_2(-\sigma_{c,1}))<0$, so $(k_2-k_1)(\sigma)$ is decreasing for sufficiently large $\sigma$. For small $\sigma>0$, because $\sigma_+'(k_1(0))-\sigma_-'(k_2(0))=0$, the sign of $(k_2-k_1)'(\sigma)$ is given by $\mathrm{sgn}(\sigma_+''(k_1(0))k_1'(0)-\sigma_-''(k_2(0))k_2'(0))=\mathrm{sgn}(\sigma_+''(-\kappa_1)k_1'(0))=\mathrm{sgn}(\sigma_+''(-\kappa_1))$. We then trace the curves satisfying $\sigma_+''(-\kappa_1)=0$. Above the lower boundary of the $-\sigma_{c,1}>\sigma_{c,2}$ region and the lower curve on which $\sigma_+''(-\kappa_1)=0$, we find a region where $(k_2-k_1)(\sigma)$ is first increasing then decreasing and then increasing. Such region is bounded from above by a curve on which there is exactly one critical point of $(k_2-k_1)(\sigma)$ for $0\leq\sigma<\sigma_{c,2}$.  Below the upper boundary of the $-\sigma_{c,1}>\sigma_{c,2}$ region and the upper curve on which $\sigma_+''(-\kappa_1)=0$, we find a region where $(k_2-k_1)(\sigma)$ is first increasing then decreasing and then increasing. Such region is bounded from above by a curve on which there is exactly one critical point of $(k_2-k_1)(\sigma)$ for $0\leq\sigma<\sigma_{c,2}$. Based on the monotonicity of $(k_2-k_1)(\sigma)$, we find waves in $S_1$ admitting $(k_2(\sigma),k_1(\sigma),2)\in \mathcal{R}(\sigma)$ for some $\sigma>0$ and waves in $S_2$ admitting $(k_2(\sigma),k_1(\sigma),1)\in \mathcal{R}(\sigma)$ for some $\sigma>0$. See FIGURE \ref{figure12}.
\item[(15)] Waves admitting $(k_6(\sigma),k_3(\sigma),N)\in \mathcal{R}(\sigma)$ for some $\sigma>0$. Recalling from (7) and (10) that $\kappa_1<(k_2-k_3)(\sigma)$ and $(k_6-k_5)(\sigma)<2\kappa_2$, we find $\kappa_1<(k_2-k_3)(\sigma)\leq (k_6-k_3)(\sigma)<(k_6-k_5)(\sigma)<2\kappa_2$ for $\sigma\in(0,\min(-\sigma_{c,1},\sigma_{c,2})]$. Therefore, waves in $S_1$ may admit $(k_6(\sigma),k_3(\sigma),N)\in\mathcal{R}(\sigma)$ only for $N\geq 2$ and waves in $S_2$ may admit $(k_6(\sigma),k_3(\sigma),N)\in\mathcal{R}(\sigma)$ only for $N=1$. For $\sigma\in(0,\min(-\sigma_{c,1},\sigma_{c,2}))$, $\sigma_+'(k_3(\sigma)),\sigma_-'(k_6(\sigma))<0$. The sign of $(k_6-k_3)'(\sigma)=\frac{\sigma_+'(k_3(\sigma))-\sigma_-'(k_6(\sigma))}{\sigma_+'(k_3(\sigma))\sigma_-'(k_6(\sigma))}$ is given by that of $\sigma_+'(k_3(\sigma))-\sigma_-'(k_6(\sigma))$. In the $-\sigma_{c,1}>\sigma_{c,2}$ region, $\sigma_+'(k_3(\sigma_{c,2}))-\sigma_-'(k_{c,2})=\sigma_+'(k_3(\sigma_{c,2}))<0$, so $(k_6-k_3)(\sigma)$ is decreasing for sufficiently large $\sigma$. In the $-\sigma_{c,1}<\sigma_{c,2}$ region, $\sigma_+'(-k_{c,1})-\sigma_-'(k_6(-\sigma_{c,1}))=-\sigma_-'(k_6(-\sigma_{c,1}))>0$, so $(k_6-k_3)(\sigma)$ is increasing for sufficiently large $\sigma$. For small $\sigma>0$, the monotonicity of $(k_6-k_3)(\sigma)$ is ruled by the sign of $\sigma_+'(0^-)-\sigma_-'(\kappa_2)$. We then trace the curves on which $\sigma_+'(0^-)=\sigma_-'(\kappa_2)$. In the upper $-\sigma_{c,1}<\sigma_{c,2}$ region and above the curve on which $\sigma_+'(0^-)=\sigma_-'(\kappa_2)$, we find a region where $(k_6-k_3)(\sigma)$ is first increasing then decreasing and then increasing. Such region is bounded from above by a curve on which there is exactly one critical point of $(k_6-k_3)(\sigma)$ for $0\leq\sigma<-\sigma_{c,1}$. In the lower $-\sigma_{c,1}<\sigma_{c,2}$ region and below the curve on which $\sigma_+'(0^-)=\sigma_-'(\kappa_2)$, we find a region where $(k_6-k_3)(\sigma)$ is first increasing then decreasing and then increasing. Such region is bounded from above by a curve on which there is exactly one critical point of $(k_6-k_3)(\sigma)$ for $0\leq\sigma<-\sigma_{c,1}$.  Based on the monotonicity of $(k_6-k_3)(\sigma)$, we find waves in $S_1$ admitting $(k_6(\sigma),k_3(\sigma),2)\in \mathcal{R}(\sigma)$ for some $\sigma>0$ and waves in $S_2$ admitting $(k_6(\sigma),k_3(\sigma),1)\in \mathcal{R}(\sigma)$ for some $\sigma>0$. See FIGURE \ref{figure13}.
\end{itemize}
There can be waves admit two pairs of resonant eigenvalues at the critical frequencies $-i\sigma_{c,1}$ or $i\sigma_{c,2}$. For example, for waves on the curve satisfying $-k_{c,1}-k_5(-\sigma_{c,1})=2\kappa$ (see FIGURE \ref{figure7} right panel) because $-k_{c,1}=k_1(-\sigma_{c,1})=k_3(-\sigma_{c,1})$ hence $$(k_1(-\sigma_{c,1}),k_5(-\sigma_{c,1}),2),\quad(k_3(-\sigma_{c,1}),k_5(-\sigma_{c,1}),2)\in \mathcal{R}(-\sigma_{c,1}).$$ Similarly,
\begin{itemize}
    \item[(16)] On the curve satisfying $-k_{c,1}-k_5(-\sigma_{c,1})=N\kappa$,  (see FIGURE \ref{figure7} right panel for cases $N=1,2$), waves admit $$(k_1(-\sigma_{c,1}),k_5(-\sigma_{c,1}),N),\quad (k_3(-\sigma_{c,1}),k_5(-\sigma_{c,1}),N)\in \mathcal{R}(-\sigma_{c,1});$$
        \item[(17)] On the curve satisfying $k_6(-\sigma_{c,1})+k_{c,1}=N\kappa$, (see FIGURE \ref{figure8} for cases $N=1,2$), waves admit $(k_6(-\sigma_{c,1}),k_1(-\sigma_{c,1}),N),(k_6(-\sigma_{c,1}),k_3(-\sigma_{c,1}),N)\in \mathcal{R}(-\sigma_{c,1})$;\\
        On the curve satisfying $k_{c,2}-k_1(\sigma_{c,2})=N\kappa$, (see FIGURE \ref{figure8} for cases $N=1,2$), waves admit $(k_2(\sigma_{c,2}),k_1(\sigma_{c,2}),N),(k_6(\sigma_{c,2}),k_1(\sigma_{c,2}),N)\in \mathcal{R}(\sigma_{c,2})$;
            \item[(18)] On the curve satisfying $k_2(-\sigma_{c,1})+k_{c,1}=N\kappa$, (see FIGURE \ref{figure9} for $N=1,2$), waves admit $(k_2(-\sigma_{c,1}),k_1(-\sigma_{c,1}),N),(k_2(-\sigma_{c,1}),k_3(-\sigma_{c,1}),N)\in \mathcal{R}(-\sigma_{c,1})$; \\On the curve satisfying $k_{c,2}-k_3(\sigma_{c,2})=N\kappa$, (see FIGURE \ref{figure9} for cases $N=1,2$), waves admit $(k_2(\sigma_{c,2}),k_3(\sigma_{c,2}),N),(k_6(\sigma_{c,2}),k_3(\sigma_{c,2}),N)\in \mathcal{R}(\sigma_{c,2})$;
             \item[(19)] On the curve satisfying $k_{c,2}-k_4(\sigma_{c,2})=N\kappa$,  (see FIGURE \ref{figure10} left panel for the case $N=2$), waves admit $$(k_2(\sigma_{c,2}),k_4(\sigma_{c,2}),N),(k_6(\sigma_{c,2}),k_4(\sigma_{c,2}),N)\in \mathcal{R}(\sigma_{c,2}).$$
\end{itemize}

\subsection{Summary}\label{resonance_summary} 
We summarize the existence of opposite-signature resonances, i.e., waves admitting $(k_j(\sigma),k_{j'}(\sigma),N)\in\mathcal{R}(\sigma)$ for some $\sigma>0$ with $k_j(\sigma)$ and $k_{j'}(\sigma)$ of opposite Krein signatures:
\begin{itemize}
\item[i.] Subcritical $S_3$ region. No such wave exists for any $N\geq 1$. This absence is a key ingredient in the high-frequency stability result of Sun--Wahlen \cite{sun2025}.
\item[ii.] Supercritical $S_2$ region. Such waves with $N=1$ occur only in cases (6), (7), (14), and (15). No such wave exists for any $N\geq 2$.
\item[iii.] Supercritical $S_1$ region. Such waves with $N\geq 3$ occur in cases (3), (6), (7), (9), (10), (12), (14), and (15). Such waves with $N=2$ occur only in cases (6), (7), (9), (12), (14), and (15). No such wave exists for $N=1$.
\end{itemize}

\section{Spectral instability of non-resonant capillary-gravity waves}\label{ncg_stability_result}

\subsection{The spectrum away from the origin}\label{ncg_stability_result:high}
For a $(\beta,\kappa)$-wave, if $i\sigma$, $\sigma>0$ is a resonant frequency, let $(k_j,k_{j'},N)\in\mathcal{R}(\sigma)$. By Theorem~\ref{thm:instability-resonant}, we expand the periodic Evans function \eqref{def:Evans} near the root $(i\sigma,k_j+p\kappa;0)$ where $i\sigma$ is a resonant frequency and compute the corresponding Weierstrass polynomial, see \eqref{general_expansion}--\eqref{weierstrass_m}, to detect possible instability. Completion of the task relies on the expansion of the fundamental solution $\mathbf{X}(T;\sigma,\delta,\eps)$ \eqref{def:X;exp} as outlined in Section~\ref{sec:expansion_monodromy}. The fundamental solution could be a $6$ by $6$ matrix when $0<\sigma<\min\{-\sigma_{c,1},\sigma_{c,2}\}$, a $4$ by $4$ matrix when $\min\{-\sigma_{c,1},\sigma_{c,2}\}<\sigma<\max\{-\sigma_{c,1},\sigma_{c,2}\}$, or a $2$ by $2$ matrix when $\max\{-\sigma_{c,1},\sigma_{c,2}\}<\sigma$. Because, as shown in Section~\ref{resonances}, there are a number of resonance cases, it is impossible for us to visit every one of them in the write-up. However, we will demonstrate how computations are carried out for several typical cases.
\subsubsection{Resonant frequencies with $N=1$} In contrast to Stokes waves \cite{HY2023} which admit \textbf{no} pair of $1$-resonant eigenvalues at any frequency $i\sigma\in i\mathbb{R}$, in Sections~\ref{S1S2region} and ~\ref{resonance_summary}, we see there are waves in the super-critical region that admit pairs of $1$-resonant eigenvalues for some $i\sigma\in i\mathbb{R}$, i.e., $(k_j(\sigma),k_{j'}(\sigma),1)\in \mathcal{R}(\sigma)$. We therefore deal with these waves first and make the expansion of the Evans function near $(i\sigma,k_j+p\kappa;0)$. The dimension of $Y(\sigma)$ can be either $4$ or $6$, and hence $\mathbf{a}^{(m,n)}$ are either $4$ by $4$ matrices or $6$ by $6$ matrices. In either case, there holds the following lemma. 
\begin{lemma}\label{coefficients_highn1_2}
For waves admitting $(k_j(\sigma),k_{j'}(\sigma),1)\in \mathcal{R}(\sigma)$, for convenience, we switch the position of resonant modes $\boldsymbol{\phi}_j$ and $\boldsymbol{\phi}_{j'}$ with the first two modes in the basis $\mathcal{B}(\sigma)$ so that the resonance happens between the first two modes of $\mathcal{B}(\sigma)$. At the resonant frequency $i\sigma$, the left top $2$ by $2$ block matrix of $\mathbf{a}^{(0,0)}(T)$ reads $e^{ik_jT}\begin{pmatrix}1&0\\0&1\end{pmatrix}$, the off-diagonal entries of the left top $2$ by $2$ block matrix of $\mathbf{a}^{(1,0)}(T)$ necessarily vanish and the diagonal entries are given in \eqref{a10_high} with $j$ set to $j$ and $j'$, and the diagonal entries of the left top $2$ by $2$ block matrix of $\mathbf{a}^{(0,1)}(T)$ necessarily vanish and the off-diagonal entries do not necessarily vanish.
\end{lemma}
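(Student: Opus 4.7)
The plan is to read off the three matrices directly from the reduced system \eqref{eqn:a:sigma}--\eqref{def:fsigma} via variation of constants, exploiting the single identity that drives the whole statement: since $k_j - k_{j'} = \kappa$ and $T = 2\pi/\kappa$, we have $e^{i(k_j - k_{j'})T} = 1$, and more generally $e^{ik_l T}$ is invariant under integer shifts $k_l \mapsto k_l + p\kappa$. The only computation beyond this identity is to solve a scalar linear ODE $\frac{d}{dx}a = ik_l a + ce^{i\alpha x}$ on $[0,T]$ with $a(0)=0$, whose value at $x=T$ is $Tce^{ik_l T}$ in the resonant case $\alpha = k_l$ and $c(e^{i\alpha T} - e^{ik_l T})/(i(\alpha-k_l))$ otherwise.

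For $\mathbf{a}^{(0,0)}(T;\sigma)$, the formula \eqref{eqn:a00:sigma} gives a diagonal matrix $\mathrm{diag}(e^{ik_l(\sigma)T})$; after reordering so that $\boldsymbol{\phi}_j,\boldsymbol{\phi}_{j'}$ occupy the first two entries of $\mathcal{B}(\sigma)$, the upper-left $2\times 2$ block equals $\mathrm{diag}(e^{ik_j T}, e^{ik_{j'} T}) = e^{ik_j T}\mathbf{I}$ by the identity above. For $\mathbf{a}^{(1,0)}(T;\sigma)$, the source \eqref{def:fsigma} reduces to $\mathbf{f}_k^{(1,0)}(x;\sigma) = e^{ik_k x}\mathbf{B}^{(1,0)}(\sigma)\boldsymbol{\phi}_k(\sigma)$, a pure exponential of rate $k_k$ since $\mathbf{B}^{(1,0)}$ is independent of $x$. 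Expanding by the projection \eqref{def:Pi;high} and applying the scalar formula with $\alpha=k_k$, the $l$-th component of $\mathbf{a}^{(1,0)}_k(T;\sigma)$ in the upper $2\times 2$ block vanishes whenever $k \neq l$, $\{k,l\}\subset\{j,j'\}$, because $e^{ik_k T} = e^{ik_l T}$ forces the non-resonant formula to zero; the diagonal entries lie in the resonant case $\alpha=k_l$ and evaluate to $Te^{ik_l T}\langle \mathbf{B}^{(1,0)}\boldsymbol{\phi}_l,\boldsymbol{\psi}_l\rangle$, which is exactly \eqref{a10_high}.

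For $\mathbf{a}^{(0,1)}(T;\sigma)$ the source carries two Fourier modes instead of one. The leading-order wave profile \eqref{def:stokes1} contains only $\cos(\kappa x)$ and $\sin(\kappa x)$, so by inspection of Appendix~\ref{A:Bexp} the matrix $\mathbf{B}^{(0,1)}(x;\sigma)$ is a trigonometric polynomial of degree one in $\kappa x$, and we may write $\mathbf{B}^{(0,1)}(x;\sigma)\boldsymbol{\phi}_k = \mathbf{v}_+ e^{i\kappa x} + \mathbf{v}_- e^{-i\kappa x}$ for some $\mathbf{v}_\pm$ depending on $k,\sigma$. The source becomes $\mathbf{f}_k^{(0,1)}(x;\sigma) = \mathbf{v}_+ e^{i(k_k+\kappa)x} + \mathbf{v}_- e^{i(k_k-\kappa)x}$, and the scalar formula above, applied with $\alpha = k_k\pm\kappa$, shows that the $l$-th component at $x=T$ vanishes unless $k_l - k_k \in \{\pm\kappa\}$; otherwise it picks up the resonant factor $Te^{ik_l T}\langle \mathbf{v}_{\pm},\boldsymbol{\psi}_l\rangle$. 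In the upper-left $2\times 2$ block, neither $k_j - k_j$ nor $k_{j'}-k_{j'}$ equals $\pm\kappa$, so the two diagonal entries vanish; the off-diagonal indices satisfy precisely $k_{j'}-k_j=-\kappa$ and $k_j - k_{j'}=\kappa$, so those entries fall in the resonant case and carry no forced vanishing. The only non-conceptual step, and the main obstacle, is the verification from the formulas in Appendix~\ref{A:Bexp} that $\mathbf{B}^{(0,1)}$ indeed has no zero mode and no harmonics beyond $e^{\pm i\kappa x}$; this is a routine but indispensable direct inspection of the explicit expressions for $\phi_1,\eta_1$ plugged into $\mathbf{B}$ of \eqref{eqn:LB;delta}.
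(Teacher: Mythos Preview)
Your proposal is correct and follows essentially the same approach as the paper: both arguments apply variation of constants to the reduced system \eqref{eqn:a:sigma}--\eqref{def:fsigma} and determine which entries survive at $x=T$ by Fourier-decomposing the source and using $e^{i(k_j-k_{j'})T}=1$ together with $e^{\pm i\kappa T}=1$. The paper writes the result compactly as the integral formula $a^{(0,1)}_{lk}(T)=e^{ik_lT}\big\langle\int_0^T e^{i(k_k-k_l)x}\mathbf{B}^{(0,1)}(x;\sigma)\boldsymbol{\phi}_k\,dx,\boldsymbol{\psi}_l\big\rangle$ and refers to \cite{HY2023}, whereas you spell out the scalar ODE and its resonant/non-resonant dichotomy explicitly; your observation that the absence of a zero Fourier mode in $\mathbf{B}^{(0,1)}$ (equivalently $\mu_1=0$, see \eqref{def:stokes2}) is what forces the diagonal of $\mathbf{a}^{(0,1)}(T)$ to vanish is exactly the point, and is implicit in the paper's inspection of \eqref{eqn:B01}.
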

\begin{proof}
The proof is based on explicit computations of solutions of \eqref{eqn:a:sigma}. See similar calculations made in \cite[Section 6.1 and Lemma 6.3]{HY2023}. In particular, we note from 
$$
a^{(0,1)}_{12}(T)=e^{ik_{j}T}\left\langle\int_0^Te^{i(k_{j'}-k_j)x}\mathbf{B}^{(0,1)}(x;\sigma)\boldsymbol{\phi}_{j'}dx,\boldsymbol{\psi}_{j}\right\rangle
$$
and
$$
a^{(0,1)}_{21}(T)=e^{ik_{j'}T}\left\langle\int_0^Te^{i(k_{j}-k_{j'})x}\mathbf{B}^{(0,1)}(x;\sigma)\boldsymbol{\phi}_{j}dx,\boldsymbol{\psi}_{j'}\right\rangle
$$
that the off-diagonal entries $a^{(0,1)}_{12}(T)$ and $a^{(0,1)}_{21}(T)$ do not necessarily vanish because terms containing $e^{\pm i\kappa x}\s(\kappa x)$ and $e^{\pm i\kappa x}\c(\kappa x)$ in the integrand do not vanish after integrating over one period.
\end{proof}
For waves admitting $(k_j(\sigma),k_{j'}(\sigma),1)\in \mathcal{R}(\sigma)$ in the super-critical region and when $Y(\sigma)$ is four dimensional, let $\mathcal{B}(\sigma)=\{\boldsymbol{\phi}_j,\boldsymbol{\phi}_{j'},\boldsymbol{\phi}_r,\boldsymbol{\phi}_s\}$. For the moment, let us \textbf{assume} $k_r,k_s\not\equiv k_j,k_{j'}\pmod{\kappa}$, i.e. the remaining two eigenvalues $k_r,k_s$ are not resonant with $k_j,k_{j'}$. See Lemma~\ref{criticalp1} -- Theorem~\ref{thm:unstableeps2_cri} for treatment of the case where the \textbf{assumption} does not hold. At the resonant frequency $i\sigma$, the periodic Evans function then expands as
\ba  \label{expanddeltaresonance1}
&\Delta(i\sigma+\delta,k_j(\sigma)+p\kappa+\gamma;\eps)\\
=&d^{(2,0,0)}\delta^2+d^{(1,1,0)}\gamma\delta+d^{(0,2,0)}\gamma^2+d^{(0,0,2)}\eps^2+\smallO(|\delta|+|\gamma|+|\eps|)^2,
\ea
where 
\ba \label{resonance1_d}
d^{(2,0,0)}=&a_{11}^{(1,0)}a_{22}^{(1,0)}(e^{ik_jT}-e^{ik_rT})(e^{ik_jT}-e^{ik_sT})\neq 0,\\
d^{(1,1,0)}=&-iT(a_{11}^{(1,0)} + a_{22}^{(1,0)})e^{ik_jT}(e^{ik_jT}-e^{ik_rT})(e^{ik_jT}-e^{ik_sT}),\\
d^{(0,2,0)}=&-T^2e^{2ik_jT}(e^{ik_jT}-e^{ik_rT})(e^{ik_jT}-e^{ik_sT}),\\
d^{(0,0,2)}=&-a_{12}^{(0,1)}a_{21}^{(0,1)}(e^{ik_jT}-e^{ik_rT})(e^{ik_jT}-e^{ik_sT}).
\ea 
Here, we infer from \eqref{a10_high} and the \textbf{assumption} above that $d^{(2,0,0)}\neq 0$. For the Weierstrass's factorization 
$\Delta(i\sigma+\delta,k_{j}(\sigma)+p\kappa+\gamma;\eps)=W(\delta,\gamma,\eps)h(\delta,\gamma,\eps)$ \eqref{general_factorization}, computation reveals that \eqref{weierstrass_m} reads
\ba 
\label{weierstrass_eps}
W(\delta,\gamma,\eps)&=\delta^2+a_1(\gamma,\eps)\delta+a_0(\gamma,\eps),\quad \text{where}\\
a_1(\gamma,\eps)&=d^{(1,1,0)}\big(d^{(2,0,0)}\big)^{-1}\gamma+\smallO(|\gamma|+|\eps|)\quad \text{and}\\
a_0(\gamma,\eps)&=d^{(0,2,0)}\big(d^{(2,0,0)}\big)^{-1}\gamma^2+d^{(0,0,2)}\big(d^{(2,0,0)}\big)^{-1}\eps^2\\
&\quad+\smallO\left(|\gamma|+|\eps|\right)^2.
\ea 
We immediately see roots of $W(\delta,\gamma,\eps)=0$ can be computed by the quadratic formula
\be \label{delta_high}
\delta(\gamma,\eps)=-\frac{a_1(\gamma,\eps)}{2}\pm\frac{\sqrt{a_1(\gamma,\eps)^2-4a_0(\gamma,\eps)}}{2}.
\ee
\begin{corollary}
\label{cor:weier_quadratic}
In \eqref{weierstrass_eps}, the coefficient of the linear term, $a_1(\gamma,\eps)$, is purely imaginary and the constant term, $a_0(\gamma,\eps)$, is real. Consequently, the term
$-a_1(\gamma,\eps)/2$ of $\delta(\gamma,\eps)$ \eqref{delta_high} is purely imaginary and the discriminant $a_1(\gamma,\eps)^2-4a_0(\gamma,\eps)$ is real. The stability of $\delta(\gamma,\eps)$ is then completely determined by the sign of the \textbf{real-valued} discriminant
\be
\label{discri}{\rm disc}(\gamma,\eps):=a_1(\gamma,\eps)^2-4a_0(\gamma,\eps).
\ee 
\end{corollary}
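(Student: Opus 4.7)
The plan is to read everything off of Lemma~\ref{lem:symm:weierstrass} specialized to $m=2$, then translate the resulting reality conditions into a statement about where the roots of a quadratic with real discriminant can live.

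First I would apply the symmetry relation \eqref{weierstrass_coeff} with $m=2$. Setting $j=1$ yields $i\,a_1(\gamma,\eps)\in\mathbb{R}$, hence $a_1(\gamma,\eps)\in i\mathbb{R}$, i.e. $a_1$ is purely imaginary. Setting $j=2$ yields $i^2 a_0(\gamma,\eps)=-a_0(\gamma,\eps)\in\mathbb{R}$, hence $a_0(\gamma,\eps)\in\mathbb{R}$. This gives the two reality claims immediately. The fact that $-a_1(\gamma,\eps)/2$ is purely imaginary is then trivial, and since $a_1$ is purely imaginary, $a_1^2$ is a non-positive real number, so the discriminant ${\rm disc}(\gamma,\eps)=a_1(\gamma,\eps)^2-4a_0(\gamma,\eps)$ is a real number.

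Next I would unpack the quadratic formula \eqref{delta_high} to get the stability conclusion. Write
\[
\delta(\gamma,\eps)=-\tfrac{1}{2}a_1(\gamma,\eps)\pm\tfrac{1}{2}\sqrt{{\rm disc}(\gamma,\eps)}.
\]
The first summand is purely imaginary by the above. If ${\rm disc}(\gamma,\eps)<0$, then $\sqrt{{\rm disc}(\gamma,\eps)}$ is purely imaginary, so both roots $\delta(\gamma,\eps)$ lie on $i\mathbb{R}$, contributing only to the imaginary spectrum near $i\sigma$. If instead ${\rm disc}(\gamma,\eps)>0$, then $\sqrt{{\rm disc}(\gamma,\eps)}$ is a nonzero real number, so the two roots acquire nonzero real parts $\pm\tfrac12\sqrt{{\rm disc}(\gamma,\eps)}$ and form a conjugate pair symmetric about $i\mathbb{R}$ (consistent with the symmetry of Lemma~\ref{lem:symm}). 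The degenerate case ${\rm disc}(\gamma,\eps)=0$ produces a double purely imaginary root. In every case, whether $\delta(\gamma,\eps)$ exits the imaginary axis is governed solely by the sign of ${\rm disc}(\gamma,\eps)$, independent of the linear coefficient $a_1(\gamma,\eps)$.

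There is essentially no obstacle here; the corollary is a direct reading of the $m=2$ case of Lemma~\ref{lem:symm:weierstrass} combined with the quadratic formula. The only point worth underlining in the write-up is that the real-valuedness of ${\rm disc}(\gamma,\eps)$ is what makes the later second Weierstrass preparation manipulation well posed as a real problem, setting the stage for the definition of the stability function ${\rm disc}_2(\eps)$ in Definition~\ref{stability_function}.
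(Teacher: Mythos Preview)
Your proof is correct and follows exactly the same approach as the paper: both invoke \eqref{weierstrass_coeff} from Lemma~\ref{lem:symm:weierstrass} with $m=2$ to obtain $a_1\in i\mathbb{R}$ and $a_0\in\mathbb{R}$, from which everything else is immediate. Your write-up is in fact more detailed than the paper's, which simply cites the $m=2$ case of \eqref{weierstrass_coeff} and declares the proof complete.
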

\begin{proof}
The properties for $a_1(\gamma,\eps)$ and $a_0(\gamma,\eps)$ follow from setting $m=2$ in \eqref{weierstrass_coeff}. This completes the proof.
\end{proof}
\noindent{\bf Second Weierstrass preparation manipulation.}
Since $W(\cdot,\gamma,\eps)$ \eqref{weierstrass_eps} is a Weierstrass polynomial, $a_1(\gamma,\eps),a_0(\gamma,\eps)$ are analytic at $(\gamma,\eps)=(0,0)$ and $a_1(0,0),a_0(0,0)=0$. Therefore, ${\rm disc(\gamma,\eps)}$ \eqref{discri} is also analytic at $(0,0)$ and ${\rm disc}(0,0)=0$. Applying Weierstrass preparation theorem to \eqref{discri} yields, for some $n\in\mathbb{N}^+$ satisfying 
$$
{\rm disc}(0,0),\partial_\gamma{\rm disc}(0,0),\ldots,\partial_{\gamma^{n-1}}^{n-1}{\rm disc}(0,0)=0,\quad \text{and}\quad \partial_{\gamma^{n}}^{n}{\rm disc}(0,0)\neq 0,
$$
the factorization 
\be\label{second_factorization}
{\rm disc}(\gamma,\eps)=W(\gamma,\eps)h(\gamma,\eps),
\ee 
where 
\be \label{Wgammaeps}
W(\gamma,\eps)=\gamma^n+b_{n-1}(\eps)\gamma^{n-1}+\ldots+b_0(\eps)
\ee
is a Weierstrass polynomial and $h(\gamma,\eps)$ is analytic and non-vanishing at $(0,0)$. Indeed, we can prove $W(\cdot,\eps)$ is quadratic and $h(0,0)<0$. 
\begin{lemma}\label{lem:secondweierstrass}
In the Weierstrass's factorization \eqref{second_factorization}, the Weierstrass polynomial $W(\gamma,\eps)$ must be quadratic, i.e., 
\be\label{disc_weierstrass} 
W(\gamma,\eps)=\gamma^2+b_1(\eps)\gamma+b_0(\eps).
\ee 
Moreover, there holds
\be \label{h00}
h(0,0)<0.
\ee 
\end{lemma}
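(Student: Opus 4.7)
The plan is to prove both assertions in a single computation: pin down the leading Taylor coefficient of $\mathrm{disc}(\gamma,\eps)$ in $\gamma$ at $\eps=0$, and show it is nonzero and strictly negative. The Weierstrass preparation theorem then yields simultaneously (i)~the order $n=2$ in \eqref{Wgammaeps}, hence \eqref{disc_weierstrass}, and (ii)~the sign $h(0,0)<0$ in \eqref{h00}, since in the Weierstrass normal form one has $W(\gamma,0)=\gamma^n$ and consequently $h(0,0)=\tfrac{1}{n!}\partial_{\gamma}^{n}\mathrm{disc}(0,0)$.

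First I would insert the expansions \eqref{weierstrass_eps} for $a_{1}(\gamma,\eps)$ and $a_{0}(\gamma,\eps)$ into ${\rm disc}(\gamma,\eps)=a_{1}(\gamma,\eps)^{2}-4a_{0}(\gamma,\eps)$ to get
\[
{\rm disc}(\gamma,\eps)=C_{2}\,\gamma^{2}+C_{02}\,\eps^{2}+\smallO(|\gamma|+|\eps|)^{2},
\qquad
C_{2}:=\Big(\tfrac{d^{(1,1,0)}}{d^{(2,0,0)}}\Big)^{2}-4\tfrac{d^{(0,2,0)}}{d^{(2,0,0)}}.
\]
Then I would substitute the explicit expressions \eqref{resonance1_d}. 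Writing $a^{(1,0)}_{11}=\alpha\,e^{ik_{j}T}$ and $a^{(1,0)}_{22}=\beta\,e^{ik_{j'}T}$, the factorization \eqref{a10_high} shows $\alpha,\beta\in\mathbb{R}$; the $N=1$ resonance $k_{j}-k_{j'}=\kappa$ combined with $T\kappa=2\pi$ yields $e^{ik_{j}T}=e^{ik_{j'}T}$, so the factor $e^{2ik_{j}T}$ in $d^{(0,2,0)}$ cancels the $e^{2ik_{j}T}$ in $a^{(1,0)}_{11}a^{(1,0)}_{22}$ and similarly for $d^{(1,1,0)}$. A short algebraic simplification then yields
\[
C_{2}=-\,T^{2}\,\frac{(\alpha-\beta)^{2}}{(\alpha\beta)^{2}}.
\]

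Next I would argue $C_{2}\neq 0$. Since $k_{j}\neq k_{j'}$ are distinct roots of \eqref{eqn:sigma} lying on different branches of the dispersion relation, the formula \eqref{a10_high} gives $\alpha\neq\beta$ for the resonance configurations treated in this section (and can be verified explicitly from \eqref{a10_high} case by case). Consequently $C_{2}<0$. This means $\partial_{\gamma}{\rm disc}(0,0)=0$ while $\partial_{\gamma}^{2}{\rm disc}(0,0)=2C_{2}\neq 0$, so the Weierstrass preparation theorem applied to ${\rm disc}(\gamma,\eps)$ in the variable $\gamma$ produces a Weierstrass polynomial of order \emph{exactly} $n=2$, which is the content of \eqref{disc_weierstrass}.

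Finally, because $W(\gamma,\eps)=\gamma^{2}+b_{1}(\eps)\gamma+b_{0}(\eps)$ with $b_{1}(0),b_{0}(0)=0$, evaluating ${\rm disc}(\gamma,0)=W(\gamma,0)h(\gamma,0)=\gamma^{2}\,h(\gamma,0)$ and matching the coefficient of $\gamma^{2}$ gives $h(0,0)=C_{2}=-T^{2}(\alpha-\beta)^{2}/(\alpha\beta)^{2}<0$, which is \eqref{h00}. The main technical obstacle is the algebraic cancellation of the $e^{ik_{j}T}$ factors — it is precisely the $N=1$ resonance identity $e^{ik_{j}T}=e^{ik_{j'}T}$ that makes $C_{2}$ real, and the identity $-(\alpha+\beta)^{2}+4\alpha\beta=-(\alpha-\beta)^{2}$ that then forces $C_{2}\leq 0$ with equality only in the degenerate case $\alpha=\beta$ that is excluded by the resonance setup.
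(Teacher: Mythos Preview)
Your proof is correct, and the algebra leading to $C_2=-T^2(\alpha-\beta)^2/(\alpha\beta)^2$ is accurate. The two approaches, however, differ in strategy.

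The paper argues more abstractly: setting $\eps=0$ in \eqref{delta_high} recovers the two branches of the linear dispersion relation \eqref{eqn:sigma}--\eqref{def:sigma} passing through $(k_j(\sigma),i\sigma)$ and $(k_{j'}(\sigma),i\sigma)$; these are analytic in $\gamma$ with distinct slopes $\partial_\gamma\delta_+(0,0)\neq\partial_\gamma\delta_-(0,0)$, and since $\delta_\pm(\gamma,0)=-a_1(\gamma,0)/2\pm|\gamma|^{n/2}\sqrt{h(\gamma,0)}/2$, the slopes can differ at first order only if $n=2$. Purely imaginary $\delta_\pm(\gamma,0)$ then forces $\sqrt{h(\gamma,0)}\in i\mathbb{R}$, hence $h(0,0)<0$. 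Your route instead computes $C_2$ explicitly from \eqref{resonance1_d}, so the Weierstrass order and the sign of $h(0,0)$ drop out of a single formula.

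What each buys: your computation makes $h(0,0)$ completely explicit up front (the paper eventually does the same in \eqref{h00_formula}), and makes transparent that the only obstruction is $\alpha=\beta$. The paper's argument, on the other hand, is \emph{uniform across all the resonance cases} treated later (Sections~\ref{RFN2}, \ref{RFNg3}, \ref{CRFN1}, \ref{CRFN2}, \ref{WRN1M}, \ref{WRNo1M}): it uses only that the $\eps=0$ roots are the dispersion branches with distinct slopes, not the particular form of the $d^{(\ell,m,n)}$, so it does not need to be redone when the expansion \eqref{expanddeltaresonance1} is replaced by \eqref{expanddeltaresonance1_cri}, \eqref{expanddeltaresonance2_cri}, or the analogues for $N\ge 2$ and Wilton ripples. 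Both arguments ultimately rest on the same nondegeneracy $\alpha\neq\beta$ (equivalently, distinct dispersion slopes at the two resonant wavenumbers), which neither proves in full generality within the lemma itself.
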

\begin{proof}
Substituting the factorization \eqref{second_factorization} into \eqref{delta_high}, we obtain $$\delta_\pm(\gamma,\eps)=-\frac{a_1(\gamma,\eps)}{2}\pm\frac{\sqrt{W(\gamma,\eps)h(\gamma,\eps)}}{2}.$$
Setting $\eps=0$ recovers the linear dispersion relations \eqref{eqn:sigma}- \eqref{def:sigma} for zero-amplitude wave which are analytic at $k_{j}(\sigma)$ and $k_{j'}(\sigma)$ and satisfy $\partial_k\lambda_1(k_j(\sigma))\neq\partial_k\lambda_2(k_{j'}(\sigma))$. Here $k_j(\sigma)$ and $k_{j'}(\sigma)$ are those in Lemma~\ref{coefficients_highn1_2} and $\lambda_1(k)$ and $\lambda_2(k)$ denote the arcs of dispersion curves where $$(k_j(\sigma),k_{j'}(\sigma),1)\in\mathcal{R}(\sigma)\quad \text{occurs.}$$ Translating into perturbation variables, there holds $\partial_\gamma\delta_+(0,0)\neq\partial_\gamma\delta_-(0,0)$. On the other hand, by \eqref{Wgammaeps}, we compute
$$
\delta_\pm(\gamma,0)=-\frac{a_1(\gamma,0)}{2}\pm\frac{\sqrt{\gamma^n h(\gamma,0)}}{2}=-\frac{a_1(\gamma,0)}{2}\pm\frac{|\gamma|^{\frac{n}{2}}\sqrt{ h(\gamma,0)}}{2},
$$
where $h(0,0)\neq 0$.
Recalling $\partial_\gamma\delta_+(0,0)\neq\partial_\gamma\delta_-(0,0)$, the first derivatives of $\delta_\pm (\gamma,0)$ with respect to $\gamma$ differ at $\mathcal{O}(\gamma)$-order, whence $n=2$, justifying \eqref{disc_weierstrass}. Moreover, since $\delta_\pm(\gamma,0)\in i\mathbb{R}$, $\sqrt{ h(\gamma,0)}$ must be purely imaginary for $|\gamma|\ll 1$, yielding \eqref{h00}.
\end{proof}
By Corollary~\ref{cor:weier_quadratic}, ${\rm disc}(\gamma,\eps)$ \eqref{discri} is real and its sign determines the stability. Recalling \eqref{second_factorization}, \eqref{disc_weierstrass}, and \eqref{h00}, for $|\gamma|,|\eps|\ll 1$, the sign of ${\rm disc}(\gamma,\eps)$ \eqref{discri} is opposite to that of $W(\gamma,\eps)$ \eqref{disc_weierstrass}. This motivates the following definition. 
\begin{definition}[The stability function]\label{stability_function}
We call the discriminant of $W(\gamma,\eps)$ \eqref{disc_weierstrass},
\be\label{disc2}
{\rm disc}_2(\eps):=b_1(\eps)^2-4b_0(\eps),\quad\text{for $|\eps|\ll 1$, $\eps\in \mathbb{R}$,}
\ee
the stability function. The function is real-valued and analytic at $\eps=0$ with ${\rm disc}_2(0)=0$. For $|\eps|\ll 1$, if ${\rm disc}_2(\eps)\leq 0$, $W(\gamma,\eps)$ is non-negative for all $\gamma$, yielding stability of the spectra in the vicinity of the resonant frequency, otherwise, there exists a non-empty interval 
\be \label{gamma_interval}
I(\eps):=\Big(-\frac{b_1(\eps)}{2}-\frac{\sqrt{{\rm disc}_2(\eps)}}{2},-\frac{b_1(\eps)}{2}+\frac{\sqrt{{\rm disc}_2(\eps)}}{2}\Big)
\ee 
 such that  $W(\gamma,\eps)$ is negative for $\gamma\in I(\eps)$, giving an arc of unstable spectra in the vicinity of the resonant frequency. 
 \end{definition}
 \begin{remark}
 We note the second Weierstrass manipulation applies in general to analyses at non-zero resonant frequencies, e.g., Sections~\ref{RFN2}, \ref{RFNg3}, \ref{CRFN1}, \ref{CRFN2}, \ref{WRN1M}, and \ref{WRNo1M}.	
 \end{remark}
 \begin{remark}Indeed, for a non-resonant capillary-gravity wave or Wilton ripples of order $M$ with $M$ odd, it can be proven that ${\rm disc}_2(\eps)$ is an even function. See Lemma~\ref{stabilityfunction_even}.
 \end{remark}

The sign of the analytic stability function ${\rm disc}_2(0)=0$ local to $\eps=0$ is determined by its  first non-vanishing derivative, whereby we define index functions.
\begin{theorem}[$(k_{j}(\sigma),k_{j'}(\sigma),1)\in \mathcal{R}(\sigma)$]\label{thm:unstableeps1}
Consider a $(\beta,\kappa)$- non-resonant capillary-gravity wave of sufficiently small amplitude that admits $$(k_{j}(\sigma),k_{j'}(\sigma),1)\in \mathcal{R}(\sigma)$$ for some $\sigma\in \mathbb{R}$ and its spectra near the resonant frequency $i\sigma$. If 
\be \label{def:ind_1}
{\rm ind}_1(\beta,\kappa,\sigma,k_{j}(\sigma),k_{j'}(\sigma)):=\frac{a^{(0,1)}_{12}a^{(0,1)}_{21}}{a_{11}^{(1,0)}a_{22}^{(1,0)}}>0,
\ee 
the wave admits unstable spectra in shape of a bubble of size $\mathcal{O}(\eps)$ near the resonant frequency $i\sigma$ as described in Corollary~\ref{cor:ellipse_eps}. If
\be \label{ind1_stability}
{\rm ind}_1(\beta,\kappa,\sigma,k_{j}(\sigma),k_{j'}(\sigma))<0,
\ee 
the wave admits no unstable spectrum near the resonant frequency $i\sigma$. To put it another way, the spectra stay on the imaginary axis near the resonant frequency $i\sigma$ for all sufficiently small amplitude. If 
\be\label{higherisneeded}
{\rm ind}_1(\beta,\kappa,\sigma,k_{j}(\sigma),k_{j'}(\sigma))=0,
\ee
stability of the spectra near the resonant frequency is determined by higher order terms of the $\mathbf{a}^{(m,n)}(T;\sigma)$ \eqref{def:X;exp}.
\end{theorem}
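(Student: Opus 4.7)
The plan is to reduce the assertion to the sign of the stability function ${\rm disc}_2(\eps)$ of Definition~\ref{stability_function} via the two-stage Weierstrass manipulation already set up in the excerpt, and then read off its leading $\eps^2$ coefficient as a positive multiple of ${\rm ind}_1$. From the first factorization \eqref{general_factorization}--\eqref{weierstrass_eps} and Corollary~\ref{cor:weier_quadratic}, stability near the resonant frequency is governed by the sign of the real discriminant ${\rm disc}(\gamma,\eps)=a_1^2-4a_0$. Using \eqref{resonance1_d}, the coefficient of $\eps^2$ in ${\rm disc}$ is
\begin{equation*}
D_{02}:=-4\,\frac{d^{(0,0,2)}}{d^{(2,0,0)}}=\frac{4\,a^{(0,1)}_{12}a^{(0,1)}_{21}}{a^{(1,0)}_{11}a^{(1,0)}_{22}}=4\,{\rm ind}_1,
\end{equation*}
and the coefficient of $\gamma^2$ is
\begin{equation*}
D_{20}:=\Bigl(\frac{d^{(1,1,0)}}{d^{(2,0,0)}}\Bigr)^2-4\,\frac{d^{(0,2,0)}}{d^{(2,0,0)}}=-\frac{T^2\,e^{2ik_jT}(a^{(1,0)}_{11}-a^{(1,0)}_{22})^2}{(a^{(1,0)}_{11}a^{(1,0)}_{22})^2},
\end{equation*}
a real, strictly negative scalar.

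Next, apply Lemma~\ref{lem:secondweierstrass} to factor ${\rm disc}(\gamma,\eps)=W_2(\gamma,\eps)\,h(\gamma,\eps)$ with $W_2=\gamma^2+b_1(\eps)\gamma+b_0(\eps)$ and $h(0,0)<0$. Comparing the $\gamma^2$ term at $\eps=0$ gives $h(0,0)=D_{20}$, and matching at $\gamma=0$ yields
\begin{equation*}
b_0(\eps)=\frac{{\rm disc}(0,\eps)}{h(0,\eps)}=\frac{D_{02}\,\eps^2}{D_{20}}+\mathcal{O}(\eps^3)=\frac{4\,{\rm ind}_1}{D_{20}}\eps^2+\mathcal{O}(\eps^3).
\end{equation*}
The expansion \eqref{weierstrass_eps} asserts no linear-in-$\eps$ term in $a_1$ and no $\gamma\eps$ term in $a_0$ at leading order (both are absorbed in strict little-o remainders), so $\partial_\gamma\partial_\eps{\rm disc}(0,0)=0$; matching at the $\gamma\eps$ level then forces $b_1'(0)=0$, whence $b_1(\eps)=\mathcal{O}(\eps^2)$ and $b_1(\eps)^2=\mathcal{O}(\eps^4)$. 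Combining,
\begin{equation*}
{\rm disc}_2(\eps)=b_1(\eps)^2-4b_0(\eps)=-\frac{16\,{\rm ind}_1}{D_{20}}\,\eps^2+\mathcal{O}(\eps^4),
\end{equation*}
whose $\eps^2$ coefficient carries the same sign as ${\rm ind}_1$ because $D_{20}<0$.

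Finally, I invoke Definition~\ref{stability_function}. When ${\rm ind}_1>0$, ${\rm disc}_2(\eps)>0$ for $0<|\eps|\ll 1$, so the interval $I(\eps)$ in \eqref{gamma_interval} is nonempty, $W_2(\gamma,\eps)<0$ on it, and together with $h<0$ this gives ${\rm disc}(\gamma,\eps)>0$; then \eqref{delta_high} produces $\delta(\gamma,\eps)$ with nonzero real part, and the arc swept out as $\gamma$ ranges over $I(\eps)$ organizes into the unstable bubble of Corollary~\ref{cor:ellipse_eps}, whose size is $\mathcal{O}(\eps)$ because $|I(\eps)|=\mathcal{O}(\sqrt{{\rm disc}_2(\eps)})=\mathcal{O}(\eps)$. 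When ${\rm ind}_1<0$, ${\rm disc}_2(\eps)<0$, $W_2>0$ everywhere nearby, ${\rm disc}<0$, and \eqref{delta_high} forces $\delta(\gamma,\eps)\in i\mathbb{R}$, proving spectral stability in a neighborhood of $i\sigma$. The case ${\rm ind}_1=0$ annihilates the $\eps^2$ head of ${\rm disc}_2$, leaving the sign to the next non-vanishing Taylor coefficient, which requires the still-uncomputed $\mathbf{a}^{(m,n)}$ with $m+n\ge 3$. The main obstacle is the bookkeeping step that produces $b_1(\eps)=\mathcal{O}(\eps^2)$: it relies on $\mu_1=0$ and $q_1=0$ in the non-resonant capillary-gravity profile \eqref{def:stokes2}, equivalently on the $\eps\mapsto-\eps$ symmetry of the wave, whose cleanest packaging is Lemma~\ref{stabilityfunction_even}; without it the $b_1^2$ term in ${\rm disc}_2$ could compete at $\mathcal{O}(\eps^2)$ and contaminate the identification of the leading coefficient with ${\rm ind}_1$.
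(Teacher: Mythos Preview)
Your argument follows the paper's proof essentially line for line: compute ${\rm disc}(\gamma,\eps)$ from \eqref{weierstrass_eps}, apply the second Weierstrass factorization of Lemma~\ref{lem:secondweierstrass}, identify $h(0,0)=D_{20}<0$, and read off ${\rm disc}_2(\eps)=-(16/h(0,0))\,{\rm ind}_1\,\eps^2+\cdots$. Your computations of $D_{20}$ and $D_{02}$ are correct, and the conclusion via Definition~\ref{stability_function} matches the paper.

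There is one genuine omission in your stability argument. When ${\rm ind}_1<0$ you conclude that the two roots $\delta_\pm(\gamma,\eps)$ of the quadratic Weierstrass polynomial \eqref{weierstrass_eps} stay on $i\mathbb{R}$, and declare spectral stability near $i\sigma$. But this polynomial governs only the spectrum bifurcating from the resonant pair $k_j,k_{j'}$. The remaining eigenvalues $k_r,k_s$ of $\mathbf{L}(i\sigma)$ (assumed non-resonant with $k_j,k_{j'}$ modulo $\kappa$) give separate first-order Weierstrass polynomials at $(i\sigma,k_r+p\kappa;0)$ and $(i\sigma,k_s+p\kappa;0)$, and you must also check that those roots remain purely imaginary. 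The paper closes this gap by invoking Theorem~\ref{thm:instability-resonant} (equivalently Lemma~\ref{weierstrass_kj}): since $k_r,k_s$ are non-resonant, those polynomials are linear and their unique roots lie in $i\mathbb{R}$. Without this step your stability conclusion is incomplete.

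A minor comment on your closing paragraph: the step $b_1(\eps)=\mathcal{O}(\eps^2)$ does not require Lemma~\ref{stabilityfunction_even} or the $\eps\mapsto-\eps$ symmetry. It follows directly from the form of \eqref{expanddeltaresonance1}--\eqref{weierstrass_eps}, where the absence of $\delta\eps$ and $\gamma\eps$ terms is a consequence of Lemma~\ref{coefficients_highn1_2} (the diagonal entries of $\mathbf{a}^{(0,1)}(T)$ vanish). You are right that this vanishing ultimately traces back to $\mu_1=0$, but the paper's proof simply reads it off from the expansion already established; the evenness lemma is invoked only later, for the $N\geq3$ analysis.
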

\begin{proof}
The proof is based on concrete computation of the stability function ${\rm disc}_2(\eps)$ \eqref{disc2}. Recalling $a_1(\gamma,\eps)$ and $a_0(\gamma,\eps)$ from \eqref{weierstrass_eps}, we obtain
$$
{\rm disc}(\gamma,\eps)=\frac{(d^{(1,1,0)})^2-4d^{(2,0,0)}d^{(0,2,0)}}{(d^{(2,0,0)})^2}\gamma^2+\frac{-4d^{(0,0,2)}}{d^{(2,0,0)}}\eps^2+\smallO(|\gamma|+|\eps|)^2,
$$
from which we deduce 
$$
\begin{aligned}
b_1(\eps)&=\mathcal{O}(\eps^2),\\
b_0(\eps)&=\frac{-4d^{(0,0,2)}}{d^{(2,0,0)}}\left(\frac{(d^{(1,1,0)})^2-4d^{(2,0,0)}d^{(0,2,0)}}{(d^{(2,0,0)})^2}\right)^{-1}\eps^2+\mathcal{O}(\eps^3),
\end{aligned}
$$
and
$$
h(\gamma,\eps)=\frac{(d^{(1,1,0)})^2-4d^{(2,0,0)}d^{(0,2,0)}}{(d^{(2,0,0)})^2}+\mathcal{O}(|\gamma|+|\eps|), 
$$
where, by \eqref{h00},
\be \label{h00_formula}
h(0,0)=\frac{(d^{(1,1,0)})^2-4d^{(2,0,0)}d^{(0,2,0)}}{(d^{(2,0,0)})^2}<0.
\ee 
Recalling \eqref{resonance1_d} and \eqref{disc2}, the stability function ${\rm disc}_2(\eps)$ then reads
$$
{\rm disc}_2(\eps)=-4\frac{-4d^{(0,0,2)}}{d^{(2,0,0)}}\frac{1}{h(0,0)}\eps^2+\mathcal{O}(\eps^3)
=\frac{-16}{h(0,0)}{\rm ind}_1\times\eps^2+\mathcal{O}(\eps^3).
$$
If \eqref{def:ind_1} holds, ${\rm disc}_2(\eps)$ is positive for $0<|\eps|\ll1$, whence there exist unstable spectra further analyzed in Corollary~\ref{cor:ellipse_eps}.\\
If \eqref{ind1_stability} holds, ${\rm disc}_2(\eps)$ is negative for $0<|\eps|\ll1$, whence roots of the Weierstrass polynomial \eqref{weierstrass_eps} associated to the expansion \eqref{expanddeltaresonance1} stay on the imaginary axis. By Theorem~\ref{thm:instability-resonant}, roots of the Weierstrass polynomials \eqref{weierstrass_m} (necessarily first order) associate to expansions of the periodic Evans function at $(i\sigma,k_r+p\kappa;0)$ and $(i\sigma,k_s+p\kappa;0)$ stay also on the imaginary axis. Combining, there cannot be nearby non purely imaginary spectrum. \\
If \eqref{higherisneeded} holds, the leading $\mathcal{O}(\eps^2)$-term of ${\rm disc}_2(\eps)$ vanishes and the sign of ${\rm disc}_2(\eps)$ depends on the expansions $\mathbf{a}^{(m,n)}$ \eqref{def:X;exp} for $m+n\geq 2$.  

When $Y(\sigma)$ is six dimensional, similar analysis yields the same index formula and results.
\end{proof}
\begin{corollary}[Unstable spectra in shape of {\bf an ellipse} at $\mathcal{O}(\eps)$-order]\label{cor:ellipse_eps}${}$

\noindent Provided that \eqref{def:ind_1} holds, there exist, near the resonant frequency $i\sigma$, unstable spectra in shape of a bubble of size $\mathcal{O}(\eps)$, which is, at the leading $\mathcal{O}(\eps)$-order, an ellipse with equation
\be 
\label{ellipse:eps}
(\Re\lambda)^2+\left(\frac{a^{(1,0)}_{11}-a^{(1,0)}_{22}}{a^{(1,0)}_{11}+a^{(1,0)}_{22}}\right)^2(\Im\lambda-i\sigma)^2={\rm ind}_1\eps^2,
\ee
whose center is at the resonant frequency $i\sigma$. 
\end{corollary}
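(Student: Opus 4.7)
The plan is to read off the ellipse directly from the quadratic formula \eqref{delta_high} for the roots of $W(\delta,\gamma,\eps)$ \eqref{weierstrass_eps}, together with Corollary~\ref{cor:weier_quadratic}. Writing $\lambda = i\sigma + \delta$, the two branches are
\[
\delta_\pm(\gamma,\eps) = -\frac{a_1(\gamma,\eps)}{2} \pm \frac{1}{2}\sqrt{{\rm disc}(\gamma,\eps)},
\]
with $a_1(\gamma,\eps)\in i\mathbb{R}$ and ${\rm disc}(\gamma,\eps)\in\mathbb{R}$. Hence the real and imaginary parts separate cleanly as
\[
\Re\lambda = \pm\tfrac{1}{2}\sqrt{{\rm disc}(\gamma,\eps)}, \qquad \Im\lambda - \sigma = \tfrac{i}{2}a_1(\gamma,\eps),
\]
so the unstable spectrum is parametrized by $\gamma\in I(\eps)$ from \eqref{gamma_interval}.

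Next I would substitute the leading-order expansions into both identities. From \eqref{weierstrass_eps} and the computation in the proof of Theorem~\ref{thm:unstableeps1}, $a_1(\gamma,\eps) = (d^{(1,1,0)}/d^{(2,0,0)})\gamma + \smallO(|\gamma|+|\eps|)$ and ${\rm disc}(\gamma,\eps) = h(0,0)(\gamma^2 + b_0(\eps)) + \smallO(\eps^2)$ where $b_0(\eps) = \frac{4\,{\rm ind}_1}{h(0,0)}\eps^2 + \mathcal{O}(\eps^3)$ by comparing with \eqref{disc2}. This gives, at the leading $\mathcal{O}(\eps)$ order, the two parametric relations
\[
(\Re\lambda)^2 \;=\; \tfrac{1}{4}h(0,0)\,\gamma^2 + {\rm ind}_1\,\eps^2, \qquad \Im\lambda - \sigma \;=\; \tfrac{i}{2}\cdot\tfrac{d^{(1,1,0)}}{d^{(2,0,0)}}\,\gamma.
\]
Eliminating $\gamma$ between these produces a quadratic equation in $(\Re\lambda, \Im\lambda-\sigma)$ of the claimed form, with coefficient
\[
-\tfrac{1}{4}h(0,0)\Bigl(\tfrac{2 d^{(2,0,0)}}{i\, d^{(1,1,0)}}\Bigr)^{2}
\]
in front of $(\Im\lambda-\sigma)^2$.

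The remaining task — and the only nontrivial step — is the algebraic simplification of this coefficient using the explicit formulas \eqref{resonance1_d}. Factoring out the common factor $(e^{ik_jT}-e^{ik_rT})(e^{ik_jT}-e^{ik_sT})$ from $d^{(2,0,0)}$, $d^{(1,1,0)}$, $d^{(0,2,0)}$ and using the $N=1$ resonance identity $e^{ik_{j'}T}=e^{ik_jT}$ (so that $a_{22}^{(1,0)}/a_{11}^{(1,0)}$ is well-defined as a real ratio times a common phase), the numerator of $h(0,0)$ becomes
\[
(d^{(1,1,0)})^2 - 4 d^{(2,0,0)} d^{(0,2,0)} \;=\; -T^2 e^{4ik_jT}(\cdots)^2\bigl[(a_{11}^{(1,0)}+a_{22}^{(1,0)})^2 - 4 a_{11}^{(1,0)}a_{22}^{(1,0)}\bigr],
\]
where the bracket collapses to $(a_{11}^{(1,0)}-a_{22}^{(1,0)})^2$. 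Combined with $(d^{(2,0,0)})^2$ in the denominator and the factor $(d^{(2,0,0)}/d^{(1,1,0)})^2$ coming from the substitution, one finds after cancellation that the coefficient of $(\Im\lambda-\sigma)^2$ is exactly $\bigl((a_{11}^{(1,0)}-a_{22}^{(1,0)})/(a_{11}^{(1,0)}+a_{22}^{(1,0)})\bigr)^2$, yielding \eqref{ellipse:eps}. The main obstacle is bookkeeping of the phase factors to confirm that $h(0,0)<0$ (as required by Lemma~\ref{lem:secondweierstrass}) is compatible with a real, strictly positive coefficient in front of $(\Im\lambda-\sigma)^2$, so that the leading-order curve is genuinely an ellipse rather than a hyperbola; the identity $a_{11}^{(1,0)}/e^{ik_jT}, a_{22}^{(1,0)}/e^{ik_jT}\in\mathbb{R}$ visible from \eqref{a10_high} is what secures this sign.
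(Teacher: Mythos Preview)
Your proposal is correct and follows essentially the same route as the paper. The paper drops the $\smallO$ terms in \eqref{weierstrass_eps}, takes the imaginary part of the resulting quadratic to solve $\gamma$ in terms of $\delta_i=\Im\lambda-\sigma$, and substitutes back; you instead separate real and imaginary parts via the discriminant form of the quadratic formula and eliminate $\gamma$ from the two parametric relations. These are the same computation organized differently, and your explicit reduction of the coefficient to $\bigl((a_{11}^{(1,0)}-a_{22}^{(1,0)})/(a_{11}^{(1,0)}+a_{22}^{(1,0)})\bigr)^2$ via \eqref{resonance1_d} fills in a step the paper leaves to the reader.
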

\begin{proof}
Dropping the $\smallO$ terms in $a_1(\gamma,\eps)$ and $a_0(\gamma,\eps)$ \eqref{weierstrass_eps}, we solve for $\delta=\delta_r+i\delta_i$, $\delta_r,\delta_i\in\mathbb{R}$, at the leading order, the equation
\be\label{weier_drop}
(\delta_r+i\delta_i)^2+\frac{d^{(1,1,0)}\gamma}{d^{(2,0,0)}}(\delta_r+i\delta_i)+\frac{d^{(0,2,0)}\gamma^2+d^{(0,0,2)}\eps^2}{d^{(2,0,0)}}=0,
\ee 
where, recalling Corollary~\ref{cor:weier_quadratic} and \eqref{weierstrass_eps}, 
$$
\frac{d^{(1,1,0)}}{d^{(2,0,0)}}\in i\mathbb{R}\quad \text{and} \quad \frac{d^{(0,2,0)}}{d^{(2,0,0)}}, \frac{d^{(0,0,2)}}{d^{(2,0,0)}}\in\mathbb{R}.
$$
Taking the imaginary part of \eqref{weier_drop} yields
\be \label{gamma_sol}
\gamma=-\frac{2id^{(2,0,0)}}{d^{(1,1,0)}}\delta_i.
\ee 
Substituting the $\gamma$ in \eqref{weier_drop} by the RHS of \eqref{gamma_sol} yields, for $\lambda=i\sigma+\delta$, the equation \eqref{ellipse:eps}.
\end{proof}
\subsubsection{Resonant frequencies with $N=2$}\label{RFN2}
In Section \ref{S3region}, we see all waves in the sub-critical region admit a unique pair of $2$-resonant eigenvalues between $k_4$ and $k_5$ at some $i\sigma_2$ ($\sigma_2>\sigma_c$) where $\mathbf{a}^{(m,n)}(T;\sigma)$ are 2 by 2 matrices\footnote{Despite $k_4$ and $k_5$ having the same Krein signature, we can construct index functions that resolve stability in the $N=2$ cases listed in Section~\ref{resonance_summary} item iii.}. Carrying out the computations outlined in Section \ref{sec:expansion_monodromy} gives, analogous to \cite[Lemma 6.3 and Lemma 6.4]{HY2023}, the following two lemmas .
\begin{lemma}\label{coefficients_highn1}
For waves in the sub-critical region, at the resonant frequency $i\sigma$ where $k_4(\sigma)-k_5(\sigma)=2\kappa$, solutions of \eqref{eqn:a:sigma} at $x=T$ read
\ba 
\label{coeff_g1}
&\mathbf{a}^{(0,0)}(T)=e^{ik_5T}\begin{pmatrix}1&0\\0&1\end{pmatrix},\quad\mathbf{a}^{(1,0)}(T)=\begin{pmatrix}a_{11}^{(1,0)}&0\\0&a_{22}^{(1,0)}\end{pmatrix},\\&
\mathbf{a}^{(0,1)}(T)=\begin{pmatrix}0&0\\0&0\end{pmatrix},
\ea
where the diagonal entries of $\mathbf{a}^{(1,0)}(T)$ are given in \eqref{a10_high} with $j$ set to $4$ and $5$.
\end{lemma}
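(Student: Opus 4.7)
The plan is to read off each matrix from the general variation-of-parameters formula developed in Section~\ref{sec:expansion_monodromy}, and then exploit Fourier orthogonality over the period $T=2\pi/\kappa$. At this sub-critical resonant frequency $i\sigma$, Lemma~\ref{lem:eps=0} gives $\mathcal{B}(\sigma)=\{\boldsymbol{\phi}_4(\sigma),\boldsymbol{\phi}_5(\sigma)\}$, so the matrices $\mathbf{a}^{(m,n)}(T;\sigma)$ lie in $\mathbb{C}^{2\times 2}$. Formula \eqref{eqn:a00:sigma} yields $\mathbf{a}^{(0,0)}(T)=\diag(e^{ik_4 T},e^{ik_5 T})$, and the resonance hypothesis $k_4(\sigma)-k_5(\sigma)=2\kappa$ gives $e^{i(k_4-k_5)T}=e^{4\pi i}=1$, which collapses both diagonal entries to $e^{ik_5 T}$ and yields the first claim.

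For $m+n=1$, I would solve \eqref{eqn:a:sigma} by variation of parameters in the basis $\mathcal{B}(\sigma)$, using $\langle \boldsymbol{\phi}_j,\boldsymbol{\psi}_{j'}\rangle=\delta_{jj'}$ from Section~\ref{sec:proj}. Since $\mathbf{B}^{(0,0)}=\mathbf{0}$ and $\mathbf{w}^{(0,0)}_k=\mathbf{0}$, the source \eqref{def:fsigma} reduces to $\mathbf{B}^{(m,n)}(x;\sigma)\boldsymbol{\phi}_k(\sigma)e^{ik_k(\sigma)x}$, and the initial condition \eqref{cond:a} produces the closed form
\begin{equation*}
a^{(m,n)}_{jk}(T)=e^{ik_j(\sigma)T}\int_0^T e^{i(k_k(\sigma)-k_j(\sigma))x}\bigl\langle \mathbf{B}^{(m,n)}(x;\sigma)\boldsymbol{\phi}_k(\sigma),\boldsymbol{\psi}_j(\sigma)\bigr\rangle\,dx.
\end{equation*}

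The rest is Fourier arithmetic. Since $\mathbf{B}^{(1,0)}(x;\sigma)$ is independent of $x$ (cf.\ the remark after \eqref{def:B;exp0}), the off-diagonal entries of $\mathbf{a}^{(1,0)}(T)$ carry the factor $\int_0^T e^{\pm 2i\kappa x}\,dx=0$, while each diagonal entry reduces to $T e^{ik_j T}\langle \mathbf{B}^{(1,0)}\boldsymbol{\phi}_j,\boldsymbol{\psi}_j\rangle$; substituting $\boldsymbol{\phi}_j(\sigma)$ from \eqref{varphi_non_zero} and $\boldsymbol{\psi}_j(\sigma)$ from \eqref{def:psi24} and simplifying via the dispersion relation \eqref{eqn:sigma} produces \eqref{a10_high}. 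For $\mathbf{B}^{(0,1)}(x;\sigma)$, the leading-order wave profile \eqref{def:stokes1} ensures that every $x$-dependent entry is a linear combination of $\s(\kappa x)$ and $\c(\kappa x)$, i.e.\ of $e^{\pm i\kappa x}$; multiplying by the resonance factor $e^{i(k_k-k_j)x}\in\{1,e^{\pm 2i\kappa x}\}$ leaves only the non-zero Fourier frequencies $\pm\kappa$ and $\pm 3\kappa$, each of which integrates to zero over $[0,T]$, yielding $\mathbf{a}^{(0,1)}(T)=\mathbf{0}$. The only non-routine step is the closed-form simplification producing \eqref{a10_high}, which I would delegate to the MATLAB symbolic computation mentioned in the introduction; this is a source of tedium rather than genuine difficulty.
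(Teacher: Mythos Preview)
Your argument is correct and coincides with the paper's approach: the paper defers the proof to \cite[Lemma~6.3]{HY2023}, and the variation-of-parameters identity you derive is exactly the formula the authors quote in the proof of Lemma~\ref{coefficients_highn1_2}, followed by the same Fourier-orthogonality reasoning (with $\mu_1=0$ ensuring no constant-in-$x$ term survives in $\mathbf{B}^{(0,1)}$). Your delegation of the closed-form simplification of $a_{jj}^{(1,0)}$ to symbolic computation also matches the paper's practice.
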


\begin{lemma}\label{coefficients_highn2}
For waves in the sub-critical region, at the resonant frequency $i\sigma$ where $k_4(\sigma)-k_5(\sigma)=2\kappa$, none of the entries of $\mathbf{a}^{(0,2)}(T)$ has to vanish. Moreover, $\mathbf{a}^{(0,2)}_{jk}(T)\in ie^{ik_4T}\mathbb{R}$ for $j,k=1,2$.
\end{lemma}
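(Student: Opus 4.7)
The plan is to compute $\mathbf{a}^{(0,2)}(T)$ directly from the reduction formula \eqref{eqn:a:sigma} with $(m,n)=(0,2)$ together with the initial condition \eqref{cond:a}, using the explicit forms of $\mathbf{B}^{(0,1)}$, $\mathbf{B}^{(0,2)}$ from Appendix~\ref{A:Bexp} and the $2$-dimensional reduced basis $\mathcal{B}(\sigma)=\{\boldsymbol{\phi}_5(\sigma),\boldsymbol{\phi}_4(\sigma)\}$ from Definition~\ref{def:proj}. The crucial feature is that although $\mathbf{a}^{(0,1)}(T)=\mathbf{0}$ by Lemma~\ref{coefficients_highn1}, the $x$-dependent matrix $\mathbf{a}^{(0,1)}(x)$ is not identically zero; it must be solved for first, because it feeds into the forcing $\mathbf{f}^{(0,2)}_k$ through the term $\mathbf{B}^{(0,1)}(x;\sigma)\mathcal{B}(\sigma)\mathbf{a}^{(0,1)}_k(x;\sigma)$ in \eqref{def:fsigma}.

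First I would integrate \eqref{eqn:a:sigma} at $(m,n)=(0,1)$: the forcing $\boldsymbol{\Pi}(\sigma)\mathbf{B}^{(0,1)}(x;\sigma)\mathcal{B}(\sigma)\mathbf{a}^{(0,0)}(x;\sigma)$ is a trigonometric polynomial in $x$ whose modes are $e^{i(k_j\pm\kappa)x}$ for $j=4,5$. Because $k_4-k_5=2\kappa$, none of the exponents $k_4\pm\kappa-k_5$ or $k_5\pm\kappa-k_4$ equals an integer multiple of $\kappa$ that produces a zero mode, so integration from $0$ to $x$ produces explicit oscillatory expressions for $\mathbf{a}^{(0,1)}(x)$ that vanish at $x=T$, consistent with Lemma~\ref{coefficients_highn1}. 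Next, I would solve \eqref{eqn:red0} with $(m,n)=(0,1)$ for $\mathbf{w}^{(0,1)}_k(x;\sigma)$ by the method of undetermined coefficients, subject to the projection constraint $\boldsymbol{\Pi}(\sigma)\mathbf{w}^{(0,1)}_k=\mathbf{0}$; this is standard since the right-hand side only involves non-resonant modes in the complementary subspace $(\mathbf{1}-\boldsymbol{\Pi}(\sigma))Y$.

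With $\mathbf{a}^{(0,1)}(x)$ and $\mathbf{w}^{(0,1)}(x)$ in hand, I assemble $\mathbf{f}^{(0,2)}_k$ via \eqref{def:fsigma} and project onto $\mathcal{B}(\sigma)$ via \eqref{def:Pi;high}. The forcing now contains, after multiplication by $e^{-ik_{j'}x}$ for $j'=4,5$ and integration, terms with vanishing exponent: the $e^{\pm2i\kappa x}$ mode of $\mathbf{B}^{(0,2)}$ acting on $\boldsymbol{\phi}_5$ resonates with $\boldsymbol{\phi}_4$ through the exact condition $k_4-k_5=2\kappa$, and likewise for the $\mathbf{B}^{(0,1)}\cdot\mathbf{a}^{(0,1)}$ convolution contributions; these resonant couplings produce the non-vanishing off-diagonal entries, while the constant Fourier mode of $\mathbf{B}^{(0,2)}$ together with the $\mathbf{B}^{(0,1)}\cdot\mathbf{w}^{(0,1)}$ interaction yields non-vanishing diagonal entries. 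The factor $e^{ik_4T}=e^{ik_5T}$ factors out because the monodromy is evaluated over exactly one period and every coefficient arises as $e^{ik_jT}$ times a period-integrated quantity.

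The reality claim $\mathbf{a}^{(0,2)}_{jk}(T)/\left(ie^{ik_4T}\right)\in\mathbb{R}$ is the main subtlety, but it follows structurally rather than from a term-by-term verification. The spectral symmetry of Lemma~\ref{lem:symm}, translated to the Weierstrass polynomial through Lemma~\ref{lem:symm:weierstrass} \eqref{weierstrass_coeff}, forces the $\eps^2$ contribution to the coefficients $a_1(0,\eps)$ and $a_0(0,\eps)$ of the quadratic Weierstrass polynomial \eqref{weierstrass_eps} to lie in $i\mathbb{R}$ and $\mathbb{R}$ respectively; tracing back through \eqref{resonance1_d}, the trace and determinant of $e^{-ik_4T}\mathbf{a}^{(0,2)}(T)$ are forced into $i\mathbb{R}$ and $\mathbb{R}$ respectively, and combined with the explicit real/imaginary structure of the eigenvectors $\boldsymbol{\phi}_j(\sigma)$, $\boldsymbol{\psi}_j(\sigma)$ from \eqref{varphi_non_zero}, \eqref{def:psi24} (first and fourth components real, second and third purely imaginary) and the parity structure of $\mathbf{B}^{(0,1)}$ and $\mathbf{B}^{(0,2)}$ inherited from $\phi_1$ being odd and $\eta_1$ being even in $x$, one deduces that each individual entry of $e^{-ik_4T}\mathbf{a}^{(0,2)}(T)$ is purely imaginary. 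The main obstacle I anticipate is the bookkeeping in tracking $\mathbf{a}^{(0,1)}(x)$ and $\mathbf{w}^{(0,1)}(x)$ through the $\mathbf{B}^{(0,1)}$ convolution: the calculation is algebraically heavy and, as the paper notes, is best handled with the MATLAB symbolic toolbox rather than by hand; this is why the lemma only asserts that the entries \emph{need not} vanish rather than giving closed-form expressions.
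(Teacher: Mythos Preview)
Your computational outline is correct and matches the approach of \cite[Lemma~6.4]{HY2023}, to which the paper defers: one writes the explicit integral representation
\[
a_{jj'}^{(0,2)}(T)=e^{ik_{j}T}\Big\langle\int_0^T e^{-ik_jx}\mathbf{B}^{(0,1)}\Big(\mathbf{w}^{(0,1)}_{j'}+\sum_{l}a_{lj'}^{(0,1)}\boldsymbol{\phi}_{l}\Big)\,dx,\boldsymbol{\psi}_{j}\Big\rangle
+e^{ik_{j}T}\Big\langle\int_0^T e^{i(k_{j'}-k_j)x}\mathbf{B}^{(0,2)}\boldsymbol{\phi}_{j'}\,dx,\boldsymbol{\psi}_{j}\Big\rangle,
\]
identifies the resonant Fourier modes surviving the period integral, and verifies by direct inspection that they need not vanish. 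Your handling of $\mathbf{a}^{(0,1)}(x)$ and $\mathbf{w}^{(0,1)}(x)$ as intermediate quantities is exactly right.

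One caveat on the reality claim. The detour through Lemma~\ref{lem:symm:weierstrass} and \eqref{weierstrass_coeff} only constrains the trace and determinant of $e^{-ik_4T}\mathbf{a}^{(0,2)}(T)$, not the four entries individually; moreover \eqref{resonance1_d} is the expansion for the $N=1$ resonance, not $N=2$. The entry-wise statement $\mathbf{a}^{(0,2)}_{jk}(T)\in ie^{ik_4T}\mathbb{R}$ is obtained purely from what you list afterward---the real/imaginary pattern in the components of $\boldsymbol{\phi}_j(\sigma),\boldsymbol{\psi}_j(\sigma)$ together with the parity of $\phi_1,\eta_1$ in $x$---applied directly to the integral formula above. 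That is precisely the argument in \cite{HY2023}. The Weierstrass symmetry is a \emph{consequence} of this structure, not an independent input, so drop it from the reality argument and rely solely on the parity computation.
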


The proofs of Lemmas~\ref{coefficients_highn1} and ~\ref{coefficients_highn2} are exactly the same as \cite[Lemmas 6.3 and 6.4]{HY2023}, whence omitted.

By Lemmas \ref{coefficients_highn1} and \ref{coefficients_highn2}, the structures of $\mathbf{a}^{(1,0)}$, $\mathbf{a}^{(0,1)}$, and $\mathbf{a}^{(0,2)}$ are exactly the same as those in the case of zero surface tension \cite{HY2023}. We thus obtain the same index formula. Moreover, applying the second Weierstrass manipulation to \cite[eq. (6.27)]{HY2023}, we resolve the weakness in the stability part of \cite[Theorem 6.5]{HY2023}. See Remark~\ref{stabilitypart}.
\begin{theorem}[$(k_{j}(\sigma),k_{j'}(\sigma),2)\in \mathcal{R}(\sigma)$]\label{thm:unstable2}
Consider a $(\beta,\kappa)$- non-resonant capillary-gravity wave of sufficiently small amplitude in the sub-critical region and its spectra near the resonant frequency $i\sigma$ where $k_4(\sigma)-k_5(\sigma)=2\kappa$. If
\be \label{def:ind_2}
{\rm ind}_2(\beta,\kappa,\sigma,k_4(\sigma),k_5(\sigma)):=\frac{a_{12}^{(0,2)}a_{21}^{(0,2)}}{a_{11}^{(1,0)}a_{22}^{(1,0)}}>0,
\ee 
the wave admits unstable spectra in shape of bubble of size $\mathcal{O}(\eps^2)$ near the resonant frequency $i\sigma$ as described in Corollary~\ref{cor:ellipse_eps_square}. If 
\be \label{ind_2stability}
{\rm ind}_2(\beta,\kappa,\sigma,k_4(\sigma),k_5(\sigma))<0,
\ee 
the wave admits no unstable spectrum near the resonant frequency $i\sigma$. To put it another way, the spectra stay on the imaginary axis near the resonant frequency $i\sigma$ for all sufficiently small amplitude. If \be{\rm ind}_2(\beta,\kappa,\sigma,k_4(\sigma),k_5(\sigma))=0,\ee stability of the spectra near the resonant frequency is determined by higher order terms of the $\mathbf{a}^{(m,n)}(T;\sigma)$ \eqref{def:X;exp}.
\end{theorem}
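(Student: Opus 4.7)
The plan is to follow the template of Theorem~\ref{thm:unstableeps1} — the two-stage Weierstrass preparation of Section~\ref{resonances} — specialized to the $N=2$ resonance recorded in Lemmas~\ref{coefficients_highn1} and~\ref{coefficients_highn2}. In the sub-critical region at $\sigma>\sigma_c$ the reduced space $Y(\sigma)$ is two-dimensional, so no extra non-resonant modes enter the factorization of $\Delta$ and one gets directly $d^{(2,0,0)}=a_{11}^{(1,0)}a_{22}^{(1,0)}\neq 0$ from \eqref{a10_high}.

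First I would expand $\Delta=\det(e^{ikT}\mathbf{I}-\mathbf{X}(T))$ about $(i\sigma,k_4(\sigma)+p\kappa;0)$. Because $\mathbf{a}^{(0,1)}(T)=\mathbf{0}$ by Lemma~\ref{coefficients_highn1}, the parameter $\eps$ enters $\Delta$ only through $\mathbf{a}^{(0,2)}(T)$, producing
\[
\Delta = d^{(2,0,0)}\delta^2 + d^{(1,1,0)}\gamma\delta + d^{(0,2,0)}\gamma^2 + d^{(1,0,2)}\delta\eps^2 + d^{(0,1,2)}\gamma\eps^2 + d^{(0,0,4)}\eps^4+\cdots,
\]
with $d^{(0,0,4)}=a_{11}^{(0,2)}a_{22}^{(0,2)}-a_{12}^{(0,2)}a_{21}^{(0,2)}$. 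The first Weierstrass preparation then yields $\Delta=W(\delta,\gamma,\eps)\,h(\delta,\gamma,\eps)$ with quadratic $W=\delta^2+a_1\delta+a_0$; Corollary~\ref{cor:weier_quadratic} reduces stability near $i\sigma$ to the sign of the real-valued discriminant $\mathrm{disc}(\gamma,\eps)=a_1^2-4a_0$, and the second Weierstrass preparation further factors $\mathrm{disc}=(\gamma^2+b_1\gamma+b_0)\,\tilde h$ with $\tilde h(0,0)<0$ by Lemma~\ref{lem:secondweierstrass}, so the stability function $\mathrm{disc}_2(\eps)=b_1(\eps)^2-4b_0(\eps)$ has sign opposite to $\mathrm{disc}$ for small parameters.

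The core of the argument is the leading-order computation of $\mathrm{disc}_2(\eps)$. Writing $a_{jj}^{(1,0)}=e^{ik_4T}\rho_j$ with $\rho_j\in\mathbb{R}$ from \eqref{a10_high} (unequal because the two resonant eigenvalues differ) and $a_{jk}^{(0,2)}=ie^{ik_4T}r_{jk}$ with $r_{jk}\in\mathbb{R}$ by Lemma~\ref{coefficients_highn2}, one extracts from the first preparation
\[
a_1(0,\eps)=\frac{i(\rho_1r_{22}+\rho_2r_{11})}{\rho_1\rho_2}\,\eps^2+\mathcal{O}(\eps^3),\qquad a_0(0,\eps)=\frac{d^{(0,0,4)}}{d^{(2,0,0)}}\,\eps^4+\mathcal{O}(\eps^5),
\]
together with the $\gamma\eps^2$ coefficient $a_0^{(1,2)}=d^{(0,1,2)}/d^{(2,0,0)}$. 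Assembling $b_1^{(2)}$ and $b_0^{(4)}$ through the second preparation, using $\tilde h(0,0)=-T^2(\rho_1-\rho_2)^2/(\rho_1\rho_2)^2<0$, and invoking the algebraic identity $(\rho_1r_{22}-\rho_2r_{11})^2-(\rho_1r_{22}+\rho_2r_{11})^2=-4\rho_1\rho_2r_{11}r_{22}$ to annihilate the diagonal $r_{11}r_{22}$ contribution in $d^{(0,0,4)}$, I expect to arrive at
\[
\mathrm{disc}_2(\eps) = -\frac{16}{\tilde h(0,0)}\cdot\frac{a_{12}^{(0,2)}a_{21}^{(0,2)}}{a_{11}^{(1,0)}a_{22}^{(1,0)}}\,\eps^4 + \mathcal{O}(\eps^5) = -\frac{16}{\tilde h(0,0)}\,\mathrm{ind}_2\,\eps^4 + \mathcal{O}(\eps^5),
\]
whose coefficient is positive. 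The three assertions of the theorem then follow exactly as in Theorem~\ref{thm:unstableeps1}: positivity of $\mathrm{ind}_2$ gives an unstable $\gamma$-interval $I(\eps)$ of width $\sqrt{\mathrm{disc}_2(\eps)}=\mathcal{O}(\eps^2)$, hence unstable spectra in a bubble of size $\mathcal{O}(\eps^2)$ to be analyzed in Corollary~\ref{cor:ellipse_eps_square}; negativity excludes unstable spectra in a full neighborhood of $i\sigma$ (no auxiliary use of Lemma~\ref{weierstrass_kj} is needed since $\dim Y(\sigma)=2$); and vanishing demands $\mathbf{a}^{(m,n)}$ with $m+n\geq 3$.

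The main obstacle will be establishing the cancellation of the diagonal $r_{11}r_{22}$ contributions between $(b_1^{(2)})^2$ and $4b_0^{(4)}$ — the bookkeeping that propagates through two rounds of Weierstrass preparation and relies essentially on the imaginarity property of Lemma~\ref{coefficients_highn2}. Without this cancellation the index would depend on all four entries of $\mathbf{a}^{(0,2)}(T)$; with it, only the off-diagonal product survives. Compared with \cite[Theorem~6.5]{HY2023}, the second Weierstrass preparation upgrades the stability case from an $\mathcal{O}(\eps^2)$ statement to a genuine spectral stability statement, as to be highlighted in Remark~\ref{stabilitypart} and Theorem~\ref{improved_thm}.
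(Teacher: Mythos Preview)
Your proposal is correct and follows essentially the same route as the paper: both apply the two-stage Weierstrass preparation of Section~\ref{resonances} to the $N=2$ expansion recorded in Lemmas~\ref{coefficients_highn1}--\ref{coefficients_highn2}, use Corollary~\ref{cor:weier_quadratic} and Lemma~\ref{lem:secondweierstrass} to reduce to the sign of $\mathrm{disc}_2(\eps)$, and obtain $\mathrm{disc}_2(\eps)=-\tfrac{16}{h(0,0)}\,\mathrm{ind}_2\,\eps^4+\mathcal{O}(\eps^5)$ with $h(0,0)<0$. The paper delegates the algebraic cancellation you flag as the ``main obstacle'' to the formulas \cite[(6.20),(6.29),(6.33)]{HY2023}, whereas you outline it directly via the identity $(\rho_1r_{22}-\rho_2r_{11})^2-(\rho_1r_{22}+\rho_2r_{11})^2=-4\rho_1\rho_2 r_{11}r_{22}$; the end result and the improvement over \cite[Theorem~6.5]{HY2023} (Remark~\ref{stabilitypart}) are identical.
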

\begin{proof}
The proof of \cite[Theorem 6.5]{HY2023} still applies to the instability statement. See Remark~\ref{stabilitypart} for weakness of both the statement of \cite[Theorem 6.5]{HY2023} and the proof.

Since the second Weierstrass preparation manipulation, the newly proven Corollary~\ref{cor:weier_quadratic}, and Lemma~\ref{lem:secondweierstrass} also apply to the situation considered, we thereby can follow the proof of Theorem~\ref{thm:unstableeps1} for improvements. Indeed, by Corollary~\ref{cor:weier_quadratic}, the stability of $\delta(\gamma,\eps)$ \cite[eq. (6.28)]{HY2023} is completely determined by the sign of the real-valued discriminant ${\rm disc}(\gamma,\eps):=Q(\gamma;\eps)+\smallO((|\gamma|+|\eps|^2))$ where $Q(\gamma,\eps)$ is given by \cite[eq. (6.33)]{HY2023}. Recalling \cite[eq. (6.29)]{HY2023}, we deduce for the quadratic Weierstrass polynomial \eqref{disc_weierstrass} that
$$
\begin{aligned}
b_1(\eps)=&\frac{2d^{(1,1,0)}d^{(1,0,2)}-4d^{(2,0,0)}d^{(0,1,2)}}{(d^{(2,0,0)})^2}\frac{1}{h(0,0)}\eps^2+\mathcal{O}(\eps^3)\quad\text{and}\\
b_0(\eps)=&\frac{(d^{(1,0,2)})^2-4d^{(2,0,0)}d^{(0,0,4)}}{(d^{(2,0,0)})^2}\frac{1}{h(0,0)}\eps^4+\mathcal{O}(\eps^5).
\end{aligned}
$$
Recalling \cite[eq. (6.20)]{HY2023} and \eqref{disc2}, the stability function ${\rm disc}_2(\eps)$ then reads
$$
{\rm disc}_2(\eps)=\frac{-16}{h(0,0)}{\rm ind}_2\times\eps^4+\mathcal{O}(\eps^5),
$$
where $h(0,0)$ given by \eqref{h00_formula} is negative.
The theorem then follows.
\end{proof}
\begin{remark}\label{stabilitypart}
In the statement of \cite[Theorem 6.5]{HY2023}, we stated the condition ${\rm ind}_2\leq 0$ implied ``It is spectrally stable at the order of $\eps^2$ as $\eps\rightarrow 0$ otherwise'', making the statement for stability rather weak, because stability was left for checking at higher order of $\eps$. Our newly proven Corollary~\ref{cor:weier_quadratic} and Lemma~\ref{lem:secondweierstrass} resolve the weakness by showing the term $-a_1(\gamma,\eps)/2$ is purely imaginary and stability is completely determined by the sign of the real-valued discriminant ${\rm disc}(\gamma,\eps)$ \eqref{discri}, whence that of the stability function ${\rm disc}_2(\eps)$ \eqref{disc2}.   
\end{remark}
We present here a new version of \cite[Theorem 6.5]{HY2023} in which the part, ``It is spectrally stable at the order of $\eps^2$ as $\eps\rightarrow 0$ otherwise'', of \cite[Theorem 6.5]{HY2023} is dropped and modified.
\begin{theorem}[Spectral stability and instability away from $0\in\mathbb{C}$]\label{improved_thm}
A $2\pi/\kappa$ periodic Stokes wave of sufficiently small amplitude in water of unit depth is spectrally unstable near $i\sigma\in \mathbb{C}$, $\sigma\in\mathbb{R}$, for which $k_2(\sigma)-k_4(\sigma)=2\kappa$, provided that 
$$
{\rm ind}_2(\kappa):=\frac{a_{12}^{(0,2)}a_{21}^{(0,2)}}{a_{11}^{(1,0)}a_{22}^{(1,0)}}(T)>0,
$$ 
where $a^{(m,n)}_{jk}(T)$ is in (6.9) and (6.13), and $T=2\pi/\kappa$. It is spectrally stable near $i\sigma\in \mathbb{C}$, provided that ${\rm ind}_2(\kappa)<0$. It is spectrally stable near $i\sigma\in \mathbb{C}$, $\sigma\in\mathbb{R}$, for which $k_2(\sigma)-k_4(\sigma)=N\kappa$ for $N\geq3,\in\mathbb{Z}$ at the order of $\eps^2$ as $\eps\to0$.
\end{theorem}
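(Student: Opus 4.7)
The plan is to reuse, almost verbatim, the two-stage Weierstrass machinery that was set up for Theorem~\ref{thm:unstable2} and to apply it to the Stokes wave case $\beta=0$. First I would expand the periodic Evans function at the $2$-resonant frequency $i\sigma$ with $k_2(\sigma)-k_4(\sigma)=2\kappa$. In this setting $Y(\sigma)$ is two dimensional, and the analogues of Lemmas~\ref{coefficients_highn1} and~\ref{coefficients_highn2} for the Stokes case (established as Lemmas 6.3–6.4 of \cite{HY2023}) give $\mathbf{a}^{(0,0)}(T)=e^{ik_4T}\mathbf{I}$, $\mathbf{a}^{(1,0)}(T)$ diagonal with nonzero entries, $\mathbf{a}^{(0,1)}(T)=\mathbf{0}$, and $\mathbf{a}^{(0,2)}(T)$ with entries purely imaginary modulo $e^{ik_4T}$. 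Consequently the expansion of $\Delta(i\sigma+\delta,k_4(\sigma)+p\kappa+\gamma;\eps)$ has the same leading monomial structure $d^{(2,0,0)}\delta^2+d^{(1,1,0)}\gamma\delta+d^{(0,2,0)}\gamma^2+d^{(1,0,2)}\delta\eps^2+d^{(0,1,2)}\gamma\eps^2+d^{(0,0,4)}\eps^4+\cdots$ as in (6.29) of \cite{HY2023}.

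Next I would apply the first Weierstrass preparation manipulation to obtain the quadratic Weierstrass polynomial $W(\delta,\gamma,\eps)=\delta^2+a_1(\gamma,\eps)\delta+a_0(\gamma,\eps)$. By Lemma~\ref{lem:symm:weierstrass} and Corollary~\ref{cor:weier_quadratic}, $a_1$ is purely imaginary and $a_0$ is real, so the stability near $i\sigma$ is ruled entirely by the sign of the \emph{real} discriminant ${\rm disc}(\gamma,\eps)=a_1(\gamma,\eps)^2-4a_0(\gamma,\eps)$. This observation is exactly the conceptual step absent in \cite{HY2023}; it removes the need to treat the linear-in-$\delta$ term separately and turns the analysis into a question about the sign of a single real analytic function on a neighbourhood of the origin in $(\gamma,\eps)$.

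I would then apply the second Weierstrass preparation manipulation to ${\rm disc}(\gamma,\eps)$, which by Lemma~\ref{lem:secondweierstrass} factors as $W(\gamma,\eps)h(\gamma,\eps)$ with $W(\gamma,\eps)=\gamma^2+b_1(\eps)\gamma+b_0(\eps)$ and $h(0,0)<0$. The stability function is ${\rm disc}_2(\eps)=b_1(\eps)^2-4b_0(\eps)$, and a direct substitution of the $d^{(m,n,\ell)}$ from (6.29) of \cite{HY2023} into the formulas for $b_1(\eps)$ and $b_0(\eps)$ produces
\[
{\rm disc}_2(\eps)=\frac{-16}{h(0,0)}\,{\rm ind}_2(\kappa)\,\eps^4+\mathcal{O}(\eps^5),
\]
so that, since $h(0,0)<0$, the sign of ${\rm disc}_2(\eps)$ for small $\eps\neq 0$ agrees with the sign of ${\rm ind}_2(\kappa)$. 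If ${\rm ind}_2(\kappa)>0$, Definition~\ref{stability_function} provides a non-empty interval $I(\eps)$ of Floquet exponents producing an unstable bubble of spectrum (recovering the instability half of the original \cite[Theorem~6.5]{HY2023}); if ${\rm ind}_2(\kappa)<0$, Definition~\ref{stability_function} gives $W(\gamma,\eps)\geq 0$ for all $\gamma$ and $|\eps|\ll 1$, so, combined with Lemma~\ref{weierstrass_kj} at the non-resonant eigenvalues of $\mathbf{L}(i\sigma)$, no spectrum leaves the imaginary axis near $i\sigma$, which is the genuinely strengthened stability statement.

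For the last assertion ($N\geq 3$), the same machinery applies and yields once more that stability is controlled by ${\rm disc}_2(\eps)$. The point is that the interaction generating nonzero off-diagonal entries of $\mathbf{a}^{(0,n)}(T)$ between modes separated by $N\kappa$ first appears at order $n=N\geq 3$, so $\mathbf{a}^{(0,2)}(T)$ has vanishing off-diagonal entries and the $\mathcal{O}(\eps^4)$ coefficient of ${\rm disc}_2(\eps)$ is zero; hence ${\rm disc}_2(\eps)=o(\eps^4)$ and no unstable spectrum of size $\gtrsim\eps^2$ can emerge, giving spectral stability at order $\eps^2$. The main obstacle in carrying this plan out is not conceptual but computational: one must verify, using the symbolic output, that the precise combinations of $d^{(m,n,\ell)}$ feeding into $b_0(\eps)$ and $b_1(\eps)$ collapse to the clean ${\rm ind}_2(\kappa)\eps^4$ displayed above, and that the $N\geq 3$ cancellations of $\mathbf{a}^{(0,2)}_{12}\mathbf{a}^{(0,2)}_{21}$ are exact rather than merely first order in $\eps$.
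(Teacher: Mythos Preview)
Your proposal is correct and follows essentially the same approach as the paper: the paper derives Theorem~\ref{improved_thm} directly from the proof of Theorem~\ref{thm:unstable2}, which proceeds by exactly the two-stage Weierstrass manipulation you describe (Corollary~\ref{cor:weier_quadratic}, Lemma~\ref{lem:secondweierstrass}, Definition~\ref{stability_function}) applied to the expansion data of \cite[(6.29)]{HY2023}, yielding ${\rm disc}_2(\eps)=\frac{-16}{h(0,0)}{\rm ind}_2\,\eps^4+\mathcal{O}(\eps^5)$. One minor remark: for Stokes waves the relevant resonant frequency lies where $Y(\sigma)$ is two dimensional, so there are no additional non-resonant eigenvalues and your appeal to Lemma~\ref{weierstrass_kj} is unnecessary (though harmless).
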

Analogous to Corollary~\ref{cor:ellipse_eps} for Theorem~\ref{thm:unstableeps1}, we should have also stated a corollary for \cite[Theorem 6.5]{HY2023} to further describe the unstable spectra near the resonant frequency of order $2$. The following corollary applies to both Theorem~\ref{thm:unstable2} and \cite[Theorem 6.5]{HY2023}. Its proof is based on computations made on \cite[pages 38-39]{HY2023}.
\begin{corollary}[Unstable spectra in shape of {\bf an ellipse} at $\mathcal{O}(\eps^2)$-order]\label{cor:ellipse_eps_square}${}$

\noindent Provided that \eqref{def:ind_2} (resp. \cite[eq. (6.34)]{HY2023}) holds, there exist, near the resonant frequency $i\sigma$, unstable spectra in shape of a bubble of size $\mathcal{O}(\eps^2)$, which is, at the leading $\mathcal{O}(\eps^2)$-order, an ellipse with equation
\be 
\label{ellipse:eps_square}
(\Re\lambda)^2+\left(\frac{a^{(1,0)}_{11}-a^{(1,0)}_{22}}{a^{(1,0)}_{11}+a^{(1,0)}_{22}}\right)^2\left(\Im\lambda-i\sigma-\frac{i(a^{(0,2)}_{11} -a^{(0,2)}_{22} )}{a^{(1,0)}_{11} - a^{(1,0)}_{22}}\eps^2\right)^2={\rm ind}_2\eps^4,
\ee
whose center drifts from the resonant frequency $i\sigma$ by a distance of $$-\frac{i(a^{(0,2)}_{11} -a^{(0,2)}_{22} )}{(a^{(1,0)}_{11} - a^{(1,0)}_{22})}\eps^2.$$
\end{corollary}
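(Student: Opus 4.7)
The plan is to parallel the proof of Corollary~\ref{cor:ellipse_eps}, replacing the $\mathcal{O}(\eps)$ contribution from $\mathbf{a}^{(0,1)}(T)$ (which vanishes by Lemma~\ref{coefficients_highn1}) with the first nonvanishing $\eps$-contribution coming from $\mathbf{a}^{(0,2)}(T)$. Concretely, I would first expand the periodic Evans function at $(i\sigma,k_5(\sigma)+p\kappa;0)$ and apply the first Weierstrass preparation to obtain the quadratic $W(\delta,\gamma,\eps)=\delta^2+a_1(\gamma,\eps)\delta+a_0(\gamma,\eps)$ as in \cite[eq.~(6.28)--(6.29)]{HY2023}, keeping explicitly the leading terms
\[
a_1(\gamma,\eps)=\frac{d^{(1,1,0)}}{d^{(2,0,0)}}\gamma+\frac{d^{(1,0,2)}}{d^{(2,0,0)}}\eps^2+\smallO(|\gamma|^{2}+\eps^{2}(|\gamma|+\eps^{2})),
\]
\[
a_0(\gamma,\eps)=\frac{d^{(0,2,0)}}{d^{(2,0,0)}}\gamma^2+\frac{d^{(0,1,2)}}{d^{(2,0,0)}}\gamma\eps^2+\frac{d^{(0,0,4)}}{d^{(2,0,0)}}\eps^{4}+\smallO(\cdots).
\]
By Corollary~\ref{cor:weier_quadratic}, $a_1\in i\mathbb{R}$ and $a_0\in\mathbb{R}$.

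Next, I would write $\delta=\delta_r+i\delta_i$ and separate the real and imaginary parts of $W(\delta,\gamma,\eps)=0$. Setting $a_1=i\tilde{a}_1$ with $\tilde{a}_1\in\mathbb{R}$, the imaginary part factors as $\delta_r(2\delta_i+\tilde{a}_1)=0$. The unstable branch corresponds to $\delta_r\neq 0$, which forces
\[
\delta_i=-\tfrac12\tilde{a}_1=\tfrac{i}{2}a_1(\gamma,\eps),
\]
while the real part then reads $\delta_r^2=\tfrac14\mathrm{disc}(\gamma,\eps)$, with $\mathrm{disc}$ as in \eqref{discri}. The range of $\gamma$ for which this has a nontrivial real solution is precisely the interval in \eqref{gamma_interval}; by Theorem~\ref{thm:unstable2} the hypothesis ${\rm ind}_2>0$ makes this interval nonempty for $0<|\eps|\ll 1$.

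The remaining step is to eliminate $\gamma$ between the two scalar relations. Solving the linear-in-$\gamma$ relation $2\delta_i=-\tilde{a}_1=(-a_1)/i$ gives
\[
\gamma=-\frac{2i\,d^{(2,0,0)}}{d^{(1,1,0)}}\delta_i-\frac{d^{(1,0,2)}}{d^{(1,1,0)}}\eps^{2}+\smallO(\eps^{2}),
\]
and substituting into $\delta_r^2=\tfrac14(a_1^2-4a_0)$ produces, after discarding higher-order remainders, a quadratic relation in $\delta_r$ and $\delta_i-c\eps^2$ for an $\eps^{2}$-drift $c$ read off the constant piece in the substitution. Matching with the computations on \cite[pp.~38--39]{HY2023}, the ratios $d^{(1,1,0)}/d^{(2,0,0)}$ and $d^{(1,0,2)}/d^{(1,1,0)}$ simplify to $-iT(a^{(1,0)}_{11}+a^{(1,0)}_{22})/(a^{(1,0)}_{11}a^{(1,0)}_{22})$ and a constant proportional to $(a^{(0,2)}_{11}-a^{(0,2)}_{22})/(a^{(1,0)}_{11}-a^{(1,0)}_{22})$ respectively, while the leading part of the disc yields the coefficient $((a^{(1,0)}_{11}-a^{(1,0)}_{22})/(a^{(1,0)}_{11}+a^{(1,0)}_{22}))^2$ in front of $(\Im\lambda-i\sigma-c\eps^2)^2$ and $\mathrm{ind}_2\,\eps^{4}$ on the right, exactly as in \eqref{ellipse:eps_square}. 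Writing $\lambda=i\sigma+\delta$ recasts the pair $(\delta_r,\delta_i)$ as $(\Re\lambda,\Im\lambda-i\sigma)$, completing the identification.

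The main obstacle is the bookkeeping in the last step: the cancellation that produces $(a^{(0,2)}_{11}-a^{(0,2)}_{22})$ in the numerator of the drift (rather than the individually nonvanishing entries of $\mathbf{a}^{(0,2)}(T)$) requires using both Lemma~\ref{coefficients_highn1} and Lemma~\ref{coefficients_highn2} together with the cofactor expansion of $\det(e^{i(k_5(\sigma)+p\kappa+\gamma)T}\mathbf{I}-\mathbf{X}(T;\sigma,\delta,\eps))$ in the two off-resonant directions, tracking the mixed coefficients $d^{(1,0,2)}$ and $d^{(0,1,2)}$. Everything else is a routine specialization of the argument already used for Corollary~\ref{cor:ellipse_eps}.
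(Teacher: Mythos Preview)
Your proposal is correct and follows essentially the same route as the paper: the paper's proof simply says it is ``similar to that of Corollary~\ref{cor:ellipse_eps}'' using Lemmas~\ref{coefficients_highn1} and~\ref{coefficients_highn2} together with the expansion formulas \cite[eqs.~(6.26), (6.27), (6.34)]{HY2023}, and your write-up carries out exactly this programme (drop the $\smallO$-terms in the quadratic Weierstrass polynomial, separate real and imaginary parts, solve the imaginary part for $\gamma$ now picking up the extra $d^{(1,0,2)}\eps^{2}$ contribution responsible for the drift, and substitute back). Your identification of the bookkeeping for the drift constant as the one nontrivial point is accurate and matches the paper's appeal to Lemma~\ref{coefficients_highn2} and the cited computations.
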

\begin{proof}
The proof is similar to that of Corollary~\ref{cor:ellipse_eps} by making use of Lemmas~\ref{coefficients_highn1} and \ref{coefficients_highn2} (resp. \cite[Lemmas 6.3 and 6.4]{HY2023}) and \cite[eqs. (6.26) (6.27) (6.34)]{HY2023}.
\end{proof}

\begin{remark}\label{remark_drift}
Comparing Corollary~\ref{cor:ellipse_eps} with Corollary~\ref{cor:ellipse_eps_square}, we note that the ellipse of unstable spectra does not drift from the resonant frequency $i\sigma$ with $(k_{j}(\sigma),k_{j'}(\sigma),1)\in\mathcal{R}(\sigma)$ while the ellipse of unstable spectra drifts from the resonant frequency $i\sigma$ with $(k_{j}(\sigma),k_{j'}(\sigma),2)\in\mathcal{R}(\sigma)$ by a $\mathcal{O}(\eps^2)$-distance. The latter drifting effect was also noted recently by Creedon, Deconinck, and Trichtchenko \cite{creedon_deconinck_trichtchenko_2022} for Stokes waves.
\end{remark}
\subsubsection{Resonant frequencies with $N\geq 3$}\label{RFNg3} In Section \ref{S3region}, we see all waves in the sub-critical region admit a unique pair of $N$-resonant eigenvalues with $N\geq 3$ between $k_4$ and $k_5$ at some $i\sigma_N$ ($\sigma_N>\sigma_c$) where $\mathbf{a}^{(m,n)}(T;\sigma)$ are 2 by 2 matrices \footnote{Despite $k_4$ and $k_5$ having the same Krein signature, we can construct index functions that resolve stability in the $N\geq 3$ cases listed in Section~\ref{resonance_summary} item iii.}. These resonances are similar to $k_2(\sigma_N)-k_4(\sigma_N)=N\kappa, N\geq 3$ for Stokes waves \cite[Sec. 6]{HY2023}. Though some preliminary computations have been carried out in \cite[Lemmas 6.3 and 6.4]{HY2023}, these resonances have not been treated in details. Based on formal asymptotic expansions of the linear perturbation variables (including the Floquet exponent), Creedon et al. \cite{creedon_deconinck_trichtchenko_2022} studied for $N$ up to $3$ for Stokes waves. In this subsection, we will make rigorous analyses at these resonant frequencies to not only fill the gap of our previous work \cite{HY2023} but also justify the formal method of Creedon et al, especially the asymptotic expansion for the Floquet variable.
\begin{lemma}\label{coefficients_highng31}
For waves in the sub-critical region, at the resonant frequency $i\sigma$ where $k_4(\sigma)-k_5(\sigma)=N\kappa$ with $N\geq 3$, solutions of \eqref{eqn:a:sigma} at $x=T$ read
\ba 
\label{coeff_g1n3}
&\mathbf{a}^{(0,0)}(T)=e^{ik_5T}\begin{pmatrix}1&0\\0&1\end{pmatrix},\quad 
\mathbf{a}^{(1,0)}(T)=\begin{pmatrix}a_{11}^{(1,0)}&0\\0&a_{22}^{(1,0)}\end{pmatrix},\\
&\mathbf{a}^{(0,1)}(T)=\begin{pmatrix}0&0\\0&0\end{pmatrix},
\ea
where the diagonal entries of $\mathbf{a}^{(1,0)}(T)$ are given in \eqref{a10_high} with $j$ set to $4$ and $5$.
\end{lemma}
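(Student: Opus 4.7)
The plan is to mirror the computation performed for Lemma~\ref{coefficients_highn1}, adapting only the arithmetic of the phase factors to accommodate the general resonant order $N\geq 3$. With $Y(\sigma)$ two-dimensional and ordered basis $\mathcal{B}(\sigma)=\{\boldsymbol{\phi}_4(\sigma),\boldsymbol{\phi}_5(\sigma)\}$, the task reduces to evaluating four scalar entries of each of the matrices $\mathbf{a}^{(0,0)}(T)$, $\mathbf{a}^{(1,0)}(T)$, and $\mathbf{a}^{(0,1)}(T)$ by integrating \eqref{eqn:a:sigma}, paired with \eqref{def:fsigma}, over one period and reading off the contribution along each basis vector.

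The formula for $\mathbf{a}^{(0,0)}(T)$ follows directly from \eqref{eqn:a00:sigma}: one has $\mathbf{a}^{(0,0)}(T)=\mathrm{diag}(e^{ik_4T},e^{ik_5T})$, and the resonance condition $k_4(\sigma)-k_5(\sigma)=N\kappa$ together with $T=2\pi/\kappa$ forces $e^{ik_4T}=e^{ik_5T}e^{2\pi iN}=e^{ik_5T}$, collapsing the diagonal to a single value. For $\mathbf{a}^{(1,0)}(T)$ I would then use that $\mathbf{B}^{(1,0)}(x;\sigma)$ is independent of $x$, as noted following \eqref{def:B;exp0}. After projecting by $\boldsymbol{\Pi}(\sigma)$, the off-diagonal entries of $\mathbf{a}^{(1,0)}(T)$ are proportional to $\int_0^T e^{\pm i(k_4-k_5)x}\,dx=\int_0^T e^{\pm iN\kappa x}\,dx=0$, so they vanish; the diagonal entries are computed exactly as in Lemma~\ref{weierstrass_kj} and yield \eqref{a10_high} with $j=4,5$.

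Finally, for $\mathbf{a}^{(0,1)}(T)$ the perturbation matrix $\mathbf{B}^{(0,1)}(x;\sigma)$ depends on $x$ only through the first-order profiles $\phi_1,\eta_1$ in \eqref{def:stokes1}, which carry only the fundamental frequency $\pm\kappa$. Each entry of $\mathbf{a}^{(0,1)}(T)$ is therefore a finite linear combination of integrals of the form $\int_0^T e^{im\kappa x}e^{\pm i\kappa x}\,dx$ with $m\in\{0,\pm N\}$ arising from the diagonal or off-diagonal phase $e^{i(k_j-k_{j'})x}$. Since $N\geq 3$, none of the exponents $m\pm 1$ vanishes, and Fourier orthogonality forces every such integral to be zero, giving $\mathbf{a}^{(0,1)}(T)=\mathbf{0}$.

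Overall there is no substantive obstacle in this lemma; the entire argument rests on one Fourier-orthogonality check, which succeeds precisely when the resonance order is at least $3$. The only subtle point, and the reason a separate statement is warranted, is the contrast with the $N=1$ case of Lemma~\ref{coefficients_highn1_2}, where $m\pm 1=0$ can occur and the off-diagonal entries of $\mathbf{a}^{(0,1)}(T)$ need not vanish. It is exactly this $N\geq 3$ vanishing that pushes the detection of instability into the higher-order coefficients $\mathbf{a}^{(m,n)}$ with $m+n\geq 2$ treated in the companion Lemma~\ref{coefficients_highng32}.
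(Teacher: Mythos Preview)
Your argument is correct and follows exactly the approach the paper takes (the paper simply refers back to \cite[Lemmas~6.3 and 6.4]{HY2023}, which carry out the same Fourier-orthogonality check you describe). One small imprecision: the orthogonality check for $\mathbf{a}^{(0,1)}(T)$ actually succeeds for all $N\geq 2$, not ``precisely when the resonance order is at least $3$''; this is why Lemma~\ref{coefficients_highn1} for $N=2$ has the identical structure, and the genuine contrast is only with the $N=1$ case.
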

\begin{lemma}\label{coefficients_highng32}
For waves in the sub-critical region, at the resonant frequency $i\sigma$ where $k_4(\sigma)-k_5(\sigma)=N\kappa$ with $N\geq 3$, the off-diagonal entries of $\mathbf{a}^{(0,2)}(T)$ and $\mathbf{a}^{(1,1)}(T)$ must vanish and the diagonal entries of $\mathbf{a}^{(0,2)}(T)$ and $\mathbf{a}^{(1,1)}(T)$ do not necessarily vanish.
\end{lemma}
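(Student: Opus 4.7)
The plan is to extend the Fourier mode-counting argument used in Lemmas~\ref{coefficients_highn1} and \ref{coefficients_highn2} (cf.\ \cite[Lemmas~6.3, 6.4]{HY2023}), refining it to handle the secular term that first appears in $a^{(1,0)}_{kk}$. The key observation is that at a resonance $k_4-k_5 = N\kappa$, a nonvanishing off-diagonal entry $a^{(m,n)}_{jk}(T)$ requires the integrand of the variation-of-parameters formula
\[
a^{(m,n)}_{jk}(T) = e^{ik_j T}\int_0^T e^{-ik_j\tau}\langle\mathbf{f}^{(m,n)}_k(\tau),\boldsymbol{\psi}_j\rangle\,d\tau
\]
to contain a genuine zero Fourier mode, which is possible only when the source $\mathbf{f}^{(m,n)}_k$ carries an oscillation whose frequency differs from $k_j$ by an integer multiple of $\kappa$.

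First I would establish, by induction on $m+n$ in \eqref{eqn:a:sigma} and \eqref{eqn:red0}, a bookkeeping lemma: $\mathbf{B}^{(m,n)}(x;\sigma)$ admits an $x$-Fourier decomposition with modes of absolute index at most $n$ in units of $\kappa$---this uses that $\mathbf{B}^{(1,0)} = \partial_\lambda\mathbf{L}(i\sigma)$ is $x$-independent while each $\eps$-derivative brings at most one additional factor of the wave profile---and, consequently, $a^{(m,n)}_{jk}(x)$ and $\mathbf{w}^{(m,n)}_k(x)$ are linear combinations of $x^p e^{i(k_r+\ell\kappa)x}$ with $r$ an index of $\mathcal{B}(\sigma)$, $|\ell|\leq n$, and $p\in\{0,1\}$, the $p=1$ secular branch entering exclusively through the self-resonant term $a^{(1,0)}_{kk}(x)\propto x\,e^{ik_k x}$.

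For $a^{(0,2)}_{jk}(T)$ with $j\neq k$, the integrand reduces to a combination of $e^{i(k_r - k_j + \ell\kappa)\tau}$ with $r\in\{j,k\}$ and $|\ell|\leq 2$; the zero-frequency requirement $k_r - k_j + \ell\kappa = 0$ fails for $N\geq 3$ because $r=k$ would demand $\ell = N\geq 3$ (violating $|\ell|\leq 2$), while $r=j$ with $\ell=0$ would require the $e^{ik_j\tau}$-mode of $a^{(0,1)}_{jk}$ to survive multiplication by the mean-free operator $\mathbf{B}^{(0,1)}$, which is impossible. Hence $a^{(0,2)}_{jk}(T) = 0$.

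The $(1,1)$ case is the main obstacle. The same mode-counting excludes all bounded sources but leaves two a priori surviving contributions: the $e^{ik_j\tau}$-mode of $a^{(0,1)}_{jk}(\tau)$ fed through the $x$-constant operator $\mathbf{B}^{(1,0)}$, and the secular term $x\,e^{ik_k x}$ in $a^{(1,0)}_{kk}$ fed through $\mathbf{B}^{(0,1)}(\tau)$, producing integrands $\tau\,e^{i(\ell-N)\kappa\tau}$ with $\ell=\pm 1$ whose period integrals equal $T/(i(\ell-N)\kappa)\neq 0$. I will collect these two pieces into a single common expression; their cancellation will then reduce to an algebraic identity on the eigendata that I expect to verify by combining the explicit formulas in Appendix~\ref{A:Bexp} with the eigenvector and adjoint-eigenvector formulas \eqref{varphi_non_zero} and \eqref{def:psi24}, in parallel with the cancellation carried out for $N=2$ in \cite[Lemma~6.4]{HY2023}. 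Finally, the diagonal entries $a^{(0,2)}_{jj}(T)$ and $a^{(1,1)}_{jj}(T)$ are unconstrained by the mode-counting (the $r=j=k$, $\ell=0$ channel is admissible), and a direct evaluation of the variation-of-parameters formula yields generically nonzero expressions, completing the statement.
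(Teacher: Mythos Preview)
Your Fourier mode-counting approach via the variation-of-parameters formula is exactly the method the paper invokes by citing \cite[Lemmas~6.3, 6.4]{HY2023}, and your argument for the off-diagonal entries of $\mathbf{a}^{(0,2)}(T)$ is correct and essentially complete: the source $\mathbf{f}^{(0,2)}_k$ carries only Fourier modes $k_k+\ell\kappa$ with $|\ell|\leq 2$ (together with the single $e^{ik_j\tau}$-tail of $a^{(0,1)}_{jk}$, which pairs only with the mean-free $\mathbf{B}^{(0,1)}$), and for $N\geq 3$ none of these resonates with $k_j$.

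The $\mathbf{a}^{(1,1)}$ case is where your proposal has a genuine gap. You correctly isolate the two contributions that survive the mode count: the $e^{ik_j\tau}$-tail of $a^{(0,1)}_{jk}$ fed through the constant operator $\mathbf{B}^{(1,0)}$, and the secular factor $B_{kk}\,\tau e^{ik_k\tau}$ in $a^{(1,0)}_{kk}$ fed through $\mathbf{B}^{(0,1)}$. Carrying the period integrals through, these two pieces sum to
\[
e^{ik_jT}\,T\,(c_{+}+c_{-})\,(B_{kk}-B_{jj}),
\]
where $B_{\ell\ell}=\langle\mathbf{B}^{(1,0)}\boldsymbol{\phi}_\ell,\boldsymbol{\psi}_\ell\rangle$ and $c_{\pm}$ are the $e^{i(k_k\pm\kappa)x}$-coefficients of $a^{(0,1)}_{jk}(x)$. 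Since $B_{jj}\neq B_{kk}$ generically (compare the two instances of \eqref{a10_high}), the vanishing of $a^{(1,1)}_{jk}(T)$ requires the nontrivial identity $c_{+}+c_{-}=0$, equivalently $(N+1)A_{+}=(N-1)A_{-}$ for the $e^{\pm i\kappa x}$-coefficients $A_\pm$ of $\langle\mathbf{B}^{(0,1)}(x)\boldsymbol{\phi}_{k},\boldsymbol{\psi}_{j}\rangle$; this does not follow from mode counting alone. Moreover, your appeal to ``the cancellation carried out for $N=2$ in \cite[Lemma~6.4]{HY2023}'' is misplaced: that lemma (reproduced here as Lemma~\ref{coefficients_highn2}) establishes the \emph{non}-vanishing of the off-diagonals of $\mathbf{a}^{(0,2)}$ for $N=2$, not any cancellation mechanism for $\mathbf{a}^{(1,1)}$. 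To close the argument you must either verify the required identity directly from the explicit formulas in Appendix~\ref{A:Bexp} together with \eqref{varphi_non_zero} and \eqref{def:psi24}, or supply a structural reason; as written your proposal defers the crux of the $\mathbf{a}^{(1,1)}$ claim to an analogy that does not exist.
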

The proofs of Lemmas \ref{coefficients_highng31} and ~\ref{coefficients_highng32} are exactly the same as \cite[Lemmas 6.3 and 6.4]{HY2023}, whence omitted. The computations from \cite[eq. (6.27)]{HY2023} to \cite[eq. (6.33)]{HY2023} still apply to the situation. For the stability function ${\rm disc}_2(\eps)$, we compute by \cite[eq. (6.32)]{HY2023} that 
$$
{\rm disc}_2(\eps)={\rm ind}_2\eps^4+\mathcal{O}(\eps^5).
$$
Since the off-diagonal entries of $\mathbf{a}^{(0,2)}(T)$ vanish, ${\rm disc}_2(\eps)$ vanishes at $\mathcal{O}(\eps^4)$-order, whence stability is determined by the next non-vanishing term. Indeed, the next order $\mathcal{O}(\eps^5)$ must also vanish and the first non-vanishing term of ${\rm disc}_2$ must be an even-order term as proven in the lemma below.
\begin{lemma}\label{stabilityfunction_even}
For a non-resonant capillary-gravity wave or Wilton ripples of order $M$ with $M$ odd, the real-valued and analytic stability function ${\rm disc}_2(\eps)$ \eqref{disc2} is an even function. Hence, the power series expansion of ${\rm disc}_2(\eps)$ at $\eps=0$ is made up of even-power terms.
\end{lemma}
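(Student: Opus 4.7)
The plan is to exploit a half-period spatial translation symmetry of the underlying traveling wave that exchanges the sign of the amplitude parameter $\eps$. This will propagate through the periodic Evans function, and then through both Weierstrass preparation manipulations, to force ${\rm disc}_2(\eps)$ to be even.

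First I would establish the base-wave symmetry
\begin{equation*}
\phi(x+T/2,y;\eps)=\phi(x,y;-\eps),\quad \eta(x+T/2;\eps)=\eta(x;-\eps),
\end{equation*}
and similar identities for $u$ and $z$. At the linear order this is transparent: for non-resonant capillary-gravity waves $\phi_1(x,y)=\s(\kappa x)\ch(\kappa y)$ and $\eta_1(x)=\sh(\kappa)\c(\kappa x)$ both flip sign under $x\mapsto x+T/2$; for Wilton ripples of odd order $M$, the extra mode $\alpha\s(M\kappa x)\ch(M\kappa y)$ also flips because $M$ is odd, whereas for $M=2$ it does not (which is why that case must be excluded, consistent with $\mu_1\neq0$ in \eqref{A_squaremu1}). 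At higher orders I would argue by induction: the recursion \eqref{eqn:stokes2}, \eqref{eqn:stokes3}, etc.\ is translation-invariant, and its nonlinear source term at order $\eps^n$ is a polynomial of total homogeneity $n$ in the lower-order profiles and their $x$-derivatives, so its parity under the shift $x\mapsto x+T/2$ is $(-1)^n$. Uniqueness of the expansion (by the Fredholm alternative for the linear part, together with our normalization) then forces $\phi_n(x+T/2,y)=(-1)^n\phi_n(x,y)$, and likewise for $\eta_n$, which is exactly $\eps\mapsto-\eps$.

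Next I would transfer the symmetry to the spectral problem. Since $\mathbf{B}(x;\sigma,\delta,\eps)$ in \eqref{eqn:LB;delta} is assembled from the base profile, the identity above gives $\mathbf{B}(x+T/2;\sigma,\delta,\eps)=\mathbf{B}(x;\sigma,\delta,-\eps)$. Uniqueness of the reduction function $\mathbf{w}$ in \eqref{def:u2} (from Mielke's theorem) transports this to the reduced system: $\mathbf{A}(x+T/2;\sigma,\delta,\eps)=\mathbf{A}(x;\sigma,\delta,-\eps)$. Let $\Phi(x,x_0;\eps)$ denote the fundamental solution of \eqref{eqn:A}, and set $A=\Phi(T/2,0;\eps)$, $B=\Phi(T/2,0;-\eps)$. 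Using the cocycle property and the symmetry, $\Phi(T,T/2;\eps)=\Phi(T/2,0;-\eps)=B$, hence
\begin{equation*}
\mathbf{X}(T;\sigma,\delta,\eps)=BA,\qquad \mathbf{X}(T;\sigma,\delta,-\eps)=AB.
\end{equation*}
By the classical identity $\det(xI-BA)=\det(xI-AB)$, these matrices share the same characteristic polynomial, so \eqref{def:Delta} gives $\Delta(\lambda,k;\eps)=\Delta(\lambda,k;-\eps)$.

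Finally I would pass evenness through both Weierstrass manipulations. Because $\Delta(\lambda,k;\eps)$ is analytic in $\eps$ and invariant under $\eps\mapsto-\eps$, uniqueness of the Weierstrass factorization \eqref{general_factorization} forces both the Weierstrass polynomial $W(\delta,\gamma,\eps)$ \eqref{weierstrass_m} and the unit $h(\delta,\gamma,\eps)$ to be invariant under $\eps\mapsto-\eps$; consequently each coefficient $a_j(\gamma,\eps)$ is even in $\eps$, and therefore so is ${\rm disc}(\gamma,\eps)=a_1(\gamma,\eps)^2-4a_0(\gamma,\eps)$ from \eqref{discri}. Applying uniqueness a second time to the factorization \eqref{second_factorization}, the coefficients $b_0(\eps),b_1(\eps)$ of $W(\gamma,\eps)$ in \eqref{disc_weierstrass} are even in $\eps$, and so is
\begin{equation*}
{\rm disc}_2(\eps)=b_1(\eps)^2-4b_0(\eps).
\end{equation*}
The main technical obstacle is the inductive verification in step one that the full formal expansion (not just the leading term) respects the parity, particularly for Wilton ripples of order $M\geq 3$ where the constant $\alpha$ is itself fixed by an equation at a higher order in $\eps$; there one must check that the chosen $\alpha$ is compatible with any of the branches at each $M$, which follows because $\alpha$ does not depend on $\eps$ and only the $\eps$-independent algebraic equation for $\alpha$ matters.
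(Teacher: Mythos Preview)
Your proof is correct and rests on the same key observation as the paper's: for non-resonant capillary-gravity waves and Wilton ripples of odd order $M$, the half-period shift $x\mapsto x+T/2$ is equivalent to $\eps\mapsto-\eps$, so phase-translation invariance of the spectrum forces ${\rm disc}_2$ to be even. The paper's proof is a two-sentence sketch invoking translation invariance directly, whereas you make the mechanism explicit---the $BA$ versus $AB$ identity for the monodromy matrix and the uniqueness of the Weierstrass factorizations---which is a welcome elaboration but not a different route.
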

\begin{proof}
For the waves considered in the lemma, we note that setting $\eps\rightarrow -\eps$	
is equivalent to translating the profile by half period, i.e., setting $x\rightarrow x+\pi/\kappa$ in \eqref{phi1eta1} or \eqref{wiltonm_1}. Since spectrum is invariant with respect to phase translation, the $\eps$-wave and the $-\eps$-wave admit the same stability function at a certain non-zero resonant frequency, i.e., ${\rm disc}_2(\eps)={\rm disc}_2(-\eps)$.
\end{proof}
\begin{remark}
For Wilton ripple of order $M$ with $M$ even, setting $\eps\rightarrow -\eps$	
is NOT equivalent to translating the profile by half period. Therefore, setting $\eps \rightarrow -\eps$ may change the spectrum. See, for instance, Corollary~\ref{cor:ellipse_eps;wilton2}. 
\end{remark}
With Lemma~\ref{stabilityfunction_even}, we now justify Creedon et al.'s formal asymptotic expansion for the Floquet exponent \cite[(5.1c)]{creedon_deconinck_trichtchenko_2022}.
\begin{corollary}\label{formal_justify}
For a Stokes wave, a non-resonant capillary-gravity wave, or Wilton ripples of order $M$ with $M$ odd, the left and right end points of the interval $I(\eps)$ \eqref{gamma_interval} are analytic function of $\eps$.
\end{corollary}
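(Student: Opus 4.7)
The plan is to exploit the evenness of the stability function ${\rm disc}_2(\eps)$ established in Lemma~\ref{stabilityfunction_even} to produce an analytic single-valued choice of $\sqrt{{\rm disc}_2(\eps)}$, from which the analyticity of the two endpoints of $I(\eps)$ follows at once. By \eqref{gamma_interval} the endpoints are $-b_1(\eps)/2 \pm \sqrt{{\rm disc}_2(\eps)}/2$, and since $b_1(\eps)$ is already analytic at $\eps=0$ as a coefficient of the Weierstrass polynomial \eqref{disc_weierstrass}, the only obstruction to analyticity is the square root factor.

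First I would observe that ${\rm disc}_2(\eps)=b_1(\eps)^2-4 b_0(\eps)$ is real-valued and analytic with ${\rm disc}_2(0)=0$. In the unstable regime of interest the function is not identically zero, so it has a finite order of vanishing at the origin; by Lemma~\ref{stabilityfunction_even} its Taylor expansion contains only even-power terms, so this order is some even integer $2m$. I may then write
\[
{\rm disc}_2(\eps) = \eps^{2m}\, g(\eps),
\]
with $g$ analytic at $\eps=0$ and $g(0)\neq 0$. Non-emptiness of $I(\eps)$ for $0<|\eps|\ll 1$ requires ${\rm disc}_2(\eps)>0$ there, which forces the stronger $g(0)>0$.

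Next I would set $\eta(\eps):=\eps^m\sqrt{g(\eps)}$; this is analytic in a neighborhood of $\eps=0$ since $\sqrt{g(\cdot)}$ is analytic wherever $g$ is positive, and it satisfies $\eta(\eps)^2={\rm disc}_2(\eps)$. The two roots of $W(\gamma,\eps)=0$ are therefore
\[
\gamma_\pm(\eps)=-\frac{b_1(\eps)}{2}\pm\frac{\eta(\eps)}{2},
\]
both manifestly analytic in $\eps$. For $\eps>0$ these are respectively the right and left endpoints of $I(\eps)$; for $\eps<0$ with $m$ odd the labels exchange because of the sign flip of $\eps^m$, but as an unordered pair the two endpoints are nonetheless traced out by two analytic functions, which is exactly what the corollary claims.

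The main point circumvented by the hypothesis is the usual Puiseux phenomenon: roots of a quadratic with analytic coefficients whose discriminant vanishes at a point are generically only half-integer-power (algebroid) in the parameter. Evenness of ${\rm disc}_2$ promotes the leading exponent of $\sqrt{{\rm disc}_2(\eps)}$ from a half-integer to an integer and eliminates the branch-cut behavior. For \emph{even}-order Wilton ripples, where Lemma~\ref{stabilityfunction_even} does not apply, this promotion is unavailable and the endpoints are generically algebroid rather than analytic in $\eps$, which indicates that the restriction in the hypothesis of the corollary is sharp; no further obstacle is anticipated beyond invoking Lemma~\ref{stabilityfunction_even}.
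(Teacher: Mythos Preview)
Your argument is correct and is essentially a fleshed-out version of the paper's own one-line proof, which simply says that analyticity follows because the first non-vanishing term of ${\rm disc}_2$ is of even order. You make explicit the factorization ${\rm disc}_2(\eps)=\eps^{2m}g(\eps)$ with $g(0)>0$ and the resulting analytic square root $\eta(\eps)=\eps^m\sqrt{g(\eps)}$, and you correctly note the harmless label exchange of the two analytic branches when $m$ is odd and $\eps$ changes sign.
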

\begin{proof}
The analyticity follows from the fact that the first non-vanishing term of the power series expansion of ${\rm disc}_2$ must be an even-order term.	
\end{proof}
Based on Lemmas~\ref{coefficients_highng31}, \ref{coefficients_highng32}, and \ref{stabilityfunction_even}, for $N\geq 3$, explicit computations of \eqref{general_expansion}, \eqref{general_factorization}, \eqref{weierstrass_eps}, \eqref{discri}, \eqref{second_factorization}, \eqref{disc_weierstrass}, and \eqref{disc2} yields
$$
{\rm disc}_2(\eps)=\frac{-16}{h(0,0)}\frac{a_{12}^{(0,3)}a_{21}^{(0,3)}}{a_{11}^{(1,0)}a_{22}^{(1,0)}}\eps^6+\mathcal{O}(\eps^8),
$$
where $h(0,0)$ given by \eqref{h00_formula} is negative. 

At the resonant frequency $i\sigma$ where $k_4(\sigma)-k_5(\sigma)=3\kappa$, the off-diagonal entries of $\mathbf{a}^{(0,3)}(T)$ do not necessarily vanish, and the stability of nearby spectrum is determined by the sign of $\frac{a_{12}^{(0,3)}a_{21}^{(0,3)}}{a_{11}^{(1,0)}a_{22}^{(1,0)}}$. 

In general, at the resonant frequency $i\sigma$ with $N\geq 1$, we can prove that the off-diagonal entries of $\mathbf{a}^{(0,M)}(T)$, for $0\leq M\leq N-1$, must vanish and the off-diagonal entries of $\mathbf{a}^{(0,N)}(T)$ do not necessarily vanish. Based on this fact and results obtained so far, we conjecture that form of ${\rm disc}_2(\eps)$ is in general
\be\label{conjecture_disc2}
{\rm disc}_2(\eps)=\frac{-16}{h(0,0)}\frac{a_{12}^{(0,N)}a_{21}^{(0,N)}}{a_{11}^{(1,0)}a_{22}^{(1,0)}}\eps^{2N}+\mathcal{O}(\eps^{2N+2}).
\ee

\subsubsection{Numerical results}\label{non-resonant_results} For $(\beta,\kappa)$ in the sub-critical region with $\beta<1$ and $\kappa<8$, we compute numerically ${\rm ind}_2$ and find $${\rm ind}_2(\beta,\kappa,\sigma,k_4(\sigma),k_5(\sigma))<0$$
except on a critical curve $(\beta,\kappa(\beta))$ where ${\rm ind}_2(\beta,\kappa(\beta),\sigma,k_4(\sigma),k_5(\sigma))=0$. See FIGURE \ref{figure14}. Hence, all of these waves (except on the critical curve) are numerically found to be {\bf stable} near the resonant frequency $i\sigma$ where $k_4(\sigma)-k_5(\sigma)=2\kappa$.

\begin{remark}
\label{rm:sign_rule} 
In the numerical results for non-modulational stability presented below, we find that the following sign rule always holds:
\begin{itemize}
\item The index function is \textbf{negative} if its corresponding eigenmodes have \textbf{identical} Krein signatures.
\item The index function is \textbf{positive} if its corresponding eigenmodes have \textbf{opposite} Krein signatures. 
\end{itemize}
This rule holds everywhere except on critical curves or points. For simplicity, these critical sets are generally not tracked (with the exception of $\rm{ind}_2$). Consequently, all subsequent statements referring to ``all of these waves'' are to be understood as holding \textit{away from} these critical curves and points. 
\end{remark}

Indeed, the first part of the rule follows from the Krein-signature criterion (Remark \ref{Kreincondition}). The second part relies on the condition for non-modulational instability, which is triggered when the relevant stability function is non-vanishing \cite{creedon_deconinck_trichtchenko_2022,Noble_2023,berti2025,JRSY25}.

By Theorem \ref{thm:unstableeps1}, the index function ${\rm ind}_1$ \eqref{def:ind_1} can be used to examine the stability near the resonant frequencies with a pair of $1$-resonant eigenvalues for all waves in the super-critical region. On the other hand, for waves in the super-critical region that admit a pair of $2$-resonant eigenvalues, i.e.,  $(k_j(\sigma),k_{j'}(\sigma),2)\in \mathcal{R}(\sigma)$ for some $\sigma>0$, the index function ${\rm ind}_2$ \eqref{def:ind_2} can be used to examine the stability at $i\sigma$ by replacing $k_4(\sigma)$ and $k_5(\sigma)$ in \eqref{def:ind_2} by $k_j(\sigma)$ and $k_{j'}(\sigma)$, respectively. That is the index function ${\rm ind}_2$ \eqref{def:ind_2} continues through the boundaries of $S_3$ and $S_2$ region. This observation also simplifies our numerical investigation for stability. For $(\beta,\kappa)$ in the super-critical region with $\kappa<8$, we document our numerical results at non-zero resonant frequencies below.
\begin{itemize}
    \item[(1)] For waves admitting $(k_1(\sigma),k_5(\sigma),1)\in \mathcal{R}(\sigma)$ at some $\sigma>0$, i.e., $(\beta,\kappa)$-waves that are above the domain of Wilton ripples of order 2 and below the curve satisfying $-k_{c,1}-k_5(-\sigma_{c,1})=\kappa$ (see FIGURE \ref{figure7} right panel), we find numerically $
    {\rm ind}_1(\beta,\kappa,\sigma,k_1(\sigma),k_5(\sigma))<0$.
    Hence, for all of these waves, spectra in the vicinity of the resonant frequency are {\bf purely imaginary};\\
    For waves admitting $(k_1(\sigma),k_5(\sigma),2)\in \mathcal{R}(\sigma)$ at some $\sigma>0$, i.e., $(\beta,\kappa)$-waves that are above the domain of Wilton ripples of order 3 and below the curve satisfying $-k_{c,1}-k_5(-\sigma_{c,1})=2\kappa$ (see FIGURE \ref{figure7} right panel), we find numerically
    $
    {\rm ind}_2(\beta,\kappa,\sigma,k_1(\sigma),k_5(\sigma))<0.
    $
    Hence, for all of these waves, spectra in the vicinity of the resonant frequency are {\bf purely imaginary};
    \item[(2)] For waves admitting $(k_6(\sigma),k_2(\sigma),1)\in \mathcal{R}(\sigma)$ for some $\sigma>0$, i.e., $(\beta,\kappa)$-waves that are below the domain of Wilton ripples of order 2 (see FIGURE \ref{figure7} right panel), we find numerically $
    {\rm ind}_1(\beta,\kappa,\sigma,k_6(\sigma),k_2(\sigma))<0$.
    Hence, for all of these waves, spectra in the vicinity of the resonant frequency are {\bf purely imaginary};\\
     For waves admitting $(k_6(\sigma),k_2(\sigma),2)\in \mathcal{R}(\sigma)$ for some $\sigma>0$, i.e., $(\beta,\kappa)$-waves that are below the domain of Wilton ripples of order 3 (see FIGURE \ref{figure7} right panel), we find numerically $
    {\rm ind}_2(\beta,\kappa,\sigma,k_6(\sigma),k_2(\sigma))<0$.
    Hence, for all of these waves, spectra in the vicinity of the resonant frequency are {\bf purely imaginary};
    \item[(4)] For waves admitting $(k_4(\sigma),k_5(\sigma),2)\in \mathcal{R}(\sigma)$ for some $\sigma>0$, i.e., $(\beta,\kappa)$-waves that are above the domain of Wilton ripples of order 2 (see FIGURE \ref{figure7} right panel), we find numerically $
    {\rm ind}_2(\beta,\kappa,\sigma,k_4(\sigma),k_5(\sigma))<0$ except on a critical curve $(\beta,\kappa(\beta))$ where $${\rm ind}_2(\beta,\beta(\kappa),\sigma,k_4(\sigma),k_5(\sigma))=0.$$
    See FIGURE \ref{figure14}. Hence, for all of these waves (except on the critical curve), spectra in the vicinity of the resonant frequency are {\bf purely imaginary};
    \item[(6)] For waves admitting $(k_6(\sigma),k_1(\sigma),1)\in \mathcal{R}(\sigma)$ for some $\sigma>0$ (see FIGURE \ref{figure8}), we find numerically $
    {\rm ind}_1(\beta,\kappa,\sigma,k_6(\sigma),k_1(\sigma))>0$.
    Hence, for all of these waves, there are {\bf unstable spectra} in the vicinity of the resonant frequency;\\
    For waves admitting $(k_6(\sigma),k_1(\sigma),2)\in \mathcal{R}(\sigma)$ for some $\sigma>0$ (see FIGURE \ref{figure8}), we find numerically $
    {\rm ind}_2(\beta,\kappa,\sigma,k_6(\sigma),k_1(\sigma))>0$.
    Hence, for all of these waves, there are {\bf unstable spectra} in the vicinity of the resonant frequency;
    \item[(7)] For waves admitting $(k_2(\sigma),k_3(\sigma),1)\in \mathcal{R}(\sigma)$ for some $\sigma>0$ (see FIGURE \ref{figure9}), we find numerically $
    {\rm ind}_1(\beta,\kappa,\sigma,k_2(\sigma),k_3(\sigma))>0$.
    Hence, for all of these waves, there are {\bf unstable spectra} in the vicinity of the resonant frequency;\\
    For waves admitting $(k_2(\sigma),k_3(\sigma),2)\in \mathcal{R}(\sigma)$ for some $\sigma>0$ (see FIGURE \ref{figure9}), we find numerically $
    {\rm ind}_2(\beta,\kappa,\sigma,k_2(\sigma),k_3(\sigma))>0$.
    Hence, for all of these waves, there are {\bf unstable spectra} in the vicinity of the resonant frequency;
     \item[(9)] For waves admitting $(k_6(\sigma),k_4(\sigma),2)\in \mathcal{R}(\sigma)$ for some $\sigma>0$ (see FIGURE \ref{figure10} left panel), we find numerically $
    {\rm ind}_2(\beta,\kappa,\sigma,k_6(\sigma),k_4(\sigma))>0$ except on a critical curve $(\beta,\kappa(\beta))$ where $${\rm ind}_2(\beta,\beta(\kappa),\sigma,k_6(\sigma),k_4(\sigma))=0.$$
    See FIGURE \ref{figure14}. Hence, for all of these waves (except on the critical curve), there are {\bf unstable spectra} in the vicinity of the resonant frequency;
    \item[(12)] For waves admitting $(k_2(\sigma),k_4(\sigma),2)\in \mathcal{R}(\sigma)$ for some $\sigma>0$ (see FIGURE \ref{figure10} left panel), we find numerically $
    {\rm ind}_2(\beta,\kappa,\sigma,k_2(\sigma),k_4(\sigma))>0$ except on a critical curve $(\beta,\kappa(\beta))$ where $${\rm ind}_2(\beta,\beta(\kappa),\sigma,k_2(\sigma),k_4(\sigma))=0.$$ See FIGURE \ref{figure14}. The curve intersects the $\kappa$-axis at $\kappa=1.849404083750\ldots$, agreeing with the critical wave number found in \cite{HY2023}. Hence, for all of these waves (except on the critical curve), there are {\bf unstable spectra} in the vicinity of the resonant frequency;
    \item[(13)] For waves admitting $(k_3(\sigma),k_5(\sigma),1)\in \mathcal{R}(\sigma)$ for some $\sigma>0$ (see FIGURE \ref{figure11}), we find numerically $
    {\rm ind}_1(\beta,\kappa,\sigma,k_3(\sigma),k_5(\sigma))<0$.
    Hence, for all of these waves, spectra in the vicinity of the resonant frequency are {\bf purely imaginary};\\
    For waves admitting $(k_3(\sigma),k_5(\sigma),2)\in \mathcal{R}(\sigma)$ for some $\sigma>0$ (see FIGURE \ref{figure11}), we find numerically $
    {\rm ind}_2(\beta,\kappa,\sigma,k_3(\sigma),k_5(\sigma))<0$.
    Hence, for all of these waves, spectra in the vicinity of the resonant frequency are {\bf purely imaginary};
\item[(14)] For waves admitting $(k_2(\sigma),k_1(\sigma),1)\in \mathcal{R}(\sigma)$ for some $\sigma>0$ (see FIGURE \ref{figure12}), we find numerically, for at least one of these $\sigma$, $$
    {\rm ind}_1(\beta,\kappa,\sigma,k_2(\sigma),k_1(\sigma))>0.$$
    Hence, for all of these waves, there are {\bf unstable spectra} in the vicinity of the resonant frequency;\\
    For waves admitting $(k_2(\sigma),k_1(\sigma),2)\in \mathcal{R}(\sigma)$ for some $\sigma>0$ (see FIGURE \ref{figure12}), we find numerically, for at least one of these $\sigma$, $$
    {\rm ind}_2(\beta,\kappa,\sigma,k_2(\sigma),k_1(\sigma))>0.$$
    Hence, for all of these waves, there are {\bf unstable spectra} in the vicinity of the resonant frequency;
\item[(15)] For waves admitting $(k_6(\sigma),k_3(\sigma),1)\in \mathcal{R}(\sigma)$ for some $\sigma>0$ (see FIGURE \ref{figure13}), we find numerically, for at least one of these $\sigma$, $$
    {\rm ind}_1(\beta,\kappa,\sigma,k_6(\sigma),k_3(\sigma))>0.$$
    Hence, for all of these waves, there are {\bf unstable spectra} in the vicinity of the resonant frequency;\\
    For waves admitting $(k_6(\sigma),k_3(\sigma),2)\in \mathcal{R}(\sigma)$ for some $\sigma>0$ (see FIGURE \ref{figure13}),  we find numerically, for at least one of these $\sigma$, $$
    {\rm ind}_2(\beta,\kappa,\sigma,k_6(\sigma),k_3(\sigma))>0.$$
    Hence, for all of these waves, there are {\bf unstable spectra} in the vicinity of the resonant frequency.
\end{itemize}

\begin{remark}\label{consistence1}
Since we detect no instability in cases (1), (2), (4), and (13), our findings agree with the local spectral stability predicted by the Krein-signature criterion (see Remark~\ref{Kreincondition}).
\end{remark}

\begin{figure}[htbp]
    \centering
    \includegraphics[scale=0.3]{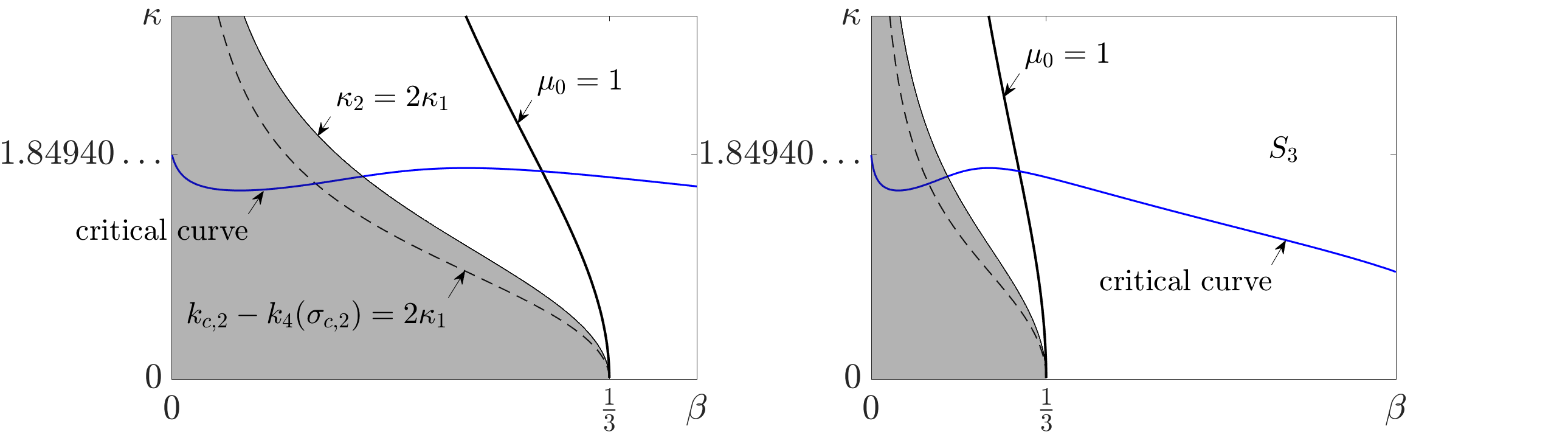}
    \caption{Left panel: Waves admitting $(k_2,k_4,2)\in\mathcal{R}(\sigma)$ for some $\sigma>0$ sit below the curve satisfying $k_{c,2}-k_4(\sigma_{c,2})=2\kappa_1$. Waves admitting $(k_6,k_4,2)\in\mathcal{R}(\sigma)$ for some $\sigma>0$ sit between the curve on which $k_{c,2}-k_4(\sigma_{c,2})=2\kappa_1$ and the curve on which $\kappa_2=2\kappa_1$, i.e., the domain of Wilton ripples of order. Waves admitting $(k_4,k_5,2)\in\mathcal{R}(\sigma)$ for some $\sigma>0$ sit above the curve on which $\kappa_2=2\kappa_1$, i.e., the domain of Wilton ripples of order $2$. We find ${\rm ind}_2>0$ for waves admitting $(k_2,k_4,2)$- or $(k_6,k_4,2)$-resonance and ${\rm ind}_2<0$ for waves admitting $(k_4,k_5,2)$-resonance, except for waves on a critical curve (blue in color) where we find ${\rm ind}_2=0$. The critical curve intersect $\kappa$ axis at $\kappa=1.849404083750\ldots$, agree with the result of zero surface tension case \cite{HY2023}. Right panel: Plot of the critical curve for large surface tension. }
    \label{figure14}
\end{figure}
\subsubsection{Resonant critical frequencies with $N=1$}\label{CRFN1}
We now turn to the study of cases (16)-(19) in which waves admit two pairs of resonant eigenvalues at the critical frequency $-i\sigma_{c,1}$ or $i\sigma_{c,2}$. Because of the appearance of double eigenvalue $ik_{c,1}$ (resp. $-ik_{c,2}$) of $\mathbf{L}(-i\sigma_{c,1})^\dag$ (resp. $\mathbf{L}(i\sigma_{c,2})^\dag$), the eigenfunction defined in \eqref{def:psi24} (resp. \eqref{def:cj24}) becomes singular for $\sigma=-\sigma_{c,1}$ (resp. $\sigma=\sigma_{c,2}$) and $j=1,3$ (resp. $j=2,6$). Consequently, generalized eigenfunctions of $\mathbf{L}(-i\sigma_{c,1})^\dag$ (resp. $\mathbf{L}(i\sigma_{c,2})^\dag$) (see \eqref{generalized_eigenvector_L*}) shall be used in the definition of the projection map $\boldsymbol{\Pi}(\sigma)$. Another main difference due to the appearance of double eigenvalue is the matrix $\mathbf{a}^{(0,0)}(x;-i\sigma_{c,1})$ (resp. $\mathbf{a}^{(0,0)}(x;i\sigma_{c,2})$) now has non-vanishing off-diagonal entry just like the case when $\sigma=0$ \eqref{eqn:a00}. Below we focus on cases (16)-(18) with $N=1$.
\begin{lemma}
\label{criticalp1}
For waves discussed in cases (16)-(18) satisfying $(-1)^rk_{c,r}-k_j((-1)^r\sigma_{c,r})=N\kappa$, for $N=1$ or $-1$ and $r=1$ or $2$, for convenience, we switch the position of resonant modes $\boldsymbol{\phi}_r((-1)^r\sigma_{c,r})$, $\boldsymbol{\phi}_{3r}((-1)^r\sigma_{c,r})$, and $\boldsymbol{\phi}_j((-1)^r\sigma_{c,r})$ with the first three modes in the basis $\mathcal{B}((-1)^r\sigma_{c,r})$. At the resonant critical frequency $(-1)^ri\sigma_{c,r}$, the left top $3$ by $3$ block matrix of $\mathbf{a}^{(0,0)}(T)$ reads $$e^{i(-1)^rk_{c,r}T}\begin{pmatrix}1&T&0\\0&1&0\\0&0&1\end{pmatrix},$$ the left top $3$ by $3$ block matrix of $\mathbf{a}^{(1,0)}(T)$ reads $\begin{pmatrix}a^{(1,0)}_{11}&a^{(1,0)}_{12}&0\\a^{(1,0)}_{21}&a^{(1,0)}_{22}&0\\0&a^{(1,0)}_{32}&a^{(1,0)}_{33}\end{pmatrix}$, and the left top $3$ by $3$ block matrix of $\mathbf{a}^{(0,1)}(T)$ reads $\begin{pmatrix}0&a^{(0,1)}_{12}&a^{(0,1)}_{13}\\0&a^{(0,1)}_{22}&a^{(0,1)}_{23}\\a^{(0,1)}_{31}&a^{(0,1)}_{32}&0\end{pmatrix}$. Moreover, there hold
\be 
\label{criticalp1:a}
a_{21}^{(1,0)},a_{22}^{(0,1)},a_{31}^{(0,1)}\in ie^{i(-1)^rk_{c,r}T}\mathbb{R},\quad a_{33}^{(1,0)},a_{23}^{(0,1)}\in e^{i(-1)^rk_{c,r}T}\mathbb{R}.
\ee 
\end{lemma}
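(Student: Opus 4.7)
The plan is to follow the same computational strategy as in Lemmas~\ref{coefficients_highn1}, \ref{coefficients_highn2}, \ref{coefficients_highng31}, and \ref{coefficients_highng32}, now adapted to the presence of a Jordan block at the double eigenvalue $(-1)^r ik_{c,r}$ of $\mathbf{L}((-1)^r i\sigma_{c,r})$. First, I would treat $\mathbf{a}^{(0,0)}(T)$. Since $\mathbf{B}^{(0,0)}=\mathbf{0}$, equation \eqref{eqn:a:sigma} collapses to $\mathcal{B}(\sigma)\mathbf{a}_x^{(0,0)}=\mathbf{L}(i\sigma)\mathcal{B}(\sigma)\mathbf{a}^{(0,0)}$, so the columns of $\mathbf{a}^{(0,0)}(x)$ describe the free evolution of the basis vectors. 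The third basis vector is a genuine eigenvector with eigenvalue $ik_j$, contributing $e^{ik_j T}$. But because the resonance condition reads $(-1)^r k_{c,r}-k_j=N\kappa$ with $N=\pm 1$ and $T=2\pi/\kappa$, we have $e^{ik_j T}=e^{i(-1)^r k_{c,r}T}e^{-iN\kappa T}=e^{i(-1)^r k_{c,r}T}$. For the first two basis vectors, the pair $(\boldsymbol{\phi}_r,\boldsymbol{\phi}_{3r})$ forms a Jordan chain with $(\mathbf{L}((-1)^r i\sigma_{c,r})-(-1)^r ik_{c,r}\mathbf{1})\boldsymbol{\phi}_{3r}=\boldsymbol{\phi}_r$ by \eqref{generalized_eigenvector}, whence exponentiating produces the entry $T$ in the $(1,2)$ slot, giving the claimed left top $3\times 3$ block of $\mathbf{a}^{(0,0)}(T)$.

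Next I would compute $\mathbf{a}^{(1,0)}(T)$ and $\mathbf{a}^{(0,1)}(T)$ by inserting \eqref{def:X;exp} and solving the linear ODE \eqref{eqn:a:sigma} with forcing $\boldsymbol{\Pi}(\sigma)\mathbf{f}_k^{(m,n)}$, and then testing against the dual basis $\boldsymbol{\psi}_\ell$ described in Section~\ref{sec:proj}. At order $(m,n)=(1,0)$ or $(0,1)$, we have $\mathbf{w}_k^{(0,0)}=\mathbf{0}$ by \eqref{def:b;exp0}, so each entry reduces to an integral of the form
\[
a^{(m,n)}_{\ell k}(T)=e^{ik_\ell T}\int_0^T e^{i(k_k-k_\ell)x}\,\bigl\langle\mathbf{B}^{(m,n)}(x;\sigma)\boldsymbol{\phi}_k,\boldsymbol{\psi}_\ell\bigr\rangle\,dx,
\]
possibly plus an explicit Jordan-chain correction in the $(1,2)$ slot arising from the factor of $x$ in the first row of $\mathbf{a}^{(0,0)}(x)$. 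The zero/non-zero pattern then follows from matching Fourier modes. Since $\mathbf{B}^{(1,0)}(x;\sigma)$ is $x$-independent (see Appendix~\ref{A:Bexp}), the integral vanishes unless $k_k\equiv k_\ell\pmod{\kappa}$; in our situation the resonance pairs the first two Jordan-block modes with each other and pairs the third mode at $k_j$ with either the first or second mode, which is exactly the sparsity pattern asserted for $\mathbf{a}^{(1,0)}(T)$. For $\mathbf{B}^{(0,1)}(x;\sigma)$, the entries contain only $e^{\pm i\kappa x}$ harmonics, so the integral vanishes unless $k_k-k_\ell\equiv \pm\kappa\pmod{\kappa\mathbb{Z}}$, producing the complementary sparsity pattern stated for $\mathbf{a}^{(0,1)}(T)$.

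Finally, I would establish the reality/imaginarity conditions \eqref{criticalp1:a}. Following the argument used in \cite[Lemma~6.4]{HY2023} and exploited already in Lemmas~\ref{coefficients_highn1}--\ref{coefficients_highng32}, one exploits the Hamiltonian symmetry $\mathbf{L}(-i\sigma)=\overline{\mathbf{L}(i\sigma)}$ combined with the parity properties of $\phi_n,\eta_n$ (odd in $x$ for $\phi_n$, even in $x$ for $\eta_n$) built into the expansions in Section~\ref{sec:Stokes} to show that certain integrands are pure imaginary or real after integration over one period. Applying this to each of the five entries listed in \eqref{criticalp1:a} gives the stated phases. The main obstacle I anticipate is bookkeeping the extra contributions generated by the Jordan-chain factor of $x$ in the $(1,2)$ entry of $\mathbf{a}^{(0,0)}(x)$: these terms propagate through \eqref{def:fsigma} and must be shown not to upset either the sparsity pattern or the phase structure; the integration-by-parts that kills the secular-looking piece is the only nontrivial step beyond what was already carried out for the non-critical cases. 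All the remaining computations are mechanical continuations of those performed in Section~\ref{sec:expansion_monodromy} and Appendix~\ref{A:Bexp}.
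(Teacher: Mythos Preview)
Your approach is essentially the same as the paper's: direct computation via the variation-of-constants formula, paralleling \cite[Lemma~6.3]{HY2023}, with the Jordan block at the double eigenvalue $(-1)^r ik_{c,r}$ as the new ingredient. The paper's own proof is equally terse, highlighting only that the second column of $\mathbf{a}^{(0,0)}(x)$ reads $(xe^{ik_cx},e^{ik_cx},0)^T$ and that this $x$-factor is what makes the full second column of $\mathbf{a}^{(1,0)}(T)$ and $\mathbf{a}^{(0,1)}(T)$ generically nonzero.

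Two imprecisions in your write-up are worth flagging. First, your Fourier-matching criterion for $\mathbf{B}^{(1,0)}$ is misstated: since $\mathbf{B}^{(1,0)}$ is $x$-independent, $\int_0^T e^{i(k_k-k_\ell)x}\,dx$ vanishes exactly when $k_k-k_\ell\in\kappa\mathbb{Z}\setminus\{0\}$ and equals $T$ when $k_k=k_\ell$. In the present resonant block \emph{all three} eigenvalues are congruent modulo $\kappa$, so your condition ``$k_k\equiv k_\ell\pmod\kappa$'' is vacuous and cannot produce the asserted zeros. Second, you localize the Jordan correction to ``the $(1,2)$ slot,'' but the nilpotent piece enters in two distinct places: through the second column of $\mathbf{a}^{(0,0)}(x)$ in the forcing $\mathbf{f}^{(m,n)}_2$ (which is why all of column~2 can be nonzero, the point the paper makes), and through the $(T-x')$ term in row~1 of the propagator $e^{\tilde A(T-x')}$, which in principle contaminates every entry in row~1. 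Verifying that, e.g., the $(1,3)$ entry of $\mathbf{a}^{(1,0)}(T)$ still vanishes therefore needs more than naive harmonic matching; it comes down to the explicit (symbolic) evaluation that both you and the paper defer to.
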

\begin{proof}
For simplicity, denote $k_c:=(-1)^rk_{c,r}$ and $\sigma_c:=(-1)^r\sigma_{c,r}$.
The proof is based on similar computations made in \cite[Lemma 6.3]{HY2023}. In particular, the first three rows of the second column of $\mathbf{a}^{(0,0)}(x)$ reads $\begin{pmatrix}xe^{ik_cx}\\e^{ik_cx}\\0\end{pmatrix}$ for $xe^{ik_cx}\boldsymbol{\phi}_r+e^{ik_cx}\boldsymbol{\phi}_{3r}$ is the solution to \eqref{eqn:a:sigma} \eqref{def:fsigma} \eqref{def:a;exp0} \eqref{cond:a} when $m=n=0$, $\sigma=\sigma_c$, and $k=2$. The second columns of $\mathbf{a}^{(0,1)}(x)$ and $\mathbf{a}^{(1,0)}(x)$ then do not necessarily vanish because of the term $xe^{ik_cx}\boldsymbol{\phi}_r$.
\end{proof}

If $Y(\sigma_c)$ is four dimensional, let $\mathcal{B}(\sigma_c)=\{\boldsymbol{\phi}_r,\boldsymbol{\phi}_{3r},\boldsymbol{\phi}_{j},\boldsymbol{\phi}_s\}$. The periodic Evans function then expands as
\ba \label{expanddeltaresonance1_cri}
&\Delta(i\sigma_c+\delta,k_c+p\kappa+\gamma;\eps)\\
=&d^{(2,0,0)}\delta^2+d^{(1,1,0)}\gamma\delta+d^{(0,0,2)}\eps^2+\smallO(|\delta|+|\gamma|+|\eps|)^2,
\ea
where 
\ba \label{resonance1_d_cri1}
d^{(2,0,0)}&=-Ta_{21}^{(1,0)}a_{33}^{(1,0)}(e^{iT(k_c + k_s)} - e^{2iTk_c}),\\
d^{(1,1,0)}&=iT^2a_{21}^{(1,0)}e^{iTk_c}(e^{iT(k_c + k_s)} - e^{2iTk_c}),\\
d^{(0,0,2)}&=Ta_{23}^{(0,1)}a_{31}^{(0,1)}(e^{iT(k_c + k_s)} - e^{2iTk_c}).&&\\
\ea 
For the Weierstrass's factorization $\Delta(i\sigma_c+\delta,k_c+p\kappa+\gamma;\eps)=W(\delta,\gamma,\eps)h(\delta,\gamma,\eps)$ \eqref{general_factorization}, computation reveals
\ba \label{weierstrass_eps_cri}
W(\delta,\gamma,\eps)=&\delta^2+\left(d^{(1,1,0)}\big(d^{(2,0,0)}\big)^{-1}\gamma+\smallO(|\gamma|+|\eps|)\right)\delta\\
&+d^{(0,0,2)}\big(d^{(2,0,0)}\big)^{-1}\eps^2+\smallO\left(|\gamma|+|\eps|\right)^2.
\ea
\begin{theorem}[Resonant critical frequencies $(-1)^ri\sigma_{c,r}$ with $N=1$]\label{thm:unstableeps1_cri}
Consider a $(\beta,\kappa)$- non-resonant capillary-gravity wave of sufficiently small amplitude discussed in cases (16)-(18) with $N=1$ and its spectra near the resonant critical frequency $(-1)^ri\sigma_{c,r}$. If
\be \label{def:ind_3}
{\rm ind}_3(\beta(\kappa),\kappa):=\frac{a^{(0,1)}_{23}a^{(0,1)}_{31}}{a_{21}^{(1,0)}a_{33}^{(1,0)}}>0,
\ee 
the wave admits unstable spectra in shape of bubble of size $\mathcal{O}(\eps)$ near the resonant critical frequency $(-1)^{r}i\sigma_{c,r}$ as described in Corollary~\ref{cor:circle_eps}. If 
\be \label{ind_3_stability}
{\rm ind}_3(\beta(\kappa),\kappa)<0,
\ee 
the wave admits no unstable spectrum near the resonant critical frequency $(-1)^{r}i\sigma_{c,r}$. To put it another way, the spectra stay on the imaginary axis near the resonant frequency $(-1)^{r}i\sigma_{c,r}$ for all sufficiently small amplitude. If \be{\rm ind}_3(\beta(\kappa),\kappa)=0,\ee stability of the spectra near the resonant frequency is determined by higher order terms of the $\mathbf{a}^{(m,n)}(T;\sigma)$ \eqref{def:X;exp}.
\end{theorem}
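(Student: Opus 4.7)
The plan is to mimic the two-stage Weierstrass preparation strategy used in the proofs of Theorems~\ref{thm:unstableeps1} and~\ref{thm:unstable2}, but with the modified leading structure forced by the double eigenvalue $i(-1)^r k_{c,r}$ of $\mathbf{L}((-1)^r i\sigma_{c,r})$. First, I would invoke Lemma~\ref{criticalp1} to record the leading entries of $\mathbf{a}^{(0,0)}(T)$, $\mathbf{a}^{(1,0)}(T)$, and $\mathbf{a}^{(0,1)}(T)$ in the four-dimensional case $\mathcal{B}(\sigma_c)=\{\boldsymbol{\phi}_r,\boldsymbol{\phi}_{3r},\boldsymbol{\phi}_j,\boldsymbol{\phi}_s\}$, noting in particular that the presence of the generalized eigenvector produces the $T\,e^{i(-1)^rk_{c,r}T}$ off-diagonal entry in the upper-left $3\times 3$ block of $\mathbf{a}^{(0,0)}(T)$. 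Expanding $\Delta$ at $(i\sigma_c,k_c+p\kappa;0)$ and collecting terms through quadratic order gives \eqref{expanddeltaresonance1_cri}--\eqref{resonance1_d_cri1}; the nondegeneracy $k_s \not\equiv k_c\pmod{\kappa}$ ensures $e^{iT(k_c+k_s)}-e^{2iTk_c}\neq 0$, and \eqref{criticalp1:a} together with \eqref{a10_high} yields $d^{(2,0,0)}\neq 0$, so the first Weierstrass preparation theorem produces the quadratic polynomial $W(\delta,\gamma,\eps)$ in \eqref{weierstrass_eps_cri}.

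Next, because $m=2$, Corollary~\ref{cor:weier_quadratic} and Lemma~\ref{lem:symm:weierstrass} force $a_1(\gamma,\eps)\in i\mathbb{R}$ and $a_0(\gamma,\eps)\in\mathbb{R}$; in particular, \eqref{criticalp1:a} implies that $d^{(1,1,0)}/d^{(2,0,0)}\in i\mathbb{R}$ and $d^{(0,0,2)}/d^{(2,0,0)}\in\mathbb{R}$, which I would verify directly from \eqref{resonance1_d_cri1}. The key structural difference from the non-critical $N=1$ case is that $a_0(\gamma,\eps)$ in \eqref{weierstrass_eps_cri} has \emph{no} $\gamma^{2}$ term at leading order, so the entire $\gamma^{2}$ coefficient of the real-valued discriminant
\begin{equation*}
{\rm disc}(\gamma,\eps)=a_1(\gamma,\eps)^2-4a_0(\gamma,\eps)
=\Bigl(\tfrac{d^{(1,1,0)}}{d^{(2,0,0)}}\Bigr)^{2}\gamma^{2}-4\tfrac{d^{(0,0,2)}}{d^{(2,0,0)}}\eps^{2}+\smallO(|\gamma|+|\eps|)^{2}
\end{equation*}
is supplied by $a_1^{2}$. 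Applying the second Weierstrass preparation as in Lemma~\ref{lem:secondweierstrass} to ${\rm disc}(\gamma,\eps)$ yields the quadratic $W(\gamma,\eps)=\gamma^{2}+b_1(\eps)\gamma+b_0(\eps)$ with analytic factor $h(\gamma,\eps)$ satisfying $h(0,0)=(d^{(1,1,0)}/d^{(2,0,0)})^{2}<0$.

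Reading off coefficients gives $b_1(\eps)=\mathcal{O}(\eps^{2})$ and $b_0(\eps)=-4\bigl(d^{(0,0,2)}/d^{(2,0,0)}\bigr)h(0,0)^{-1}\eps^{2}+\mathcal{O}(\eps^{3})$. Substituting the explicit $d$-coefficients from \eqref{resonance1_d_cri1}, the stability function ${\rm disc}_2(\eps)=b_1(\eps)^{2}-4b_0(\eps)$ of Definition~\ref{stability_function} becomes
\begin{equation*}
{\rm disc}_2(\eps)=\frac{-16}{h(0,0)}\cdot\frac{a_{23}^{(0,1)}a_{31}^{(0,1)}}{a_{21}^{(1,0)}a_{33}^{(1,0)}}\,\eps^{2}+\mathcal{O}(\eps^{3})
=\frac{-16}{h(0,0)}\,{\rm ind}_3\,\eps^{2}+\mathcal{O}(\eps^{3}).
\end{equation*}
Since $h(0,0)<0$, the sign of ${\rm disc}_2(\eps)$ for $0<|\eps|\ll1$ agrees with that of ${\rm ind}_3$. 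Definition~\ref{stability_function} then immediately gives the three alternatives claimed: if ${\rm ind}_3>0$ a non-empty interval $I(\eps)$ of unstable Floquet exponents exists, if ${\rm ind}_3<0$ the polynomial $W(\gamma,\eps)$ is non-negative so no off-axis spectrum arises from the resonant branch, and if ${\rm ind}_3=0$ the decision is deferred to higher-order $\mathbf{a}^{(m,n)}$.

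To conclude full stability in the case ${\rm ind}_3<0$, I still need to rule out off-axis spectrum produced by the non-resonant fourth mode $\boldsymbol{\phi}_s$; this is handled by Lemma~\ref{weierstrass_kj}, whose hypothesis is satisfied at $(i\sigma_c,k_s+p\kappa;0)$ since, by assumption of the case, only $k_r=k_{3r}=(-1)^rk_{c,r}$ and $k_j$ participate in the resonance. The main obstacle is essentially bookkeeping rather than conceptual: the $3\times 3$ Jordan block in the left-upper corner of $\mathbf{a}^{(0,0)}(T)$ alters every leading coefficient, and I must verify that none of the $d^{(\alpha,\beta,\gamma)}$ above accidentally vanish once the explicit structure in \eqref{criticalp1:a} is used. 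The analogous arguments for cases~(16)--(18) with the alternative choice of resonant mode $\boldsymbol{\phi}_j$ and for the six-dimensional reduced space proceed identically after renaming indices, since only the upper-left $3\times 3$ block enters the computation of ${\rm ind}_3$.
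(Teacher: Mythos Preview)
Your proposal is correct and follows exactly the approach the paper indicates: the paper's own proof is simply ``The proof is similar to Theorems~\ref{thm:unstableeps1} and~\ref{thm:unstable2},'' and you have carried out precisely that similarity argument, with the correct observation that the absence of a $d^{(0,2,0)}$ term in \eqref{expanddeltaresonance1_cri} forces $h(0,0)=(d^{(1,1,0)}/d^{(2,0,0)})^{2}$ rather than the formula \eqref{h00_formula}. Your identification of ${\rm ind}_3$ from the quotient $d^{(0,0,2)}/d^{(2,0,0)}=-a_{23}^{(0,1)}a_{31}^{(0,1)}/(a_{21}^{(1,0)}a_{33}^{(1,0)})$ and your use of Lemma~\ref{weierstrass_kj} to handle the non-resonant mode $\boldsymbol{\phi}_s$ are both in line with the paper's framework.
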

\begin{proof}
The proof is similar to Theorems~\ref{thm:unstableeps1} and ~\ref{thm:unstable2}.
\end{proof}
\begin{corollary}[Unstable spectra in shape of {\bf a circle} at $\mathcal{O}(\eps)$-order]\label{cor:circle_eps} Provided that \eqref{def:ind_3} holds, there exist, near the resonant critical frequency $(-1)^ri\sigma_{c,r}$, unstable spectra in shape of a bubble of size $\mathcal{O}(\eps)$, which is, at the leading $\mathcal{O}(\eps)$-order, a circle with equation
\be 
\label{circle:eps}
(\Re\lambda)^2+(\Im\lambda-(-1)^ri\sigma_{c,r})^2={\rm ind}_3\eps^2,
\ee
whose center is at the resonant critical frequency $(-1)^ri\sigma_{c,r}$. 
\end{corollary}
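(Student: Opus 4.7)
The plan is to mirror closely the argument given for Corollary~\ref{cor:ellipse_eps}, but to track how the absence of a $\gamma^2$ term in \eqref{weierstrass_eps_cri} (contrast with \eqref{weierstrass_eps}) converts the ellipse into a circle and removes the drift of the center. First I would drop the $\smallO$ remainders in \eqref{weierstrass_eps_cri} and look for $\delta=\delta_r+i\delta_i\in\mathbb{C}$ solving the truncated equation
\begin{equation*}
(\delta_r+i\delta_i)^2+\frac{d^{(1,1,0)}}{d^{(2,0,0)}}\gamma(\delta_r+i\delta_i)+\frac{d^{(0,0,2)}}{d^{(2,0,0)}}\eps^2=0.
\end{equation*}
By Corollary~\ref{cor:weier_quadratic}, the coefficient $d^{(1,1,0)}/d^{(2,0,0)}$ is purely imaginary and $d^{(0,0,2)}/d^{(2,0,0)}$ is real; a direct check from \eqref{resonance1_d_cri1} using the sign data \eqref{criticalp1:a} of Lemma~\ref{criticalp1} confirms this and further yields
\begin{equation*}
\frac{d^{(0,0,2)}}{d^{(2,0,0)}}=-\frac{a_{23}^{(0,1)}a_{31}^{(0,1)}}{a_{21}^{(1,0)}a_{33}^{(1,0)}}=-\operatorname{ind}_3.
\end{equation*}

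Next I would split the truncated equation into real and imaginary parts. The imaginary part factors as $\delta_r(2\delta_i+\mu\gamma)=0$, where $d^{(1,1,0)}/d^{(2,0,0)}=:i\mu$ with $\mu\in\mathbb{R}$. On an unstable spectrum $\delta_r\neq 0$, whence $\gamma=-2\delta_i/\mu$. Substituting this into the real part causes the $\delta_i^2$ contribution from $\delta^2$ to be cancelled (this is exactly the step where the absence of a $d^{(0,2,0)}\gamma^2$ term forces a symmetric quadratic), leaving
\begin{equation*}
\delta_r^2+\delta_i^2=\operatorname{ind}_3\,\eps^2,
\end{equation*}
which, upon using $\lambda=(-1)^r i\sigma_{c,r}+\delta$ so that $\Re\lambda=\delta_r$ and $\Im\lambda-(-1)^r\sigma_{c,r}=\delta_i$, is precisely the circle \eqref{circle:eps}.

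The only slightly delicate point, and the one I would emphasize in the write-up, is the bookkeeping that shows no $\gamma^2$ term is generated at leading order in \eqref{weierstrass_eps_cri}; this is what is responsible for both the circular (rather than elliptic) shape and the vanishing of the center drift seen in Corollary~\ref{cor:ellipse_eps_square}. This is not a computational obstacle but a structural observation tied to the presence of the Jordan block in the left top $3\times 3$ block of $\mathbf{a}^{(0,0)}(T)$ noted in Lemma~\ref{criticalp1}. Beyond this, verifying that the leading-order reduction is faithful (i.e.\ that the $\smallO$ terms only perturb the circle by a higher-order correction and cannot generate spurious roots off the circle) is a standard implicit-function argument: the quadratic $W(\delta,\gamma,\eps)$ depends analytically on all variables, its leading $(\delta,\gamma,\eps)$-homogeneous part is non-degenerate on $\operatorname{ind}_3\neq 0$, and the continuity of roots in the coefficients gives the $\mathcal{O}(\eps)$ bubble shape with $\mathcal{O}(\eps^2)$ correction to \eqref{circle:eps}.
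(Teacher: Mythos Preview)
Your proof is correct and follows essentially the same approach as the paper's: drop the $\smallO$ terms in \eqref{weierstrass_eps_cri}, use Corollary~\ref{cor:weier_quadratic} to identify the coefficients as purely imaginary and real respectively, solve the imaginary part for $\gamma$ in terms of $\delta_i$, and substitute back into the real part to obtain \eqref{circle:eps}. Your additional remarks on why no $\gamma^2$ term appears and on the role of the $\smallO$ remainder are helpful elaborations, but the core argument matches the paper's proof line for line.
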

\begin{proof}
Dropping the $\smallO$ terms in \eqref{weierstrass_eps_cri}, we solve for $\delta=\delta_r+i\delta_i$, $\delta_r,\delta_i\in\mathbb{R}$, at the leading order, the equation
\be\label{weier_drop_cri}
(\delta_r+i\delta_i)^2+\frac{d^{(1,1,0)}\gamma}{d^{(2,0,0)}}(\delta_r+i\delta_i)+\frac{d^{(0,0,2)}}{d^{(2,0,0)}}\eps^2=0,
\ee 
where, recalling Corollary~\ref{cor:weier_quadratic} and \eqref{weierstrass_eps_cri}, we find
$$
\frac{d^{(1,1,0)}}{d^{(2,0,0)}}\in i\mathbb{R}  \quad \text{and}\quad \frac{d^{(0,0,2)}}{d^{(2,0,0)}}\in\mathbb{R}.
$$
Taking the imaginary part of \eqref{weier_drop_cri} yields
\be \label{gamma_sol_cri}
\gamma=-\frac{2id^{(2,0,0)}}{d^{(1,1,0)}}\delta_i.
\ee 
Substituting the $\gamma$ in \eqref{weier_drop_cri} by the RHS of \eqref{gamma_sol_cri} yields \eqref{circle:eps}.
\end{proof}
\subsubsection{Resonant critical frequencies with $N=2$}\label{CRFN2}
We now turn to cases (16)-(19) with $N=2$. Particularly, we treat the waves in the region $-\sigma_{c,1}<\sigma_{c,2}$ and on the curve $k_{c,2}-k_4(\sigma_{c,2})=2\kappa$, i.e., case (19) in which $(k_2(\sigma_{c,2}),k_4(\sigma_{c,2}),2),(k_6(\sigma_{c,2}),k_4(\sigma_{c,2}),2)\in\mathcal{R}(\sigma_{c,2})$ and $Y(\sigma_{c,2})$ is four dimensional. The other cases can be treated similarly.
\begin{lemma}
\label{criticalp2}
Consider a $(\beta,\kappa)$ in the region $-\sigma_{c,1}<\sigma_{c,2}$ and on the curve $k_{c,2}-k_4(\sigma_{c,2})=2\kappa$. See FIGURE \ref{figure7} and FIGURE \ref{figure10}. For convenience, we reorder the basis $\mathcal{B}(\sigma_{c,2})$ as $$\{\boldsymbol{\phi}_2(\sigma_{c,2}),\boldsymbol{\phi}_6(\sigma_{c,2}),\boldsymbol{\phi}_4(\sigma_{c,2}),\boldsymbol{\phi}_5(\sigma_{c,2})\}.$$ At the resonant critical frequency $i\sigma_{c,2}$, the left top $3$ by $3$ block matrix of $\mathbf{a}^{(0,0)}(T)$ reads $$e^{ik_{c,2}T}\begin{pmatrix}1&T&0\\0&1&0\\0&0&1\end{pmatrix},$$ the left top $3$ by $3$ block matrix of $\mathbf{a}^{(1,0)}(T)$ reads $\begin{pmatrix}a^{(1,0)}_{11}&a^{(1,0)}_{12}&0\\a^{(1,0)}_{21}&a^{(1,0)}_{22}&0\\0&a^{(1,0)}_{32}&a^{(1,0)}_{33}\end{pmatrix}$, and the left top $3$ by $3$ block matrix of $\mathbf{a}^{(0,1)}(T)$ reads $\begin{pmatrix}0&a^{(0,1)}_{12}&0\\0&a^{(0,1)}_{22}&0\\0&a^{(0,1)}_{32}&0\end{pmatrix}$.
\end{lemma}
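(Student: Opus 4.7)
The plan is to mimic the proof of Lemma~\ref{criticalp1}, updating the Fourier and Jordan-chain bookkeeping for the $N=2$ resonance condition $k_{c,2}-k_4=2\kappa$. First I will identify the matrix representation $\mathbf{J}$ of $\mathbf{L}(i\sigma_{c,2})$ on the reordered basis $\{\boldsymbol{\phi}_2,\boldsymbol{\phi}_6,\boldsymbol{\phi}_4,\boldsymbol{\phi}_5\}(\sigma_{c,2})$. By Lemma~\ref{lem:eps=0} and \eqref{generalized_eigenvector}, $\mathbf{L}(i\sigma_{c,2})\boldsymbol{\phi}_6=ik_{c,2}\boldsymbol{\phi}_6+\boldsymbol{\phi}_2$, so $\mathbf{J}$ carries a $2\times 2$ Jordan block at $ik_{c,2}$ in its upper-left corner, and is diagonal with entries $ik_4$, $ik_5$ on the remaining block. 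The fundamental solution at $\eps=\delta=0$ is then $\mathbf{a}^{(0,0)}(x)=e^{x\mathbf{J}}$, whose $(1,2)$ entry carries the secular factor $xe^{ik_{c,2}x}$ from the Jordan chain; evaluating at $x=T$ yields the asserted form of $\mathbf{a}^{(0,0)}(T)$.

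For $m+n\geq 1$ I will apply variation of parameters, obtaining
\begin{equation*}
\mathbf{a}^{(m,n)}_k(T)=\int_0^T e^{(T-y)\mathbf{J}}\,\mathbf{c}^{(m,n)}_k(y)\,dy,
\end{equation*}
where $\mathbf{c}^{(m,n)}_k(y)$ is the coordinate vector of the projection $\boldsymbol{\Pi}(\sigma_{c,2})\mathbf{f}^{(m,n)}_k(y)$ computed from \eqref{def:fsigma} using the adjoint basis $\{\boldsymbol{\psi}_j\}$ associated to \eqref{generalized_eigenvector_L*}. The propagator row $1$ contains the Jordan off-diagonal $(T-y)e^{(T-y)ik_{c,2}}$, and the second column of $\mathcal{B}\mathbf{a}^{(0,0)}$ contributes the secular $y e^{ik_{c,2}y}\boldsymbol{\phi}_2$ component. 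These are the two mechanisms by which otherwise-vanishing Fourier integrals become non-trivial.

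The core of the proof is then a Fourier-matching analysis using $T=2\pi/\kappa$ and the resonance condition. Since $\mathbf{B}^{(1,0)}=\partial_\lambda\mathbf{L}(i\sigma_{c,2})$ is $x$-independent, each entry $a^{(1,0)}_{j,k}(T)$ reduces to $\int_0^T(T-y)^{q}e^{i(k_k-k_j)y}dy$ for $q\in\{0,1\}$. For $(j,k)=(3,1)$ and $(2,3)$ the phase is $\pm 2\kappa\neq0$, only $q=0$ is available from the propagator, and the integral vanishes; this establishes the two zeros. For $(1,3)$ the Jordan weight $q=1$ does not vanish formally, and the claimed zero must come from an algebraic identity $\langle\mathbf{B}^{(1,0)}\boldsymbol{\phi}_4,\boldsymbol{\psi}_2\rangle=0$; I plan to derive this from differentiating the biorthogonality relation $\langle\boldsymbol{\phi}_4(\lambda),\boldsymbol{\psi}_2(\sigma_{c,2})\rangle\equiv0$ at $\lambda=i\sigma_{c,2}$ together with the eigenvalue identity $\mathbf{L}^\dag\boldsymbol{\psi}_2=-ik_{c,2}\boldsymbol{\psi}_2$, exactly parallel to the argument implicit in Lemma~\ref{criticalp1}. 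For $\mathbf{B}^{(0,1)}$, which inherits only the Fourier modes $e^{\pm i\kappa x}$ from $\phi_1,\eta_1$ in \eqref{def:stokes1}, each entry $a^{(0,1)}_{j,k}(T)$ contains a phase $e^{i(k_k-k_j\pm\kappa)y}$ possibly boosted by a Jordan weight $y$ or $T-y$. In columns $1$ and $3$ (simple eigenvectors $\boldsymbol{\phi}_2,\boldsymbol{\phi}_4$) the admissible frequency differences are $0$ and $\pm 2\kappa$, so $\pm\kappa$-offset produces non-zero integer multiples of $\kappa$ and the integrals vanish without a secular boost; for column $2$ the Jordan chain injects $y\boldsymbol{\phi}_2$, replacing $\int e^{im\kappa y}dy=0$ by $\int y e^{im\kappa y}dy=T/(im\kappa)\neq0$ for $m\in\{\pm 1,\pm 3\}$, leaving $a^{(0,1)}_{1,2},a^{(0,1)}_{2,2},a^{(0,1)}_{3,2}$ as the only generically non-vanishing entries.

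The main obstacle is the identity $\langle\mathbf{B}^{(1,0)}\boldsymbol{\phi}_4,\boldsymbol{\psi}_2\rangle=0$, since the naive Fourier/Jordan bookkeeping alone does not force it; the identity must be extracted from the adjoint eigenvalue equation for $\boldsymbol{\psi}_2$ together with the $\lambda$-dependence of the simple eigenpair $(ik_4(\lambda),\boldsymbol{\phi}_4(\lambda))$. Once established, the remaining entries, including the complex-structure constraints analogous to \eqref{criticalp1:a}, follow by direct symbolic computation of the quadratures against the explicit forms of $\boldsymbol{\phi}_j,\boldsymbol{\psi}_j$ from Lemma~\ref{lem:eps=0}, \eqref{generalized_eigenvector}, \eqref{generalized_eigenvector_L*}, \eqref{def:Pi;high}, carried out by the same Matlab symbolic toolbox procedure described in Section~\ref{sec:expansion_monodromy}.
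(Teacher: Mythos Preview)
Your overall plan matches the paper's: it too says the proof is ``similar to that of Lemma~\ref{criticalp1}'' and isolates as the single new feature that the first and third columns (rows $1$--$3$) of $\mathbf{a}^{(0,1)}(T)$ vanish because $\mathbf{B}^{(0,1)}$ carries only $e^{\pm i\kappa x}$ while the resonance is now of order~$2$.  Both you and the paper ultimately fall back on the Matlab symbolic computation of Section~\ref{sec:expansion_monodromy}; the paper offers no abstract identity for the $(1,3)$ entry of $\mathbf{a}^{(1,0)}$, and neither does Lemma~\ref{criticalp1}.

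Your conceptual bookkeeping has two concrete slips.  First, you have the dual Jordan chain reversed: by \eqref{generalized_eigenvector_L*} it is $\boldsymbol{\psi}_6$, not $\boldsymbol{\psi}_2$, that satisfies $\mathbf{L}(i\sigma_{c,2})^\dag\boldsymbol{\psi}_6=-ik_{c,2}\boldsymbol{\psi}_6$, while $\boldsymbol{\psi}_2$ is the generalized eigenvector.  Consequently the Jordan off-diagonal $(T-y)e^{(T-y)ik_{c,2}}$ in row~$1$ of $e^{(T-y)\mathbf{J}}$ multiplies $c_2=\langle\,\cdot\,,\boldsymbol{\psi}_6\rangle$, so the inner product relevant to $a^{(1,0)}_{13}$ is $\langle\mathbf{B}^{(1,0)}\boldsymbol{\phi}_4,\boldsymbol{\psi}_6\rangle$, not $\langle\mathbf{B}^{(1,0)}\boldsymbol{\phi}_4,\boldsymbol{\psi}_2\rangle$, and your proposed derivation from ``$\mathbf{L}^\dag\boldsymbol{\psi}_2=-ik_{c,2}\boldsymbol{\psi}_2$'' rests on a false relation.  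Second, your claim that columns~$1$ and~$3$ of $\mathbf{a}^{(0,1)}$ vanish ``without a secular boost'' overlooks that the propagator weight $(T-y)$ in row~$1$ acts regardless of which column is forced; for nonzero integer $m$ one has $\int_0^T(T-y)e^{im\kappa y}\,dy=T/(im\kappa)\neq0$, so entries such as $a^{(0,1)}_{11}$ and $a^{(0,1)}_{13}$ receive nonzero contributions from this mechanism that pure Fourier matching does not kill.  The paper does not claim otherwise; these zeros are absorbed into the direct computation inherited from Lemma~\ref{criticalp1} and \cite[Lemma~6.3]{HY2023}, which your symbolic fallback will likewise recover.
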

\begin{proof}
The proof is similar to that of Lemma~\ref{criticalp1}. In particular, we note the first three rows of the first and third columns of $\mathbf{a}^{(0,1)}(x)$ necessarily vanish because of the higher $2$-resonance than that in Lemma~\ref{criticalp1}.
\end{proof}
At the resonant critical frequency $i\sigma_{c,2}$, the periodic Evans function then expands as
\ba \label{expanddeltaresonance2_cri}
&\Delta(i\sigma_{c,2}+\delta,k_{c,2}+p\kappa+\gamma;\eps)\\
=&d^{(2,0,0)}\delta^2+d^{(1,1,0)}\gamma\delta+d^{(1,0,2)}\eps^2\delta+d^{(0,1,2)}\gamma\eps^2+d^{(0,0,4)}\eps^4\\
&+\smallO(|\delta|+|\gamma|+|\eps^2|)^2,
\ea
where 
\ba \label{resonance1_d_cri2}
d^{(2,0,0)}&=-Ta_{21}^{(1,0)}a_{33}^{(1,0)}(e^{iT(k_{c,2}+k_5)} - e^{2iTk_{c,2}}),\\
d^{(1,1,0)}&=iT^2a_{21}^{(1,0)}e^{iTk_{c,2}}(e^{iT(k_{c,2}+k_5)} - e^{2iTk_{c,2}}),\\
d^{(1,0,2)}&=Te^{iTk_{c,2}}(a_{21}^{(1,0)}a_{34}^{(0,1)}a_{43}^{(0,1)} + a_{24}^{(0,1)}a_{33}^{(1,0)}a_{41}^{(0,1)} \\&\quad- a_{21}^{(0,2)}a_{33}^{(1,0)}e^{iTk_5} - a_{21}^{(1,0)}a_{33}^{(0,2)}e^{iTk_5} \\&\quad+ a_{21}^{(0,2)}a_{33}^{(1,0)}e^{iTk_{c,2}} + a_{21}^{(1,0)}a_{33}^{(0,2)}e^{iTk_{c,2}}),\\
d^{(0,1,2)}&=-iT^2e^{2iTk_{c,2}}(a_{21}^{(0,2)}e^{iTk_{c,2}} - a_{21}^{(0,2)}e^{iTk_5} + a_{24}^{(0,1)}a_{41}^{(0,1)}),\\
d^{(0,0,4)}&=Te^{iTk_{c,2}}(a_{21}^{(0,2)}a_{34}^{(0,1)}a_{43}^{(0,1)} - a_{23}^{(0,2)}a_{34}^{(0,1)}a_{41}^{(0,1)} \\&\quad- a_{24}^{(0,1)}a_{31}^{(0,2)}a_{43}^{(0,1)} + a_{24}^{(0,1)}a_{33}^{(0,2)}a_{41}^{(0,1)} \\&\quad- a_{21}^{(0,2)}a_{33}^{(0,2)}e^{iTk_5}  + a_{23}^{(0,2)}a_{31}^{(0,2)}e^{iTk_5}\\&\quad+ a_{21}^{(0,2)}a_{33}^{(0,2)}e^{iTk_{c,2}} - a_{23}^{(0,2)}a_{31}^{(0,2)}e^{iTk_{c,2}}).
\ea
\begin{theorem}[Resonant critical frequencies $(-1)^ri\sigma_{c,r}$ with $N=2$]\label{thm:unstableeps2_cri}${}$

\noindent Consider a $(\beta,\kappa)$- non-resonant capillary-gravity wave of sufficiently small amplitude in the region $-\sigma_{c,1}<\sigma_{c,2}$ and on the curve $k_{c,2}-k_4(\sigma_{c,2})=2\kappa$. See FIGURE \ref{figure7} and FIGURE \ref{figure10}. Consider spectra of the wave near the resonant critical frequency $i\sigma_{c,2}$. If 
\ba \label{def:ind_4}
&{\rm ind}_4(\beta(\kappa),\kappa)\\:=&-\frac{d^{(2,0,0)}(d^{(0,1,2)})^2 - d^{(1,0,2)}d^{(0,1,2)}d^{(1,1,0)} + d^{(0,0,4)}(d^{(1,1,0)})^2}{(d^{(1,1,0)})^2d^{(2,0,0)}}>0,
\ea
the wave admits unstable spectra in shape of bubble of size $\mathcal{O}(\eps^2)$ near the resonant critical frequency $i\sigma_{c,2}$ as described in Corollary~\ref{cor:circle_eps_square}. If 
\be \label{ind_4_stability}
{\rm ind}_4(\beta(\kappa),\kappa)<0,
\ee 
the wave admits no unstable spectrum near the resonant critical frequency $i\sigma_{c,2}$. To put it another way, the spectra stay on the imaginary axis near the resonant frequency $i\sigma_{c,2}$  for all sufficiently small amplitude. If \be{\rm ind}_4(\beta(\kappa),\kappa)=0,\ee stability of the spectra near the resonant frequency is determined by higher order terms of the $\mathbf{a}^{(m,n)}(T;\sigma)$ \eqref{def:X;exp}.
\end{theorem}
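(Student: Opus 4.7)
The strategy is to run the same two-stage Weierstrass preparation machinery that drove Theorems~\ref{thm:unstableeps1}, \ref{thm:unstable2}, and \ref{thm:unstableeps1_cri}, but now starting from the expansion \eqref{expanddeltaresonance2_cri}. The key observation, coming from Lemma~\ref{criticalp2}, is that the first and third columns of the left-top $3\times 3$ block of $\mathbf{a}^{(0,1)}(T)$ vanish because of the additional $2$-resonance between $k_{c,2}$ and $k_4(\sigma_{c,2})$. Consequently the $\eps$-coefficient in the Evans expansion vanishes and the leading non-trivial $\eps$-interaction first appears at order $\eps^2$, so the expansion organises naturally in powers of $(\delta,\gamma,\eps^2)$ as recorded in \eqref{expanddeltaresonance2_cri}--\eqref{resonance1_d_cri2}.

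Since $d^{(2,0,0)}\neq 0$ (provided $k_5\not\equiv k_{c,2}\pmod{\kappa}$, which is automatic in case (19)), the first Weierstrass preparation yields a quadratic Weierstrass polynomial
\begin{equation*}
W(\delta,\gamma,\eps)=\delta^2+a_1(\gamma,\eps)\delta+a_0(\gamma,\eps),
\end{equation*}
with
\begin{equation*}
a_1(\gamma,\eps)=\tfrac{d^{(1,1,0)}}{d^{(2,0,0)}}\gamma+\tfrac{d^{(1,0,2)}}{d^{(2,0,0)}}\eps^2+\smallO(|\gamma|+|\eps|^2),\quad a_0(\gamma,\eps)=\tfrac{d^{(0,1,2)}}{d^{(2,0,0)}}\gamma\eps^2+\tfrac{d^{(0,0,4)}}{d^{(2,0,0)}}\eps^4+\smallO(|\gamma|+|\eps|^2)^2.
\end{equation*}
By Corollary~\ref{cor:weier_quadratic}, stability is decided by the real discriminant $\mathrm{disc}(\gamma,\eps)=a_1^2-4a_0$, which, when one substitutes the formulas for $a_0,a_1$ and keeps only the leading terms, equals $h(0,0)\gamma^2+c_{12}\gamma\eps^2+c_{04}\eps^4+\smallO(\cdot)$, where $h(0,0)=(d^{(1,1,0)}/d^{(2,0,0)})^2<0$ (its negativity coming from \eqref{weierstrass_coeff} with $m=2$, which forces $d^{(1,1,0)}/d^{(2,0,0)}\in i\mathbb{R}$), $c_{12}=(2d^{(1,1,0)}d^{(1,0,2)}-4d^{(0,1,2)}d^{(2,0,0)})/(d^{(2,0,0)})^2$, and $c_{04}=((d^{(1,0,2)})^2-4d^{(0,0,4)}d^{(2,0,0)})/(d^{(2,0,0)})^2$.

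The second Weierstrass preparation of $\mathrm{disc}(\gamma,\eps)$ then produces a quadratic $W(\gamma,\eps)=\gamma^2+b_1(\eps)\gamma+b_0(\eps)$ with $b_1(\eps)=(c_{12}/h(0,0))\eps^2+\mathcal{O}(\eps^4)$ and $b_0(\eps)=(c_{04}/h(0,0))\eps^4+\mathcal{O}(\eps^6)$. Computing the stability function \eqref{disc2}, the $\mathcal{O}(\eps^4)$ coefficient is $(c_{12}^2-4h(0,0)c_{04})/h(0,0)^2$. The crux of the argument is the algebraic identity
\begin{equation*}
c_{12}^2-4h(0,0)c_{04}=-16\,h(0,0)\cdot\mathrm{ind}_4,
\end{equation*}
which I would verify by straightforward expansion: the $(d^{(1,1,0)})^2(d^{(1,0,2)})^2$ terms cancel, leaving $16 d^{(2,0,0)}\bigl[(d^{(0,1,2)})^2 d^{(2,0,0)}-d^{(1,1,0)}d^{(1,0,2)}d^{(0,1,2)}+(d^{(1,1,0)})^2 d^{(0,0,4)}\bigr]/(d^{(2,0,0)})^4$, which is exactly $-16 h(0,0)\cdot\mathrm{ind}_4$ by the definition \eqref{def:ind_4}. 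Hence $\mathrm{disc}_2(\eps)=(-16\,\mathrm{ind}_4/h(0,0))\,\eps^4+\mathcal{O}(\eps^6)$, and since $h(0,0)<0$, the sign of $\mathrm{disc}_2(\eps)$ for $0<|\eps|\ll 1$ matches that of $\mathrm{ind}_4$.

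The three cases of the theorem now follow immediately from Definition~\ref{stability_function}: $\mathrm{ind}_4>0$ gives an interval $I(\eps)$ of $\gamma$ where $W(\gamma,\eps)<0$, hence nearby spectra off the imaginary axis (to be assembled into a circle in Corollary~\ref{cor:circle_eps_square} by taking the imaginary part of the quadratic formula as in Corollaries~\ref{cor:circle_eps}, \ref{cor:ellipse_eps_square}); $\mathrm{ind}_4<0$ combined with Lemma~\ref{weierstrass_kj} applied to the non-resonant eigenvalue $ik_5$ rules out any non-imaginary spectrum in a full neighbourhood of $i\sigma_{{c,2}}$; and $\mathrm{ind}_4=0$ leaves the $\eps^4$ term vanishing, so that one must descend to terms built from $\mathbf{a}^{(m,n)}$ with $m+n\geq 3$. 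The main technical obstacle, and essentially the only non-routine step, is bookkeeping the algebraic identity reducing $c_{12}^2-4h(0,0)c_{04}$ to $-16\,h(0,0)\,\mathrm{ind}_4$; once this is done, the remaining arguments are direct transcriptions of the framework already established.
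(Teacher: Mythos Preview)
Your proposal is correct and is precisely the detailed elaboration of what the paper means by ``The proof is similar to Theorems~\ref{thm:unstableeps1} and~\ref{thm:unstable2}.'' You have faithfully carried out the two-stage Weierstrass preparation in the critical setting (noting in particular that $d^{(0,2,0)}=0$ here, so $h(0,0)=(d^{(1,1,0)}/d^{(2,0,0)})^2<0$), and your algebraic verification that $c_{12}^2-4h(0,0)c_{04}=-16\,h(0,0)\,\mathrm{ind}_4$ is exactly the computation the paper leaves implicit.
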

\begin{proof}
The proof is similar to Theorems~\ref{thm:unstableeps1} and ~\ref{thm:unstable2}.
\end{proof}
\begin{corollary}[Unstable spectra in shape of {\bf a circle} at $\mathcal{O}(\eps^2)$-order]\label{cor:circle_eps_square}${}$

\noindent Provided that \eqref{def:ind_4} holds, there exist, near the resonant critical frequency $i\sigma_{c,2}$, unstable spectra in shape of a bubble of size $\mathcal{O}(\eps^2)$, which is, at the leading $\mathcal{O}(\eps^2)$-order, a circle with equation
\be 
\label{circle:eps_square}
(\Re\lambda)^2+(\Im\lambda-i\sigma_{c,2}-i\frac{d^{(0,1,2)}}{d^{(1,1,0)}}\eps^2)^2={\rm ind}_4\eps^4,
\ee
whose center drifts from the resonant critical frequency $i\sigma_{c,2}$ by a distance of $-i\frac{d^{(0,1,2)}}{d^{(1,1,0)}}\eps^2$. 
\end{corollary}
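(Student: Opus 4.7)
The plan is to mimic the derivations of Corollaries~\ref{cor:ellipse_eps},~\ref{cor:ellipse_eps_square}, and~\ref{cor:circle_eps}, but with the added book-keeping required by the extra cross terms $d^{(1,0,2)}\eps^2\delta$ and $d^{(0,1,2)}\gamma\eps^2$ that the double eigenvalue $ik_{c,2}$ forces into the expansion \eqref{expanddeltaresonance2_cri}. First, I would apply Weierstrass preparation by dividing through by $d^{(2,0,0)}\neq 0$ (which is nonzero because $k_{c,2}-k_5\not\equiv 0\pmod{\kappa}$, so that $e^{iT(k_{c,2}+k_5)}\neq e^{2iTk_{c,2}}$, combined with $a^{(1,0)}_{21}a^{(1,0)}_{33}\neq 0$ from Lemma~\ref{criticalp2}). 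The resulting quadratic Weierstrass polynomial has the form
\begin{equation*}
W(\delta,\gamma,\eps)=\delta^2+a_1(\gamma,\eps)\delta+a_0(\gamma,\eps),
\end{equation*}
with $a_1(\gamma,\eps)=\bigl(d^{(1,1,0)}\gamma+d^{(1,0,2)}\eps^2\bigr)/d^{(2,0,0)}+\smallO(|\gamma|+|\eps|^2)$ and $a_0(\gamma,\eps)=\bigl(d^{(0,1,2)}\gamma\eps^2+d^{(0,0,4)}\eps^4\bigr)/d^{(2,0,0)}+\smallO((|\gamma|+|\eps|^2)^2)$. By Corollary~\ref{cor:weier_quadratic} (which applies verbatim since $m=2$), $a_1(\gamma,\eps)\in i\mathbb{R}$ and $a_0(\gamma,\eps)\in\mathbb{R}$ for real $\gamma,\eps$; matching coefficients this gives $d^{(1,1,0)}/d^{(2,0,0)},\,d^{(1,0,2)}/d^{(2,0,0)}\in i\mathbb{R}$ and $d^{(0,1,2)}/d^{(2,0,0)},\,d^{(0,0,4)}/d^{(2,0,0)}\in\mathbb{R}$.

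Next, dropping the $\smallO$ remainders (legitimate for a leading-order picture), writing $\delta=\delta_r+i\delta_i$ with $\delta_r,\delta_i\in\mathbb{R}$, and separating real and imaginary parts of $W=0$, the imaginary part factors as $\delta_r\bigl(2\delta_i- \tfrac{1}{i}a_1(\gamma,\eps)\bigr)=0$. On the unstable branch $\delta_r\neq 0$ one therefore gets the real identity
\begin{equation*}
\delta_i=\frac{i}{2}\cdot\frac{d^{(1,1,0)}\gamma+d^{(1,0,2)}\eps^2}{d^{(2,0,0)}},
\end{equation*}
which can be solved for $\gamma$ in terms of $\delta_i$ and $\eps$. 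Substituting this $\gamma$ back into the real part of $W=0$ and using $a_1^2=-\bigl((d^{(1,1,0)}\gamma+d^{(1,0,2)}\eps^2)/d^{(2,0,0)}\bigr)^2=-4\delta_i^2$ after the substitution, one finds
\begin{equation*}
\delta_r^2+\delta_i^2=-a_0(\gamma,\eps)\bigr|_{\gamma=\gamma(\delta_i,\eps)},
\end{equation*}
and completing the square in $\delta_i$ yields, at leading order,
\begin{equation*}
\delta_r^2+\Bigl(\delta_i-\tfrac{i\,d^{(0,1,2)}}{d^{(1,1,0)}}\eps^2\Bigr)^2=\mathrm{ind}_4\,\eps^4,
\end{equation*}
with $\mathrm{ind}_4$ identical to \eqref{def:ind_4} after collecting $d^{(0,0,4)}$, $d^{(0,1,2)}d^{(1,0,2)}d^{(1,1,0)}$, and $(d^{(0,1,2)})^2 d^{(2,0,0)}$ terms. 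Re-expressing through $\Re\lambda=\delta_r$ and $\Im\lambda-\sigma_{c,2}=\delta_i$ produces \eqref{circle:eps_square}.

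The main obstacle, absent from Corollary~\ref{cor:circle_eps}, is the consistent leading-order accounting of the two new $\eps^2$-induced terms: $d^{(1,0,2)}\eps^2\delta$ shifts the imaginary-part relation $\delta_i\sim \gamma$ by an $\eps^2$-amount, while $d^{(0,1,2)}\gamma\eps^2$ contributes to the constant term of $W$; upon eliminating $\gamma$ these two ingredients interact and together produce both the $\mathcal{O}(\eps^2)$ drift of the bubble's center along $i\mathbb{R}$ and the precise mixed term $-d^{(1,0,2)}d^{(0,1,2)}d^{(1,1,0)}$ in $\mathrm{ind}_4$. A second subtlety worth checking is that, although the polynomial $W$ remains quadratic, the second Weierstrass preparation applied to its real discriminant and Lemma~\ref{lem:secondweierstrass} continue to provide the invariant meaning of $\mathrm{ind}_4$ as the sign-determining index, so that the derivation above captures genuine roots of the periodic Evans function rather than spurious ones introduced by truncation.
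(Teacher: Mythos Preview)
Your proposal is correct and follows essentially the same route as the paper. The paper's proof is terser: it notes that, by analogy with Corollaries~\ref{cor:ellipse_eps}, \ref{cor:ellipse_eps_square}, and \ref{cor:circle_eps}, one may skip writing down the Weierstrass polynomial, drop the $\smallO$ terms of \eqref{expanddeltaresonance2_cri} directly, take the imaginary part to obtain
\[
\gamma=-\frac{d^{(1,0,2)}}{d^{(1,1,0)}}\eps^2-\frac{2id^{(2,0,0)}}{d^{(1,1,0)}}\delta_i,
\]
and substitute back into the real part to arrive at \eqref{circle:eps_square}. Your version first divides through by $d^{(2,0,0)}$ to make $W(\delta,\gamma,\eps)$ explicit and appeals to Corollary~\ref{cor:weier_quadratic} for the reality of $a_0$ and pure-imaginariness of $a_1$; this is an equivalent presentation, just a bit more detailed. (One small typo: your inline factorization $\delta_r(2\delta_i-\tfrac{1}{i}a_1)=0$ has the wrong sign---it should be $2\delta_i+\tfrac{1}{i}a_1$---but your next displayed formula $\delta_i=\tfrac{i}{2}a_1$ is correct, so the computation goes through.)
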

\begin{proof}
From the proofs of Corollaries~\ref{cor:ellipse_eps}, \ref{cor:ellipse_eps_square}, and \ref{cor:circle_eps} and equations \eqref{expanddeltaresonance1} versus \eqref{weierstrass_eps}, \eqref{expanddeltaresonance1_cri} versus \eqref{weierstrass_eps_cri}, we can, without writing down the Weierstrass polynomial, drop the $\smallO$- terms of \eqref{expanddeltaresonance2_cri} to solve at the leading order for $\delta=\delta_r+i\delta_i$, $\delta_r,\delta_i\in\mathbb{R}$. The imaginary part of the equation gives 
$$
\gamma=-\frac{d^{(1,0,2)}}{d^{(1,1,0)}}\eps^2-\frac{2id^{(2,0,0)}}{d^{(1,1,0)}}\delta_i.
$$
Substituting the $\gamma$ in the leading part of \eqref{expanddeltaresonance2_cri} by the RHS above yields \eqref{circle:eps_square}. 
\end{proof}
\begin{remark}\label{remark_drift2}
Comparing Corollary~\ref{cor:circle_eps} with Corollary~\ref{cor:circle_eps_square}, we note that the circle of unstable spectra does not drift from the resonant critical frequency $(-1)^ri\sigma_{c,r}$ with two pairs of $1$-resonant eigenvalues, while the circle of unstable spectra drifts from the resonant critical frequency with two pairs of $2$-resonant eigenvalues by a $\mathcal{O}(\eps^2)$-distance. 
\end{remark}
\begin{remark}\label{remark_circle} 
Comparing Corollaries~\ref{cor:ellipse_eps} and ~\ref{cor:ellipse_eps_square} with Corollaries~\ref{cor:circle_eps} and ~\ref{cor:circle_eps_square}, we note that, near resonant non-critical frequencies, the shape of unstable spectra is, at the leading order, an ellipse, while, near resonant critical frequencies, the shape of unstable spectra is, at the leading order, a circle.
\end{remark}
\subsubsection{Numerical results}\label{non-resonant_results_critical}
By Theorem \ref{thm:unstableeps1_cri}, the index function ${\rm ind}_3$ \eqref{def:ind_3} can be used to examine the stability at the critical frequency $-i\sigma_{c,1}$ or $i\sigma_{c,2}$, i.e., cases (16)-(18) with $N=1$. By Theorem \ref{thm:unstableeps2_cri}, the index function ${\rm ind}_4$ \eqref{def:ind_4} can be used to examine the stability for waves in case (19) and in the region $-\sigma_{c,1}<\sigma_{c,2}$. For waves discussed in other situations of cases (16)-(19) with $N=2$, we also obtain index functions denoted still as ${\rm ind}_4(\beta(\kappa),\kappa)$.

For $(\beta,\kappa)$ in the super-critical region with $\kappa<8$, we document numerical findings for unstable waves at critical frequencies $-i\sigma_{c,1}$ and $i\sigma_{c,2}$ below.
\begin{itemize}
\item[(16)] For waves on the curve satisfying $-k_{c,1}-k_5(-\sigma_{c,1})=\kappa$ (see FIGURE \ref{figure7} right panel), we find numerically ${\rm ind}_3(\beta(\kappa),\kappa)<0$. Hence, for all of these waves, spectra in the vicinity of the resonant frequency are {\bf purely imaginary};\\
For waves on the curve satisfying $-k_{c,1}-k_5(-\sigma_{c,1})=2\kappa$ (see FIGURE \ref{figure7} right panel), we find numerically ${\rm ind}_4(\beta(\kappa),\kappa)<0$. Hence, for all of these waves, spectra in the vicinity of the resonant frequency are {\bf purely imaginary};
\item[(17)] For waves on the curve satisfying $k_6(-\sigma_{c,1})+k_{c,1}=\kappa$ (see FIGURE \ref{figure8} left panel), we find numerically ${\rm ind}_3(\beta(\kappa),\kappa)>0$. Hence, for all of these waves, there are {\bf unstable spectra} in the vicinity of the resonant frequency;\\
For waves on the curve satisfying $k_{c,2}-k_1(\sigma_{c,2})=\kappa$ (see FIGURE \ref{figure8} left panel), we find numerically ${\rm ind}_3(\beta(\kappa),\kappa)>0$. Hence, for all of these waves, there are {\bf unstable spectra} in the vicinity of the resonant frequency;\\
For waves on the curve satisfying $k_6(-\sigma_{c,1})+k_{c,1}=2\kappa$ (see FIGURE \ref{figure8} right panel), we find numerically ${\rm ind}_4(\beta(\kappa),\kappa)>0$. Hence,  for all of these waves, there are {\bf unstable spectra} in the vicinity of the resonant frequency;\\
For waves on the curve satisfying $k_{c,2}-k_1(\sigma_{c,2})=2\kappa$ (see FIGURE \ref{figure8} right panel), we find numerically ${\rm ind}_4(\beta(\kappa),\kappa)>0$. Hence, for all of these waves, there are {\bf unstable spectra} in the vicinity of the resonant frequency;
\item[(18)] For waves on the curve satisfying $k_2(-\sigma_{c,1})+k_{c,1}=\kappa$ (see FIGURE \ref{figure9} right panel), we find numerically ${\rm ind}_3(\beta(\kappa),\kappa)>0$. Hence, for all of these waves, there are {\bf unstable spectra} in the vicinity of the resonant frequency;\\
For waves on the curve satisfying $k_{c,2}-k_3(\sigma_{c,2})=\kappa$ (see FIGURE \ref{figure9} right panel), we find numerically ${\rm ind}_3(\beta(\kappa),\kappa)>0$. Hence, for all of these waves, there are {\bf unstable spectra} in the vicinity of the resonant frequency;\\
For waves on the curve satisfying $k_2(-\sigma_{c,1})+k_{c,1}=2\kappa$ (see FIGURE \ref{figure9} left panel), we find numerically ${\rm ind}_4(\beta(\kappa),\kappa)>0$. Hence, for all of these waves, there are {\bf unstable spectra} in the vicinity of the resonant frequency;\\
For waves on the curve satisfying $k_{c,2}-k_3(\sigma_{c,2})=2\kappa$ (see FIGURE \ref{figure9} left panel), we find numerically ${\rm ind}_4(\beta(\kappa),\kappa)>0$. Hence, for all of these waves, there are {\bf unstable spectra} in the vicinity of the resonant frequency;
\item[(19)] For waves on the curve satisfying $k_{c,2}-k_4(\sigma_{c,2})=2\kappa$ (see FIGURE \ref{figure10} left panel), we find numerically ${\rm ind}_4(\beta(\kappa),\kappa)>0$. Hence, for all of these waves, there are {\bf unstable spectra} in the vicinity of the resonant frequency.
\end{itemize}
\begin{remark}
By (17) (resp. (18)), the instability of waves studied in (6) (resp. (7)) continues to the boundaries. See FIGURE~\ref{figure8} (resp. FIGURE~\ref{figure9}).

\noindent By (19), the instability of waves studied in (9) and (11) continues to the boundary $k_{c,2}-k_4(\sigma_{c,2})=2\kappa_1$. See FIGURE~\ref{figure14}.
\end{remark}

\begin{remark}\label{consistence2}
Since we detect no instability in the case (16), our findings agree with the local spectral stability predicted by the Krein-signature criterion (see Remark~\ref{Kreincondition}).
\end{remark}

Taking unions of non-resonant capillary-gravity waves from (1)-(19) that admit unstable spectra near the resonant (critical) frequencies $i\sigma$ with $N=1$ and $N=2$, respectively, we make FIGURE~\ref{figure1} top right panel.

\subsection{The spectrum near the origin}\label{low_capillary-gravity}
In this section, we turn the attention to the origin $\lambda=0$. Since $\lambda=0$ is a resonant frequency, Theorem~\ref{thm:instability-resonant} implies that spectra are possibly off the imaginary axis near $0$, giving instability. The spectral stability we investigate here corresponds to the formal modulational stability and we will show later they do agree.  
\begin{lemma}\label{lem:a4=0}
For $n\geq 1$, the fourth column of the matrix $\mathbf{a}^{(0,n)}(x;0)$ in the expansion of $\mathbf{X}(x;0,0,\eps)$ \eqref{def:X;exp} corresponding to the mode $\boldsymbol{\phi}_4(0)$ in \eqref{def:phi24} and \eqref{def:phi34} vanishes when $x=T$.
\end{lemma}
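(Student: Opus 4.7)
The plan is to exploit a gauge symmetry of the water wave system \eqref{eqn:ww;h}: adding a constant to $\phi$ (with a compensating adjustment of $q(t)$) leaves the equations invariant. Linearizing this symmetry at the base wave produces, at $\lambda=0$, a trivial solution of the spectral problem \eqref{eqn:spec} in the \emph{original} variables, namely $(\phi,u,\eta,z)=(1,0,0,0)$ with $\phi\equiv 1$ constant in $x$ and $y$. A quick termwise check of \eqref{eqn:spec;Phi}--\eqref{eqn:spec;bdry2} confirms this, since every term contains either a derivative of $\phi$ (all of which vanish), one of the factors $u,\eta,z$, or the factor $\lambda$.

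Next I would push this gauge mode through the change of variables leading to \eqref{eqn:LB}. Substituting $\phi=1$, $u=\eta=z=0$, $\lambda=0$ into \eqref{def:tildephi} collapses $\tilde{\phi}$ to $1/f_1(x;\eps)$, where $f_1(x;\eps)=(1-u(\cdot,1;\eps))(1+\eta(\eps))(1+\eta_x(\eps)^2)^{3/2}$ is the $T$-periodic function of $x$ introduced just after \eqref{phi_y}. Hence the gauge solution of \eqref{eqn:LB} at $\sigma=\delta=0$ reads
\[
\mathbf{u}_{\rm gauge}(x;\eps)=\frac{1}{f_1(x;\eps)}\,\boldsymbol{\phi}_4(0).
\]
Two observations then drive the argument: (i) for every $x$, $\mathbf{u}_{\rm gauge}(x;\eps)$ is a pointwise scalar multiple of $\boldsymbol{\phi}_4(0)\in Y(0)$, so Mielke's reduction applied to it is trivial in the sense that $(\mathbf{1}-\boldsymbol{\Pi}(0))\mathbf{u}_{\rm gauge}\equiv\mathbf{0}$; and (ii) $\mathbf{u}_{\rm gauge}$ is $T$-periodic in $x$, inheriting periodicity from $f_1$.

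By (i), the coordinate representation of $\mathbf{u}_{\rm gauge}$ relative to $\mathcal{B}(0)$ is $\mathbf{a}_{\rm gauge}(x;\eps)=f_1(x;\eps)^{-1}\mathbf{e}_4$, where $\mathbf{e}_4$ denotes the coordinate vector of $\boldsymbol{\phi}_4(0)$, and this solves the reduced system $\mathbf{a}_x=\mathbf{A}(x;0,0,\eps)\mathbf{a}$. Using $\mathbf{X}(0;0,0,\eps)=\mathbf{I}$, the identity $\mathbf{a}_{\rm gauge}(x;\eps)=\mathbf{X}(x;0,0,\eps)\mathbf{a}_{\rm gauge}(0;\eps)$ rearranges to
\[
\mathbf{X}_{\cdot 4}(x;0,0,\eps)=\frac{f_1(0;\eps)}{f_1(x;\eps)}\,\mathbf{e}_4
\]
for the fourth column of $\mathbf{X}$. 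Evaluating at $x=T$ and invoking (ii) gives $\mathbf{X}_{\cdot 4}(T;0,0,\eps)=\mathbf{e}_4$ independently of $\eps$, and matching powers of $\eps$ then yields $\mathbf{a}^{(0,0)}_{\cdot 4}(T;0)=\mathbf{e}_4$ (consistent with \eqref{eqn:a00}) and $\mathbf{a}^{(0,n)}_{\cdot 4}(T;0)=\mathbf{0}$ for every $n\geq 1$, as claimed. The identical argument applies in the super-critical case where $\boldsymbol{\phi}_4(0)$ is given by \eqref{def:phi34}. The only mildly delicate point is verifying (i)---that under \eqref{def:tildephi} the constant mode becomes a pure $\boldsymbol{\phi}_4(0)$-multiple rather than picking up components along other basis vectors of $Y(0)$---but this is immediate from the form of \eqref{def:tildephi} when $u=\eta=z=0$.
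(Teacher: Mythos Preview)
Your proof is correct and follows essentially the same route as the paper: both exploit that the constant-$\phi$ gauge mode at $\lambda=0$ becomes, after the change of variables \eqref{def:tildephi}, a $T$-periodic scalar multiple of $\boldsymbol{\phi}_4(0)$, and then use uniqueness for the reduced ODE to identify this with the fourth column of $\mathbf{X}(\cdot;0,0,\eps)$. The only cosmetic difference is normalization---the paper fixes the original constant to be $f_1(0;\eps)$ so that the initial value is exactly $\boldsymbol{\phi}_4(0)$, whereas you carry the factor $f_1(0;\eps)/f_1(x;\eps)$ explicitly and rescale at the end.
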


\begin{proof}
In the original coordinate, when $\lambda=0$, we find $\varphi$=constant, $u=\eta=z=0$ solves \eqref{eqn:spec}. Setting the constant to be $(1+\eta(0;\eps))(1-u(0,1;\eps))(1+\eta_x(0;\eps)^2)^{3/2}$ and working in the new unknown \eqref{def:tildephi}, we find
$$
\mathbf{u}(x)=\begin{pmatrix}
\frac{(1+\eta(0;\eps))(1-u(0,1;\eps))(1+\eta_x(0;\eps)^2)^{3/2}}{(1+\eta(x;\eps))(1-u(x,1;\eps))(1+\eta_x(x;\eps)^2)^{3/2}}\\0\\0\\0
\end{pmatrix}
$$
shall solve \eqref{eqn:LB} for $\lambda=0$. Because $\mathbf{u}(x)$ is a multiple of the mode $\boldsymbol{\phi}_4(0)$ in \eqref{def:phi24} and \eqref{def:phi34}, in \eqref{eqn:LB;u12}, we have $\mathbf{v}(x)=\boldsymbol{\Pi}(\sigma)\mathbf{u}(x)=\mathbf{u}(x)$. Also, by the uniqueness of solution of the ordinary differential equation \eqref{eqn:LB;u12}, or, equivalently, \eqref{eqn:A}, because $\mathbf{v}(0)=\boldsymbol{\phi}_4(0)=\mathbf{v}_4(0;0,0,\eps)$ in \eqref{def:a;exp0}, $\mathbf{v}(x)=\mathbf{v}_4(x;0,0,\eps)$ for all $x$. Hence $\mathbf{v}_4(T;0,0,\eps)=\mathbf{u}(T)=\mathbf{u}(0)=\boldsymbol{\phi}_4(0)$ by periodicity of the profile. Noticing that 
$$
\sum_{n=0}^\infty a^{(0,n)}_{j4}(T;0)\eps^n=\langle\mathbf{v}_4(T;0,0,\eps),\boldsymbol{\psi}_j(0)\rangle,
$$
$a^{(0,n)}_{j4}(T;0)$ then vanishes for $n\geq 1$ because the right hand side of the equation above is independent of $\eps$.
\end{proof}
\begin{corollary}\label{delta_0pk}
For $\eps\in\mathbb{R}$ and $|\eps|\ll1$, $\Delta(0,p\kappa;\eps)=0$, $p\in\mathbb{Z}$.
\end{corollary}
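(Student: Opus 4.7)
The plan is a short direct calculation combining the explicit base-case formula \eqref{eqn:a00} with Lemma~\ref{lem:a4=0}. First, observe that $\lambda=0$ corresponds to $\sigma=0,\delta=0$ in the decomposition of Section~\ref{sec:reduction}, so from Definition~\ref{def:Evans} we have
\[
\Delta(0,p\kappa;\eps)=\det\bigl(e^{ip\kappa T}\mathbf{I}-\mathbf{X}(T;0,0,\eps)\bigr).
\]
Because $T=2\pi/\kappa$ gives $e^{ip\kappa T}=e^{2\pi i p}=1$ for every $p\in\mathbb{Z}$, the task reduces to showing that $\det(\mathbf{I}-\mathbf{X}(T;0,0,\eps))=0$, a statement independent of $p$.

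Next I would concentrate on the fourth column of $\mathbf{X}(T;0,0,\eps)$, which, by the orderings fixed in Definition~\ref{def:proj}, corresponds to the constant gauge mode $\boldsymbol{\phi}_4(0)$ in both the sub-critical and the super-critical regimes. Specializing the convergent series \eqref{def:X;exp} to $\sigma=0$ and $\delta=0$, this column reads $\sum_{n=0}^{\infty}\bigl(\text{fourth column of }\mathbf{a}^{(0,n)}(T;0)\bigr)\eps^n$. The zeroth-order term is pinned down by \eqref{eqn:a00}, which shows that the fourth column of $\mathbf{a}^{(0,0)}(T;0)$ is $(0,0,0,1)^{\top}$; the analogous super-critical formula is derived by exactly the same recipe (the Jordan block at $\lambda=0$ is generated by the pair $\boldsymbol{\phi}_3(0),\boldsymbol{\phi}_4(0)$ appearing in \eqref{def:phi34}) and produces the standard basis vector in the fourth slot. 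Meanwhile Lemma~\ref{lem:a4=0} kills every higher-order contribution, since the fourth column of $\mathbf{a}^{(0,n)}(T;0)$ vanishes for each $n\geq 1$.

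Combining these two ingredients, the fourth column of $\mathbf{X}(T;0,0,\eps)$ coincides with the fourth column of $\mathbf{I}$ for every $|\eps|\ll 1$, so the fourth column of $\mathbf{I}-\mathbf{X}(T;0,0,\eps)$ is identically the zero vector and its determinant vanishes identically in $\eps$. There is no genuine obstacle here; the only point requiring careful bookkeeping is to match the column index of $\boldsymbol{\phi}_4(0)$ to the orderings of $\mathcal{B}(0)$ listed in Definition~\ref{def:proj} when passing between the sub-critical and super-critical regimes, so that ``fourth column'' consistently labels the $\boldsymbol{\phi}_4(0)$-coordinate in either setting. Conceptually, this corollary encodes the familiar fact that the constant additive gauge of the traveling wave supplies a bounded neutral mode of the linearized operator at $\lambda=0$ for every Floquet exponent of the form $p\kappa$, persisting for all small amplitudes.
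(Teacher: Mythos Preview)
Your proof is correct and follows essentially the same approach as the paper's: both reduce to showing the fourth column of $\mathbf{I}-\mathbf{X}(T;0,0,\eps)$ vanishes by combining Lemma~\ref{lem:a4=0} (for the $n\geq1$ terms) with the explicit $n=0$ computation. The only cosmetic difference is that you cite \eqref{eqn:a00} directly for the zeroth-order column, while the paper cites \eqref{cond:a} (the initial condition from which \eqref{eqn:a00} is derived).
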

\begin{proof}
Notice that $\Delta(0,p\kappa;\eps)=\det(\mathbf{I}-\mathbf{X}(T;0,0,\eps))$, and Lemma~\ref{lem:a4=0} and \eqref{cond:a} assert that the fourth column of $\mathbf{I}-\mathbf{X}(T;0,0,\eps)$ vanishes.
\end{proof}

For waves in the sub-critical region, $\mathbf{a}^{(m,n)}(T;0)$ are 4 by 4 matrices. Carrying out the computations outlined in Section \ref{sec:expansion_monodromy} gives the following lemma.
\begin{lemma}\label{pure_wave_modulation}  Computations show
\be\label{eqn:a10}
\mathbf{a}^{(1,0)}(T;0)=\begin{pmatrix} a^{(1,0)}_{11}
 & 0 & a^{(1,0)}_{13} & 0 \\ 0 & a^{(1,0)}_{11} & -a^{(1,0)}_{13} & 0 \\ 0 & 0 & a^{(1,0)}_{33} & 0\\ a^{(1,0)}_{41}&a^{(1,0)}_{41}&Ta^{(1,0)}_{33}&a^{(1,0)}_{33} \end{pmatrix},
 \ee
 \be\label{eqn:a01}
\mathbf{a}^{(0,1)}(T;0)=\begin{pmatrix} 0 & 0 & a^{(0,1)}_{13}&0\\ 0 & 0 & a^{(0,1)}_{13}&0\\0&0&0&0\\a^{(0,1)}_{41}&-a^{(0,1)}_{41}&*&0\end{pmatrix},
\ee
\begin{equation}\label{eqn:a20}
\mathbf{a}^{(2,0)}(T;0)=\begin{pmatrix} a_{11}^{(2,0)}& -a_{21}^{(2,0)} & * &a_{14}^{(2,0)}  \\ a_{21}^{(2,0)} & (a_{11}^{(2,0)})^*& * &-a_{14}^{(2,0)} \\ 
a_{31}^{(2,0)}& a_{31}^{(2,0)} & a_{33}^{(2,0)} & a_{34}^{(2,0)} \\ * & * & * & a_{33}^{(2,0)} \end{pmatrix},
\end{equation}
\begin{equation}\label{eqn:a11}
\mathbf{a}^{(1,1)}(T;0)=\begin{pmatrix} a_{11}^{(1,1)} & a_{21}^{(1,1)} & * & a_{14}^{(1,1)}\\ 
a_{21}^{(1,1)} & a_{11}^{(1,1)} & * & a_{14}^{(1,1)}\\
a_{31}^{(1,1)} & -a_{31}^{(1,1)} & * & 0 \\ * & * & * & * \end{pmatrix},
\end{equation}
\begin{equation}\label{eqn:a02}
\mathbf{a}^{(0,2)}(T;0)=\begin{pmatrix} a_{11}^{(0,2)} & -a_{11}^{(0,2)} & * & 0\\ 
a_{11}^{(0,2)} & -a_{11}^{(0,2)} & * & 0\\
0 & 0 & * & 0 \\ * & * & * & 0 \end{pmatrix},
\end{equation}
where $a^{(1,0)}_{11}$, $a^{(1,0)}_{13}$ $a^{(1,0)}_{33}$ $a^{(1,0)}_{41}$, $a^{(0,1)}_{13}$, $a^{(0,1)}_{41}$, $a^{(2,0)}_{11}$, $a^{(2,0)}_{34}$, $a^{(1,1)}_{31}$, $a^{(1,1)}_{14}$, and $a^{(0,2)}_{11}$ are given in Appendix \ref{amn_details}. Our computations also show there hold 
\ba
\label{extra_relation}
&a^{(2,0)}_{21}=-a^{(1,0)}_{13}a^{(1,0)}_{41}/T,\quad a_{14}^{(2,0)}=-a_{13}^{(1,0)}\big(a_{11}^{(1,0)} - a_{33}^{(1,0)}\big)/T,\\ &a_{31}^{(2,0)}=-a_{41}^{(1,0)}\big(a_{11}^{(1,0)} - a_{33}^{(1,0)}\big)/T,\\
&a_{11}^{(1,1)}=\big(a_{13}^{(0,1)}a_{41}^{(1,0)} + a_{13}^{(1,0)}a_{41}^{(0,1)}\big)/T,\\
&a_{21}^{(1,1)}=\big(a_{13}^{(0,1)}a_{41}^{(1,0)} - a_{13}^{(1,0)}a_{41}^{(0,1)}\big)/T,
\ea
and we use these relations together with the relations shown in \eqref{eqn:a10}, \eqref{eqn:a01}, \eqref{eqn:a20}, \eqref{eqn:a11}, and \eqref{eqn:a02} to make simplification in the later expansion of the periodic Evans function \eqref{eqn:evans0}. Our analysis does not involve other entries marked with $*$ of $\mathbf{a}^{(0,1)}(T;0)$, $\mathbf{a}^{(2,0)}(T;0)$, $\mathbf{a}^{(1,1)}(T;0)$, and $\mathbf{a}^{(0,2)}(T;0)$, whence we do not include the formulas here. We make clear that $a_{22}^{(2,0)}$ in \eqref{eqn:a20} is the complex conjugate of $a_{11}^{(2,0)}$ and we also do not include formulas for $a_{33}^{(2,0)}$ and $a_{44}^{(2,0)}$ but merely use $a_{33}^{(2,0)}=a_{44}^{(2,0)}$ as noted in \eqref{eqn:a20}. 
\end{lemma}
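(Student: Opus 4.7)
The plan is to compute $\mathbf{a}^{(m,n)}(T;0)$ for $m+n\leq 2$ by iteratively solving the linear ODE system~\eqref{eqn:a} subject to~\eqref{cond:a}, starting from the explicit zeroth-order piece $\mathbf{a}^{(0,0)}(x;0)$ given by~\eqref{eqn:a00}. At each order the forcing $\mathbf{f}_k^{(m,n)}(x;0)$ in~\eqref{def:f0} couples $\mathbf{B}^{(m',n')}$ (collected in Appendix~\ref{A:Bexp}) with lower-order $\mathbf{a}^{(m-m',n-n')}$ together with the out-of-center-manifold correction $\mathbf{w}_k^{(m-m',n-n')}$ extracted by solving~\eqref{eqn:red0}. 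The resulting right-hand side, once projected by $\boldsymbol{\Pi}(0)$ from~\eqref{def:Pi0} in the ordered basis $\{\boldsymbol{\phi}_5(0),\boldsymbol{\phi}_6(0),\boldsymbol{\phi}_2(0),\boldsymbol{\phi}_4(0)\}$, is integrated column by column on $[0,T]$.

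The qualitative structural features claimed in~\eqref{eqn:a10}--\eqref{eqn:a02} will then be read off from three systematic inputs. First, all entries involving $x$-dependence come from products of $e^{\pm i\kappa x}$-exponentials (from $\mathbf{a}^{(0,0)}$) against the Fourier harmonics $e^{\pm i\kappa x}$, $e^{\pm 2i\kappa x}$, \dots\ appearing in $\mathbf{B}^{(m',n')}$; only the resonant combinations survive the period integral, which forces most off-diagonal entries to vanish. Second, the complex-conjugation symmetry $\boldsymbol{\phi}_5(0)=\boldsymbol{\phi}_6(0)^*$ (since $k_5(0)=-k_6(0)=-\kappa$) together with the reality of $\mathbf{B}(x;0,0,\eps)$ in the original variables produces the conjugate-pair pattern seen in the upper $2\times 2$ blocks, accounting for entries such as $a_{22}^{(1,0)}=a_{11}^{(1,0)}$, $a_{22}^{(2,0)}=(a_{11}^{(2,0)})^*$, and the sign relations like $(a_{13},-a_{13})$. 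Third, Lemma~\ref{lem:a4=0} immediately yields the vanishing of the fourth columns of $\mathbf{a}^{(0,1)}(T;0)$ and $\mathbf{a}^{(0,2)}(T;0)$ in~\eqref{eqn:a01}--\eqref{eqn:a02}.

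The nontrivial part is establishing the relations~\eqref{extra_relation}. These are not forced by any obvious symmetry; rather, they arise from a cancellation between two contributions to $\mathbf{f}_k^{(m,n)}$ at order $m+n=2$---one coming from $\mathbf{B}^{(1,0)}\mathcal{B}(0)\mathbf{a}^{(m-1,n)}$ and $\mathbf{B}^{(0,1)}\mathcal{B}(0)\mathbf{a}^{(m,n-1)}$, the other from $\mathbf{B}^{(m,n)}$ itself. I plan to verify them by carrying out the two integrations against the adjoint modes $\boldsymbol{\psi}_j(0)$ in closed form and then simplifying using the dispersion identity~\eqref{mu0kappa} together with the explicit profiles~\eqref{def:stokes1}, \eqref{def:stokes2}, and~\eqref{mu2pure}.

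The main obstacle is purely computational: at order $m+n=2$ every entry of $\mathbf{a}^{(m,n)}(T;0)$ is a rational trigonometric-hyperbolic function of $\kappa,\beta$ assembled from several nested integrals, and the algebraic simplification needed to expose the closed-form expressions quoted in Appendix~\ref{amn_details} and the identities~\eqref{extra_relation} is intractable by hand. As noted in the outline of Section~\ref{sec:expansion_monodromy}, I will execute this step in the MATLAB Symbolic Toolbox and record only the final simplified expressions; the entries marked $*$ in the statement play no role in subsequent stability analysis and will not be displayed.
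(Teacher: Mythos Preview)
Your proposal is correct and matches the paper's approach essentially verbatim: the paper's proof is the single sentence ``The Lemma follows from direct computations outlined in Section~\ref{sec:expansion_monodromy},'' and what you have written is precisely a fleshed-out account of those computations, including the same symbolic-computation workflow the authors describe in the introduction and in Section~\ref{sec:expansion_monodromy}. Your identification of the three structural inputs (Fourier resonance for which entries survive, the conjugation symmetry between the $\boldsymbol{\phi}_5$ and $\boldsymbol{\phi}_6$ modes, and Lemma~\ref{lem:a4=0} for the fourth column) is exactly the reasoning implicit in the paper's parallel computations elsewhere (cf.\ the proofs of Lemmas~\ref{coefficients_highn1_2}, \ref{coefficients_highn1}, \ref{coefficients_highn2}).
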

\begin{proof}
The Lemma follows from direct computations outlined in Section \ref{sec:expansion_monodromy}.
\end{proof}
At $(0,p\kappa;0)$, putting together \eqref{eqn:a00}, \eqref{eqn:a10}, \eqref{eqn:a01}, \eqref{eqn:a20}, \eqref{eqn:a11}, \eqref{eqn:a02}, \eqref{extra_relation} yields 
\ba \label{eqn:evans0}
\Delta(\delta,p\kappa+\gamma;\eps)=&\sum_{l=0}^4d^{(l,4-l,0)}\delta^{l}\gamma^{4-l}+\sum_{l=0}^3d^{(l,3-l,2)}\delta^{l}\gamma^{3-l}\eps^2\\
&+\smallO\big((|\delta|+|\gamma|)^3(|\delta|+|\gamma|+|\eps|^2)\big),
\ea
as $\delta,\gamma,\eps\to 0$, where, analogous to \eqref{resonance1_d}, \eqref{resonance1_d_cri1}, and \eqref{resonance1_d_cri2},  $d^{(\ell,m,n)}$, $\ell,m,n=0,1,2,\dots$ can be determined in terms of $\mathbf{a}^{(m,n)}(T;0)$, $m,n=0,1,2,\dots$. We omit the formulas. Nonetheless, we note that, by \eqref{def:a10} and \eqref{def:a20}
$$
\begin{aligned}
d^{(4,0,0)}=&\left(a_{11}^{(1,0)}\right)^2\left(\left(a_{33}^{(1,0)}\right)^2 - Ta_{34}^{(2,0)}\right)\\
=&64\pi^4{\sh(2\kappa)}^2\sh(\kappa)\left(\beta \kappa ^2+1\right)^3\cdot\Big(\kappa^4\left(\sh(\kappa)-\kappa \ch(\kappa)+\beta \kappa ^2\sh(\kappa)\right)\\&\cdot\left(2\kappa -\sh(2\kappa)+2\beta \kappa ^3+\beta \kappa ^2\sh(2\kappa)\right)^2\Big)^{-1}\neq 0.
\end{aligned}
$$
The Weierstrass polynomial \eqref{weierstrass_m} is thus quartic 
\be 
\label{def:W}
W(\delta,\gamma,\eps)=\delta^4+a_3(\gamma,\eps)\delta^3+a_2(\gamma,\eps)\delta^2+a_1(\gamma,\eps)\delta+a_0(\gamma,\eps),
\ee 
where we infer from setting $m=4$ in \eqref{weierstrass_coeff} that
\be\label{g_eqs}
a_3(\gamma,\eps),a_1(\gamma,\eps)\in i\mathbb{R} \quad \text{and}\quad a_2(\gamma,\eps),a_0(\gamma,\eps)\in \mathbb{R}. 
\ee
When $\eps=0$, the four zeros of \eqref{def:W} are given by $\delta_j(\gamma,0)=i\sigma(k_j(0)+\gamma)$, where $\sigma$ \eqref{eqn:sigma} admits power series expansions about $\gamma=0$. Hence, 
\[
\delta_j(\gamma,0)=\delta^{(1,0)}_j\gamma+\delta^{(2,0)}_j\gamma^2+\cdots,\quad \text{for $|\gamma|\ll1$.}
\]
Also, Corollary~\ref{delta_0pk} says that $\delta=0$ is a root of $\Delta(\cdot,p\kappa;\eps)$ for all $p\in\mathbb{Z}$ for $\eps\in\mathbb{R}$ and $|\eps|\ll1$. Substituting \footnote{By \cite[Theorem 1.1]{hsiao2025} and \cite[Proposition 5.14]{sun2025}, the asymptotic expansion \eqref{def:lambda(gamma,eps)} holds for $0\ll |\gamma|\ll |\eps|$.}
\be \label{def:lambda(gamma,eps)}
\delta_j(k_j(0)+\gamma,\eps)=\delta^{(1,0)}_j\gamma+\delta^{(2,0)}_j\gamma^2+\delta^{(1,1)}_j\gamma\eps+\cdots.
\ee 
into \eqref{eqn:evans0}, 
after straightforward calculations, we learn that $\gamma^4$ is the leading order whose coefficient reads
\begin{equation}\label{eqn:alpha10}
- \big(-a_{11}^{(1,0)}\delta^{(1,0)}_j+iT\big)^2
\big((Ta_{34}^{(2,0)}-{a_{33}^{(1,0)}}^2)(\delta^{(1,0)}_j)^2+2iTa_{33}^{1,0}\delta^{(1,0)}_j+T^2\big),
\end{equation}
where $a_{11}^{(1,0)}$, $a_{33}^{(1,0)}$,  $a_{34}^{(2,0)}$ are given in \eqref{def:a10} and \eqref{def:a20}. The coefficient of $\gamma^4$ \eqref{eqn:alpha10} must vanish, whence
\be\label{def:alpha10'}
\delta^{(1,0)}_j=iT/a_{11}^{(1,0)}\quad \text{or}\quad 
\delta^{(1,0)}_j=\frac{iTa_{33}^{(1,0)}\pm\sqrt{-T^3a^{(2,0)}_{34}}}{(a_{33}^{(1,0)})^2-Ta_{34}^{(2,0)}}.
\ee
Notice that $\delta^{(1,0)}_j$, $j=2,4,5,6$, are purely imaginary by \eqref{def:a10} and \eqref{def:a20}. On the other hand, \eqref{def:alpha10'} must agree with power series expansions of \eqref{def:sigma} about $k_j(0)$, $j=2,4,5,6$. Thus 
\begin{equation}\label{def:alpha10}
\delta^{(1,0)}_j=iT/a_{11}^{(1,0)} \quad\text{for}\quad j=5,6,
\end{equation}
and the latter equation of \eqref{def:alpha10'} holds for $j=2,4$.

Substituting \eqref{def:lambda(gamma,eps)} and \eqref{def:alpha10} into \eqref{eqn:evans0}, after straightforward calculations, we verify that the $\gamma^5$ term vanishes, and solving at the order of $\gamma^6$, we arrive at 
\be\label{def:alpha20}
\delta^{(2,0)}_j=\pm\frac{T^2\left((a_{11}^{(1,0)})^2-2a_{11}^{(2,0)}\right)+2Ta_{13}^{(1,0)}a_{41}^{(1,0)}}{2(a_{11}^{(1,0)})^3},\quad j=5,6,
\ee
where $a_{11}^{(1,0)}$, $a_{13}^{(1,0)}$, $a_{41}^{(1,0)}$ and $a_{11}^{(2,0)}$ are given in \eqref{def:a10} and \eqref{def:a20}. We remark that $\delta^{(2,0)}_j$, $j=5,6$, are purely imaginary because $(a_{11}^{(1,0)})^2$ offsets the real part of $2a_{11}^{(2,0)}$. The $\pm$ signs explain the oppositeness of the convexity of the curves \eqref{def:sigma} at $k_j(0)$, $j=5,6$. See Figure~\ref{figure6}.

To proceed, substituting \eqref{def:lambda(gamma,eps)} into \eqref{eqn:evans0} and evaluating at \eqref{def:alpha10} and \eqref{def:alpha20}, after straightforward calculations, we verify that the $\gamma^3\eps^2$ term vanishes, and the coefficient of $\gamma^4\eps^2$ reads
\ba \label{eqn:f}
T^2g_1(\delta_j^{(1,1)})^2-T^3g_2,
\ea 
where
\ba \label{g1g2}
g_1=&Ta_{34}^{(2,0)} + 2a_{11}^{(1,0)}a_{33}^{(1,0)} - (a_{33}^{(1,0)})^2 - (a_{11}^{(1,0)})^2,\\
g_2=&-(a_{11}^{(1,0)})^{-4}\big(T  (a^{(1,0)}_{11})^2 - 2 T  a^{(2,0)}_{11} + 2  a^{(1,0)}_{13}  a^{(1,0)}_{41}\big)\\&\times\big(a^{(0,2)}_{11}(a^{(1,0)}_{11})^2+a^{(0,2)}_{11}(a^{(1,0)}_{33})^2-T  a^{(0,2)}_{11}a^{(2,0)}_{34}+Ta^{(1,1)}_{14}a^{(1,1)}_{31}\\&-2a^{(0,2)}_{11}a^{(1,0)}_{11}a^{(1,0)}_{33}+a^{(1,0)}_{11}a^{(0,1)}_{13}a^{(1,1)}_{31}+a^{(1,0)}_{11}a^{(1,1)}_{14}a^{(0,1)}_{41}\\&-a^{(0,1)}_{13}a^{(1,1)}_{31}a^{(1,0)}_{33}+a^{(0,1)}_{13}a^{(2,0)}_{34}a^{(0,1)}_{41}-a^{(1,1)}_{14}a^{(1,0)}_{33}a^{(0,1)}_{41}\big)
\ea 
are given in \eqref{g_i}. 
\begin{theorem}[Spectral instability near $0\in\mathbb{C}$ at $\gamma\eps$ order]\label{thm:modulation}
In the sub-critical region, a $(\beta,\kappa)$- non-resonant capillary-gravity wave of sufficiently small amplitude is spectrally unstable in the vicinity of $0\in\mathbb{C}$ provided that 
\be \label{def:ind_5}
{\rm ind}_5(\beta,\kappa):={\rm ind}_{5,1}{\rm ind}_{5,2}{\rm ind}_{5,3}{\rm ind}_{5,4}>0,
\ee
where ${\rm ind}_{5,i}$, $i=1,2,3,4$ are given in \eqref{index_5i}.
\end{theorem}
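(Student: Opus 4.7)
The plan is to trace the real parts along the branches $j=5,6$ of the root ansatz \eqref{def:lambda(gamma,eps)} for the quartic Weierstrass polynomial \eqref{def:W}, and to show that the only place a nontrivial real part can enter at the order displayed is through the $\gamma\eps$ coefficient $\delta^{(1,1)}_j$. First I would substitute \eqref{def:lambda(gamma,eps)} into the expansion \eqref{eqn:evans0} of the periodic Evans function and collect powers of $\gamma$ and $\eps$. Matching the $\gamma^4$ coefficient \eqref{eqn:alpha10} gives the three candidates \eqref{def:alpha10'} for $\delta^{(1,0)}_j$; consistency with the slopes of the dispersion curves \eqref{def:sigma} at $k_j(0)=\mp\kappa$ selects $\delta^{(1,0)}_j=iT/a^{(1,0)}_{11}$ for $j=5,6$, which is purely imaginary by the formula for $a^{(1,0)}_{11}$ in Appendix~\ref{amn_details}. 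The $\gamma^5$ coefficient vanishes identically thanks to the symmetry \eqref{g_eqs} together with the structural relations in Lemma~\ref{pure_wave_modulation}, and matching $\gamma^6$ yields \eqref{def:alpha20}; the offset between $(a^{(1,0)}_{11})^2$ and $2a^{(2,0)}_{11}$ ensures $\delta^{(2,0)}_j\in i\mathbb{R}$ for $j=5,6$.

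Next I would read off the leading $\eps$-dependent coefficient. After substituting \eqref{def:alpha10} and \eqref{def:alpha20} into \eqref{eqn:evans0}, the $\gamma^3\eps^2$ coefficient vanishes by the extra relations \eqref{extra_relation} satisfied by $\mathbf{a}^{(2,0)}$ and $\mathbf{a}^{(1,1)}$, and the $\gamma^4\eps^2$ coefficient reduces to the quadratic expression \eqref{eqn:f} in $\delta^{(1,1)}_j$. Setting it to zero, as required for $\delta_j$ to be a root of \eqref{def:W}, gives
\begin{equation*}
(\delta^{(1,1)}_j)^2 = T\,g_2/g_1,
\end{equation*}
with $g_1,g_2$ the real-valued quantities in \eqref{g1g2}. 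Since $\delta^{(1,0)}_j$ and $\delta^{(2,0)}_j$ are purely imaginary, the real part of $\delta_j(\gamma,\eps)$ along the ansatz \eqref{def:lambda(gamma,eps)} is, to leading order, $\Re(\delta^{(1,1)}_j)\,\gamma\eps$. Consequently, nontrivial real part at this order — hence spectrum off the imaginary axis near $0\in\mathbb{C}$ — occurs precisely when $Tg_2/g_1$ is a positive real number.

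The remaining step is symbolic: I would factor the explicit rational expressions for $g_1$ and $g_2$ in \eqref{g1g2} using the closed-form formulas for the entries $a^{(1,0)}_{jk}, a^{(0,1)}_{jk}, a^{(2,0)}_{jk}, a^{(1,1)}_{jk}, a^{(0,2)}_{jk}$ listed in Appendix~\ref{amn_details}, and identify the four irreducible factors ${\rm ind}_{5,1},\ldots,{\rm ind}_{5,4}$ (from \eqref{index_1i}) whose product has the same sign as $Tg_2/g_1$. The equivalence ${\rm ind}_5(\beta,\kappa)>0\Leftrightarrow T g_2/g_1>0$ then upgrades to spectral instability near the origin by the previous paragraph, completing the proof.

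The main obstacle is this factorization: \eqref{g1g2} is an unwieldy rational function of $\sinh\kappa$, $\cosh\kappa$, $\kappa$, and $\beta$, and extracting the four-factor decomposition in a form that reproduces the Djordjevic--Redekopp modulational diagram \cite{djordjevic_redekopp_1977} requires careful symbolic manipulation (carried out in Matlab, as acknowledged in the Introduction). A secondary concern is that the $j=2,4$ branches — which correspond to the double root of the dispersion relation at $k=0$ — share the quartic Weierstrass polynomial \eqref{def:W} with the $j=5,6$ branches, so one must check that the expansion for $j=5,6$ genuinely decouples up to the order at which ${\rm ind}_5$ is extracted. This decoupling is precisely what the vanishings at the $\gamma^5$ and $\gamma^3\eps^2$ orders encode, and it is enforced by \eqref{g_eqs} together with \eqref{extra_relation}.
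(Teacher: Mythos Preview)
Your proposal is correct and follows essentially the same route as the paper: both set the $\gamma^4\eps^2$ coefficient \eqref{eqn:f} to zero to obtain $(\delta^{(1,1)}_j)^2=Tg_2/g_1$, observe that $\delta^{(1,0)}_j,\delta^{(2,0)}_j\in i\mathbb{R}$ so instability is governed by the sign of $g_2/g_1$, and then reduce that sign to the sign of ${\rm ind}_5$ via the factorization in \eqref{g_i}. The paper's proof is terser because the preparatory steps you outline (matching $\gamma^4$, $\gamma^6$, and verifying the $\gamma^5$, $\gamma^3\eps^2$ vanishings) are carried out in the text \emph{before} the theorem statement rather than inside the proof.
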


\begin{proof}
The coefficient of $\gamma^4\eps^2$ \eqref{eqn:f} must vanish, whence
\begin{equation}\label{def:alpha11}
\delta^{(1,1)}_j=\pm\sqrt{Tg_2/g_1 }, 
\end{equation}
where $g_1$ and $g_2$ are given in \eqref{g1g2} \eqref{g_i}, and $\delta^{(1,1)}_j\in\mathbb{R}$ implies spectral instability. By \eqref{g_i}, the sign of $g_2/g_1$ is the same as that of ${\rm ind}_{5,2}{\rm ind}_{5,3}/\left({\rm ind}_{5,1}{\rm ind}_{5,4}\right)$ and hence that of ${\rm ind}_5$. Therefore, \eqref{def:alpha11} is real if ${\rm ind}_5(\kappa)>0$. This completes the proof. 
\end{proof}
For capillary-gravity waves in the super-critical regions where $\mathbf{a}^{(m,n)}(T;0)$ are 6 by 6 matrices, we can likewise carry out the computations outlined in Section \ref{sec:expansion_monodromy} and similarly obtain stability index functions. 

We notice, in particular, the stability index function \eqref{def:ind_5} continues through the boundary between $S_2$ and $S_3$ region and the index for waves in the $S_2$ region agree with the index for waves in the $S_3$ region. We hence use \eqref{def:ind_5} also to investigate the spectral stability near the origin for capillary-gravity waves in the super-critical region.

Observing that ${\rm ind}_{5,i}(\beta,\kappa)$ is a polynomial in $\beta$ of order $2$ for $i=1$, $2$ for $i=2$, $4$ for $i=3$, and $1$ for $i=4$, respectively. The curves where ${\rm ind}_{5,i}(\beta,\kappa)=0$, $i=1,2,3,4$ \eqref{index_5i} are explicitly computable. These curves separate regions where ${\rm ind}_5(\beta,\kappa)$ is positive from those where ${\rm ind}_5(\beta,\kappa)$ is negative. In particular,
\begin{itemize}
    \item[1.] Solving ${\rm ind}_{5,1}=0$ yields two real roots $\beta(\kappa)$ with opposite signs. Because $\beta>0$, we keep the positive root and denote it as $\beta_4(\kappa)$.
    \item[2.] Solving ${\rm ind}_{5,2}=0$ yields two real roots $\beta(\kappa)$ with opposite signs. We denote the positive root as $\beta_2(\kappa)$.
    \item[3.] Solving ${\rm ind}_{5,3}=0$ yields four roots $\beta(\kappa)$. The first root is positive real for $\kappa>0$ and we denote it as $\beta_5(\kappa)$. The second root is positive real when $0<\kappa<\kappa_*$ where $\kappa_*=1.3627\ldots$ is the critical wave number of Benjamin-Feir modulational instability and the root becomes negative when $\kappa>\kappa_*$. We denote this root by $\beta_1(\kappa)$. The third and fourth roots are negative reals when $0<\kappa<0.32\ldots$ and a pair of conjugate complex numbers with negative real part when $\kappa>0.32\ldots$. We drop these two roots.
    \item[4.] Solving ${\rm ind}_{5,4}=0$ yields one root $\beta(\kappa)$. The root is positive real for $\kappa>0$ and we denote the root as $\beta_3(\kappa)$.  Indeed ${\rm ind}_{5,4}=0$ is equivalent to \eqref{wilton_con} with $M=2$. Hence $(\kappa,\beta_3(\kappa))$ corresponds to the domain of Wilton ripples of order 2.
\end{itemize}
We plot $(\kappa,\beta_i(\kappa))$ for $i=1,2,3,4,5$ in FIGURE \ref{figure15}.
\begin{remark}
Setting $\beta=0$, ${\rm ind}_{5,3}$ becomes
$$
{\rm ind}_{5,3}(\kappa,0)=-64 e^{8\kappa} {\rm ind}_{1}(\kappa)
$$
where the index function ${\rm ind}_{1}$ on the right hand side is the index function for Benjamin-Feir modulational instability defined in \cite[eq. (5.34)]{HY2023} . Hence although \eqref{def:u} becomes singular when $\beta=0$, our index function is valid for the zero surface tension case.
\end{remark}
\begin{figure}[htbp]
    \centering
    \includegraphics[scale=0.3]{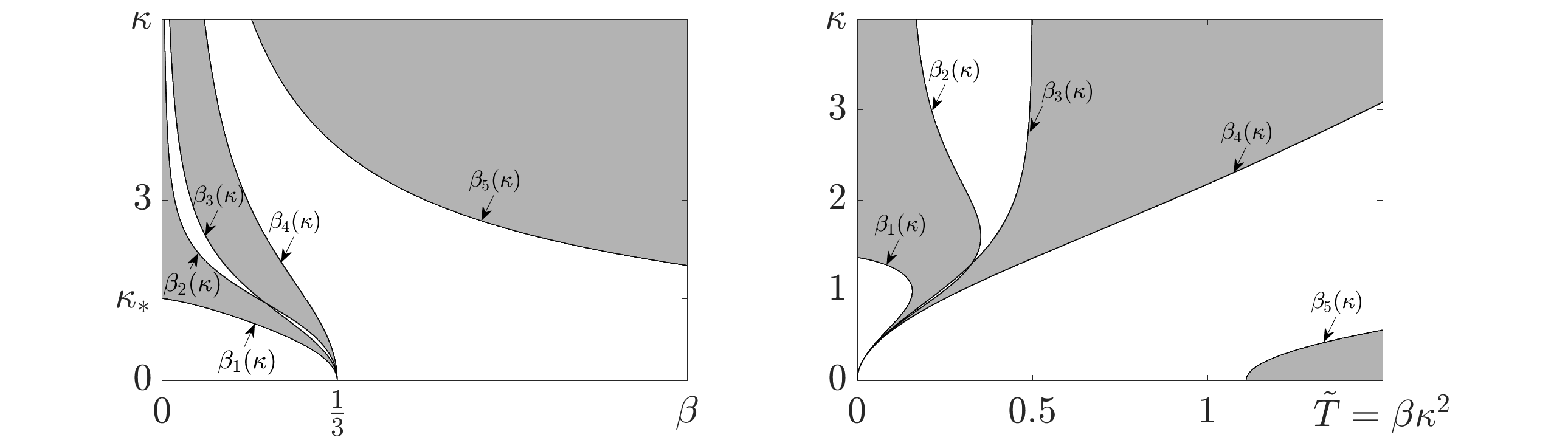}
    \caption{Modulational stability diagrams. Non-resonant capillary-gravity waves in the shaded regions are modulationally unstable. Left panel: the diagram in $\beta$ versus $\kappa$ coordinates. Notice that $\beta_2(\kappa)$ and $\beta_3(\kappa)$ intersect at $\kappa_*=1.28523\cdots$. Right panel: the diagram in $\tilde{T}=\beta\kappa^2$ versus $\kappa$ coordinates. The right panel turns out to be the same as the modulational stability diagram of Djordjevic and Redekopp's \cite[FIGURE 1]{djordjevic_redekopp_1977}.}
    \label{figure15}
\end{figure}
\section{Spectral instability of Wilton ripples}\label{wilton_result}
The general analysis framework introduced in Sections \ref{sec:spec} and ~\ref{resonances} applies also to the stability analysis for Wilton ripples of order $M\geq 2$. In particular, when we deal with system \eqref{eqn:LB;u12}, the singularities occur in the higher $\mathcal{O}(\eps^M)$ terms of the profile \eqref{eqn:stokes exp} cause no changes to the leading operator $\mathbf{L}(i\sigma)$ and projection map $\boldsymbol{\Pi}(\sigma)$. The only change is we shall supply the operator $\mathbf{B}(x;\sigma,\delta,\eps)$ with expansion terms of the Wilton ripples of order $2$ \eqref{wilton2_1} \eqref{A_squaremu1} \eqref{phi2eta2} \eqref{c1mu2} and terms of the Wilton ripples of order $M\geq 3$ \eqref{wiltonm_1} \eqref{mu1q2} \eqref{phi2_m} \eqref{eta2_m} \eqref{mu2wiltonm} \eqref{mu2wilton3}.
\subsection{The spectrum away from the origin}\label{wilton_result:high}

\subsubsection{Wilton ripples of order $2$}
By the discussion in Section \ref{S1S2region}, we see the domain of existence of Wilton ripples of order $2$, \eqref{wilton_con} with $M=2$, (see FIGURE \ref{figure7} right panel) can cross the domains of waves admitting $(k_j(\sigma),k_{j'}(\sigma),2)\in\mathcal{R}(\sigma)$ for some $\sigma>0$. At the resonant frequency $i\sigma$, supplying the operator $\mathbf{B}(x;\sigma,\delta,\eps)$ in \eqref{eqn:LB;u12}[i] with the profile of the Wilton ripples of order $2$ \eqref{wilton2_1} and solving the equation yields the following lemma.

\begin{lemma}\label{wilton_order2_lemma}
For Wilton ripples of order $2$ admitting $(k_j(\sigma),k_{j'}(\sigma),2)\in \mathcal{R}(\sigma)$ (see cases (i)-(v) discussed above). For convenience, we switch the position of resonant modes $\boldsymbol{\phi}_j$ and $\boldsymbol{\phi}_{j'}$ with first two modes in the basis $\mathcal{B}(\sigma)$ so that the resonance happens between the first two modes of  $\mathcal{B}(\sigma)$. At the resonant frequency $i\sigma$, the left top $2$ by $2$ block matrix of $\mathbf{a}^{(0,0)}(T)$ reads $e^{ik_jT}\begin{pmatrix}1&0\\0&1\end{pmatrix}$, the off-diagonal entries of the left top $2$ by $2$ block matrix of $\mathbf{a}^{(1,0)}(T)$ necessarily vanish and the diagonal entries are given in \eqref{a10_high} with $j$ set to $j$ and $j'$, and entries of the left top $2$ by $2$ block matrix of $\mathbf{a}^{(0,1)}(T)$ do not necessarily vanish.
\end{lemma}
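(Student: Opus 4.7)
The plan is to follow the same explicit computational strategy used for Lemmas~\ref{coefficients_highn1_2} and \ref{coefficients_highn1}, but now feeding into $\mathbf{B}(x;\sigma,\delta,\eps)$ the Wilton ripple profile \eqref{wilton2_1}--\eqref{A_squaremu1} instead of the non-resonant capillary-gravity profile \eqref{def:stokes1}. All three claims in the lemma read off the solutions of \eqref{eqn:a:sigma} at $x=T$ at the orders $(m,n)=(0,0),(1,0),(0,1)$.

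First, at order $(0,0)$, since $\mathbf{B}^{(0,0)}\equiv\mathbf{0}$, the equation \eqref{eqn:a:sigma} reduces to $\mathcal{B}(\sigma)\frac{d}{dx}\mathbf{a}^{(0,0)}=\mathbf{L}(i\sigma)\mathcal{B}(\sigma)\mathbf{a}^{(0,0)}$ on the two-dimensional invariant subspace spanned by the resonant modes. Since these modes are eigenfunctions for the eigenvalues $ik_j(\sigma)=ik_{j'}(\sigma)+2i\kappa$, and since $e^{2i\kappa T}=1$, the top-left $2\times2$ block at $x=T$ is $e^{ik_j T}\mathbf{I}_2$, which is the first assertion. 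Second, at order $(1,0)$, the operator $\mathbf{B}^{(1,0)}(x;\sigma)$ is independent of $x$ (as noted after \eqref{def:B;exp0} and used in \cite{HY2023}), so the same orthogonality that led to \eqref{a10_high} in Lemma~\ref{coefficients_highn1_2} applies verbatim: the off-diagonal entries of the top-left $2\times2$ block of $\mathbf{a}^{(1,0)}(T)$ are $e^{ik_jT}\langle \mathbf{B}^{(1,0)}\boldsymbol{\phi}_{j'},\boldsymbol{\psi}_{j}\rangle\int_0^T e^{i(k_{j'}-k_j)x}\,dx$ (and its symmetric partner), which vanish because $k_j-k_{j'}=2\kappa\neq0$, while the diagonal entries reproduce \eqref{a10_high}.

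The core of the argument is the order $(0,1)$ statement, which is where Wilton ripples diverge from the non-resonant case treated in Lemma~\ref{coefficients_highn1}. Exactly as in the proof of Lemma~\ref{coefficients_highn1_2}, one has the explicit formulas
\begin{align*}
a^{(0,1)}_{12}(T) &= e^{ik_{j}T}\Bigl\langle\int_0^T e^{i(k_{j'}-k_{j})x}\mathbf{B}^{(0,1)}(x;\sigma)\boldsymbol{\phi}_{j'}\,dx,\,\boldsymbol{\psi}_{j}\Bigr\rangle,\\
a^{(0,1)}_{21}(T) &= e^{ik_{j'}T}\Bigl\langle\int_0^T e^{i(k_{j}-k_{j'})x}\mathbf{B}^{(0,1)}(x;\sigma)\boldsymbol{\phi}_{j}\,dx,\,\boldsymbol{\psi}_{j'}\Bigr\rangle,
\end{align*}
together with analogous expressions for the diagonal entries. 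Now $\mathbf{B}^{(0,1)}(x;\sigma)$ is linear in $(\phi_1,\eta_1)$ and their derivatives. For non-resonant capillary-gravity waves these contain only $\s(\kappa x)$ and $\c(\kappa x)$, so the integrand in $a^{(0,1)}_{12}(T)$ is a finite Fourier sum supported on frequencies $\pm\kappa - 2\kappa\in\{-\kappa,-3\kappa\}$, all nonzero, forcing the integral to vanish. For Wilton ripples of order $2$, however, \eqref{wilton2_1} supplies the additional terms $\alpha\s(2\kappa x)\ch(2\kappa y)$ in $\phi_1$ and $\alpha\sh(2\kappa)\c(2\kappa x)$ in $\eta_1$, so the integrand now contains Fourier modes at frequencies $\pm 2\kappa-2\kappa\in\{0,-4\kappa\}$: the zero-frequency mode survives integration over $[0,T]$ and produces a generically non-zero contribution proportional to $\alpha$. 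The same computation shows the diagonal entries pick up non-vanishing $\alpha$-multiples from the zero-frequency mode $\pm 2\kappa-2\kappa=0$ as well. Hence no entry of the top-left $2\times2$ block of $\mathbf{a}^{(0,1)}(T)$ is forced to vanish.

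The main obstacle, really the only delicate point, is bookkeeping: one must track which of the several tensor components of $\mathbf{B}^{(0,1)}$ actually carry the $2\kappa$ Fourier mode and verify that the resulting integral does not cancel identically after pairing with $\boldsymbol{\psi}_{j,j'}$. This is mechanical but tedious; we would carry it out with the same symbolic setup used for \eqref{a10_high} and collect the explicit formulas in the appendix, paralleling Appendix~\ref{A:Bexp}. The remaining steps, namely the statements about $\mathbf{a}^{(0,0)}$ and $\mathbf{a}^{(1,0)}$, are immediate adaptations of Lemma~\ref{coefficients_highn1_2} and require no new ideas.
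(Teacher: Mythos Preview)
Your argument for $\mathbf{a}^{(0,0)}(T)$, $\mathbf{a}^{(1,0)}(T)$, and the \emph{off-diagonal} entries of $\mathbf{a}^{(0,1)}(T)$ is correct and coincides with the paper's: the $\alpha\s(2\kappa x)$ and $\alpha\c(2\kappa x)$ terms in $\phi_1,\eta_1$ from \eqref{wilton2_1} combine with the phase $e^{\mp 2i\kappa x}$ to produce a non-vanishing period integral.

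However, your reasoning for the \emph{diagonal} entries of $\mathbf{a}^{(0,1)}(T)$ is wrong. For $a^{(0,1)}_{11}$ and $a^{(0,1)}_{22}$ the phase factor is $e^{i(k_j-k_j)x}=1$, so you are simply integrating $\mathbf{B}^{(0,1)}(x;\sigma)\boldsymbol{\phi}_j$ over a period. Since $\mathbf{B}^{(0,1)}$ is \emph{linear} in $\phi_1,\eta_1,\mu_1$ (see \eqref{eqn:B01}), the $x$-dependent contributions carry frequencies $\pm\kappa$ and $\pm2\kappa$ only, and every one of these integrates to zero over $[0,T]$; your ``$\pm2\kappa-2\kappa=0$'' mechanism is not available here because there is no $e^{\pm2i\kappa x}$ factor to cancel against. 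What actually survives is the constant-in-$x$ part of $\mathbf{B}^{(0,1)}$, namely the terms in $(\mathbf{B}^{(0,1)}\mathbf{u})_4$ that carry $\mu_1$. For non-resonant capillary-gravity waves $\mu_1=0$ by \eqref{def:stokes2}, which is why the diagonal entries vanish in Lemma~\ref{coefficients_highn1}; for Wilton ripples of order $2$, however, $\mu_1\neq0$ by \eqref{A_squaremu1}, and this is the mechanism that makes the diagonal entries nonzero. This is exactly what the paper's proof says. A consistency check: the explicit formula for $a^{(0,1)}_{11}$ in Appendix~\ref{amn_details_wilton2} is proportional to $1/\alpha$, matching $\mu_1\propto1/\alpha$ from \eqref{A_squaremu1}, not to $\alpha$ as your argument would predict.
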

\begin{proof}
The proof is mostly the same as \cite[Lemma 6.3]{HY2023} except that the diagonal entries of $\mathbf{a}^{(0,1)}(T)$ do not necessarily vanish because the integral of $(\mathbf{B}^{(0,1)}(x;\sigma)\boldsymbol{\psi}_{j}(\sigma))_4$ \eqref{eqn:B01} over one period is non-zero for $\mu_1$ \eqref{A_squaremu1} is non-zero and the off-diagonal entries of $\mathbf{a}^{(0,1)}(T)$ do not necessarily vanish because presences of $\s(2\kappa x)$ in $\phi_1$ and $\c(2\kappa x)$ in $\eta_1$ \eqref{wilton2_1} make integrals of $e^{\pm 2i\kappa x}\s(2\kappa x)$ and $e^{\pm 2i\kappa x}\c(2\kappa x)$ over one period non-vanishing.
\end{proof}
At the resonant frequency $i\sigma$, the periodic Evans function then expands as
\ba  \label{expanddeltaresonance1_wilton}
&\Delta(i\sigma+\delta,k_{j}(\sigma)+p\kappa+\gamma;\eps)\\
=&d^{(2,0,0)}\delta^2+d^{(1,1,0)}\gamma\delta+d^{(1,0,1)}\eps\delta+d^{(0,2,0)}\gamma^2+d^{(0,1,1)}\gamma\eps+d^{(0,0,2)}\eps^2\\
&+\smallO((|\delta|+|\gamma|+|\eps|)^2).
\ea 
Following a similar analysis as that for Theorem \ref{thm:unstableeps1}, we obtain the theorem below.
\begin{theorem}[Wilton ripples of order $2$; $(k_j(\sigma),k_{j'}(\sigma),2)\in\mathcal{R}(\sigma)$]\label{thm:unstableeps1_wilton}
${}$\\
Consider a Wilton ripples of order $2$ of sufficiently small amplitude that admits $(k_{j}(\sigma),k_{j'}(\sigma),2)\in \mathcal{R}(\sigma)$ for some $\sigma\in \mathbb{R}$ and its spectra near the resonant frequency $i\sigma$. If
\be \label{def:ind_6}
{\rm ind}_6(\kappa,\sigma,k_{j}(\sigma),k_{j'}(\sigma)):=\frac{a_{12}^{(0,1)}a_{21}^{(0,1)}}{\alpha^2a_{11}^{(1,0)}a_{22}^{(1,0)}}>0,
\ee
the wave admits unstable spectra in shape of a bubble of size $\mathcal{O}(\eps)$ near the resonant frequency $i\sigma$ as described in Corollary~\ref{cor:ellipse_eps;wilton2}.
If
\be \label{ind_6_stability}
{\rm ind}_6(\kappa,\sigma,k_{j}(\sigma),k_{j'}(\sigma))<0,
\ee 
the wave admits no unstable spectrum near the resonant frequency $i\sigma$. To put it another way, the spectra stay on the imaginary axis near the resonant frequency $i\sigma$ for all sufficiently small amplitude. If \be{\rm ind}_6(\kappa,\sigma,k_{j}(\sigma),k_{j'}(\sigma)))=0,\ee stability of the spectra near the resonant frequency is determined by higher order terms of the $\mathbf{a}^{(m,n)}(T;\sigma)$ \eqref{def:X;exp}. The index function $${\rm ind}_6(\kappa,\sigma,k_{j}(\sigma),k_{j'}(\sigma))$$ is independent of the parameter $\alpha$ in \eqref{wilton2_1} \eqref{A_squaremu1}.
\end{theorem}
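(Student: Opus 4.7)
The plan is to run the two-stage Weierstrass preparation argument of Section~\ref{ncg_stability_result:high}, as used in Theorems~\ref{thm:unstableeps1} and \ref{thm:unstable2}, adapted to the Wilton-ripple setting. I would first apply Weierstrass preparation to \eqref{expanddeltaresonance1_wilton} near $(0,0,0)$. Assuming the remaining basis modes $k_r,k_s\not\equiv k_j\pmod{\kappa}$, formula \eqref{a10_high} gives $d^{(2,0,0)}=a_{11}^{(1,0)}a_{22}^{(1,0)}(e^{ik_jT}-e^{ik_rT})(e^{ik_jT}-e^{ik_sT})\neq 0$, so Weierstrass preparation produces a quadratic polynomial $W(\delta,\gamma,\eps)=\delta^{2}+a_{1}(\gamma,\eps)\delta+a_{0}(\gamma,\eps)$. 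The essential new feature compared to Theorem~\ref{thm:unstableeps1} is that, by Lemma~\ref{wilton_order2_lemma}, \textbf{all} entries of the upper-left $2\times 2$ block of $\mathbf{a}^{(0,1)}(T)$ can be nonzero (because $\mu_{1}\neq 0$ in \eqref{A_squaremu1} and because the $2\kappa$-mode in \eqref{wilton2_1} drives off-diagonal wave--wave interactions already at order $\eps$). Consequently $a_{1},a_{0}$ inherit the extra cross terms $d^{(1,0,1)}\eps\delta$ and $d^{(0,1,1)}\gamma\eps$ present in \eqref{expanddeltaresonance1_wilton}. Corollary~\ref{cor:weier_quadratic} and Lemma~\ref{lem:secondweierstrass} reduce the stability question to the sign of the analytic stability function ${\rm disc}_{2}(\eps)$ of Definition~\ref{stability_function}.

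The decisive step is to extract the leading $\eps^{2}$ coefficient of ${\rm disc}_{2}(\eps)$. Writing ${\rm disc}(\gamma,\eps)=A_{2}\gamma^{2}+2B_{1}\gamma\eps+C_{0}\eps^{2}+\mathcal{O}(|\gamma|+|\eps|)^{3}$, the second Weierstrass factorization yields $b_{1}(\eps)=2B_{1}\eps/A_{2}+\mathcal{O}(\eps^{2})$ and $b_{0}(\eps)=C_{0}\eps^{2}/A_{2}+\mathcal{O}(\eps^{3})$, so
\[
{\rm disc}_{2}(\eps)=b_{1}^{2}-4b_{0}=\frac{4(B_{1}^{2}-A_{2}C_{0})}{A_{2}^{2}}\eps^{2}+\mathcal{O}(\eps^{3}).
\]
A direct substitution of the explicit formulas for $d^{(\ell,m,n)}$ (obtained from expanding the $2\times 2$ block of $e^{ikT}\mathbf{I}-\mathbf{a}^{(0,0)}-\delta\mathbf{a}^{(1,0)}-\eps\mathbf{a}^{(0,1)}-\cdots$ to the required order) gives $A_{2}=h(0,0)<0$ (recovering \eqref{h00_formula}) and shows that the ``diagonal-interference'' combination $(a_{11}^{(1,0)}a_{22}^{(0,1)}-a_{22}^{(1,0)}a_{11}^{(0,1)})^{2}$ enters $B_{1}^{2}$ and $A_{2}C_{0}$ as a perfect square and \emph{cancels exactly}. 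The surviving identity is
\[
B_{1}^{2}-A_{2}C_{0}=-A_{2}\cdot\frac{4\,a_{11}^{(1,0)}a_{22}^{(1,0)}\,a_{12}^{(0,1)}a_{21}^{(0,1)}}{\bigl(a_{11}^{(1,0)}a_{22}^{(1,0)}\bigr)^{2}},
\]
so that
\[
{\rm disc}_{2}(\eps)=-\frac{16}{h(0,0)}\cdot\frac{a_{12}^{(0,1)}a_{21}^{(0,1)}}{a_{11}^{(1,0)}a_{22}^{(1,0)}}\,\eps^{2}+\mathcal{O}(\eps^{3}).
\]
Since $-16/h(0,0)>0$, the sign of ${\rm disc}_{2}(\eps)$ for $|\eps|\ll 1$ coincides with that of $a_{12}^{(0,1)}a_{21}^{(0,1)}/(a_{11}^{(1,0)}a_{22}^{(1,0)})$, and hence with that of ${\rm ind}_{6}$ (since $\alpha^{2}>0$). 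Instability, stability, and the degenerate case then follow verbatim from the template of Theorem~\ref{thm:unstableeps1}, and the geometry of the unstable bubble is described by Corollary~\ref{cor:ellipse_eps;wilton2}.

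For the final $\alpha$-independence assertion, the argument is Fourier orthogonality. The profile \eqref{wilton2_1} splits cleanly as a $\kappa$-mode plus $\alpha$ times a $2\kappa$-mode, which in turn splits $\mathbf{B}^{(0,1)}(x;\sigma)$ via \eqref{def:B;exp0}. Because the resonance is $k_{j}-k_{j'}=2\kappa$, the Fourier selection in the integral representations of $a_{12}^{(0,1)},a_{21}^{(0,1)}$ coming from \eqref{eqn:a:sigma}--\eqref{def:fsigma} forces each off-diagonal entry to pick up \emph{exactly one} factor of $\alpha$ from the $2\kappa$-mode; hence $a_{12}^{(0,1)}a_{21}^{(0,1)}\propto\alpha^{2}$. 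Since $a_{11}^{(1,0)},a_{22}^{(1,0)}$ depend only on $\kappa,\beta,\sigma$ through \eqref{a10_high}, the normalization by $\alpha^{2}$ in \eqref{def:ind_6} cancels the $\alpha^{2}$ in the numerator and yields an $\alpha$-independent expression. The main technical obstacle is tracking the $\alpha$-bookkeeping carefully: because $\mu_{1}\sim 1/\alpha$ by \eqref{A_squaremu1}, the diagonal entries $a_{11}^{(0,1)},a_{22}^{(0,1)}$ carry $\alpha^{-1}$ factors and a priori contaminate $B_{1}^{2}/A_{2}^{2}$ and $4C_{0}/A_{2}$ with $\alpha^{-2}$ terms; it is precisely the cancellation of the diagonal-interference square above that removes this contamination and makes ${\rm ind}_{6}$ well-defined as an $\alpha$-independent quantity.
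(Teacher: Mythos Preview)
Your proof is correct and follows the same two-stage Weierstrass preparation template as the paper, which simply invokes Theorems~\ref{thm:unstableeps1} and~\ref{thm:unstable2}. Your write-up in fact supplies more than the paper does: the explicit verification that the diagonal-interference combination $(a_{11}^{(1,0)}a_{22}^{(0,1)}-a_{22}^{(1,0)}a_{11}^{(0,1)})^{2}$ cancels between $B_{1}^{2}$ and $A_{2}C_{0}$ is exactly the algebraic identity needed here (and not needed in Theorem~\ref{thm:unstableeps1}, where $a_{11}^{(0,1)}=a_{22}^{(0,1)}=0$), and your Fourier-selection argument for the $\alpha$-independence of ${\rm ind}_{6}$ makes precise what the paper states in one clause.
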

\begin{proof}
Following a similar analysis to Theorems~\ref{thm:unstableeps1} and ~\ref{thm:unstable2}, we shall define an index function as $a^{(0,1)}_{12}a^{(0,1)}_{21}(a_{11}^{(1,0)}a_{22}^{(1,0)})^{-1}$, which can be made to be independent of $\alpha$ by dividing $\alpha^2$.
\end{proof}
\begin{corollary}[Unstable spectra in shape of {\bf an ellipse} at $\mathcal{O}(\eps)$-order]\label{cor:ellipse_eps;wilton2} 
${}$

\noindent Provided that \eqref{def:ind_6} holds, there exist, near the resonant frequency $i\sigma$, unstable spectra in shape of a bubble of size $\mathcal{O}(\eps)$, which is, at the leading $\mathcal{O}(\eps)$-order, an ellipse with equation
\be 
\label{ellipse:eps;wilton2}
(\Re\lambda)^2+\left(\frac{a^{(1,0)}_{11}-a^{(1,0)}_{22}}{a^{(1,0)}_{11}+a^{(1,0)}_{22}}\right)^2\left(\Im\lambda-i\sigma-\frac{i(a_{11}^{(0,1)} - a_{22}^{(0,1)})}{a_{11}^{(1,0)} - a_{22}^{(1,0)}}\eps\right)^2={\rm ind}_6\alpha^2\,\eps^2,
\ee
whose center drifts from the resonant frequency $i\sigma$ by a distance of $$-\frac{i(a_{11}^{(0,1)} - a_{22}^{(0,1)})}{a_{11}^{(1,0)} - a_{22}^{(1,0)}}\eps.$$ 
\end{corollary}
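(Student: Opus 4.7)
The proof follows the strategy of Corollary~\ref{cor:ellipse_eps_square}, adapted to the two additional cross terms $d^{(1,0,1)}\eps\delta$ and $d^{(0,1,1)}\gamma\eps$ that appear in \eqref{expanddeltaresonance1_wilton}. These terms vanish for non-resonant capillary-gravity waves (since $\mu_1=0$ in \eqref{def:stokes2} forces the diagonal $a^{(0,1)}_{jj}=0$), but survive here because $\mu_1\neq 0$ in \eqref{A_squaremu1}. Dropping the $\smallO$ terms from \eqref{expanddeltaresonance1_wilton} and dividing by $d^{(2,0,0)}$ produces the leading-order Weierstrass polynomial
\[
\delta^2 + \Bigl(\tfrac{d^{(1,1,0)}}{d^{(2,0,0)}}\gamma + \tfrac{d^{(1,0,1)}}{d^{(2,0,0)}}\eps\Bigr)\delta + \tfrac{d^{(0,2,0)}}{d^{(2,0,0)}}\gamma^2 + \tfrac{d^{(0,1,1)}}{d^{(2,0,0)}}\gamma\eps + \tfrac{d^{(0,0,2)}}{d^{(2,0,0)}}\eps^2 = 0,
\]
whose $\delta$-coefficient is purely imaginary and whose constant term is real by Lemma~\ref{lem:symm:weierstrass} applied with $m=2$.

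Write $\delta=\delta_r+i\delta_i$ with $\delta_r,\delta_i\in\mathbb{R}$. Taking the imaginary part of the leading equation collapses it to $\delta_r(2\delta_i+\tilde A\gamma+\tilde B\eps)=0$ with $\tilde A,\tilde B\in\mathbb{R}$, and for an unstable root ($\delta_r\neq 0$) this forces
\[
\gamma = -\tfrac{2id^{(2,0,0)}}{d^{(1,1,0)}}\delta_i - \tfrac{d^{(1,0,1)}}{d^{(1,1,0)}}\eps,
\]
generalizing \eqref{gamma_sol_cri}. Substituting this $\gamma$ back into the real part and completing the square in $\delta_i$ yields a real conic in $(\delta_r,\delta_i)$. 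Using $a^{(1,0)}_{jj}=A_j e^{ik_jT}$ with $A_j\in\mathbb{R}$ from \eqref{a10_high} together with the resonance identity $e^{i(k_j-k_{j'})T}=1$ (from $k_j-k_{j'}=2\kappa$ and $T=2\pi/\kappa$), the phase factors in the coefficients $d^{(\bullet)}$ collapse into real common denominators, and the conic reduces to the ellipse \eqref{ellipse:eps;wilton2}; the linear-in-$\eps$ shift in $\delta_i$ needed to cancel the remaining $\eps\delta_i$ cross term is precisely the claimed drift $-i(a^{(0,1)}_{11}-a^{(0,1)}_{22})/(a^{(1,0)}_{11}-a^{(1,0)}_{22})\,\eps$.

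For the $\alpha$-independence of ${\rm ind}_6$, observe that the off-diagonal entries $a^{(0,1)}_{12}$ and $a^{(0,1)}_{21}$ arise from inner products whose integrands carry the factor $e^{\mp 2i\kappa x}$. Since $\mathbf{B}^{(0,1)}$ is built linearly out of $\phi_1,\eta_1$ from \eqref{wilton2_1}, only the second-harmonic terms $\alpha\s(2\kappa x)\ch(2\kappa y)$ and $\alpha\sh(2\kappa)\c(2\kappa x)$ survive integration over one period, so $a^{(0,1)}_{12}$ and $a^{(0,1)}_{21}$ are each proportional to $\alpha$, whence $a^{(0,1)}_{12}a^{(0,1)}_{21}\propto\alpha^2$ and the explicit division by $\alpha^2$ in \eqref{def:ind_6} eliminates the $\alpha$ dependence. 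The main obstacle is the algebraic bookkeeping in the completion of square, particularly matching the imaginary-axis radius to $(a^{(1,0)}_{11}-a^{(1,0)}_{22})/(a^{(1,0)}_{11}+a^{(1,0)}_{22})$ and pinning down $\delta_{i,0}$ in the compact closed form stated; the phase identity $e^{i(k_j-k_{j'})T}=1$ is essential here, since without it the coefficients $d^{(\bullet)}$ would carry residual phases that obstruct the reduction. The resulting drift sits at $\mathcal{O}(\eps)$ rather than the $\mathcal{O}(\eps^2)$ of Corollary~\ref{cor:ellipse_eps_square}, directly traceable to the non-vanishing $\mu_1$ in \eqref{A_squaremu1} that makes $a^{(0,1)}_{jj}\neq 0$ for order-$2$ Wilton ripples.
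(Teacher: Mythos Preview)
Your proposal is correct and follows essentially the same approach as the paper, which simply states that the proof is similar to Corollary~\ref{cor:ellipse_eps}. You have spelled out the details of that strategy---dropping the $\smallO$ terms, splitting $\delta$ into real and imaginary parts, solving the imaginary part for $\gamma$, and substituting back---while correctly accounting for the extra cross terms $d^{(1,0,1)}\eps\delta$ and $d^{(0,1,1)}\gamma\eps$ that appear here because $\mu_1\neq 0$; your discussion of the $\alpha$-dependence is extra material that properly belongs to Theorem~\ref{thm:unstableeps1_wilton} rather than this corollary, but it is not wrong.
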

\begin{proof}
The proof is similar to Corollary~\ref{cor:ellipse_eps}.
\end{proof}
In contrast to the spectral invariance under the coordinate change $\eps\rightarrow -\eps$ for Wilton ripples of order $M$ with $M$ odd (see Lemma~\ref{stabilityfunction_even}), we note that the invariance unholds for Wilton ripples of order $2$ for the coordinate change flips the sign of the drifting distance in \eqref{ellipse:eps;wilton2}.

\subsubsection{Numerical results}
 The domain of Wilton ripples of order $2$ can cross the domains of waves admitting $(k_j(\sigma),k_{j'}(\sigma),2)\in\mathcal{R}(\sigma)$ for some $\sigma>0$ and $(j,j')=(1,5),\;(3,5),\;(2,1),\;(2,3),\;(6,3).$ In particular, 
\begin{itemize}
\item[(i)] when $2.811980\ldots<\kappa$, the corresponding Wilton ripples of order $2$ admit $$(k_1(\sigma),k_5(\sigma),2)\in\mathcal{R}(\sigma),\quad \text{for some} \quad 0<\sigma<-\sigma_{c,1};$$
\item[(ii)] when $1.662761\ldots<\kappa<2.811980\ldots$, the corresponding Wilton ripples of order $2$ admit $$(k_3(\sigma),k_5(\sigma),2)\in\mathcal{R}(\sigma),\quad \text{for some} \quad 0<\sigma<-\sigma_{c,1};$$
\item[(iii)] when $1.370646\ldots<\kappa<1.662761\ldots$, the corresponding Wilton ripples of order $2$ admit $$(k_6(\sigma),k_3(\sigma),2)\in\mathcal{R}(\sigma),\quad \text{for some} \quad 0<\sigma<-\sigma_{c,1};$$
\item[(iv)] when $1.361869\ldots<\kappa<1.370646\ldots$, the corresponding Wilton ripples of order $2$ admit $$(k_2(\sigma),k_3(\sigma),2)\in\mathcal{R}(\sigma),\quad \text{for some} \quad 0<\sigma<\min(-\sigma_{c,1},\sigma_{c,2});$$
\item[(v)] when $1.285234\ldots<\kappa<1.361869\ldots$, the corresponding Wilton ripples of order $2$ admit $$(k_2(\sigma),k_1(\sigma),2)\in\mathcal{R}(\sigma),\quad \text{for some} \quad 0<\sigma<\sigma_{c,2}.$$
\end{itemize}
Numerically, ${\rm ind}_6(\kappa,\sigma)$ is negative for cases (i)-(ii) and positive for cases (iii)-(v), indicating that, for waves in cases (i)-(ii), spectra in the vicinity of the resonant frequency are {\bf purely imaginary}, and, for waves in cases (iii)-(v), there are {\bf unstable spectra} in the vicinity the resonant frequencies.
\begin{remark}\label{consistence3}
Since we detect no instability in cases (i) and (ii), our findings agree with the local spectral stability predicted by the Krein-signature criterion (see Remark~\ref{Kreincondition}).
\end{remark}
\subsubsection{Wilton ripples of order $M\geq 3$ with $N=1$ or $M$}\label{WRN1M} For stability of Wilton ripples of order $M\geq 3$ at non-zero resonant frequencies, we suppose that Wilton ripples of order $M\geq 3$ admits  $(k_j(\sigma),k_{j'}(\sigma),N)\in\mathcal{R}(\sigma)$ for some $\sigma>0$ and $N\geq 1$ and compute $\mathbf{a}^{(0,1)}$ and $\mathbf{a}^{(0,2)}$.
\begin{lemma}\label{wilton_orderm_lemma}
Assume a Wilton ripple of order $M\geq 3$ admits $$(k_{j}(\sigma),k_{j'}(\sigma),N)\in \mathcal{R}(\sigma),\quad N\geq 1.$$ For convenience, we switch the position of resonant modes $\boldsymbol{\phi}_{j}$ and $\boldsymbol{\phi}_{j'}$ with first two modes in the basis $\mathcal{B}(\sigma)$ so that the resonance happens between the first two modes of  $\mathcal{B}(\sigma)$. At the resonant frequency $i\sigma$, the left top $2$ by $2$ block matrix of $\mathbf{a}^{(0,0)}(T)$ reads $e^{ik_jT}\begin{pmatrix}1&0\\0&1\end{pmatrix}$ and the off-diagonal entries of the left top $2$ by $2$ block matrix of $\mathbf{a}^{(1,0)}(T)$ necessarily vanish and the diagonal entries are given in \eqref{a10_high} with $j$ set to $j$ and $j'$. For the left top $2$ by $2$ block matrix of $\mathbf{a}^{(0,1)}(T)$, the diagonal entries necessarily vanish and the off-diagonal entries are, for $N=1$, the same as the corresponding ones given in Lemma~\ref{coefficients_highn1_2}, for $N=M$, in $i\alpha e^{ik_jT}\mathbb{R}$ and linear in $\alpha$, and, for $1<N\neq M$,  necessarily vanishing.
\end{lemma}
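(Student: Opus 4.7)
The plan is to follow the template used for Lemma \ref{coefficients_highn1_2} and Lemma \ref{wilton_order2_lemma}, working from the explicit solution of \eqref{eqn:a:sigma} at order $(m,n)=(0,1)$. With $\mathbf{w}_k^{(0,0)}=\mathbf{0}$ and $\mathbf{a}^{(0,0)}_k(x;\sigma)=e^{ik_k x}\mathbf{e}_k$, Duhamel's formula together with $\boldsymbol{\Pi}(\sigma)$ in \eqref{def:Pi;high} gives
\begin{equation*}
a^{(0,1)}_{jk}(T)=e^{ik_j T}\int_0^T e^{i(k_k-k_j)x}\bigl\langle \mathbf{B}^{(0,1)}(x;\sigma)\boldsymbol{\phi}_k(\sigma),\boldsymbol{\psi}_j(\sigma)\bigr\rangle\,dx.
\end{equation*}
The claim on $\mathbf{a}^{(0,0)}(T)$ is immediate from $k_{j'}\equiv k_j\pmod{\kappa}$ and $T=2\pi/\kappa$, and the claim on $\mathbf{a}^{(1,0)}(T)$ copies Lemma \ref{coefficients_highn1} verbatim because $\mathbf{B}^{(1,0)}(x;\sigma)$ is $x$-independent (as noted below \eqref{def:B;exp0}).

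The crux is the Fourier content of $\mathbf{B}^{(0,1)}(x;\sigma)=\partial_\eps\mathbf{B}(x;i\sigma,\eps)|_{\eps=0}$. For a Wilton ripple of order $M\geq 3$ the first-order profile \eqref{wiltonm_1} contains only the spatial frequencies $\pm\kappa$ (unit amplitude) and $\pm M\kappa$ (amplitude $\alpha$), and by \eqref{mu1q2} one has $\mu_1=0$, so no $x$-independent piece is generated. Consequently
\begin{equation*}
\mathbf{B}^{(0,1)}(x;\sigma)=\sum_{\pm}\mathbf{B}^{(0,1)}_{\pm 1}\,e^{\pm i\kappa x}+\alpha\sum_{\pm}\mathbf{B}^{(0,1)}_{\pm M}\,e^{\pm iM\kappa x},
\end{equation*}
for $x$-independent coefficient matrices, with the $\pm M\kappa$ block being \emph{exactly} linear in $\alpha$.

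With this in hand the three sub-cases are case analyses of which Fourier mode survives the integration against $e^{i(k_k-k_j)x}$ on $[0,T]$. For the diagonal entries $j=k$ all four modes integrate to zero since $M\geq 3$, giving $a^{(0,1)}_{jj}(T)=0$. For the off-diagonal entries with $k_{j'}-k_j=\pm N\kappa$: if $N=1$ only the base $\pm\kappa$ modes contribute and the formula reduces to the one appearing in Lemma \ref{coefficients_highn1_2}, since the $\alpha$-modes are killed by $M\neq 1$; if $N=M$ only the $\pm M\kappa$ modes contribute, producing an expression of the form $\alpha\,e^{ik_j T}\cdot(\text{integral coefficient})$; and if $1<N\neq M$ no Fourier frequency matches, so the integral vanishes.

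The main obstacle is the remaining imaginary structure for $N=M$, i.e.\ showing that the coefficient extracted from $\mathbf{B}^{(0,1)}_{\pm M}$ lies in $i\mathbb{R}$. This is the $\alpha$-weighted analogue of the imaginary structure recorded in \eqref{criticalp1:a} and will be obtained by tracking the reality/imaginary pattern through the explicit formulas: the components of $\boldsymbol{\phi}_k(\sigma)$ and $\boldsymbol{\psi}_j(\sigma)$ in \eqref{varphi_non_zero}, \eqref{def:psi24}, \eqref{def:cj24} alternate between real and purely imaginary entries according to a fixed pattern, and the blocks $\mathbf{B}^{(0,1)}_{\pm M}$ built from $\partial_\eps$ of \eqref{wiltonm_1} respect the same pattern so that, after writing $e^{\pm iM\kappa x}=\cos(M\kappa x)\pm i\sin(M\kappa x)$ and pairing the surviving $\sin$ and $\cos$ integrals with the appropriate matrix entries, the resulting inner product falls in $i\mathbb{R}$. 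In the write-up this will be verified by a direct bookkeeping of the nonzero entry positions, identical in spirit to the calculation behind Lemma \ref{coefficients_highn2} and \eqref{criticalp1:a}; no further ingredient beyond Appendix \ref{A:Bexp} and the eigenfunction formulas of Section \ref{sec:proj} is required.
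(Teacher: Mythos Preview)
Your proposal is correct and follows essentially the same approach as the paper: both rely on the Duhamel/variation-of-parameters formula for $a^{(0,1)}_{jk}(T)$, then exploit the Fourier content of $\mathbf{B}^{(0,1)}$ for the Wilton profile \eqref{wiltonm_1}---modes at $\pm\kappa$ and $\alpha$-weighted modes at $\pm M\kappa$, with no constant piece because $\mu_1=0$---to read off which integrals survive in each case. Your write-up is in fact more detailed than the paper's brief proof, which does not explicitly argue the $i\alpha e^{ik_jT}\mathbb{R}$ structure for $N=M$; your plan to verify it by tracking the real/imaginary pattern of the entries of $\boldsymbol{\phi}_k$, $\boldsymbol{\psi}_j$ against the $\sin/\cos$ decomposition is the natural completion and matches the spirit of the analogous checks in Lemma~\ref{coefficients_highn2} and \eqref{criticalp1:a}.
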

\begin{proof}
The proof is mostly the same as \cite[Lemma 6.3]{HY2023} and Lemma~\ref{coefficients_highn1_2}. In contrast to Lemma~\ref{wilton_order2_lemma}, the diagonal entries of $\mathbf{a}^{(0,1)}(T)$ here vanish because $\mu_1=0$ for Wilton ripples of order $M\geq 3$. In contrast to Lemma~\ref{coefficients_highn1_2}, the off-diagonal entries of $\mathbf{a}^{(0,1)}(T)$ do not necessarily vanish when $N=M$ because products of $\s(M\kappa x)$ (factors in $\phi_1$) and $\c(M\kappa x)$ (factors in $\eta_1$) \eqref{wiltonm_1} with $e^{\pm iM\kappa x}$ make integrals of $e^{\pm iM\kappa x}\s(M\kappa x)$ and $e^{\pm iM\kappa x}\c(M\kappa x)$ over one period non-vanishing.
\end{proof}
At the resonant frequency $i\sigma$ admitting a pair of $N$-resonant eigenvalues ($N=1$ or $M$), the periodic Evans function then expands as
\ba  \label{expanddeltaresonance1m_wiltonm}
&\Delta(i\sigma+\delta,k_{j}(\sigma)+p\kappa+\gamma;\eps)\\
=&d^{(2,0,0)}\delta^2+d^{(1,1,0)}\gamma\delta+d^{(0,2,0)}\gamma^2+d^{(0,0,2)}\eps^2+\smallO(|\delta|+|\gamma|+|\eps|)^2.
\ea 
Following a similar analysis to Theorem~\ref{thm:unstableeps1},  we obtain the Corollary below.
\begin{theorem}[Wilton ripples of order $M$; $(k_j(\sigma),k_{j'}(\sigma),1)\in\mathcal{R}(\sigma)$]\label{thm:wiltonmp1}
${}$

\noindent For Wilton ripples of order $M\geq 3$ admitting $(k_{j}(\sigma),k_{j'}(\sigma),1)\in \mathcal{R}(\sigma)$ for some $\sigma>0$, Theorem \ref{thm:unstableeps1} and Corollary~\ref{cor:ellipse_eps} apply to the stability of these waves near the resonant frequency $i\sigma$. That is the additional terms involving $\alpha$ in \eqref{wiltonm_1} \eqref{phi2_m} \eqref{eta2_m} make no difference in defining an index function.
\end{theorem}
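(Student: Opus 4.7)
The plan is to reduce the stability analysis at $i\sigma$ for a Wilton ripple of order $M\ge 3$ with $(k_j(\sigma),k_{j'}(\sigma),1)\in\mathcal{R}(\sigma)$ to the non-resonant capillary-gravity case treated in Theorem~\ref{thm:unstableeps1}, by checking that the matrix entries entering the leading order of the periodic Evans function expansion are unchanged. The central ingredient has already been isolated in Lemma~\ref{wilton_orderm_lemma}: the top-left $2\times 2$ blocks of $\mathbf{a}^{(0,0)}(T)$ and $\mathbf{a}^{(1,0)}(T)$ coincide with their non-resonant analogues in Lemma~\ref{coefficients_highn1_2}, and for $N=1$ the off-diagonal entries of the top-left $2\times 2$ block of $\mathbf{a}^{(0,1)}(T)$ likewise coincide with those of Lemma~\ref{coefficients_highn1_2}, while the diagonal entries vanish because $\mu_1=0$ for $M\ge 3$ by \eqref{mu1q2}.

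First, I would record that the profile \eqref{wiltonm_1} decomposes as $(\phi_1,\eta_1)=(\phi_1^{\mathrm{cg}},\eta_1^{\mathrm{cg}})+\alpha(\s(M\kappa x)\ch(M\kappa y),\sh(M\kappa)\c(M\kappa x))$, where $(\phi_1^{\mathrm{cg}},\eta_1^{\mathrm{cg}})$ is exactly the non-resonant capillary-gravity profile \eqref{def:stokes1}. Consequently $\mathbf{B}^{(0,1)}(x;\sigma)$ \eqref{def:B;exp0} splits into an $\alpha$-independent piece at wave number $\kappa$ and an $\alpha$-linear piece purely at wave number $M\kappa$. The off-diagonal entries $a^{(0,1)}_{12}(T)$ and $a^{(0,1)}_{21}(T)$ are given by the Fourier integrals
\[
a^{(0,1)}_{12}(T)=e^{ik_jT}\Bigl\langle \int_0^T e^{i(k_{j'}-k_j)x}\mathbf{B}^{(0,1)}(x;\sigma)\boldsymbol{\phi}_{j'}\,dx,\boldsymbol{\psi}_j\Bigr\rangle,
\]
and analogously for $a^{(0,1)}_{21}$, so that integrating $e^{\pm i\kappa x}$ (since $k_j-k_{j'}=\pm\kappa$ by $N=1$) against the $\alpha$-piece, which oscillates as $e^{\pm iM\kappa x}$, vanishes over one period $T=2\pi/\kappa$ for every $M\ge 3$. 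The $\alpha$-contribution therefore drops out of the leading off-diagonal entries of $\mathbf{a}^{(0,1)}(T)$, confirming that they agree numerically with their non-resonant counterparts.

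Next I would observe that $\mathbf{B}^{(1,0)}(x;\sigma)$ captures the variation with respect to the spectral parameter $\lambda$ at $\eps=0$ (cf.\ \eqref{eqn:LB;delta} and Appendix~\ref{A:Bexp}), and in particular does not involve $\alpha$; thus the diagonal entries of the top-left $2\times 2$ block of $\mathbf{a}^{(1,0)}(T)$ are exactly those given in \eqref{a10_high} with $j,j'$, independent of $\alpha$ and identical to Lemma~\ref{coefficients_highn1_2}. Combining these two observations with Lemma~\ref{wilton_orderm_lemma}, the expansion \eqref{expanddeltaresonance1m_wiltonm} coincides with \eqref{expanddeltaresonance1}, and the coefficients $d^{(2,0,0)}$, $d^{(1,1,0)}$, $d^{(0,2,0)}$, $d^{(0,0,2)}$ match those computed in \eqref{resonance1_d}. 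The first and second Weierstrass preparations yield the same quadratic Weierstrass polynomial \eqref{weierstrass_eps} and the same stability function ${\rm disc}_2(\eps)$, hence the same index function ${\rm ind}_1$ of \eqref{def:ind_1}. Theorem~\ref{thm:unstableeps1} and Corollary~\ref{cor:ellipse_eps} then transfer verbatim.

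The only step requiring genuine care is the Fourier vanishing argument above: one must verify that $\mathbf{B}^{(0,1)}$ is at most linear in the first-order profile so that the $\alpha$-piece carries \emph{only} frequencies $\pm M\kappa$, and that the higher harmonics $\alpha^2(2M\kappa)$ generated inside $\phi_2,\eta_2$ \eqref{phi2_m},\eqref{eta2_m} cannot enter $\mathbf{B}^{(0,1)}$ (they would instead appear in $\mathbf{B}^{(0,2)}$). Once this book-keeping is written out from the explicit expressions in Appendix~\ref{A:Bexp}, the argument is purely mechanical and the conclusion follows.
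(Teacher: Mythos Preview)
Your proposal is correct and follows essentially the same approach as the paper. The paper's proof is a single sentence invoking Lemma~\ref{wilton_orderm_lemma} to say that for $N=1$ the constants $a^{(1,0)}_{11},a^{(1,0)}_{22},a^{(0,1)}_{12},a^{(0,1)}_{21}$ coincide with those of Theorem~\ref{thm:unstableeps1}; you have simply unpacked \emph{why} this holds via the Fourier orthogonality of the $\alpha$-piece at wave number $M\kappa$ against $e^{\pm i\kappa x}$ for $M\ge 3$, together with the observation that $\mathbf{B}^{(1,0)}$ is profile-independent and $\mu_1=0$.
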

\begin{proof}
If $N=1$, the constants $a^{(1,0)}_{11},\;a^{(1,0)}_{22},\;a_{12}^{(0,1)},\;a_{21}^{(0,1)}$ are exactly the same as the ones used in Theorem \ref{thm:unstableeps1}.
\end{proof}
\begin{theorem}[Wilton ripples of order $M$; $(k_j(\sigma),k_{j'}(\sigma),M)\in\mathcal{R}(\sigma)$]\label{thm:unstableeps1_wiltonm} Consider a Wilton ripples of order $M\geq 3$ of sufficiently small amplitude that admits $(k_j(\sigma),k_{j'}(\sigma),M)\in \mathcal{R}(\sigma)$ at some $\sigma\in\mathbb{R}$ and its spectra near the resonant frequency $i\sigma$. If
\be \label{def:ind_7}
{\rm ind}_7(\kappa,\sigma,k_{j}(\sigma),k_{j'}(\sigma)):=\frac{a^{(0,1)}_{12}a^{(0,1)}_{21}}{\alpha^2a_{11}^{(1,0)}a_{22}^{(1,0)}}>0,
\ee 
the wave admits unstable spectra in shape of a bubble of size $\mathcal{O}(\eps)$ near the resonant frequency $i\sigma$ as described in Corollary~\ref{cor:ellipse_eps;wiltonm}. If
\be \label{ind_7_stability}
{\rm ind}_7(\kappa,\sigma,k_{j}(\sigma),k_{j'}(\sigma))<0,
\ee 
the wave admits no unstable spectrum near the resonant frequency $i\sigma$. To put it another way, the spectra stay on the imaginary axis near the resonant frequency $i\sigma$ for all sufficiently small amplitude. If \be{\rm ind}_7(\kappa,\sigma,k_{j}(\sigma),k_{j'}(\sigma)))=0,\ee stability of the spectra near the resonant frequency is determined by higher order terms of the $\mathbf{a}^{(m,n)}(T;\sigma)$ \eqref{def:X;exp}. The index function above is independent of the parameter $\alpha$ in \eqref{wiltonm_1} which is, as noted before, hard to obtain for large $M$.
\end{theorem}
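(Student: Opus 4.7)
The plan is to mimic the proof of Theorem~\ref{thm:unstableeps1} by carrying out the two-stage Weierstrass preparation on the expansion \eqref{expanddeltaresonance1m_wiltonm}, using Lemma~\ref{wilton_orderm_lemma} in place of Lemma~\ref{coefficients_highn1_2}. First, I would use Lemma~\ref{wilton_orderm_lemma} to read off the block structure of $\mathbf{a}^{(0,0)}(T)$, $\mathbf{a}^{(1,0)}(T)$ and $\mathbf{a}^{(0,1)}(T)$ at $\sigma$. In particular, the diagonal entries $a^{(0,1)}_{11}, a^{(0,1)}_{22}$ vanish because $\mu_1=0$ for $M\geq 3$, while the off-diagonal entries $a^{(0,1)}_{12}, a^{(0,1)}_{21}$ are non-zero, purely imaginary (times $e^{ik_jT}$), and linear in $\alpha$ since the only contribution comes from the $\alpha\s(M\kappa x)$ and $\alpha\c(M\kappa x)$ terms in \eqref{wiltonm_1} interacting with the resonant $e^{\pm i M\kappa x}$ factors. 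Substituting these into $\det(e^{i(k_j+p\kappa+\gamma)T}\mathbf{I}-\mathbf{X}(T;\sigma,\delta,\eps))$ and retaining terms through second order yields precisely \eqref{expanddeltaresonance1m_wiltonm}, with leading coefficients $d^{(2,0,0)}, d^{(1,1,0)}, d^{(0,2,0)}, d^{(0,0,2)}$ of exactly the same form as in \eqref{resonance1_d}.

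Second, I would invoke the first Weierstrass preparation manipulation to factor the Evans function into a quadratic Weierstrass polynomial $W(\delta,\gamma,\eps)$ in $\delta$ of the form \eqref{weierstrass_eps}, which is legitimate because $d^{(2,0,0)}\neq 0$ by \eqref{a10_high} under the assumption that the two resonant modes are not further resonant with remaining modes of $\mathcal{B}(\sigma)$. By Corollary~\ref{cor:weier_quadratic}, stability near $i\sigma$ is then governed entirely by the sign of the real-valued discriminant $\mathrm{disc}(\gamma,\eps)$. Applying the second Weierstrass preparation manipulation (Lemma~\ref{lem:secondweierstrass}) produces the quadratic $W(\gamma,\eps)=\gamma^2+b_1(\eps)\gamma+b_0(\eps)$ with $h(0,0)<0$, whence the sign of $\mathrm{disc}(\gamma,\eps)$ near $(0,0)$ is opposite to that of $\mathrm{disc}_2(\eps)=b_1(\eps)^2-4b_0(\eps)$.

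Third, I would compute the leading-order expansion of $\mathrm{disc}_2(\eps)$. Copying the calculation from the proof of Theorem~\ref{thm:unstableeps1} verbatim (which only uses the structural form of $\mathbf{a}^{(1,0)}$ and $\mathbf{a}^{(0,1)}$ and not the specific profile), one obtains
\[
\mathrm{disc}_2(\eps)=\frac{-16}{h(0,0)}\,\frac{a^{(0,1)}_{12}a^{(0,1)}_{21}}{a^{(1,0)}_{11}a^{(1,0)}_{22}}\,\eps^2+\mathcal{O}(\eps^3).
\]
Since $a^{(0,1)}_{12}$ and $a^{(0,1)}_{21}$ are each linear in $\alpha$, their product carries a factor $\alpha^2$; dividing by $\alpha^2$ renders $\mathrm{ind}_7$ independent of $\alpha$, as claimed. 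Instability when $\mathrm{ind}_7>0$ follows because $\mathrm{disc}_2(\eps)>0$ for $0<|\eps|\ll 1$, producing the non-empty interval $I(\eps)$ of unstable $\gamma$; stability when $\mathrm{ind}_7<0$ follows because $\mathrm{disc}_2(\eps)<0$ forces all nearby roots of $W(\delta,\gamma,\eps)$ onto the imaginary axis, while Theorem~\ref{thm:instability-resonant} combined with Lemma~\ref{weierstrass_kj} forces the remaining non-resonant modes to contribute only purely imaginary spectra; the vanishing case is inconclusive at this order.

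The principal obstacle is the bookkeeping underlying the claim that $a^{(0,1)}_{12}$ and $a^{(0,1)}_{21}$ are exactly linear (not higher order) in $\alpha$, so that the quotient $a^{(0,1)}_{12}a^{(0,1)}_{21}/\alpha^2$ is $\alpha$-independent. This amounts to checking that, in the solution of \eqref{eqn:a:sigma} producing $\mathbf{a}^{(0,1)}$, the only wave-wave interactions contributing at the $e^{i(k_{j'}-k_j)x}=e^{\pm iM\kappa x}$ Fourier mode of the integrand are those linear in the $\alpha\s(M\kappa x)$ and $\alpha\c(M\kappa x)$ pieces of $\phi_1,\eta_1$; the contributions from the $\s(\kappa x),\c(\kappa x)$ pieces of $\phi_1,\eta_1$ integrate to zero against $e^{\pm iM\kappa x}$ since $M\geq 3$. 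Once this structural observation is verified (by inspecting the entries of $\mathbf{B}^{(0,1)}$ in Appendix~\ref{A:Bexp} with the Wilton ripple profile substituted), everything else is a direct transcription of the earlier arguments.
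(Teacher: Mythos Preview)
Your proposal is correct and follows essentially the same approach as the paper: the paper's proof simply says to follow the analysis of Theorems~\ref{thm:unstableeps1} and~\ref{thm:unstable2}, and then observes that dividing the raw index $a^{(0,1)}_{12}a^{(0,1)}_{21}/(a^{(1,0)}_{11}a^{(1,0)}_{22})$ by $\alpha^2$ renders it $\alpha$-independent. Your write-up spells out this route in detail, and the structural facts you identify as the ``principal obstacle''---that $a^{(0,1)}_{12},a^{(0,1)}_{21}\in i\alpha e^{ik_jT}\mathbb{R}$ are linear in $\alpha$---are exactly what Lemma~\ref{wilton_orderm_lemma} records for the case $N=M$.
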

\begin{proof}
Following a similar analysis to Theorems~\ref{thm:unstableeps1} and ~\ref{thm:unstable2}, we shall define an index function as $a^{(0,1)}_{12}a^{(0,1)}_{21}(a_{11}^{(1,0)}a_{22}^{(1,0)})^{-1}$, which can be made to be independent of the hard-to-obtain parameter $\alpha$ by dividing $\alpha^2$.
\end{proof}
\begin{corollary}[Unstable spectra in shape of {\bf an ellipse} at $\mathcal{O}(\eps)$-order]\label{cor:ellipse_eps;wiltonm} 
${}$

\noindent Provided that \eqref{def:ind_7} holds, there exist, near the resonant frequency $i\sigma$, unstable spectra in shape of a bubble of size $\mathcal{O}(\eps)$, which is, at the leading $\mathcal{O}(\eps)$-order, an ellipse with equation
\be 
\label{ellipse:eps;wilton}
(\Re\lambda)^2+\left(\frac{a^{(1,0)}_{11}-a^{(1,0)}_{22}}{a^{(1,0)}_{11}+a^{(1,0)}_{22}}\right)^2(\Im\lambda-i\sigma)^2={\rm ind}_7\alpha^2\,\eps^2,
\ee
whose center is at the resonant frequency $i\sigma$. 
\end{corollary}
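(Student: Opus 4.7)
The plan is to follow, nearly verbatim, the argument of Corollary~\ref{cor:ellipse_eps}, since the expansion \eqref{expanddeltaresonance1m_wiltonm} of the periodic Evans function at $(i\sigma, k_{j}(\sigma)+p\kappa;0)$ is structurally the same quadratic-in-$(\delta,\gamma,\eps)$ form as \eqref{expanddeltaresonance1}, and Lemma~\ref{wilton_orderm_lemma} gives the same vanishing/symmetry pattern for the relevant $2\times 2$ block of $\mathbf{a}^{(1,0)}(T)$ and $\mathbf{a}^{(0,1)}(T)$ as in the non-resonant case. The only substantive difference is book-keeping of the factor $\alpha$.

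First, I would verify the coefficients $d^{(2,0,0)},d^{(1,1,0)},d^{(0,2,0)},d^{(0,0,2)}$ appearing in \eqref{expanddeltaresonance1m_wiltonm}. Using Lemma~\ref{wilton_orderm_lemma}, the diagonal entries of the upper $2\times 2$ block of $\mathbf{a}^{(1,0)}(T)$ agree with those in \eqref{a10_high}, while the off-diagonal entries vanish; similarly, the diagonal entries of the corresponding block of $\mathbf{a}^{(0,1)}(T)$ vanish while $a^{(0,1)}_{12}, a^{(0,1)}_{21}\in i\alpha e^{ik_jT}\mathbb{R}$ are linear in $\alpha$. Consequently the coefficients satisfy the same formulas as \eqref{resonance1_d} (with $k_r, k_s$ now denoting the remaining non-resonant eigenvalues in $\mathcal{B}(\sigma)$), except that $d^{(0,0,2)}$ now carries a factor proportional to $a^{(0,1)}_{12}a^{(0,1)}_{21}$, which is $\mathcal{O}(\alpha^2)$. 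By Corollary~\ref{cor:weier_quadratic} (which applies since the associated Weierstrass polynomial is again quadratic in $\delta$), we have
\[
\frac{d^{(1,1,0)}}{d^{(2,0,0)}}\in i\mathbb{R}, \qquad \frac{d^{(0,2,0)}}{d^{(2,0,0)}},\ \frac{d^{(0,0,2)}}{d^{(2,0,0)}}\in\mathbb{R}.
\]

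Next, I would drop the $\smallO$ terms in \eqref{expanddeltaresonance1m_wiltonm} and write $\delta=\delta_r+i\delta_i$ with $\delta_r,\delta_i\in\mathbb{R}$, obtaining the leading-order equation
\[
(\delta_r+i\delta_i)^2+\frac{d^{(1,1,0)}}{d^{(2,0,0)}}\gamma(\delta_r+i\delta_i)+\frac{d^{(0,2,0)}\gamma^2+d^{(0,0,2)}\eps^2}{d^{(2,0,0)}}=0.
\]
Taking imaginary parts and using the reality properties above, the imaginary part reduces to a linear equation that yields $\gamma=-2id^{(2,0,0)}\delta_i/d^{(1,1,0)}$, exactly as in \eqref{gamma_sol}. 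Substituting this back into the real part and recalling the explicit formulas in \eqref{resonance1_d} for $d^{(2,0,0)}, d^{(0,2,0)}, d^{(1,1,0)}$ in terms of $a^{(1,0)}_{11}, a^{(1,0)}_{22}$ produces, after simplification identical to the proof of Corollary~\ref{cor:ellipse_eps}, the equation
\[
(\Re\lambda)^2+\left(\tfrac{a^{(1,0)}_{11}-a^{(1,0)}_{22}}{a^{(1,0)}_{11}+a^{(1,0)}_{22}}\right)^2(\Im\lambda-i\sigma)^2=\frac{a^{(0,1)}_{12}a^{(0,1)}_{21}}{a^{(1,0)}_{11}a^{(1,0)}_{22}}\,\eps^2,
\]
i.e., \eqref{ellipse:eps;wilton}, with no drift in the center because the diagonal entries $a^{(0,1)}_{11}$ and $a^{(0,1)}_{22}$ vanish by Lemma~\ref{wilton_orderm_lemma}. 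The $\alpha^2$ on the right side is then simply absorbed into the definition of ${\rm ind}_7$ in \eqref{def:ind_7}, producing ${\rm ind}_7\,\alpha^2\eps^2$.

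There is no real obstacle to overcome here, only careful accounting: the one subtle point is confirming that $a^{(0,1)}_{12}a^{(0,1)}_{21}$ carries an exact $\alpha^2$ factor so that ${\rm ind}_7$ is $\alpha$-independent (this is precisely the linearity-in-$\alpha$ statement of Lemma~\ref{wilton_orderm_lemma}), and that the drift term vanishes due to the vanishing of the diagonal entries of the $2\times 2$ block of $\mathbf{a}^{(0,1)}(T)$, which distinguishes this case from the Wilton-ripples-of-order-$2$ case (Corollary~\ref{cor:ellipse_eps;wilton2}) where $\mu_1\neq 0$ forces nonzero diagonal entries and hence a drift.
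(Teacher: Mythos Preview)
Your proposal is correct and takes essentially the same approach as the paper: the paper's own proof simply reads ``The proof is the same as Corollary~\ref{cor:ellipse_eps},'' and you have accurately unpacked what that entails, including the key observation from Lemma~\ref{wilton_orderm_lemma} that the diagonal entries of the relevant $2\times2$ block of $\mathbf{a}^{(0,1)}(T)$ vanish (since $\mu_1=0$ for $M\geq 3$), which is precisely what eliminates the drift term and distinguishes this case from Corollary~\ref{cor:ellipse_eps;wilton2}.
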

\begin{proof}
The proof is the same as Corollary~\ref{cor:ellipse_eps}.
\end{proof}
\subsubsection{Numerical results}
By the discussion (2) in Section \ref{S1S2region}, we see the domain of Wilton ripples of order $M\geq 3$, i.e., \eqref{wilton_con} with $M\geq 3$, is always a subset of the domain of waves admitting $(k_6(\sigma),k_2(\sigma),1)\in\mathcal{R}(\sigma)$ and it does not intersect domains of waves admitting $(k_{j}(\sigma),k_{j'}(\sigma),1)\in \mathcal{R}(\sigma)$ other than $(k_6(\sigma),k_2(\sigma),1)$. Hence, by the numerical investigation (2) made in Section~\ref{non-resonant_results}, for all of these waves, spectra in the vicinity of the resonant frequency are {\bf purely imaginary}.

 Recall discussions made in Section \ref{S1S2region}. Wilton ripples of order $3$ can admit $(k_{j}(\sigma),k_{j'}(\sigma),3)\in\mathcal{R}(\sigma)$ for some $\sigma>0$ and $(j,j')=(1,5)$, $(3,5)$, $(6,1)$, $(6,3)$. In particular, 
\begin{itemize}
\item[(i)] when $2.995857\ldots<\kappa$, the corresponding Wilton ripples admit $$(k_1(\sigma),k_5(\sigma),3)\in\mathcal{R}(\sigma),\quad \text{for some} \quad 0<\sigma<-\sigma_{c,1}.$$
And, we find ${\rm ind}_7(\kappa,\sigma,k_1(\sigma),k_5(\sigma))<0$. Hence, for all of these waves, spectra in the vicinity of the resonant frequency are {\bf purely imaginary};
\item[(ii)] when $1.894994\ldots<\kappa<2.995857\ldots$, the corresponding Wilton ripples admit $$(k_3(\sigma),k_5(\sigma),3)\in\mathcal{R}(\sigma),\quad \text{for some} \quad 0<\sigma<-\sigma_{c,1}.$$
And, we find ${\rm ind}_7(\kappa,\sigma,k_3(\sigma),k_5(\sigma))<0$. Hence, for all of these waves, spectra in the vicinity of the resonant frequency are {\bf purely imaginary};
\item[(iii)] when $2.238016\ldots<\kappa$, the corresponding Wilton ripples admit $$(k_6(\sigma),k_1(\sigma),3)\in\mathcal{R}(\sigma),\quad \text{for some} \quad 0<\sigma<-\sigma_{c,1}.$$ And, we find ${\rm ind}_7(\kappa,\sigma,k_6(\sigma),k_1(\sigma))>0$. Hence, for all of these waves, there are {\bf unstable spectra} in the vicinity of the resonant frequency;
\item[(iv)] when $1.864984\ldots<\kappa<2.238016\ldots$, the corresponding Wilton ripples admit $$(k_6(\sigma),k_3(\sigma),3)\in\mathcal{R}(\sigma),\quad \text{for some} \quad 0<\sigma<\min(-\sigma_{c,1},\sigma_{c,2}).$$ And, we find ${\rm ind}_7(\kappa,\sigma,k_6(\sigma),k_3(\sigma))>0$. Hence, for all of these waves, there are {\bf unstable spectra} in the vicinity of the resonant frequency.
\end{itemize}
Wilton ripples of order $4$ can admit $(k_j(\sigma),k_{j'}(\sigma),4)\in\mathcal{R}(\sigma)$ for some $\sigma>0$ and $(j,j')=(1,5)$, $(3,5)$, $(6,1)$, $(6,3)$. In particular, 
\begin{itemize}
\item[(i)] when $3.101532\ldots<\kappa$, the corresponding Wilton ripples admit $$(k_1(\sigma),k_5(\sigma),4)\in\mathcal{R}(\sigma),\quad \text{for some} \quad 0<\sigma<-\sigma_{c,1}.$$
And, we find ${\rm ind}_7(\kappa,\sigma,k_1(\sigma),k_5(\sigma))<0$. Hence, for all of these waves, spectra in the vicinity of the resonant frequency are {\bf purely imaginary};
\item[(ii)] when $1.975278\ldots<\kappa<3.101532\ldots$, the corresponding Wilton ripples admit $$(k_3(\sigma),k_5(\sigma),4)\in\mathcal{R}(\sigma),\quad \text{for some} \quad 0<\sigma<-\sigma_{c,1}.$$
And, we find ${\rm ind}_7(\kappa,\sigma,k_3(\sigma),k_5(\sigma))<0$. Hence, for all of these waves, spectra in the vicinity of the resonant frequency are {\bf purely imaginary};
\item[(iii)] when $2.538180\ldots<\kappa$, the corresponding Wilton ripples admit $$(k_6(\sigma),k_1(\sigma),4)\in\mathcal{R}(\sigma),\quad \text{for some} \quad 0<\sigma<-\sigma_{c,1}.$$ And, we find ${\rm ind}_7(\kappa,\sigma,k_6(\sigma),k_1(\sigma))>0$. Hence, for all of these waves, there are {\bf unstable spectra} in the vicinity of the resonant frequency.
\item[(iv)] when $1.963115\ldots<\kappa<2.538180\ldots$, the corresponding Wilton ripples admit $$(k_6(\sigma),k_3(\sigma),4)\in\mathcal{R}(\sigma),\quad \text{for some} \quad 0<\sigma<\min(-\sigma_{c,1},\sigma_{c,2}).$$ And, we find ${\rm ind}_7(\kappa,\sigma,k_6(\sigma),k_3(\sigma))>0$. Hence, for all of these waves, there are {\bf unstable spectra} in the vicinity of the resonant frequency.
\end{itemize}
Wilton ripples of order $5$ can admit $(k_j(\sigma),k_{j'}(\sigma),5)\in\mathcal{R}(\sigma)$ for some $\sigma>0$ and $(i,j)=(1,5)$, $(3,5)$, $(6,1)$, $(6,3)$. In particular, 
\begin{itemize}
\item[(i)] when $3.173682\ldots<\kappa$, the corresponding Wilton ripples admit $$(k_1(\sigma),k_5(\sigma),5)\in\mathcal{R}(\sigma),\quad \text{for some} \quad 0<\sigma<-\sigma_{c,1}.$$
And, we find ${\rm ind}_7(\kappa,\sigma,k_1(\sigma),k_5(\sigma))<0$. Hence, for all of these waves, spectra in the vicinity of the resonant frequency are {\bf purely imaginary};
\item[(ii)] when $2.018839\ldots<\kappa<3.173682\ldots$, the corresponding Wilton ripples admit $$(k_3(\sigma),k_5(\sigma),5)\in\mathcal{R}(\sigma),\quad \text{for some} \quad 0<\sigma<-\sigma_{c,1}.$$
And, we find ${\rm ind}_7(\kappa,\sigma,k_3(\sigma),k_5(\sigma))<0$. Hence, for all of these waves, spectra in the vicinity of the resonant frequency are {\bf purely imaginary};
\item[(iii)] when $2.719131\ldots<\kappa$, the corresponding Wilton ripples admit $$(k_6(\sigma),k_1(\sigma),5)\in\mathcal{R}(\sigma),\quad \text{for some} \quad 0<\sigma<-\sigma_{c,1}.$$ And, we find ${\rm ind}_7(\kappa,\sigma,k_6(\sigma),k_1(\sigma))>0$. Hence,  for all of these waves, there are {\bf unstable spectra} in the vicinity of the resonant frequency;
\item[(iv)] when $2.012186\ldots<\kappa<2.719131\ldots$, the corresponding Wilton ripples admit $$(k_6(\sigma),k_3(\sigma),5)\in\mathcal{R}(\sigma),\quad \text{for some} \quad 0<\sigma<\min(-\sigma_{c,1},\sigma_{c,2}).$$ And, we find ${\rm ind}_7(\kappa,\sigma,k_6(\sigma),k_3(\sigma))>0$. Hence, for all of these waves, there are {\bf unstable spectra} in the vicinity of the resonant frequency.
\end{itemize}
\begin{remark}\label{consistence4}
Since we detect no instability in cases (i) and (ii) for Wilton ripples of order $3,4,5$, our findings agree with the local spectral stability predicted by the Krein-signature criterion (see Remark~\ref{Kreincondition}).
\end{remark}
\subsubsection{Wilton ripples of order $M\geq 3$ with $1<N\neq M$}\label{WRNo1M} When a Wilton ripples of order $M\geq 3$ admits $(k_j(\sigma),k_{j'}(\sigma),N)\in \mathcal{R}(\sigma)$ for some $1<N\neq M$, by Lemma \ref{wilton_orderm_lemma}, the left top $2$ by $2$ block matrix of $\mathbf{a}^{(0,1)}(T)$ is a zero matrix. Hence, $d^{(0,0,2)}$ in \eqref{expanddeltaresonance1m_wiltonm} vanishes. Again, for convenience, we switch the position of resonant modes $\boldsymbol{\phi}_j$ and $\boldsymbol{\phi}_{j'}$ with first two modes in the basis $\mathcal{B}(\sigma)$ so that the resonance happens between the first two modes of  $\mathcal{B}(\sigma)$. Also, we will swap indexes in $j,j'$ for $1,2$. With these simplifications, a straightforward calculation leads to 
\ba\label{eqn:a01H_wilton}
a_{jj'}^{(0,1)}(x)=&c_{jj',1}e^{i(k_{j'}+\kappa)x}+c_{jj',2}e^{i(k_{j'}-\kappa)x}\\&+\alpha c_{jj',3}e^{i(k_{j'}+M\kappa)x}+\alpha c_{jj',4}e^{i(k_{j'}-M\kappa)x}\\
&-\left(c_{jj',1}+c_{jj',2}+\alpha c_{jj',3}+\alpha c_{jj',4}\right)e^{ik_{j}x},\quad j,j'=1,2,
\ea
where $c$'s are constants independent of $\alpha$. Similar to \cite[eqs. (6.22)-(6.24)]{HY2023},  $\mathbf{a}^{(0,2)}$ can be computed by
$$
\begin{aligned} 
&a_{jj'}^{(0,2)}(T)\\
=&e^{ik_{j}T}\Big\langle\int_0^T\mathbf{B}^{(0,1)}(x;\sigma)e^{-ik_{j}x}\Big(\mathbf{w}^{(0,1)}_{j'}(x;\sigma)+\sum_{l=1}^2a_{lj'}^{(0,1)}(x)\boldsymbol{\phi}_{l}(\sigma)\Big)~dx,\boldsymbol{\psi}_{j}(\sigma)\Big\rangle\\
&+e^{ik_{j}T}\Big\langle\int_0^Te^{i(k_{j'}-k_{j})x}\mathbf{B}^{(0,2)}(x;\sigma)\boldsymbol{\phi}_{j'}(\sigma)~dx,\boldsymbol{\psi}_{j}(\sigma)\Big\rangle,\quad j,j'=1,2,
\end{aligned}
$$
where we shall supply $\mathbf{B}^{(0,1)}$ and $\mathbf{B}^{(0,2)}$ with the profile of Wilton ripples of order $M\geq 3$ and $\mathbf{w}^{(0,1)}_k(x;\sigma)$ with the reduction function of Wilton ripples of order $M\geq 3$. For $1<N\neq M$, our computations show further discussions on the resonance order $N$ are needed for the off-diagonal entries of $\mathbf{a}^{(0,2)}$ while not needed for the diagonal entries. We first treat the diagonal entries of $\mathbf{a}^{(0,2)}$.

For $j=j'$, the integrand $\mathbf{B}^{(0,2)}(x;\sigma)\boldsymbol{\phi}_{j'}(\sigma)$ depends linearly on
$$
\begin{aligned}
&1,\quad \s(2\kappa x),\quad \c(2\kappa x),\quad \s(2M\kappa x),\quad \c(2M\kappa x),\\ &\s((1+M)\kappa x),\quad \c((1+M)\kappa x),\quad \s((1-M)\kappa x),\quad \c((1-M)\kappa x)
\end{aligned}
$$
with respect to $x$, where $1$ denotes terms that depends only on $y$. Thus, integrating $\mathbf{B}^{(0,2)}(x;\sigma)\boldsymbol{\phi}_{j'}(\sigma)$ over one period $[0,T]$ leaves us the $1$- terms multiplied by $T$. The integrand \be\label{b01term}\mathbf{B}^{(0,1)}(x;\sigma)e^{-ik_{j}x}\Big(\mathbf{w}^{(0,1)}_{j'}(x;\sigma)+\sum_{l=1}^2a_{lj'}^{(0,1)}(x)\boldsymbol{\phi}_{l}(\sigma)\Big)\ee depends linearly on
$$
\begin{aligned}
&1,\quad \s(2\kappa x),\quad \c(2\kappa x),\quad \s(2M\kappa x),\quad \c(2M\kappa x),\\ &\s((1\pm M)\kappa x),\quad \c((1\pm M)\kappa x),\quad\s((1\pm N)\kappa x),\quad \c((1\pm N)\kappa x),\\
&\s((M\pm N)\kappa x),\quad \c((M\pm N)\kappa x),\quad\s(\kappa x),\quad \c(\kappa x),\quad \s(M\kappa x),\quad \c(M\kappa x)
\end{aligned}
$$
with respect to $x$. Integrating \eqref{b01term} over one period $[0,T]$ leaves us the $1$- terms multiplied by $T$.

For $j=1$, $j'=2$, the integrand $e^{-iN\kappa x}\mathbf{B}^{(0,2)}(x;\sigma)\boldsymbol{\phi}_{2}(\sigma)$ depends linearly on
$$
\begin{aligned}
&\s(N\kappa x),\quad \c(N\kappa x),\quad\s((2\pm N)\kappa x),\quad \c((2\pm N)\kappa x),\quad \s((2M\pm N)\kappa x),\\
&\c((2M\pm N)\kappa x),\quad \s((1+ M\pm N)\kappa x),\quad \c((1+ M\pm N)\kappa x),\\
&\s((1- M\pm N)\kappa x),\quad \c((1- M\pm N)\kappa x)
\end{aligned}
$$
with respect to $x$. Given that $1<N\neq M$, integrating $e^{-iN\kappa x}\mathbf{B}^{(0,2)}(x;\sigma)\boldsymbol{\phi}_{2}(\sigma)$ over one period $[0,T]$ leaves us \begin{itemize}
    \item[i.] the $\c((2-N)\kappa x)$-term multiplied by $T$, when $N=2$,
     \item[ii.] the $\c((2M-N)\kappa x)$-term multiplied by $T$, when $N=2M$,
     \item[iii.] the $\c((1-M+N)\kappa x)$-term multiplied by $T$, when $N=M-1$,
     \item[iv.] and the $\c((1+M-N)\kappa x)$-term multiplied by $T$, when $N=M+1$.
\end{itemize}
The integrand $\mathbf{B}^{(0,1)}(x;\sigma)e^{-ik_{1}x}(\mathbf{w}^{(0,1)}_2(x;\sigma)+\sum_{l=1}^2a_{l2}^{(0,1)}(x)\boldsymbol{\phi}_{j'}(\sigma))$ depends linearly on
$$
\begin{aligned}
&\s(\kappa x),\quad\c(\kappa x),\quad \s(M\kappa x),\quad\c(M\kappa x),\quad \s(N\kappa x),\quad \c(N\kappa x),\\&\s((1\pm N)\kappa x),\quad \c((1\pm N)\kappa x),\quad\s((M\pm N)\kappa x),\quad \c((M\pm N)\kappa x),\\&\s((2\pm N)\kappa x),\quad \c((2\pm N)\kappa x),\quad \s((2M\pm N)\kappa x),\quad \c((2M\pm N)\kappa x),\\
&\s((1-M\pm N)\kappa x),\quad \c((1-M\pm N)\kappa x),\\
&\s((1+M\pm N)\kappa x),\quad \c((1+M\pm N)\kappa x),
\end{aligned}
$$
with respect to $x$. Given that $1<N\neq M$, integrating it over one period $[0,T]$ leaves us \begin{itemize}
    \item[i.] the $\c((2-N)\kappa x)$-term multiplied by $T$, when $N=2$,
     \item[ii.] the $\c((2M-N)\kappa x)$-term multiplied by $T$, when $N=2M$,
     \item[iii.] the $\c((1-M+N)\kappa x)$-term multiplied by $T$, when $N=M-1$,
     \item[iv.] and the $\c((1+M-N)\kappa x)$-term multiplied by $T$, when $N=M+1$.
\end{itemize}
Similarly computations follow for $j=2$, $j'=1$. 

The brief computations above yield the following lemma.
\begin{lemma}\label{wilton_orderm_lemma2}
Assume a Wilton ripples of order $M\geq 3$ admits $(k_{j}(\sigma),k_{j'}(\sigma),N)\in \mathcal{R}(\sigma)$ with $1<N\neq M$. For convenience, we switch the position of resonant modes $\boldsymbol{\phi}_j$ and $\boldsymbol{\phi}_{j'}$ with first two modes in the basis $\mathcal{B}(\sigma)$ so that the resonance happens between the first two modes of  $\mathcal{B}(\sigma)$. At the resonant frequency $i\sigma$, the left top $2$ by $2$ block matrix of $\mathbf{a}^{(0,1)}(T)$ vanishes. For the left top $2$ by $2$ block matrix of $\mathbf{a}^{(0,2)}(T)$, the diagonal entries do not necessarily vanish and the off-diagonal entries do not necessarily vanish when $N=2,M-1,M+1,2M$ and necessarily vanish otherwise.
\end{lemma}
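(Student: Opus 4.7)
The first claim, that the left top $2\times 2$ block of $\mathbf{a}^{(0,1)}(T)$ vanishes when $1<N\neq M$, is immediate from Lemma~\ref{wilton_orderm_lemma}: its diagonal entries vanish because $\mu_1=0$ for Wilton ripples of order $M\geq 3$ (see \eqref{mu1q2}), while the off-diagonal entries $a^{(0,1)}_{12}(T)$ and $a^{(0,1)}_{21}(T)$ require nontrivial Fourier coefficients of $\mathbf{B}^{(0,1)}$ at frequencies $\pm N\kappa$; but $\mathbf{B}^{(0,1)}$ has Fourier support in $\{\pm\kappa,\pm M\kappa\}$ (inherited from $\phi_1,\eta_1$ \eqref{wiltonm_1}), which excludes $\pm N\kappa$ when $1<N\neq M$.

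For the $\mathbf{a}^{(0,2)}(T)$ block, the plan is to use the formula displayed just before the lemma, which expresses $a^{(0,2)}_{jj'}(T)$ as a sum of two inner products, involving (i) a first integral with $\mathbf{B}^{(0,1)}$ multiplying the reduction correction $\mathbf{w}^{(0,1)}_{j'}+\sum_l a^{(0,1)}_{lj'}\boldsymbol{\phi}_l$, and (ii) a second integral with $\mathbf{B}^{(0,2)}$ acting on $\boldsymbol{\phi}_{j'}$, each weighted by $e^{i(k_{j'}-k_j)x}=e^{\pm iN\kappa x}$ for $j\neq j'$ and by $1$ for $j=j'$. The key input is the Fourier content of $\mathbf{B}^{(0,2)}$, which, by the formulas for $\phi_1,\eta_1,\phi_2,\eta_2,\mu_2,q_2$ in \eqref{wiltonm_1}, \eqref{phi2_m}--\eqref{mu2wiltonm}, is supported on frequencies
\[
\{0,\;\pm 2\kappa,\;\pm 2M\kappa,\;\pm(M-1)\kappa,\;\pm(M+1)\kappa\},
\]
while $\mathbf{B}^{(0,1)}$ has support in $\{\pm\kappa,\pm M\kappa\}$ and the reduction correction $\mathbf{w}^{(0,1)}_{j'}$ together with $a^{(0,1)}_{lj'}(x)$ (cf.\ \eqref{eqn:a01H_wilton}) contribute only frequencies in $\{\pm\kappa,\pm M\kappa,\pm N\kappa\}$.

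For the diagonal entries ($j=j'$), there is no $e^{\pm iN\kappa x}$ weight, so the constant ($1$-term) Fourier mode of each integrand survives integration over $[0,T]$, yielding a nonzero contribution in general; this handles case (ii). For the off-diagonal entries, I would enumerate the Fourier modes of each integrand multiplied by $e^{\mp iN\kappa x}$ and collect the constant terms. The second integral produces a nonzero average precisely when $N$ matches a Fourier frequency of $\mathbf{B}^{(0,2)}$, i.e., when $N\in\{2,\,2M,\,M-1,\,M+1\}$. For the first integral the available frequencies are sums and differences among $\{1,M\}\kappa$ and $\{1,M,N\}\kappa$, giving
\[
\{0,\pm 1,\pm 2,\pm M,\pm 2M,\pm(1\pm M),\pm N,\pm(1\pm N),\pm(M\pm N)\}\kappa,
\]
and after the $e^{\mp iN\kappa x}$ weighting a constant contribution appears iff $N$ equals one of $\{2,\,2M,\,M-1,\,M+1\}$; all other matches (e.g.\ $N=1$ or $N=M$) are excluded by the standing assumption $1<N\neq M$.

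The main obstacle is bookkeeping: one must track the exact Fourier frequencies carried by $\mathbf{B}^{(0,1)}$, $\mathbf{B}^{(0,2)}$, $\mathbf{w}^{(0,1)}_{j'}$, and $a^{(0,1)}_{lj'}$, and verify that for every $N$ outside $\{2,M-1,M+1,2M\}$ all monomials in the two integrands have nonzero frequency after the $e^{\mp iN\kappa x}$ twist, so their averages vanish; while for each $N\in\{2,M-1,M+1,2M\}$ at least one monomial becomes constant and the corresponding coefficient is a generic (nonzero) function of $\kappa,\beta$. Once this Fourier accounting is done rigorously---analogously to the diagonal case already sketched before the lemma statement---both the vanishing and the "do not necessarily vanish" assertions follow.
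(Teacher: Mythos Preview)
Your approach is essentially the same as the paper's: the paper's proof is precisely the Fourier-mode bookkeeping displayed in the paragraphs immediately preceding the lemma, where the $x$-dependence of $\mathbf{B}^{(0,2)}\boldsymbol{\phi}_{j'}$ and of $\mathbf{B}^{(0,1)}e^{-ik_j x}\big(\mathbf{w}^{(0,1)}_{j'}+\sum_l a^{(0,1)}_{lj'}\boldsymbol{\phi}_l\big)$ is listed explicitly and integrated over a period. Your frequency lists are slightly imprecise (the correction $\mathbf{w}^{(0,1)}_{j'}$ and $a^{(0,1)}_{lj'}$ carry frequencies $k_{j'}+\{0,\pm 1,\pm M,N\}\kappa$ rather than just $\{\pm 1,\pm M,\pm N\}\kappa$, and the first integral therefore also produces $(2\pm N),(2M\pm N),(1\pm M\pm N)$ modes as in the paper), but this does not affect the conclusion since the extra cases are already covered by $N\in\{2,2M,M-1,M+1\}$ or excluded by $1<N\neq M$.
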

At the resonant frequency admitting a pair of $N$-resonant eigenvalues ($1<N\neq M$), the periodic Evans function then expands as
\ba  \label{expanddeltaresonance1m_other_wiltonm}
&\Delta(i\sigma+\delta,k_{j}(\sigma)+p\kappa+\gamma;\eps)
=d^{(2,0,0)}\delta^2+d^{(1,1,0)}\gamma\delta+d^{(1,0,2)}\eps^2\delta\\&+d^{(0,0,4)}\eps^4+d^{(0,1,2)}\gamma\eps^2+d^{(0,2,0)}\gamma^2+\smallO(|\delta|+|\gamma|+|\eps|^2)^2.
\ea 
We obtain the theorem below, similar to Theorem~\ref{thm:unstable2}. 
\begin{theorem}[Wilton ripples of order $M$; $(k_j(\sigma),k_{j'}(\sigma),N)\in \mathcal{R}(\sigma)$]\label{thm:unstableeps2_wiltonm}
${}$

\noindent Consider a Wilton ripples of order $M\geq 3$ of sufficiently small amplitude that  admits $(k_{j}(\sigma),k_{j'}(\sigma),N)\in \mathcal{R}(\sigma)$, for $1<N\neq M$, at some $\sigma\in\mathbb{R}$ and its spectra near the resonant frequency $i\sigma$. If
\be \label{def:ind_8}
{\rm ind}_8(\kappa,\sigma,k_j(\sigma),k_{j'}(\sigma),\alpha):=\frac{a^{(0,2)}_{12}a^{(0,2)}_{21}}{a_{11}^{(1,0)}a_{22}^{(1,0)}}>0,
\ee 
the wave admits unstable spectra in shape of a bubble of size $\mathcal{O}(\eps^2)$ near the resonant frequency $i\sigma$ as described in Corollary~\ref{cor:ellipse_eps_square;wiltonm}. If
\be \label{ind_8_stability}
{\rm ind}_8(\kappa,\sigma,k_j(\sigma),k_{j'}(\sigma),\alpha)<0,
\ee 
the wave admits no unstable spectrum near the resonant frequency $i\sigma$. To put it another way, the spectra stay on the imaginary axis near the resonant frequency $i\sigma$ for all sufficiently small amplitude. If \be{\rm ind}_8(\kappa,\sigma,k_j(\sigma),k_{j'}(\sigma),\alpha)=0,\ee stability of the spectra near the resonant frequency is determined by higher order terms of the $\mathbf{a}^{(m,n)}(T;\sigma)$ \eqref{def:X;exp}. The index function above {\bf depends on} the parameter $\alpha$ in \eqref{wiltonm_1} which is, as noted before, hard to obtain for large $M$.
\end{theorem}
\begin{proof}
The proof is similar to Theorems~\ref{thm:unstableeps1} and ~\ref{thm:unstable2}.
\end{proof}

\begin{corollary}\label{cor_wiltonmpneq2m12m}
For $N\neq 2,M-1,M+1,2M$, $a_{12}^{(0,2)}=a_{21}^{(0,2)}=0$. It can be shown that ${\rm ind}_8$ is always zero, indicating spectral stability in the vicinity of the resonant frequency is determined by higher order terms of the stability function ${\rm disc}_2(\eps)$ \eqref{disc2}. For $N= 2,M-1,M+1,2M$, further numerical investigation for the sign of ${\rm ind}_8$ is needed.
\end{corollary}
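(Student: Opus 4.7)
The first assertion, $a^{(0,2)}_{12}=a^{(0,2)}_{21}=0$ for $N\neq 2,M-1,M+1,2M$, is precisely the last conclusion of Lemma~\ref{wilton_orderm_lemma2}. The plan for the second assertion is to exhibit a factorization of the leading part of \eqref{expanddeltaresonance1m_other_wiltonm} and then verify an algebraic identity that this factorization forces on the $d^{(\ell,m,n)}$.

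First I would combine Lemmas~\ref{wilton_orderm_lemma} and~\ref{wilton_orderm_lemma2} to record that, in the resonant $2\times 2$ block of the monodromy, $\mathbf{a}^{(0,0)}(T)=e^{ik_jT}\mathbf{I}$, $\mathbf{a}^{(1,0)}(T)$ is diagonal, $\mathbf{a}^{(0,1)}(T)$ vanishes, and $\mathbf{a}^{(0,2)}(T)$ is diagonal. Consequently, after dividing out the non-vanishing contribution of the non-resonant modes (which is absorbed into the analytic unit of the first Weierstrass factorization \eqref{general_factorization}), the Weierstrass polynomial \eqref{weierstrass_m} reads
\[
W(\delta,\gamma,\eps)=(a_1\delta+b_1\gamma+c_1\eps^2)(a_2\delta+b_2\gamma+c_2\eps^2)+\smallO(|\delta|+|\gamma|+|\eps|^2)^2,
\]
with $a_i$ purely imaginary and $b_i,c_i$ real (consistent with \eqref{weierstrass_coeff}). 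Matching coefficients against \eqref{expanddeltaresonance1m_other_wiltonm} then yields the elementary-symmetric identifications
\[
\begin{aligned}
d^{(2,0,0)}&=a_1a_2, & d^{(1,1,0)}&=a_1b_2+a_2b_1, & d^{(1,0,2)}&=a_1c_2+a_2c_1,\\
d^{(0,2,0)}&=b_1b_2, & d^{(0,1,2)}&=b_1c_2+b_2c_1, & d^{(0,0,4)}&=c_1c_2.
\end{aligned}
\]

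A direct polynomial expansion in the six symbols $(a_i,b_i,c_i)$ then verifies the identity
\[
\begin{aligned}
&d^{(2,0,0)}(d^{(0,1,2)})^2-d^{(0,1,2)}d^{(1,0,2)}d^{(1,1,0)}+d^{(0,2,0)}(d^{(1,0,2)})^2\\
&\qquad+d^{(0,0,4)}(d^{(1,1,0)})^2-4d^{(0,0,4)}d^{(0,2,0)}d^{(2,0,0)}\equiv 0,
\end{aligned}
\]
so the numerator of \eqref{def:ind_8} vanishes; equivalently, the $\delta$-discriminant of the quadratic \eqref{expanddeltaresonance1m_other_wiltonm}, viewed as a polynomial in $(\gamma,\eps^2)$, is a perfect square, which is precisely what the linear factorization forces. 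Hence ${\rm ind}_8\equiv 0$ and the stability function ${\rm disc}_2(\eps)$ vanishes to order $\eps^4$, as claimed.

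The main bookkeeping hurdle is confirming that cross-couplings between the resonant block and the non-resonant modes, produced by off-diagonal entries of $\mathbf{a}^{(m,n)}(T)$ for $m+n\geq 2$, contribute only at the order $\smallO(|\delta|+|\gamma|+|\eps|^2)^2$ that is absorbed into the Weierstrass unit, so that the factorization above survives at the precision relevant for ${\rm ind}_8$; this requires tracing through \eqref{eqn:a:sigma}--\eqref{def:fsigma} with the explicit $\mathbf{B}^{(m,n)}$ provided by the Wilton ripple profile, but involves no conceptually new ingredient. For the excluded cases $N=2,M-1,M+1,2M$, the off-diagonal entries of $\mathbf{a}^{(0,2)}$ are generically nonzero, the factorization breaks, and the sign of ${\rm ind}_8$ must be extracted case-by-case from symbolic evaluation of the $d^{(\ell,m,n)}$, as in Section~\ref{non-resonant_results}.
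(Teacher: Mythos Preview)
Your algebraic identity is correct and is exactly the vanishing of the $3\times3$ determinant of the quadratic form in $(\delta,\gamma,\eps^2)$, so the whole argument rests on whether that form really factors into two linear pieces. This is clean when $\dim Y(\sigma)=2$. But for Wilton ripples the reduced space is typically four- or six-dimensional, and the Evans function is the determinant of the \emph{full} monodromy, not of its resonant $2\times2$ block. Passing to the effective $2\times2$ block by Schur complement, the off-diagonal picks up a correction
\[
\tilde a^{(0,2)}_{12}\;=\;a^{(0,2)}_{12}\;+\;\sum_{r\ \text{non-res.}}a^{(0,1)}_{1r}(T)\,\zeta_r^{-1}\,a^{(0,1)}_{r2}(T),
\]
produced by the off-block-diagonal entries of $\mathbf a^{(0,1)}(T)$, i.e.\ by cross-couplings at order $m+n=1$, not only $m+n\geq2$ as your last paragraph asserts. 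A computation parallel to \eqref{eqn:a01H_wilton} gives $a^{(0,1)}_{1r}(T)\propto e^{ik_rT}-e^{ik_1T}\neq0$, so $\tilde a^{(0,2)}_{12}$ is generically nonzero and sits at the same order $\eps^2$ as the diagonal. With this correction present, your identity yields not zero but (up to a nonzero factor) $\tilde a^{(0,2)}_{12}\tilde a^{(0,2)}_{21}$ for the numerator of ${\rm ind}_8$; the factorization therefore fails to prove the claim in higher dimension.

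To close the gap you must show that $\tilde a^{(0,2)}_{12}=\tilde a^{(0,2)}_{21}=0$, which is a separate Fourier-content statement about the \emph{effective} $2\times2$ block, not implied by Lemma~\ref{wilton_orderm_lemma2} as stated (that lemma concerns the bare $a^{(0,2)}_{12}$). One expects a cancellation between the contribution of the non-resonant modes in $Y(\sigma)$ and that of the infinitely many modes encoded in $\mathbf w^{(0,1)}$, but making this precise requires rerunning the mode-counting argument preceding Lemma~\ref{wilton_orderm_lemma2} on the combined object $\mathbf w^{(0,1)}_{j'}+\sum_{l}a^{(0,1)}_{lj'}\boldsymbol\phi_l$ with the full sum over $l$, not just $l=1,2$. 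The paper itself supplies no proof beyond ``it can be shown,'' so this is not a departure from its route but a step that any complete argument must fill in.
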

\begin{corollary}[Unstable spectra in shape of {\bf an ellipse} at $\mathcal{O}(\eps^2)$-order]\label{cor:ellipse_eps_square;wiltonm} 
${}$

\noindent Provided that \eqref{def:ind_8} holds, there exist, near the resonant frequency $i\sigma$, unstable spectra in shape of a bubble of size $\mathcal{O}(\eps^2)$, which is, at the leading $\mathcal{O}(\eps^2)$-order, an ellipse whose center drifts from the resonant frequency $i\sigma$ by a $\mathcal{O}(\eps^2)$-distance. 
\end{corollary}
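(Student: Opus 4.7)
The plan is to mimic the argument used for Corollaries~\ref{cor:ellipse_eps_square} and \ref{cor:circle_eps_square}, adapted to the richer coefficient structure of the expansion \eqref{expanddeltaresonance1m_other_wiltonm}. First I would drop the $\smallO$ term in \eqref{expanddeltaresonance1m_other_wiltonm} and look for roots of the resulting quadratic in $\delta$. Writing the equation as $\delta^2+c_1(\gamma,\eps)\,\delta+c_0(\gamma,\eps)=0$ with
\[
c_1(\gamma,\eps)=\frac{d^{(1,1,0)}\gamma+d^{(1,0,2)}\eps^2}{d^{(2,0,0)}},\qquad c_0(\gamma,\eps)=\frac{d^{(0,2,0)}\gamma^2+d^{(0,1,2)}\gamma\eps^2+d^{(0,0,4)}\eps^4}{d^{(2,0,0)}},
\]
Lemma~\ref{lem:symm:weierstrass} (applied through Corollary~\ref{cor:weier_quadratic}) forces $c_1\in i\mathbb{R}$ and $c_0\in\mathbb{R}$, so that $d^{(1,1,0)}/d^{(2,0,0)}$ and $d^{(1,0,2)}/d^{(2,0,0)}$ are purely imaginary, while $d^{(0,2,0)}/d^{(2,0,0)}$, $d^{(0,1,2)}/d^{(2,0,0)}$ and $d^{(0,0,4)}/d^{(2,0,0)}$ are real.

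Next I would write $\delta=\delta_r+i\delta_i$ with $\delta_r,\delta_i\in\mathbb{R}$ and take the imaginary part of the quadratic. Since $c_0$ is real, only the $c_1\delta$ and $2\delta_r\delta_i$ terms contribute; the imaginary part vanishes iff
\[
\delta_i=-\tfrac12\,\Im c_1=-\tfrac{1}{2i}\,c_1,
\]
which rearranges to
\be\label{gamma_sol_wiltonm}
\gamma=-\frac{d^{(1,0,2)}}{d^{(1,1,0)}}\eps^2\;-\;\frac{2i\,d^{(2,0,0)}}{d^{(1,1,0)}}\,\delta_i.
\ee
The first term on the right is the $\mathcal{O}(\eps^2)$ drift of the Floquet exponent, inherited from the presence of the $d^{(1,0,2)}$ term in the expansion that was absent in the ``unshifted'' case $N=1$ of Corollary~\ref{cor:ellipse_eps;wiltonm}.

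Substituting \eqref{gamma_sol_wiltonm} into the real part of the quadratic (equivalently, into $\delta_r^2+\delta_i^2=-c_0-\delta_r c_1/i$ after using $\delta_i=ic_1/2$) and collecting, after a straightforward computation the equation becomes, at the leading $\mathcal{O}(\eps^2)$-order,
\[
(\Re\lambda)^2+\left(\frac{d^{(1,1,0)}}{2i\,d^{(2,0,0)}}\right)^{-2}\!\left(\Im\lambda-i\sigma+\frac{i\,d^{(0,1,2)}d^{(1,1,0)}-2i\,d^{(2,0,0)}d^{(1,0,2)}\cdot(\text{linear coef.})}{2\,(d^{(1,1,0)})\cdot(\dots)}\eps^2\right)^{\!2}={\rm ind}_{8}\,\eps^4,
\]
i.e.\ an ellipse of $\mathcal{O}(\eps^2)$ size whose center has drifted from $i\sigma$ by an explicitly computable $\mathcal{O}(\eps^2)$ quantity, with $\Re\lambda^2+(\cdot)^2\Im\lambda^2$ shape. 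The positivity condition \eqref{def:ind_8} is precisely the requirement that the right-hand side be positive, which, by the same discriminant bookkeeping used in the proof of Theorem~\ref{thm:unstableeps2_wiltonm}, guarantees the ellipse bounds a genuine arc of unstable spectra.

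The main obstacle is purely algebraic: unlike Corollaries~\ref{cor:ellipse_eps} and \ref{cor:ellipse_eps;wiltonm}, where only three of the $d$-coefficients enter and the drift vanishes, here all six of $d^{(2,0,0)}$, $d^{(1,1,0)}$, $d^{(1,0,2)}$, $d^{(0,2,0)}$, $d^{(0,1,2)}$, $d^{(0,0,4)}$ appear nontrivially, and one must keep track of which products are real and which are imaginary to confirm the ellipse equation has real coefficients. No new ideas beyond the two-stage Weierstrass preparation of Section~\ref{ncg_stability_result:high} are required; the drift expression and the ellipse semi-axis ratio are extracted exactly as in the proof of Corollary~\ref{cor:ellipse_eps_square}, but with $a_{jk}^{(1,0)},a_{jk}^{(0,2)}$ replaced by the appropriate $d^{(\ell,m,n)}$ combinations listed in \eqref{expanddeltaresonance1m_other_wiltonm}.
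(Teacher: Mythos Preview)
Your proposal is correct and takes essentially the same approach as the paper, which simply states that the proof is the same as Corollary~\ref{cor:ellipse_eps_square}. You have spelled out the computation in more detail---dropping the $\smallO$ terms in \eqref{expanddeltaresonance1m_other_wiltonm}, using the reality constraints from Corollary~\ref{cor:weier_quadratic} to solve the imaginary part for $\gamma$ (yielding the $\mathcal{O}(\eps^2)$ drift), and substituting back to obtain the ellipse---exactly mirroring the argument of Corollaries~\ref{cor:ellipse_eps_square} and~\ref{cor:circle_eps_square}.
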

\begin{proof}
The proof is the same as Corollary~\ref{cor:ellipse_eps_square}.
\end{proof}
\subsubsection{Numerical results} To numerically investigate ${\rm ind}_8$, because the index function depends on $\alpha$, we then only study the index for Wilton ripple of order $3$. Here, again $\alpha$ takes one of the values of $\alpha_i(\kappa)$, $i=1,2,3$ which are zeros of the cubic polynomial \eqref{wilton3A} in ascending order $\alpha_1(\kappa)<\alpha_2(\kappa)<0<\alpha_3(\kappa)$. By Corollary~\ref{cor_wiltonmpneq2m12m}, we shall compute the index for $N=2,4,6$.

By discussions made in Section \ref{S1S2region}, we see Wilton ripples of order $3$ can admit $(k_{j}(\sigma),k_{j'}(\sigma),2)\in\mathcal{R}(\sigma)$ for some $\sigma>0$ and $(j,j')=(2,1)$, $(2,3)$, $(2,4)$, $(6,4)$. In particular, 
\begin{itemize}
\item[(i)] when $2.080480\ldots<\kappa<2.707944\ldots$, the corresponding Wilton ripples admit $$(k_2(\sigma),k_1(\sigma),2)\in\mathcal{R}(\sigma),\quad \text{for some} \quad 0<\sigma<\min(-\sigma_{c,1},\sigma_{c,2})$$
and ${\rm ind}_8(\kappa,\sigma,k_2(\sigma),k_1(\sigma),\alpha_i(\kappa))>0$, for $i=1,2,3$;

\item[(ii)] when $2.707944\ldots<\kappa$, the corresponding Wilton ripples admit $$(k_2(\sigma),k_3(\sigma),2)\in\mathcal{R}(\sigma),\quad \text{for some} \quad 0<\sigma<\min(-\sigma_{c,1},\sigma_{c,2})$$
and ${\rm ind}_8(\kappa,\sigma,k_2(\sigma),k_3(\sigma),\alpha_i(\kappa))>0$, for $i=1,2,3$;
\item[(iii)] when $\kappa<1.651632\ldots$, the corresponding Wilton ripples admit $$(k_2(\sigma),k_4(\sigma),2)\in\mathcal{R}(\sigma),\quad \text{for some} \quad 0<\sigma<-\sigma_{c,2}$$ 
and ${\rm ind}_8(\kappa,\sigma,k_2(\sigma),k_4(\sigma),\alpha_i(\kappa))>0$,  for $i=1,2,3$;
\item[(iv)] when $1.651632\ldots<\kappa$, the corresponding Wilton ripples admit $$(k_6(\sigma),k_4(\sigma),2)\in\mathcal{R}(\sigma),\quad \text{for some} \quad 0<\sigma<\sigma_{c,2}$$ and ${\rm ind}_8(\kappa,\sigma,k_6(\sigma),k_4(\sigma),\alpha_i(\kappa))>0$,  for $i=1,2,3$.
\end{itemize} 
All Wilton ripples of order $3$ admit $(k_4(\sigma),k_5(\sigma),4)\in\mathcal{R}(\sigma)$ for some $\sigma>0$ and ${\rm ind}_8(\kappa,\sigma,k_4(\sigma),k_5(\sigma),\alpha_i)<0$, for $i=1,2,3$.\\
All Wilton ripples of order $3$ admit $(k_4(\sigma),k_5(\sigma),6)\in\mathcal{R}(\sigma)$ for some $\sigma>0$ and ${\rm ind}_8(\kappa,\sigma,k_4(\sigma),k_5(\sigma),\alpha_i)<0$, for $i=1,2,3$.
\begin{remark}\label{consistence5}
Since we detect no instability in cases $(k_4(\sigma),k_5(\sigma),4)\in\mathcal{R}(\sigma)$ and $(k_4(\sigma),k_5(\sigma),6)\in\mathcal{R}(\sigma)$ for Wilton ripples of order $3$, our findings agree with the local spectral stability predicted by the Krein-signature criterion (see Remark~\ref{Kreincondition}).
\end{remark}
\subsection{The spectrum near the origin}\label{low_wilton-ripples}
\subsubsection{Wilton ripples of order $M\geq 3$}
Lemma \ref{lem:a4=0} and Corollary \ref{delta_0pk} also hold for Wilton ripples of order $M$, $M\geq 2$. We see, in contrast to the case of Wilton ripples of order $2$ where $\mu_1\neq 0$ \eqref{A_squaremu1}, $\mu_1=0$ \eqref{mu1q2} for Wilton ripples of order $M$, $M\geq 3$, making the matrix $\mathbf{a}^{(0,1)}$ \eqref{eqn:a01wiltonm} similar to that \eqref{eqn:a01} of non-resonant capillary-gravity waves. We thus first deal the Wilton ripple of order $M\geq 3$.
\begin{lemma}\label{wiltonm_matrices} Computations show for Wilton ripples of order $M\geq 3$
\be \label{eqn:a00 wiltonm}
\mathbf{a}^{(0,0)}(T;0)=\begin{pmatrix} 
1&0&0&0 &0&0\\ 0&1&0&0&0&0\\0&0&1&0&0&0\\0&0&T&1&0&0\\0&0&0&0&1&0\\0&0&0&0&0&1\end{pmatrix},
\ee
\be\label{eqn:a10 wiltonm}
\mathbf{a}^{(1,0)}(T;0)=\begin{pmatrix} a^{(1,0)}_{11}
 & 0 & a^{(1,0)}_{13} & 0&0&0 \\ 0 & a^{(1,0)}_{11} & -a^{(1,0)}_{13} & 0 &0&0\\ 0 & 0 & a^{(1,0)}_{33} & 0&0&0\\ a^{(1,0)}_{41}&a^{(1,0)}_{41}&Ta^{(1,0)}_{33}&a^{(1,0)}_{33} &a^{(1,0)}_{45}&a^{(1,0)}_{45}\\0&0&a^{(1,0)}_{53}&0&a^{(1,0)}_{55}&0\\0&0&-a^{(1,0)}_{53}&0&0&a^{(1,0)}_{55}\end{pmatrix},
\ee 
\be \label{eqn:a01wiltonm}
\mathbf{a}^{(0,1)}(T;0)=\begin{pmatrix} 
0&0&a^{(0,1)}_{13}&0 &0&0\\ 0&0&a^{(0,1)}_{13}&0&0&0\\0&0&0&0&0&0\\a^{(0,1)}_{41}&-a^{(0,1)}_{41}&a^{(0,1)}_{43}&0&a^{(0,1)}_{45}&-a^{(0,1)}_{45}\\0&0&a^{(0,1)}_{53}&0&0&0\\0&0&a^{(0,1)}_{53}&0&0&0\end{pmatrix},
\ee 
\ba \label{eqn:a20wiltonm}
&\mathbf{a}^{(2,0)}(T;0)\\
=&\begin{pmatrix} a_{11}^{(2,0)}& a_{12}^{(2,0)} & a_{13}^{(2,0)} &a_{14}^{(2,0)} &a_{15}^{(2,0)}&a_{16}^{(2,0)} \\ -a_{12}^{(2,0)} & (a_{11}^{(2,0)})^*& (a_{13}^{(2,0)})^* &-a_{14}^{(2,0)} &-a_{16}^{(2,0)}&-a_{15}^{(2,0)}\\ a_{31}^{(2,0)}&a_{31}^{(2,0)} & a_{33}^{(2,0)} & a_{34}^{(2,0)}&a_{35}^{(2,0)} &a_{35}^{(2,0)}\\ a_{41}^{(2,0)} & (a_{41}^{(2,0)})^* & a_{43}^{(2,0)} & a_{33}^{(2,0)}&a_{45}^{(2,0)}&(a_{45}^{(2,0)})^*\\ a_{51}^{(2,0)}&a_{52}^{(2,0)}&a_{53}^{(2,0)}&a_{54}^{(2,0)}&a_{55}^{(2,0)}&a_{56}^{(2,0)}\\ -a_{52}^{(2,0)} & -a_{51}^{(2,0)} & (a_{53}^{(2,0)})^* &-a_{54}^{(2,0)}&-a_{56}^{(2,0)}&(a_{55}^{(2,0)})^* \end{pmatrix},
\ea 
\be \label{eqn:a11wiltonm}
\mathbf{a}^{(1,1)}(T;0)=\begin{pmatrix} a_{11}^{(1,1)} & a_{12}^{(1,1)} & *& a_{14}^{(1,1)} &* &*\\
a_{12}^{(1,1)} & a_{11}^{(1,1)} & *& a_{14}^{(1,1)} &* &*\\
a_{31}^{(1,1)} & -a_{31}^{(1,1)} & *& 0 &a_{35}^{(1,1)} &-a_{35}^{(1,1)}\\
* & * & *& * &* &*\\
* & * & *& a_{54}^{(1,1)} &a_{55}^{(1,1)} &a_{56}^{(1,1)}\\
* & * & *& a_{54}^{(1,1)} &a_{56}^{(1,1)} &a_{55}^{(1,1)}\\
\end{pmatrix},
\ee 
where formulas for entries of $\mathbf{a}^{(1,0)}$, $\mathbf{a}^{(0,1)}$, and $\mathbf{a}^{(2,0)}$ are given in Appendix \ref{amn_details_wiltonm} and formulas for $\mathbf{a}^{(1,1)}$ together with formulas for entries of $\mathbf{a}^{(0,2)}$ are too long to included in the write-up, hence omitted. Our computations also show there hold 
\ba
\label{extra_relation_wiltonm}
&a^{(2,0)}_{12}=a^{(1,0)}_{13}a^{(1,0)}_{41}/T,\;\; a_{14}^{(2,0)}=-a_{13}^{(1,0)}\big(a_{11}^{(1,0)} - a_{33}^{(1,0)}\big)/T,\\ &a_{31}^{(2,0)}=-a_{41}^{(1,0)}\big(a_{11}^{(1,0)} - a_{33}^{(1,0)}\big)/T,\;\; a^{(2,0)}_{56}=a^{(1,0)}_{53}a^{(1,0)}_{45}/T,\\&a_{54}^{(2,0)}=-a_{53}^{(1,0)}\big(a_{55}^{(1,0)} - a_{33}^{(1,0)}\big)/T,\;\; a_{35}^{(2,0)}=-a_{45}^{(1,0)}\big(a_{55}^{(1,0)} - a_{33}^{(1,0)}\big)/T,\\
&a_{11}^{(1,1)}=\big(a_{13}^{(0,1)}a_{41}^{(1,0)} + a_{13}^{(1,0)}a_{41}^{(0,1)}\big)/T,\;\; a_{12}^{(1,1)}=\big(a_{13}^{(0,1)}a_{41}^{(1,0)} - a_{13}^{(1,0)}a_{41}^{(0,1)}\big)/T,\\
&a_{55}^{(1,1)}=\big(a_{53}^{(0,1)}a_{45}^{(1,0)} + a_{53}^{(1,0)}a_{45}^{(0,1)}\big)/T,\;\; a_{56}^{(1,1)}=\big(a_{53}^{(0,1)}a_{45}^{(1,0)} - a_{53}^{(1,0)}a_{45}^{(0,1)}\big)/T,     
\ea
and we use these relations together with the relations shown in \eqref{eqn:a00 wiltonm}, \eqref{eqn:a10 wiltonm}, \eqref{eqn:a01wiltonm},  \eqref{eqn:a20wiltonm}, and \eqref{eqn:a11wiltonm} to make simplification in the later expansion of the periodic Evans function \eqref{eqn:evans0}. the star mark ${\cdot}^*$ is to denote the complex conjugate. 
\end{lemma}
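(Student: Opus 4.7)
The plan is to obtain these matrix structures by systematically integrating the reduced ODE system \eqref{eqn:a} at $\sigma=0$ with the initial condition \eqref{cond:a}, after substituting the Wilton-ripple-of-order-$M$ expansions \eqref{wiltonm_1}, \eqref{mu1q2}, \eqref{phi2_m}, \eqref{eta2_m}, \eqref{mu2wiltonm} into the driving operator $\mathbf{B}(x;0,\delta,\eps)$ and its blocks \eqref{eqn:B1020}--\eqref{eqn:B02}. First I would solve the homogeneous equation $\mathbf{a}_x = \mathbf{L}(0)\mathbf{a}$ using Lemma~\ref{lem:eps=0}: the eigenvalues $ik_j(0)$ for $j=1,2,5,6$ are simple with eigenvectors $\boldsymbol{\phi}_j(0)$ in \eqref{def:phi12}, while $ik_3(0)=ik_4(0)=0$ forms a single Jordan block with generalized pair $\boldsymbol{\phi}_3(0),\boldsymbol{\phi}_4(0)$ in \eqref{def:phi34}. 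Evolving each basis vector yields a pure exponential in the simple cases and the pair $(x\boldsymbol{\phi}_4(0)+\boldsymbol{\phi}_3(0),\boldsymbol{\phi}_4(0))$ for the Jordan block, giving \eqref{eqn:a00 wiltonm} with its lone off-diagonal entry $T$ at position $(4,3)$.

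For each $(m,n)$ with $m+n\geq 1$, the coefficient $a^{(m,n)}_{jk}(T;0)$ is extracted from \eqref{eqn:a} and \eqref{def:fsigma} by Duhamel's formula combined with the projection \eqref{def:Pi;high}, reducing to inner products $\langle \int_0^T (\text{integrand})\,dx,\boldsymbol{\psi}_j(0)\rangle$ where the integrand is built from the $\mathbf{B}^{(m',n')}$, the lower-order $\mathbf{a}^{(m-m',n-n')}$, and the reduction corrections $\mathbf{w}^{(m-m',n-n')}$ obtained from \eqref{eqn:red0}. The structural content of the lemma — which entries vanish, which coincide — is then dictated by Fourier orthogonality over the period $T=2\pi/\kappa$: a term proportional to $e^{ip\kappa x}$ integrates to zero unless $p=0$. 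Because the Wilton-ripple profile \eqref{wiltonm_1} carries only wavenumbers $\kappa$ and $M\kappa$ at order $\eps$, and pairwise combinations thereof at order $\eps^2$, the vanishing pattern in the off-diagonal blocks of \eqref{eqn:a10 wiltonm}, \eqref{eqn:a01wiltonm}, \eqref{eqn:a20wiltonm}, \eqref{eqn:a11wiltonm} follows: an entry can be nonzero only when the difference of the two mode indices lies in the support of the forcing. Note in particular that $\mu_1=0$ in \eqref{mu1q2} (in contrast to \eqref{A_squaremu1} for $M=2$) eliminates the diagonal contributions to $\mathbf{a}^{(0,1)}$, explaining why \eqref{eqn:a01wiltonm} mimics the non-resonant capillary-gravity pattern \eqref{eqn:a01}.

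The algebraic identities \eqref{extra_relation_wiltonm} would be proved by reorganizing the second-order Duhamel integrals: for instance, the convolution defining $a^{(2,0)}_{12}$ can be integrated by parts in $x$, using that the Jordan pair produces polynomial growth, and the surface terms precisely reproduce the product $a^{(1,0)}_{13}a^{(1,0)}_{41}/T$; analogous manipulations yield each of the other relations in \eqref{extra_relation_wiltonm}. The complex-conjugation and sign-flip symmetries within \eqref{eqn:a20wiltonm} and \eqref{eqn:a11wiltonm} stem from the reflection invariance $x\mapsto -x$ of the profile together with the fact that $\boldsymbol{\phi}_1\leftrightarrow \boldsymbol{\phi}_2$ and $\boldsymbol{\phi}_5\leftrightarrow \boldsymbol{\phi}_6$ under this reflection composed with complex conjugation, a symmetry inherited from Lemma~\ref{lem:symm}.

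The main obstacle, as the authors themselves acknowledge, is the purely computational burden of producing closed-form symbolic expressions for the nonzero entries — especially those of $\mathbf{a}^{(2,0)}$ and $\mathbf{a}^{(1,1)}$, which involve a second-generation Duhamel integral against $\mathbf{w}^{(0,1)}$, $\mathbf{w}^{(1,0)}$, and the second-order profile data. The hand proof is therefore confined to establishing the sparsity pattern via orthogonality, the symmetry relations via reflection/conjugation, and the algebraic identities via integration by parts; the explicit formulas collected in Appendix~\ref{amn_details_wiltonm} are then delegated to the Matlab symbolic toolbox, with the structural statements above serving as a priori consistency checks on the output.
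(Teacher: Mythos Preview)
Your proposal is correct and follows essentially the same approach as the paper, whose proof is the single line ``The lemma follows from direct computations outlined in Section~\ref{sec:expansion_monodromy}.'' Your Fourier-orthogonality explanation for the sparsity pattern, the observation that $\mu_1=0$ (from \eqref{mu1q2}) accounts for the vanishing diagonals in \eqref{eqn:a01wiltonm}, and the reflection/conjugation symmetry argument all make explicit what the paper leaves to the symbolic computation; the identities \eqref{extra_relation_wiltonm} are in the paper likewise verified by direct symbolic output rather than by the integration-by-parts mechanism you sketch, but that is a cosmetic difference.
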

\begin{proof}
The lemma follows from direct computations outlined in Section~\ref{sec:expansion_monodromy}.
\end{proof}
When $\eps=0$, recall Section~\ref{sec:eps=0} that $\Delta(i\sigma(k),k;0)=0$ for any $ k\in\mathbb{R}$, where $\sigma$ is in \eqref{eqn:sigma}. Particularly, $\Delta(\delta_j(k_j(0),0),k_j(0);0)=0$, $j=1,\ldots,6$, where 
\[
\begin{aligned}
&k_j(0)=(-1)^j\kappa \quad\text{for $j=1,2$},\quad k_j(0)=0\quad \text{for $j=3,4$},\\& k_j(0)=(-1)^jM\kappa\quad \text{for $j=5,6$}.
\end{aligned}
\]
In other words, $\delta=0$ is a root of $\Delta(\cdot,p\kappa;0)=0$ with multiplicity $6$. 
Substituting
\ba\label{def:lambda(gamma,eps)_wiltonm}
\delta_j(k_j(0)+\gamma,\eps)=&
\delta^{(1,0)}_j\gamma+\delta^{(2,0)}_j\gamma^2+\delta^{(1,1)}_j\gamma\eps
\\&+\smallO(|\gamma|^2+ |\gamma||\eps|),\quad j=1,\ldots,6,
\ea
into $\Delta(\delta,p\kappa+\gamma;\eps)$, 
after straightforward calculations, we learn that $\gamma^6$ is the leading order whose coefficient reads
\ba\label{eqn:alpha10_wiltonm}
- \big(-a_{11}^{(1,0)}\delta^{(1,0)}_j+iT\big)^2 \big(-a_{55}^{(1,0)}\delta^{(1,0)}_j+iT\big)^2
\\ \cdot\big((Ta_{34}^{(2,0)}-{a_{33}^{(1,0)}}^2)(\delta^{(1,0)}_j)^2+2iTa_{33}^{1,0}\delta^{(1,0)}_j+T^2\big),
\ea which must vanish, whence
\ba\label{def:alpha10'_wiltonm}
&\delta^{(1,0)}_j=iT/a_{11}^{(1,0)}\quad \text{or}\quad \delta^{(1,0)}_j=iT/a_{55}^{(1,0)}\quad \text{or}\\
&\delta^{(1,0)}_j=\frac{iTa_{33}^{(1,0)}\pm\sqrt{-T^3a^{(2,0)}_{34}}}{(a_{33}^{(1,0)})^2-Ta_{34}^{(2,0)}}.
\ea
Notice that $\delta^{(1,0)}_j$, $j=1,\ldots,6$, are purely imaginary by \eqref{def:a10}, \eqref{def:wiltonm_a10}, and \eqref{def:a20} evaluated at \eqref{wilton_simplification}. On the other hand, \eqref{def:alpha10'_wiltonm} must agree with power series expansions of \eqref{def:sigma} about $k_j(0)$, $j=1,\ldots,6$. Thus 
\begin{equation}\label{def:alpha10_wiltonm}
\delta^{(1,0)}_j=iT/a_{11}^{(1,0)} \quad\text{for}\quad j=1,2,\quad \delta^{(1,0)}_j=iT/a_{55}^{(1,0)} \quad\text{for}\quad j=5,6,
\end{equation}
and the latter equation of \eqref{def:alpha10'_wiltonm} holds for $j=3,4$.

Substituting \eqref{def:lambda(gamma,eps)_wiltonm} and \eqref{def:alpha10_wiltonm} into $\Delta(\delta,p\kappa+\gamma;0)$, after straightforward calculations, we verify that the $\gamma^7$ term vanishes, and solving at the order of $\gamma^8$, we arrive at 
\ba\label{def:alpha20_wiltonm}
\delta^{(2,0)}_j=&\pm\frac{T^2\left((a_{11}^{(1,0)})^2-2a_{11}^{(2,0)}\right)+2Ta_{13}^{(1,0)}a_{41}^{(1,0)}}{2(a_{11}^{(1,0)})^3},\quad j=1,2,\\
\delta^{(2,0)}_j=&\pm\frac{T^2\left((a_{55}^{(1,0)})^2-2a_{55}^{(2,0)}\right)+2Ta_{53}^{(1,0)}a_{45}^{(1,0)}}{2(a_{55}^{(1,0)})^3},\quad j=5,6.
\ea
We remark that $\delta^{(2,0)}_j$, $j=1,2,5,6$, are purely imaginary because $(a_{11}^{(1,0)})^2$ offsets the real part of $2a_{11}^{(2,0)}$ and $(a_{55}^{(1,0)})^2$ offsets the real part of $2a_{55}^{(2,0)}$. The $\pm$ signs explain the oppositeness of the convexity of the curves \eqref{def:sigma} at $k_j(0)$, $j=1,2,5,6$. See Figure~\ref{figure4} and Figure~\ref{figure5}.

Substituting \eqref{def:lambda(gamma,eps)_wiltonm} into $\Delta(\delta,p\kappa+\gamma;\eps)$ and evaluating at \eqref{def:alpha10_wiltonm} and \eqref{def:alpha20_wiltonm}, after straightforward calculations, we obtain that the coefficient of $\gamma^6\eps^2$ reads
\ba \label{eqn:f_wiltonm}
T^4(a^{(1,0)}_{11} - a^{(1,0)}_{55})^2g_1(\delta_j^{(1,1)})^2-T^5(a^{(1,0)}_{11} - a^{(1,0)}_{55})^2g_2,
\ea 
where, for $j=1,2$,
\ba \label{g1g2_wiltonmj12}
g_1=&\big((a^{(1,0)}_{11})^2 - 2 a^{(1,0)}_{11} a^{(1,0)}_{33} + (a^{(1,0)}_{33})^2 - T a^{(2,0)}_{34}\big)(a^{(1,0)}_{11})^{-2},\\
g_2=&\big(T(a_{11}^{(1,0)})^2 - 2Ta_{11}^{(2,0)} + 2a_{13}^{(1,0)}a_{41}^{(1,0)}\big)\big(a_{11}^{(0,2)} (a_{11}^{(1,0)})^2 \\&+ a_{11}^{(0,2)} (a_{33}^{(1,0)})^2 - T a_{11}^{(0,2)} a_{34}^{(2,0)} + T a_{14}^{(1,1)} a_{31}^{(1,1)} \\&- 2 a_{11}^{(0,2)} a_{11}^{(1,0)} a_{33}^{(1,0)} + a_{11}^{(1,0)} a_{13}^{(0,1)} a_{31}^{(1,1)} + a_{11}^{(1,0)} a_{14}^{(1,1)} a_{41}^{(0,1)} \\&- a_{13}^{(0,1)} a_{31}^{(1,1)} a_{33}^{(1,0)} + a_{13}^{(0,1)} a_{34}^{(2,0)} a_{41}^{(0,1)} \\&- a_{14}^{(1,1)} a_{33}^{(1,0)} a_{41}^{(0,1)}\big)(a_{11}^{(1,0)})^{-6},\\
\ea 
and, for $j=5,6$,
\ba \label{g1g2_wiltonmj56}
g_1=& \big( (a^{(1,0)}_{33})^2 - 2 a^{(1,0)}_{33} a^{(1,0)}_{55} + (a^{(1,0)}_{55})^2 - T a^{(2,0)}_{34}\big)(a^{(1,0)}_{55})^{-2},\\
g_2=&\big(T(a_{55}^{(1,0)})^2 - 2Ta_{55}^{(2,0)} + 2a_{45}^{(1,0)}a_{53}^{(1,0)}\big)\big((a_{33}^{(1,0)})^2  a_{55}^{(0,2)}\\& +  a_{55}^{(0,2)}  (a_{55}^{(1,0)})^2 - T a_{34}^{(2,0)}  a_{55}^{(0,2)} + T  a_{35}^{(1,1)}  a_{54}^{(1,1)} \\&- a_{33}^{(1,0)}  a_{35}^{(1,1)}  a_{53}^{(0,1)} - a_{33}^{(1,0)}  a_{45}^{(0,1)}  a_{54}^{(1,1)} + a_{34}^{(2,0)}  a_{45}^{(0,1)}  a_{53}^{(0,1)}\\& - 2 a_{33}^{(1,0)}  a_{55}^{(0,2)}  a_{55}^{(1,0)} +  a_{35}^{(1,1)}  a_{53}^{(0,1)}  a_{55}^{(1,0)}\\& +  a_{45}^{(0,1)}  a_{54}^{(1,1)}  a_{55}^{(1,0)}\big)(a_{55}^{(1,0)})^{-6}.
\ea 
\begin{theorem}[Spectral instability near $0\in\mathbb{C}$ at $\gamma\eps$ order]\label{thm:wiltonm}
A $\kappa$-Wilton ripples of order $M\geq 3$ of sufficiently small amplitude is spectrally unstable in the vicinity of $0\in\mathbb{C}$ provided that either
\be \label{def:ind_9}
{\rm ind}_9(\kappa,M,\alpha):=\frac{g_2}{g_1},
\ee
where $g_1$, $g_2$ are given by \eqref{g1g2_wiltonmj12}, or
\be \label{def:ind_10}
{\rm ind}_{10}(\kappa,M,\alpha):=\frac{g_2}{g_1},
\ee
where $g_1$, $g_2$ are given by \eqref{g1g2_wiltonmj56}, is positive.
\end{theorem}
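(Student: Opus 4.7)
The plan is to follow the exact blueprint of the proof of Theorem~\ref{thm:modulation}, with the essential difference that the reduced space $Y(0)$ for a Wilton ripple of order $M\geq 3$ is six dimensional rather than four dimensional. Consequently the Weierstrass polynomial at $(0,p\kappa;0)$ is of degree six in $\delta$, and there are two pairs of non-trivial roots of $\Delta(\cdot,p\kappa;\eps)$ branching from $\delta=0$ as $\eps$ turns on: one pair associated with the eigenvalues $\pm i\kappa$ of $\mathbf{L}(0)$ (indices $j=1,2$), and another pair associated with $\pm iM\kappa$ (indices $j=5,6$). Instability near the origin is therefore governed by two independent index functions, one for each pair, and it suffices to show that a real $\delta^{(1,1)}_j$ appears in the $\gamma\eps$ coefficient of the expansion \eqref{def:lambda(gamma,eps)_wiltonm} whenever the respective index is positive.

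First I will insert the ansatz \eqref{def:lambda(gamma,eps)_wiltonm} into $\Delta(\delta,p\kappa+\gamma;\eps)$ and extract its Taylor expansion. The $\gamma^6$ coefficient, computed in \eqref{eqn:alpha10_wiltonm} using Lemma~\ref{wiltonm_matrices} and the simplifying identities \eqref{extra_relation_wiltonm}, fixes the admissible $\delta^{(1,0)}_j$; matching against the zero-amplitude dispersion relation \eqref{def:sigma} distributes them as in \eqref{def:alpha10_wiltonm}, assigning $\delta^{(1,0)}_j=iT/a^{(1,0)}_{11}$ to $j=1,2$ and $\delta^{(1,0)}_j=iT/a^{(1,0)}_{55}$ to $j=5,6$. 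The $\gamma^7$ coefficient then vanishes identically, while $\gamma^8$ determines $\delta^{(2,0)}_j$ via \eqref{def:alpha20_wiltonm}, both of which are purely imaginary owing to the same $(a^{(1,0)}_{jj})^2$-cancellation seen in Theorem~\ref{thm:modulation}.

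Next I will extract the coefficient of $\gamma^6\eps^2$; after applying \eqref{extra_relation_wiltonm} to collapse the many candidate monomials, this coefficient takes the compact form \eqref{eqn:f_wiltonm}. Since for $M\geq 3$ the group velocities at $\pm i\kappa$ and $\pm iM\kappa$ differ, $a^{(1,0)}_{11}\neq a^{(1,0)}_{55}$ and the prefactor $T^4(a^{(1,0)}_{11}-a^{(1,0)}_{55})^2$ is non-zero; forcing \eqref{eqn:f_wiltonm} to vanish then yields
\[
\delta^{(1,1)}_j=\pm\sqrt{T\,g_2/g_1},
\]
with $g_1,g_2$ given by \eqref{g1g2_wiltonmj12} when $j\in\{1,2\}$ and by \eqref{g1g2_wiltonmj56} when $j\in\{5,6\}$. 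If ${\rm ind}_9=g_2/g_1>0$ (for the first pair) or ${\rm ind}_{10}=g_2/g_1>0$ (for the second), the corresponding $\delta^{(1,1)}_j$ is real and non-zero, so $\Re\,\delta_j(k_j(0)+\gamma,\eps)=\delta^{(1,1)}_j\gamma\eps+\smallO(|\gamma\eps|)\neq 0$ for $0<|\gamma|,|\eps|\ll 1$, producing spectra strictly off the imaginary axis near the origin.

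The main obstacle is the bookkeeping: the $6\times 6$ block structure of Lemma~\ref{wiltonm_matrices}, combined with the cross-couplings between the $\kappa$-modes and the $M\kappa$-modes that are injected by the Wilton profile \eqref{wiltonm_1}, generates substantially more terms than in the non-resonant case, and extracting the $\gamma^6\eps^2$-coefficient in the form \eqref{eqn:f_wiltonm} depends decisively on the symbolic identities \eqref{extra_relation_wiltonm}. These identities should be verified by a symbolic computation package rather than by hand. A secondary subtlety is the implicit non-degeneracy $a^{(1,0)}_{11}\neq a^{(1,0)}_{55}$, which I will confirm from the explicit formulas in Appendix~\ref{amn_details_wiltonm}; uniform non-vanishing in $\kappa>0$ is needed to guarantee the leading-order balance used above and to keep ${\rm ind}_9,{\rm ind}_{10}$ well-defined.
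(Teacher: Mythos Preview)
Your proposal is correct and follows essentially the same approach as the paper: substitute the ansatz \eqref{def:lambda(gamma,eps)_wiltonm} into the periodic Evans function, read off $\delta^{(1,0)}_j$ from the $\gamma^6$ coefficient \eqref{eqn:alpha10_wiltonm}, verify the $\gamma^7$ term vanishes, obtain $\delta^{(2,0)}_j$ at $\gamma^8$, and then solve the $\gamma^6\eps^2$ coefficient \eqref{eqn:f_wiltonm} for $\delta^{(1,1)}_j=\pm\sqrt{Tg_2/g_1}$. The paper does not isolate a formal proof environment for this theorem; the argument is the derivation leading up to its statement, which you have reproduced accurately, including the explicit observation that $a^{(1,0)}_{11}\neq a^{(1,0)}_{55}$ is needed for the prefactor in \eqref{eqn:f_wiltonm} to be non-degenerate.
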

For ${\rm ind}_9$, we find that 
\be
\label{ind9_reduction}
{\rm ind}_9(\kappa,M,\alpha)=\frac{g_2}{g_1}\big(\kappa,\beta=\beta_{\text{Wilton},M}(\kappa)\big),
\ee 
where $g_1$ and $g_2$ are given in \eqref{g1g2} \eqref{g_i} and $\beta_{\text{Wilton},M}(\kappa)$ is given in \eqref{wilton_con}. That is ${\rm ind}_9(\kappa,M,\alpha)$ is independent of $\alpha$ and ${\rm ind}_5$ \eqref{def:ind_5} continues to the domain of Wilton ripples of order $M\geq 3$ of sufficiently small amplitude.

For ${\rm ind}_{10}$, we find that 
\ba 
a_{45}^{(0,1)}&=  a_{45}^{(0,1,1)}\alpha+a_{45}^{(0,1,0)},\quad a_{53}^{(0,1)}=  a_{53}^{(0,1,1)}\alpha,\quad a_{54}^{(1,1)}=  a_{54}^{(1,1,1)}\alpha,\\
  a_{35}^{(1,1)}&=  a_{35}^{(1,1,1)}\alpha+a_{35}^{(1,1,0)},\quad a_{55}^{(0,2)}=a_{55}^{(0,2,2)}\alpha^2+a_{55}^{(0,2,1)}\alpha+a_{55}^{(0,2,0)},
\ea
so that
\be 
\label{ind10_reduction}
{\rm ind}_{10}(\kappa,M,\alpha)=a(\kappa,M)\alpha^2+b(\kappa,M),
\ee 
where
\ba \label{ab_terms}
&\,a(\kappa,M)\\
=&\,\big(T(a_{55}^{(1,0)})^2 - 2Ta_{55}^{(2,0)} + 2a_{45}^{(1,0)}a_{53}^{(1,0)}\big)\times\\
&\,\big((a_{33}^{(1,0)})^2 a_{55}^{(0,2,2)} + (a_{55}^{(1,0)})^2 a_{55}^{(0,2,2)} - 2 a_{33}^{(1,0)} a_{55}^{(1,0)} a_{55}^{(0,2,2)}\\&\; - a_{33}^{(1,0)} a_{35}^{(1,1,1)} a_{53}^{(0,1,1)} + a_{55}^{(1,0)} a_{35}^{(1,1,1)} a_{53}^{(0,1,1)} \\
&\;+ a_{34}^{(2,0)} a_{45}^{(0,1,1)} a_{53}^{(0,1,1)} - a_{33}^{(1,0)} a_{45}^{(0,1,1)} a_{54}^{(1,1,1)} \\
&\;+ a_{55}^{(1,0)} a_{45}^{(0,1,1)} a_{54}^{(1,1,1)} - T a_{34}^{(2,0)} a_{55}^{(0,2,2)}+ T a_{35}^{(1,1,1)} a_{54}^{(1,1,1)}\big)\\
&\,\times\big( (a^{(1,0)}_{33})^2 - 2 a^{(1,0)}_{33} a^{(1,0)}_{55} + (a^{(1,0)}_{55})^2 - T a^{(2,0)}_{34}\big)^{-1}(a^{(1,0)}_{55})^{-4},\\
&\,b(\kappa,M)\\
=&\,\big(T(a_{11}^{(1,0)})^2 - 2Ta_{11}^{(2,0)} + 2a_{13}^{(1,0)}a_{41}^{(1,0)}\big)a_{55}^{(0,2,0)} (a^{(1,0)}_{55})^{-4}.
\ea 

When $M=3$, for we have found $\alpha$ takes the value of one of the real zeros of the cubic polynomial \eqref{wilton3A}, it is feasible to investigate ${\rm ind}_{10}$ \eqref{ind10_reduction} at $(\kappa,3,\alpha_i(\kappa))$. For general $M\geq 4$, as we noted in Section \ref{wilton_ripple_profile}, $\alpha$ is hard to obtain for large $M$. It is then unrealistic to fully investigate ${\rm ind}_{10}$ \eqref{ind10_reduction} at $(\kappa,M,\alpha_i(\kappa))$ for $i=1,\ldots,M$. However, by the form of \eqref{ind10_reduction}, the sign of ${\rm ind}_{10}(\kappa,M,\alpha_i(\kappa))$ is independent of values of $\alpha_i(\kappa)$ provided that the sign of $a(\kappa,M)$ is the same as that of $b(\kappa,M)$. This perhaps is the most we can say about ${\rm ind}_{10}$ without knowing $\alpha$ for general Wilton ripple of order $M\geq 4$. 

\subsubsection{Numerical results}\label{resonant_results_modulational}
Since ${\rm ind}_9(\kappa,M,\alpha)$ \eqref{ind9_reduction} is independent of $\alpha$ and admits the same sign as ${\rm ind}_5$ \eqref{def:ind_5}, we can numerically determine the sign of  ${\rm ind}_9(\kappa,M,\alpha)$ for arbitrary $(\kappa,M)$. The numerics show ${\rm ind}_5(\kappa,\beta_{\text{Wilton},M}(\kappa))>0$ when
\begin{itemize}
    \item[i.] $0.600792\ldots<\kappa<2.080480\ldots$ for $M=3$;
    \item[ii.] $0.994569\ldots<\kappa<2.624458\ldots$ for $M=4$;
    \item[iii.] $1.122253\ldots<\kappa<3.191355\ldots$ for $M=5$.
\end{itemize}
Hence these waves are numerically found low-frequency unstable.

\smallskip

For ${\rm ind}_{10}(\kappa,3,\alpha)$ \eqref{ind10_reduction}, let $\alpha$ take one of the values of $\alpha_i(\kappa)$, $i=1,2,3$ which are zeros of the cubic polynomial \eqref{wilton3A} in ascending order $\alpha_1(\kappa)<\alpha_2(\kappa)<0<\alpha_3(\kappa)$ and we numerically compute ${\rm ind}_{10}(\kappa,3,\alpha_i)$ and find that
\begin{itemize}
    \item[i.] ${\rm ind}_{10}(\kappa,3,\alpha_1)>0$ when $\kappa<1.422151\ldots$ or $1.894994\ldots<\kappa$;
    \item[ii.] ${\rm ind}_{10}(\kappa,3,\alpha_2)>0$ when $0.318469\ldots<\kappa<1.569441\ldots$ or $1.894994\ldots<\kappa$;
    \item[iii.] ${\rm ind}_{10}(\kappa,3,\alpha_3)>0$ when $0.360095\ldots<\kappa<1.669691\ldots$ or $1.894994\ldots<\kappa$.
\end{itemize}
Hence these waves are numerically found low-frequency unstable. We also note that the factor $(a^{(1,0)}_{33})^2 - 2 a^{(1,0)}_{33} a^{(1,0)}_{55} + (a^{(1,0)}_{55})^2 - T a^{(2,0)}_{34}$ appearing in the denominator of $a(\kappa,3)$ \eqref{ab_terms} vanishes at $\kappa=1.894994\ldots$, yielding a vertical asymptotic for ${\rm ind}_{10}(\cdot,3,\alpha_i)$.

For ${\rm ind}_{10}(\kappa,M,\alpha)$ \eqref{ind10_reduction} with $M\geq 4$, we numerically compute $a(\kappa,M)$ and $b(\kappa,M)$ for $M=4,5,6$. It is found that $b(\kappa,M)$ is always positive and $a(\kappa,M)>0$ if 
\begin{itemize}
    \item[i.] $\kappa<1.260977\ldots$ or $1.975278\ldots<\kappa$ for $M=4$;
    \item[ii.] $\kappa<1.203246\ldots$ or $2.018839\ldots<\kappa$ for $M=5$;
    \item[iii.] $\kappa<1.234445\ldots$ or $2.047172\ldots<\kappa$ for $M=6$.
\end{itemize}
As noted above, these waves are numerically found low-frequency unstable. We also note that the factor $(a^{(1,0)}_{33})^2 - 2 a^{(1,0)}_{33} a^{(1,0)}_{55} + (a^{(1,0)}_{55})^2 - T a^{(2,0)}_{34}$ appearing in the denominator of $a(\kappa,M)$ \eqref{ab_terms} vanishes at $\kappa=1.975278\ldots$, $2.018839\ldots$, and $2.047172\ldots$, respectively for $M=4,5,6$.

\subsubsection{Wilton ripples of order $2$}
We now turn to the discussion for Wilton ripples of order $2$. Because $\mu_1\neq 0$ \eqref{A_squaremu1}, some entries of $\mathbf{a}^{(0,1)}$ are none-zero comparing to \eqref{eqn:a01wiltonm}. This difference in the matrix $\mathbf{a}^{(0,1)}$ causes changes in the expansion of $\delta_j$, for $j=1,2$ (see \eqref{expansion_wilton2}).
\begin{lemma}\label{wilton2_matrices} Computations show for Wilton ripples of order $2$
\be \label{eqn:a01wilton}
\mathbf{a}^{(0,1)}(T;0)=\begin{pmatrix} 
a^{(0,1)}_{11}&a^{(0,1)}_{11}&a^{(0,1)}_{13}&0 &a^{(0,1)}_{15}&0\\ -a^{(0,1)}_{11}&-a^{(0,1)}_{11}&a^{(0,1)}_{13}&0&0&-a^{(0,1)}_{15}\\0&0&0&0&0&0\\a^{(0,1)}_{41}&-a^{(0,1)}_{41}&a^{(0,1)}_{43}&0&a^{(0,1)}_{45}&-a^{(0,1)}_{45}\\a^{(0,1)}_{51}&0&a^{(0,1)}_{53}&0&a^{(0,1)}_{55}&0\\0&-a^{(0,1)}_{51}&a^{(0,1)}_{53}&0&0&-a^{(0,1)}_{55}\end{pmatrix},
\ee 
In particular, $a^{(0,1)}_{11}$, $a^{(0,1)}_{12}$, $a^{(0,1)}_{21}$, $a^{(0,1)}_{22}$, $a^{(0,1)}_{51}$, $a^{(0,1)}_{62}$, $a^{(0,1)}_{15}$, $a^{(0,1)}_{26}$, $a^{(0,1)}_{55}$, and $a^{(0,1)}_{66}$ are not zeros and formulas for entries $a^{(0,1)}_{11}$ and $a^{(0,1)}_{55}$ which will be used in Theorem \ref{thm:wilton2} are given in Appendix \ref{amn_details_wilton2}. Because $\mathbf{a}^{(0,0)}$, $\mathbf{a}^{(1,0)}$, and $\mathbf{a}^{(2,0)}$ are computed merely by the constant wave, we find \eqref{eqn:a00 wiltonm}, \eqref{eqn:a10 wiltonm}, and \eqref{def:alpha20_wiltonm} also follow. 
\end{lemma}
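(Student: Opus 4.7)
\textbf{Proof plan for Lemma \ref{wilton2_matrices}.} The plan is to carry out the same computational procedure outlined in Section \ref{sec:expansion_monodromy}, specialized to $\sigma=0$ and to the Wilton-ripples-of-order-$2$ profile \eqref{wilton2_1}--\eqref{A_squaremu1}, and to read off the (non)vanishing of each entry of $\mathbf{a}^{(0,1)}(T;0)$ from the Fourier content of the resulting periodic integrands. Concretely, I would begin with the recursion \eqref{eqn:a}--\eqref{def:f0} at $(m,n)=(0,1)$ and $\sigma=0$; for the simple-eigenvalue columns $k\in\{1,2,5,6\}$ this solves to
\begin{equation*}
a^{(0,1)}_{jk}(T;0)=e^{ik_j(0)T}\Big\langle \int_0^T e^{i(k_k(0)-k_j(0))x}\,\mathbf{B}^{(0,1)}(x;0)\,\boldsymbol{\phi}_k(0)\,dx,\;\boldsymbol{\psi}_j(0)\Big\rangle,
\end{equation*}
with a slightly longer expression for $k\in\{3,4\}$ due to the Jordan block, and Lemma \ref{lem:a4=0} immediately gives the fourth column of zeros in \eqref{eqn:a01wilton}. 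After substituting $\boldsymbol{\phi}_k(0)$ from \eqref{def:phi12}, \eqref{def:phi34}, the matter reduces to inspecting which Fourier harmonics of $\mathbf{B}^{(0,1)}$ survive integration against $e^{i(k_k(0)-k_j(0))x}$.

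The key new ingredient compared to Lemma \ref{wiltonm_matrices} is that, for order-$2$ Wilton ripples, \eqref{A_squaremu1} gives $\mu_1\neq 0$, and the first-order profile \eqref{wilton2_1} carries \emph{both} a $\kappa$-harmonic and a $2\kappa$-harmonic (the superimposed $\alpha$-mode). These two features produce precisely the new non-vanishing entries of \eqref{eqn:a01wilton}: the $\mu_1$-piece of $\mathbf{B}^{(0,1)}$ (inspected via \eqref{eqn:B01} in Appendix \ref{A:Bexp}) is $x$-independent and contributes whenever $k_k(0)=k_j(0)$, which accounts for $a^{(0,1)}_{11},a^{(0,1)}_{12},a^{(0,1)}_{21},a^{(0,1)}_{22}$, $a^{(0,1)}_{55}$, $a^{(0,1)}_{66}$; the $2\kappa$-harmonic of the profile couples $e^{\pm i\kappa x}$ to $e^{\pm 2i\kappa x}$ through the identities $k_5(0)-k_1(0)=-2\kappa$ and $k_6(0)-k_2(0)=2\kappa$, which supplies $a^{(0,1)}_{15},a^{(0,1)}_{26},a^{(0,1)}_{51},a^{(0,1)}_{62}$. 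All remaining entries either integrate against a non-trivial pure harmonic and vanish, or coincide with an entry already determined up to sign.

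To extract the internal relations $a^{(0,1)}_{12}=a^{(0,1)}_{11}$, $a^{(0,1)}_{22}=-a^{(0,1)}_{11}$, $a^{(0,1)}_{46}=-a^{(0,1)}_{45}$ and the block symmetries between rows $5,6$ and columns $5,6$, I would exploit the conjugation structure $\boldsymbol{\phi}_2(0)=\boldsymbol{\phi}_1(0)^*$, $\boldsymbol{\phi}_6(0)=\boldsymbol{\phi}_5(0)^*$ (and correspondingly for $\boldsymbol{\psi}_j$), together with reality of the profile coefficients and the symmetry of $\mathbf{B}(x;\sigma,\delta,\eps)$ inherited from Lemma \ref{lem:symm} (see Appendix \ref{A:symm}). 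The closing assertion that $\mathbf{a}^{(0,0)}$, $\mathbf{a}^{(1,0)}$, $\mathbf{a}^{(2,0)}$ retain the forms \eqref{eqn:a00 wiltonm}, \eqref{eqn:a10 wiltonm}, \eqref{def:alpha20_wiltonm} is then immediate: by Appendix \ref{A:Bexp} the blocks $\mathbf{B}^{(m,0)}(x;0)$, $m\geq 0$, depend only on the constant background (through $\mu_0$ and $q$ evaluated at $\eps=0$), hence on data shared by non-resonant capillary-gravity waves and by Wilton ripples of any order.

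The main obstacle will not be conceptual but rather the exhaustive bookkeeping of Fourier harmonics: the order-$2$ $\mathbf{B}^{(0,1)}(x;0)$ carries the modes $1,\s(\kappa x),\c(\kappa x),\s(2\kappa x),\c(2\kappa x)$ with distinct $y$-dependent coefficients, and each of the $36$ entries must be traced through an inner product against one of six eigenfunctions and one of six (generalized) adjoint eigenfunctions, with the $2\times 2$ Jordan block at $k=3,4$ requiring special care in combination with Lemma \ref{lem:a4=0}. Fortunately this is a routine symbolic calculation, already automated by the authors via the Matlab symbolic toolbox, so once the vanishing pattern is justified on Fourier grounds as above the explicit formulas for $a^{(0,1)}_{11}$ and $a^{(0,1)}_{55}$ cited in Appendix \ref{amn_details_wilton2} follow by substitution.
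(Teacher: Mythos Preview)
Your plan is correct and follows the same route as the paper: compute each entry via the integral formula against $\mathbf{B}^{(0,1)}(x;0)$, read off (non)vanishing from the Fourier content, and attribute the new nonzero diagonal blocks to $\mu_1\neq0$. The paper's proof in fact displays only the computation of $a^{(0,1)}_{11}$ and says ``similar computation can be made for other entries,'' so your outline is more detailed than the original.

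One small bookkeeping slip: you attribute $a^{(0,1)}_{12}$ and $a^{(0,1)}_{21}$ to the $\mu_1$-mechanism via the condition $k_k(0)=k_j(0)$, but $k_1(0)=-\kappa\neq\kappa=k_2(0)$, so the $x$-independent $\mu_1$-piece cannot survive integration against $e^{2i\kappa x}$. Those off-diagonal entries are instead fed by the $2\kappa$-harmonic of the profile (the $\alpha$-mode in \eqref{wilton2_1}), exactly as you describe for $a^{(0,1)}_{15},a^{(0,1)}_{51}$, etc. This does not affect the validity of your plan, since the computation itself would reveal the correct source.
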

\begin{proof}
$a^{(0,1)}_{11}(T)$ can be computed by
$$
\begin{aligned}
a^{(0,1)}_{11}(T)=&e^{-i\kappa T}\Big\langle \int_0^T
e^{i\kappa x}\mathbf{B}^{(0,1)}(x;0)e^{-i\kappa x}\boldsymbol{\phi}_1(0)~dx, 
\boldsymbol{\psi}_1(0)\Big\rangle\\
=&\Big\langle \int_0^T
\mathbf{B}^{(0,1)}(x;0)\boldsymbol{\phi}_1(0)~dx, 
\boldsymbol{\psi}_1(0)\Big\rangle,
\end{aligned}
$$
where $\mathbf{B}^{(0,1)}$ is given by \eqref{eqn:B01}, $\boldsymbol{\phi}_1(0)$ is given by \eqref{def:phi12}, and $\boldsymbol{\psi}_1(0)$ is given by \eqref{def:psi56} with $j=1$.
Terms in $\mathbf{B}^{(0,1)}(x;0)\boldsymbol{\phi}_1(0)$ \eqref{eqn:B01} integrate to zero except for those containing $\mu_1$. Hence, $a^{(0,1)}_{11}(T)\neq 0$. Similar computation can be made for other entries.
\end{proof}
The periodic Evans function $\Delta(\delta,p\kappa+\gamma;\eps)$ \eqref{def:Evans} can be expanded as
\ba \label{eqn:evans_wilton2}
\Delta(\delta,p\kappa+\gamma;\eps)=&\sum_{l=0}^6d^{(l,6-l,0)}\delta^{l}\gamma^{6-l}+\sum_{l=0}^6d^{(l,6-l,1)}\delta^{l}\gamma^{6-l}\eps\\
&+\smallO((|\delta|+|\gamma|)^6+|\eps|).
\ea
Again, the corresponding Weierstrass polynomial is $6$ order. Inserting
\[
\delta_j(k_j(0)+\gamma,\eps)=\delta^{(1,0)}_j\gamma+\delta^{(2,0)}_j\gamma^2+\delta^{(\frac{m}{s},\frac{n}{t})}_j\gamma^{\frac{m}{s}}\eps^{\frac{n}{t}}+\cdots.
\] 
into \eqref{eqn:evans_wilton2}, upon inspection, we obtain $\frac{m}{s}=1$ and $\frac{n}{t}=\frac{1}{2}$. Because $\delta^{(1,0)}_j$ $\delta^{(2,0)}_j$ can be computed merely by the constant wave, i.e., the dispersion relation, \eqref{def:alpha10_wiltonm} also follows. To proceed, substituting 
\ba
\label{expansion_wilton2}
&\delta_j(k_j(0)+\gamma,\eps)\\
=&\delta^{(1,0)}_j\gamma+\delta^{(2,0)}_j\gamma^2+\delta^{(1,\frac{1}{2})}_j\gamma\eps^{\frac{1}{2}}+\cdots,\quad \text{for $j=1,2,5,6$,} 
\ea
into \eqref{eqn:evans_wilton2} and evaluating at \eqref{def:alpha10_wiltonm}, the coefficient of $\gamma^6\eps$ reads, for $j=1,2$,
\ba 
\label{eqn:g_wilton2_1}
&-T^4(a^{(1,0)}_{11} - a^{(1,0)}_{55})^2\big(- (a^{(1,0)}_{11})^2 + 2a^{(1,0)}_{11}a^{(1,0)}_{33} \\&- (a^{(1,0)}_{33})^2 + Ta^{(2,0)}_{34}\big)(a^{(1,0)}_{11})^{-6}\Big((a^{(1,0)}_{11})^4(\delta^{(1,\frac{1}{2})}_j)^2\\&-Ta^{(0,1)}_{11}(T(a^{(1,0)}_{11})^2 - 2Ta^{(2,0)}_{11} + 2a^{(1,0)}_{13}a^{(1,0)}_{41})\Big)
\ea 
and, for $j=5,6$,
\ba 
\label{eqn:g_wilton2_2}
&-T^4(a^{(1,0)}_{11} - a^{(1,0)}_{55})^2\big(- (a^{(1,0)}_{33})^2 + 2a^{(1,0)}_{33}a^{(1,0)}_{55} \\&- (a^{(1,0)}_{55})^2 + Ta^{(2,0)}_{34}\big)(a^{(1,0)}_{55})^{-6}\Big((a^{(1,0)}_{55})^4(\delta^{(1,\frac{1}{2})}_j)^2\\&-Ta^{(0,1)}_{55}(T(a^{(1,0)}_{55})^2 - 2Ta^{(2,0)}_{55} + 2a^{(1,0)}_{53}a^{(1,0)}_{45})\Big).
\ea 
\begin{theorem}[Spectral instability near $0\in\mathbb{C}$ at $\gamma\eps^{\frac{1}{2}}$ order]\label{thm:wilton2}
For $\eps>0$, the $\kappa$-Wilton ripples of order $2$ of sufficiently small amplitude is spectrally unstable in the vicinity of $0\in\mathbb{C}$ provided that either 
\ba \label{def:ind_11}
&{\rm ind}_{11}(\kappa,\alpha)\\:=&a^{(0,1)}_{11}\big(T(a^{(1,0)}_{11})^2 - 2Ta^{(2,0)}_{11} + 2a^{(1,0)}_{13}a^{(1,0)}_{41}\big)(a^{(1,0)}_{11})^{-4}\\
=&-\kappa^2(4e^{2\kappa} + e^{4\kappa} + 1)(42e^{2\kappa} + 75e^{4\kappa} - 212e^{6\kappa} + 75e^{8\kappa} + 42e^{10\kappa} \\&- 11e^{12\kappa} + 144\kappa^2e^{2\kappa} + 144\kappa^2e^{4\kappa} + 144\kappa^2e^{8\kappa} + 144\kappa^2e^{10\kappa}\\& + 120\kappa e^{2\kappa} + 48\kappa e^{4\kappa} - 48\kappa e^{8\kappa} - 120\kappa e^{10\kappa} - 11)\\
&\cdot\big(1152\alpha \pi(e^{2\kappa} - 1)^2(e^{2\kappa} + 1)^4(e^{4\kappa} + 1)\big)^{-1}
\ea
or \ba \label{def:ind_12}
&{\rm ind}_{12}(\kappa,\alpha)\\:=&a^{(0,1)}_{55}\big(T(a^{(1,0)}_{55})^2 - 2Ta^{(2,0)}_{55} + 2a^{(1,0)}_{53}a^{(1,0)}_{45}\big)(a^{(1,0)}_{55})^{-4}\\
=&-\kappa^2 e^{-8\kappa} (4 e^{2\kappa} + e^{4\kappa} + 1) (32 e^{2\kappa} + 64 e^{4\kappa} - 32 e^{6\kappa} \\&- 82 e^{8\kappa} - 32 e^{10\kappa} + 64 e^{12\kappa} + 32 e^{14\kappa} - 23 e^{16\kappa} + 576 \kappa^2 e^{4\kappa}\\& - 576 \kappa^2 e^{8\kappa} + 576 \kappa^2 e^{12\kappa} + 336 \kappa e^{4\kappa} - 384 \kappa e^{6\kappa} + 384 \kappa e^{10\kappa} \\
&- 336 \kappa e^{12\kappa} - 23)\cdot\big(4608 \alpha \pi \sh(4\kappa)^2 (e^{4\kappa} + 1)\big)^{-1},
\ea
where $\alpha$ is a root of \eqref{A_squaremu1}, is positive.

For $\eps<0$, the $\kappa$-Wilton ripples of order $2$ of sufficiently small amplitude is spectrally unstable in the vicinity of $0\in\mathbb{C}$ provided that either ${\rm ind}_{11}(\kappa,\alpha)$ or ${\rm ind}_{12}(\kappa,\alpha)$ is negative.
\end{theorem}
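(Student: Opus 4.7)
The plan is to continue the ansatz--matching strategy already set up immediately before the theorem statement. Lemma~\ref{wilton2_matrices} together with \eqref{def:alpha10_wiltonm} and \eqref{def:alpha20_wiltonm} reduces the analysis to tracking how the Puiseux exponent $\eps^{1/2}$ in \eqref{expansion_wilton2} feeds into the expansion \eqref{eqn:evans_wilton2}. Because $\mathbf{a}^{(0,0)}$, $\mathbf{a}^{(1,0)}$, and $\mathbf{a}^{(2,0)}$ depend only on the underlying constant wave, the $\gamma$ and $\gamma^2$ coefficients of $\delta_j$ are already fixed by the dispersion relation and coincide with those used for non-resonant capillary-gravity waves and for Wilton ripples of order $M\geq 3$. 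The novelty here is that $\mu_1\neq 0$ in \eqref{A_squaremu1} forces nonzero entries in $\mathbf{a}^{(0,1)}$, and it is these entries that produce an $\eps^{1/2}$ branch rather than the $\eps$ branch seen in Theorem~\ref{thm:wiltonm}.

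Concretely, I would substitute \eqref{expansion_wilton2} into \eqref{eqn:evans_wilton2} and collect terms. As in the derivation of \eqref{def:alpha20_wiltonm}, the coefficients of $\gamma^j$ for $j\leq 5$ vanish after imposing \eqref{def:alpha10_wiltonm}, and the $\gamma^6\eps^{1/2}$ coefficient vanishes identically (it is linear in $\delta_j^{(1,1/2)}$ with no inhomogeneity, since $\Delta$ has no half--integer powers of $\eps$). The next order that involves $\delta_j^{(1,1/2)}$ is $\gamma^6\eps$, whose coefficient is given by \eqref{eqn:g_wilton2_1} for $j=1,2$ and by \eqref{eqn:g_wilton2_2} for $j=5,6$. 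Setting these to zero yields the quadratic relations
\begin{equation}
\bigl(\delta_j^{(1,1/2)}\bigr)^2 = T\,{\rm ind}_{11}(\kappa)\quad (j=1,2),\qquad \bigl(\delta_j^{(1,1/2)}\bigr)^2 = T\,{\rm ind}_{12}(\kappa)\quad (j=5,6),
\end{equation}
where the right-hand sides are precisely \eqref{def:ind_11} and \eqref{def:ind_12}. Whenever either index is strictly positive, $\delta_j^{(1,1/2)}$ is real and nonzero, so $\Re\,\delta_j(k_j(0)+\gamma,\eps)\sim\gamma\sqrt{T\,{\rm ind}}\,\eps^{1/2}\neq 0$, producing spectrum off the imaginary axis for arbitrarily small $\eps>0$ and $\gamma$ of appropriate sign.

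The main obstacle is not conceptual but computational: converting the schematic relations \eqref{eqn:g_wilton2_1}--\eqref{eqn:g_wilton2_2} into the closed forms of ${\rm ind}_{11}(\kappa)$ and ${\rm ind}_{12}(\kappa)$ displayed in \eqref{def:ind_11}--\eqref{def:ind_12}. This requires substituting the explicit expressions for $a_{11}^{(0,1)}$, $a_{55}^{(0,1)}$ from Appendix~\ref{amn_details_wilton2} together with $a^{(1,0)}_{11},a^{(1,0)}_{13},a^{(1,0)}_{41},a^{(1,0)}_{45},a^{(1,0)}_{53}$, $a^{(2,0)}_{11},a^{(2,0)}_{55}$, and then imposing the Wilton constraint \eqref{wilton_simplification} to eliminate $\mu_0$ and $\beta$ in favor of $\kappa$. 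As elsewhere in the paper, I would delegate this symbolic simplification to the MATLAB symbolic toolbox. Two minor side checks accompany this: first, that the prefactor $(a^{(1,0)}_{11}-a^{(1,0)}_{55})^2$ appearing in \eqref{eqn:g_wilton2_1}--\eqref{eqn:g_wilton2_2} is nonzero along the domain \eqref{wilton_con} with $M=2$ (a consequence of $\kappa\neq M\kappa$), and second, that the quadratic factors $-(a^{(1,0)}_{11})^2 + 2a^{(1,0)}_{11}a^{(1,0)}_{33} - (a^{(1,0)}_{33})^2 + Ta^{(2,0)}_{34}$ and its $j=5,6$ counterpart do not vanish on the Wilton curve, so that the sign of ${\rm ind}_{11}$ (resp.\ ${\rm ind}_{12}$) really does govern reality of $\delta_j^{(1,1/2)}$.
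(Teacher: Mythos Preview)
Your proposal is correct and follows essentially the same approach as the paper's own proof: set the coefficient of $\gamma^6\eps$ in \eqref{eqn:g_wilton2_1}--\eqref{eqn:g_wilton2_2} to zero, solve for $(\delta_j^{(1,1/2)})^2 = T\,{\rm ind}_{11}(\kappa)$ (resp.\ $T\,{\rm ind}_{12}(\kappa)$), and conclude instability when the relevant index is positive. Your additional remarks on why the $\gamma^6\eps^{1/2}$ order is vacuous and on the nonvanishing of the prefactors are sound elaborations that the paper leaves implicit.
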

\begin{proof}
Solving \eqref{eqn:g_wilton2_1} for $\delta^{(1,\frac{1}{2})}_j$, $j=1,2$ yields $(\delta^{(1,\frac{1}{2})}_j)^2={\rm ind}_{11}(\kappa,\alpha)T$. Hence, for $\eps\gtrless 0$, the expansion \eqref{expansion_wilton2}, for $j=1,2$, bifurcates off the imaginary axis provided that ${\rm ind}_{11}(\kappa,\alpha)\gtrless 0$. Similar argument can be made for $j=5,6$.
\end{proof}

Recalling \eqref{A_squaremu1}, since the factor $\alpha$ appearing in the denominators of ${\rm ind}_{j}(\kappa,\alpha)$, for $j=11,12$, takes one of the values of
$$
\alpha_\pm:=\pm(8\ch(2\kappa))^{-\frac{1}{2}},
$$ 
the index ${\rm ind}_{j}(\kappa,\alpha_+)$ and ${\rm ind}_{j}(\kappa,\alpha_-)$ must admit opposite signs provided that they are non-zero. For ${\rm ind}_{11}(\kappa,\alpha)$ \eqref{def:ind_11}, notice that the factor $$\big(T(a^{(1,0)}_{11})^2 - 2Ta^{(2,0)}_{11} + 2a^{(1,0)}_{13}a^{(1,0)}_{41}\big)$$ also appears in $g_2$ \eqref{g1g2} for non-resonant capillary-gravity waves. Indeed, computation shows
\[
{\rm ind}_{11}(\kappa,\alpha)=-\frac{\kappa^2(4e^{2\kappa} + e^{4\kappa} + 1)^3}{288\alpha \pi(e^{2\kappa} - 1)^2(e^{2\kappa} + 1)^6(e^{4\kappa} + 1)}{\rm ind}_{5,2}(\kappa)
\]
where ${\rm ind}_{5,2}(\kappa)$ is given by evaluating ${\rm ind}_{5,2}(\beta,\kappa)$ \eqref{index_5i} at $\beta=\beta_{\text{Wilton},2}(\kappa)$ \eqref{wilton_con}, the domain of Wilton ripples of order $2$. By the discussion (item 2.) for the modulational stability diagram, ${\rm ind}_{5,2}(\beta,\kappa)$ is zero on the $\beta_2(\kappa)$-curve (see FIGURE \ref{figure15} left panel). And the $\beta_2(\kappa)$-curve intersects the domain of Wilton ripples of order $2$, $\beta_3(\kappa)$-curve, at $(\beta_*,\kappa_*)=(0.19678\ldots,1.28523\ldots)$, whence ${\rm ind}_{5,2}(\kappa)$ changes signs at $\kappa_*$. As for ${\rm ind}_{12}(\kappa,\alpha)$  \eqref{def:ind_12}, we can show by a computer-assisted proof, analogous to the proof of Lemma~\ref{lemma:mu0kappa} that ${\rm ind}_{12}(\kappa,\alpha)\alpha$ is always positive. 

\smallskip

Consider $\kappa$-Wilton ripples of order $2$ with $0<\eps\ll 1$. For the $\kappa$-wave with $\alpha=\alpha_+$, the expansion \eqref{expansion_wilton2}, for $j=1,2$, is unstable at $\gamma\eps^{\frac{1}{2}}$ order provided that $\kappa>\kappa_*$ and stable at $\gamma\eps^{\frac{1}{2}}$ order provided that $\kappa<\kappa_*$. For the $\kappa$-wave with $\alpha=\alpha_-$, the expansion \eqref{expansion_wilton2}, for $j=1,2$, is stable at $\gamma\eps^{\frac{1}{2}}$ order provided that $\kappa>\kappa_*$ and unstable at $\gamma\eps^{\frac{1}{2}}$ order provided that $\kappa<\kappa_*$. For the $\kappa$-wave with $\alpha=\alpha_+$, the expansion \eqref{expansion_wilton2}, for $j=5,6$, is always unstable at $\gamma\eps^{\frac{1}{2}}$ order. For the $\kappa$-wave with $\alpha=\alpha_-$, the expansion \eqref{expansion_wilton2}, for $j=5,6$, is always stable at $\gamma\eps^{\frac{1}{2}}$ order.

Consider $\kappa$-Wilton ripples of order $2$ with $\eps<0, |\eps|\ll 1$. The reverse conclusions can be made for stability.

\newpage

\begin{appendix}
\section{Expansion for Wilton ripples}\label{expansion_wilton} 

When $M=2$, at the order of $\eps^2$, we collect\footnote{By rescaling of $\eps$, we assume in the expansion for $\eta(x;\eps)$ \eqref{eqn:stokes exp}[ii] $\c(\kappa x)$ only appears in the first term $\eta_1(x)$.}
\begin{align}\label{phi2eta2}
\begin{aligned}
\phi_2(x,y)=&\alpha^2\kappa \sh(2\kappa)y\s(4\kappa x)\sh(2\kappa y)+\alpha\kappa \sh(\kappa)y\s(3\kappa x)\sh(2\kappa y)\\&+\tfrac{1}{2}\alpha\kappa \sh(2\kappa)y\s(3\kappa x)\sh(\kappa y)+\tfrac{1}{2}\kappa \sh(\kappa)y\s(2\kappa x)\sh(\kappa y)\\&+\alpha\kappa \sh(\kappa)y\s(\kappa x)\sh(2\kappa y)-\tfrac{1}{2}\alpha\kappa \sh(2\kappa)y\s(\kappa x)\sh(\kappa y)\\
&-\frac{\alpha^2\kappa(2\ch(4\kappa) - 4\ch(2\kappa) + 5)}{16\sh(\kappa)^4}
 \s(4\kappa x)\ch(4 \kappa y)
 \\&-\frac{\alpha\kappa(\ch(2\kappa) + 9\ch(2\kappa)^2 + 8)}{4(\ch(2\kappa) - 1)^2}\s(3\kappa x)\ch(3\kappa y)\\&+c_1\s(2\kappa x)\ch(2\kappa y)-\alpha\kappa(2 + 3 \sh(\kappa)^2)\s(\kappa x)\ch(\kappa y),\\
 \eta_2(x)=&-\frac{\alpha^2\kappa(4\sh(4\kappa) + \sh(8\kappa))}{8(\ch(2\kappa) - 1)^2}\c(4\kappa x)\\
 &-\frac{3\alpha \kappa(16\ch(2\kappa) + 7\ch(4\kappa) + 2\ch(6\kappa) + 11)}{32\sh(\kappa)^3}\c(3\kappa x)\\&+\tfrac{1}{4}\sh(2\kappa)(\kappa + 4c_1)\c(2\kappa x)
\end{aligned}
\end{align}
where $c_1$ is determined upon solving \eqref{stationary-periodic:a}-\eqref{stationary-periodic:D} \eqref{eqn:stokes exp} at the order of $\eps^3$. Solving at the order of $\eps^3$, we obtain
\begin{align}\label{c1mu2}
\begin{aligned}
c_1=&-\kappa(687 \sh(\kappa)^2 + 2363\sh(\kappa)^4 + 4530\sh(\kappa)^6 + 4988\sh(\kappa)^8 \\&+ 3000\sh(\kappa)^{10} + 768\sh(\kappa)^{12} + 90)\\&\cdot\big(128\ch(2\kappa)^2\sh(\kappa)^4(5\sh(\kappa)^2 + 2\sh(\kappa)^4 + 3)\big)^{-1},\\
\mu_2=&\kappa^3(\ch(2\kappa) + 2)(16398\ch(2\kappa) + 11611\ch(4\kappa) + 6805\ch(6\kappa)\\& + 3002\ch(8\kappa) + 1213\ch(10\kappa) + 325\ch(12\kappa) + 32\ch(14\kappa)\\& + 8998)\big(1536\ch(2\kappa)^2\sh(2\kappa)(\ch(2\kappa) - 1)^2(\ch(2\kappa) + 1)\\
&\cdot(2\ch(2\kappa) + 1)\big)^{-1},
\end{aligned}
\end{align}
and we omit the formulas for $\phi_3,\;\eta_3,\;q_3$.
When $M\geq 3$, at the order of $\eps^2$, we collect 

\ba\label{phi2_m}
&\phi_2(x,y)\\=&\tfrac{1}{2}M\alpha ^2\kappa \sh(M\kappa)y\s(2M\kappa x)\sh(M\kappa y)\\&+\tfrac{1}{2}M\alpha \kappa \sh(\kappa)y\s((M+1)\kappa x)\sh(M\kappa y)\\&+\tfrac{1}{2}\alpha\kappa \sh(M\kappa)y\s((M+1)\kappa x)\sh(\kappa y)\\&+\tfrac{1}{2}M\alpha \kappa \sh(\kappa)y\s((M-1)\kappa x)\sh(M\kappa y)\\&-\tfrac{1}{2}\alpha\kappa \sh(M\kappa)y\s((M-1)\kappa x)\sh(\kappa y)\\&+\tfrac{1}{2}\kappa \sh(\kappa)y\s(2\kappa x)\sh(\kappa y)\\
&+3\alpha ^2\kappa M(\sh(\kappa) - M^2\sh(\kappa) - 2M^2\ch(M\kappa)^2\sh(\kappa) \\&+ 2M\ch(M\kappa)\sh(M\kappa)\ch(\kappa))\big(8(M^2\sh(\kappa) - \sh(\kappa) \\&+ \ch(M\kappa)^2\sh(\kappa) + 2M^2\ch(M\kappa)^2\sh(\kappa) \\&- 3M\ch(M\kappa)\sh(M\kappa)\ch(\kappa))\big)^{-1}\s(2M\kappa x)\ch(2M\kappa y)\\&-\big(\alpha\kappa(3M - 2\ch(2M\kappa) + 3M^3\ch(2\kappa) + 2M^4\ch(2\kappa) \\&- 3M\ch(2M\kappa) - M^3\ch(2M\kappa) - 3M^3 - 2M^4 + M\ch(2\kappa)\\& + 2M^3\sh(2\kappa)\sh(2M\kappa) - M\ch(2\kappa)\ch(2M\kappa)\\& - 2M\sh(2\kappa)\sh(2M\kappa) + M^3\ch(2\kappa)\ch(2M\kappa) + 2)\big)\\&
\cdot\big(2(3M - 2\ch(2M\kappa) + M^2\ch(2(M + 1)\kappa) + 3M^2\ch(2\kappa) \\&+ 2M^3\ch(2\kappa)  - 3M\ch(2M\kappa)- M\ch(2(M + 1)\kappa)\\& - M^2\ch(2M\kappa) - 3M^2 - 2M^3 + M\ch(2\kappa) + 2)\big)^{-1}\\&\cdot
\s((M+1)\kappa x)\ch((M+1)\kappa y)\\
&-\alpha\kappa\big(3M + 2\ch(2M\kappa) + 3M^3\ch(2\kappa) - 2M^4\ch(2\kappa) \\& - 3M\ch(2M\kappa)- M^3\ch(2M\kappa) - 3M^3 + 2M^4 + M\ch(2\kappa) \\&- 2M^3\sh(2\kappa)\sh(2M\kappa) - M\ch(2\kappa)\ch(2M\kappa) \\&+ 2M\sh(2\kappa)\sh(2M\kappa) + M^3\ch(2\kappa)\ch(2M\kappa) - 2\big)\\&\cdot\big(2(3M + 2\ch(2M\kappa) - M^2c(2(M - 1)) - 3M^2\ch(2\kappa)\\& + 2M^3\ch(2\kappa) - 3M\ch(2M\kappa) - Mc(2(M - 1)) \\&+ M^2\ch(2M\kappa) + 3M^2 - 2M^3 + M\ch(2\kappa) - 2)\big)^{-1}\\&\cdot\s((M-1)\kappa x)\ch((M-1)\kappa y)\\
&-3\kappa(2\sh(M\kappa) + \ch(2\kappa)\sh(M\kappa) - M^2\sh(M\kappa) \\&- M\sh(2\kappa)\ch(M\kappa))\big(4(4\sh(M\kappa) + 2\ch(2\kappa)\sh(M\kappa)\\& - M^2\sh(M\kappa) + M^2\ch(2\kappa)\sh(M\kappa) - 3M\sh(2\kappa)\ch(M\kappa))\big)^{-1}\\&
\cdot\s(2\kappa x)\ch(2\kappa y)
\ea

\ba\label{eta2_m}
&\eta_2(x)\\
=&-\alpha^2\kappa M\ch(M\kappa)\sh(M\kappa)\sh(\kappa)(M^2 - 1)(\ch(2M\kappa) + 2)\\
&\cdot\big(4(M^2\sh(\kappa) - \sh(\kappa) + \ch(M\kappa)^2\sh(\kappa) + 2M^2\ch(M\kappa)^2\sh(\kappa) \\&- 3M\ch(M\kappa)\sh(M\kappa)\ch(\kappa))\big)^{-1}\c(2M\kappa x)\\
&-2\alpha \kappa(M^2 - 1)\big(\ch(M\kappa)^3\sh(\kappa) - \ch(M\kappa)\sh(\kappa)\\
&+ M^2\sh(M\kappa)\ch(\kappa)^3 - M^2\sh(M\kappa)\ch(\kappa) \\&- 2M\ch(M\kappa)^2\sh(M\kappa)\ch(\kappa) - 2M\ch(M\kappa)\ch(\kappa)^2\sh(\kappa) \\
&+ 2M\ch(M\kappa)^2\sh(M\kappa)\ch(\kappa)^3 + 2M\ch(M\kappa)^3\ch(\kappa)^2\sh(\kappa)\big)\\
&\cdot\big(3M - 2\ch(2M\kappa) + M^2\ch(2(M + 1)\kappa) + 3M^2\ch(2\kappa) \\&+ 2M^3\ch(2\kappa) - 3M\ch(2M\kappa) - M\ch(2(M + 1)\kappa) \\&- M^2\ch(2M\kappa) - 3M^2 - 2M^3 + M\ch(2\kappa) + 2\big)^{-1}
\c((M + 1)\kappa x)\\
&-2 \alpha \kappa (M^2-1) \big( \ch(M\kappa)^3 \sh(\kappa)- \ch(M\kappa) \sh(\kappa)\\&-M^2  \sh(M\kappa) \ch(\kappa)^3+M^2  \sh(M\kappa) \ch(\kappa)\\&-2 M  \ch(M\kappa)^2  \sh(M\kappa) \ch(\kappa)+2 M  \ch(M\kappa) \ch(\kappa)^2 \sh(\kappa)\\&+2 M  \ch(M\kappa)^2  \sh(M\kappa) \ch(\kappa)^3-2 M  \ch(M\kappa)^3 \ch(\kappa)^2 \sh(\kappa)\big)\\&\cdot\big(3 M+2  \ch(2M\kappa)-M^2 \ch(2(M-1)\kappa)-3 M^2  \ch(2\kappa)\\&+2 M^3  \ch(2\kappa)-3 M  \ch(2M\kappa)-M \ch(2(M-1)\kappa)\\&+M^2  \ch(2M\kappa)+3 M^2-2 M^3+M  \ch(2\kappa)-2\big)^{-1}\c((M - 1)\kappa x)\\
&+\kappa \sh(M\kappa) \ch(\kappa) \sh(\kappa) (M^2 - 1) (2 \ch(\kappa)^2 + 1)\big(4  \sh(M\kappa)\\
&+ 8  \sh(M\kappa) \ch(\kappa)^2 - 4 M^2  \sh(M\kappa) + 4 M^2  \sh(M\kappa) \ch(\kappa)^2 \\&- 12 M \ch(M\kappa) \ch(\kappa) \sh(\kappa)\big)^{-1}\c(2\kappa x).
\ea
Solving at the order of $\eps^3$, we obtain $q_3=0$ and, for $M\geq 4$, \be\label{mu2wiltonm}
\mu_2(\kappa,M,\alpha)=\mu_{2,2}(\kappa,M)\alpha^2+\mu_{2,0}(\kappa,M)\ee 
where $\mu_{2,0}(\kappa,M)$ is given by \eqref{mu2pure} evaluated at $\beta=\beta(\kappa,M)$ \eqref{wilton_con} and $\mu_{2,2}(\kappa,M)$ is given by
\ba
&\mu_{2,2}(\kappa,M)=\\&M^2\Big(-2 \kappa^3 M^{10} \sh(M\kappa) \sh(\kappa)^5 (\sh(\kappa)^2 + 1) -\kappa^3 M^9 \ch(M\kappa) \ch(\kappa)\\&\cdot (\ch(\kappa)^2 - 1)^2 (\ch(M\kappa)^2 \ch(\kappa)^2 - 5 \ch(\kappa)^2 + 2 \ch(M\kappa)^4 \ch(\kappa)^2\\& + \ch(M\kappa)^2 + 1)+ \kappa^3 M^8 \sh(M\kappa) \sh(\kappa)^3 (2 \sh(M\kappa)^2 - 12 \sh(M\kappa)^4\\&
\ea

\begin{align*}
&+ 16 \sh(\kappa)^2 + 15 \sh(\kappa)^4 + 23 \sh(M\kappa)^2 \sh(\kappa)^2 + 20 \sh(M\kappa)^2 \sh(\kappa)^4 \\&- 14 \sh(M\kappa)^4 \sh(\kappa)^2 - 2 \sh(M\kappa)^4 \sh(\kappa)^4) -\kappa^3 M^7 \ch(M\kappa)\ch(\kappa) \\&
\cdot(\ch(\kappa)^2 - 1) (16 \ch(\kappa)^2 - 34 \ch(\kappa)^4 - 37 \ch(M\kappa)^2 \ch(\kappa)^2 \\&+ 73 \ch(M\kappa)^2 \ch(\kappa)^4 - 29 \ch(M\kappa)^4 \ch(\kappa)^4 + \ch(M\kappa)^6 \ch(\kappa)^2 \\&- 2 \ch(M\kappa)^2 + 9 \ch(M\kappa)^4 + \ch(M\kappa)^6 + 2) -\kappa^3 M^6 \sh(M\kappa) \sh(\kappa) \\
&\cdot(14 \sh(\kappa)^4 - 16 \sh(M\kappa)^6 + 6 \sh(\kappa)^6 + 30 \sh(M\kappa)^2 \sh(\kappa)^2 \\&+ 103 \sh(M\kappa)^2 \sh(\kappa)^4 + 23 \sh(M\kappa)^4 \sh(\kappa)^2 + 61 \sh(M\kappa)^2 \sh(\kappa)^6 \\
&+ 84 \sh(M\kappa)^4 \sh(\kappa)^4 - 31 \sh(M\kappa)^6 \sh(\kappa)^2 + 56 \sh(M\kappa)^4 \sh(\kappa)^6 \\
&- 14 \sh(M\kappa)^6 \sh(\kappa)^4) -2 \kappa^3 M^5 \ch(M\kappa) \ch(\kappa) (12 \ch(\kappa)^2 - 17 \ch(\kappa)^4 \\&+ \ch(\kappa)^6 + 7 \ch(M\kappa)^2 \ch(\kappa)^4 - 7 \ch(M\kappa)^4 \ch(\kappa)^2 + 17 \ch(M\kappa)^2 \ch(\kappa)^6\\& - 17 \ch(M\kappa)^6 \ch(\kappa)^2 - 22 \ch(M\kappa)^4 \ch(\kappa)^6 + 22 \ch(M\kappa)^6 \ch(\kappa)^4 \\&- 12 \ch(M\kappa)^2 + 17 \ch(M\kappa)^4 - \ch(M\kappa)^6)+ \kappa^3 M^4 \sh(M\kappa) \sh(\kappa) \\&\cdot(14 \sh(M\kappa)^4 + 6 \sh(M\kappa)^6 - 16 \sh(\kappa)^6 + 30 \sh(M\kappa)^2 \sh(\kappa)^2 \\&+ 23 \sh(M\kappa)^2 \sh(\kappa)^4 + 103 \sh(M\kappa)^4 \sh(\kappa)^2 - 31 \sh(M\kappa)^2 \sh(\kappa)^6 \\&+ 84 \sh(M\kappa)^4 \sh(\kappa)^4 + 61 \sh(M\kappa)^6 \sh(\kappa)^2 - 14 \sh(M\kappa)^4 \sh(\kappa)^6 \\&+ 56 \sh(M\kappa)^6 \sh(\kappa)^4)+ \kappa^3 M^3 \ch(M\kappa) \ch(\kappa) (\ch(M\kappa)^2 - 1) \\&\cdot(9 \ch(\kappa)^4 - 2 \ch(\kappa)^2 + \ch(\kappa)^6 - 37 \ch(M\kappa)^2 \ch(\kappa)^2 \\&+ 73 \ch(M\kappa)^4 \ch(\kappa)^2 + \ch(M\kappa)^2 \ch(\kappa)^6 - 29 \ch(M\kappa)^4 \ch(\kappa)^4 \\&+ 16 \ch(M\kappa)^2 - 34 \ch(M\kappa)^4 + 2) -\kappa^3 M^2 \sh(M\kappa)^3 \sh(\kappa) \\&\cdot(16 \sh(M\kappa)^2 + 15 \sh(M\kappa)^4 + 2 \sh(\kappa)^2 - 12 \sh(\kappa)^4 + 23 \sh(M\kappa)^2 \sh(\kappa)^2 \\
&- 14 \sh(M\kappa)^2 \sh(\kappa)^4 + 20 \sh(M\kappa)^4 \sh(\kappa)^2 - 2 \sh(M\kappa)^4 \sh(\kappa)^4)\\&+ \kappa^3 M \ch(M\kappa) \ch(\kappa) (\ch(M\kappa)^2 - 1)^2 (\ch(\kappa)^2 + \ch(M\kappa)^2 \ch(\kappa)^2\\& + 2 \ch(M\kappa)^2 \ch(\kappa)^4 - 5 \ch(M\kappa)^2 + 1)+ 2 \kappa^3 \sh(M\kappa)^5 \sh(\kappa)\\
&\cdot(\sh(M\kappa)^2 + 1)\Big)\cdot\Big(4 \ch(\kappa) (M^2 - 1)^2 (\ch(\kappa)^2 - 1)\big(-M^6 (\sh(M\kappa) \\&- 2 \sh(M\kappa) \ch(\kappa)^2 + \sh(M\kappa) \ch(\kappa)^4)+ M^5 (2 \ch(M\kappa) \ch(\kappa)^3 \sh(\kappa)\\& + 2 \ch(M\kappa)^3 \ch(\kappa) \sh(\kappa) - 2 \ch(M\kappa)^3 \ch(\kappa)^3 \sh(\kappa)\\& - 2 \ch(M\kappa) \ch(\kappa) \sh(\kappa)) -M^4 (2 \sh(M\kappa) \ch(\kappa)^2 - \ch(M\kappa)^4 \sh(M\kappa) \\&- 3 \sh(M\kappa) + \sh(M\kappa) \ch(\kappa)^4 + 5 \ch(M\kappa)^2 \sh(M\kappa) \ch(\kappa)^2\\
&- 5 \ch(M\kappa)^2 \sh(M\kappa) \ch(\kappa)^4 + \ch(M\kappa)^4 \sh(M\kappa) \ch(\kappa)^2)\\&+ M^3 (4 \ch(M\kappa) \ch(\kappa)^3 \sh(\kappa) - 4 \ch(M\kappa)^3 \ch(\kappa) \sh(\kappa) \\
&+ 4 \ch(M\kappa)^5 \ch(\kappa) \sh(\kappa) - 4 \ch(M\kappa)^3 \ch(\kappa)^3 \sh(\kappa))\\&+ M^2 (2 \ch(M\kappa)^2 \sh(M\kappa) - 3 \sh(M\kappa) + \ch(M\kappa)^4 \sh(M\kappa) 
\end{align*}
\begin{align*}
&- \sh(M\kappa) \ch(\kappa)^4 + 5 \ch(M\kappa)^2 \sh(M\kappa) \ch(\kappa)^2 + \ch(M\kappa)^2 \sh(M\kappa) \ch(\kappa)^4 \\&- 5 \ch(M\kappa)^4 \sh(M\kappa) \ch(\kappa)^2)+ M (2 \ch(M\kappa)^5 \ch(\kappa) \sh(\kappa)\\& - 4 \ch(M\kappa)^3 \ch(\kappa) \sh(\kappa) + 2 \ch(M\kappa) \ch(\kappa) \sh(\kappa))+ \sh(M\kappa) \\&- 2 \ch(M\kappa)^2 \sh(M\kappa) + \ch(M\kappa)^4 \sh(M\kappa)\big)\Big)^{-1}.
\end{align*}
And, for $M=3$, there is an additional $\alpha$-order term in the formula of $\mu_2(\kappa,3,\alpha)$, namely,
\be \label{mu2wilton3}
\mu_2(\kappa,3,\alpha)=\mu_{2,2}(\kappa,3)\alpha^2+\mu_{2,1}(\kappa)\alpha+\mu_{2,0}(\kappa,3),
\ee 
where
\begin{align*}
\mu_{2,1}(\kappa)=&-3\kappa^3\ch(\kappa)^3\big(108\ch(\kappa)^6 - 18\ch(\kappa)^4 + 351\ch(\kappa)^8 \\&- 4194\ch(\kappa)^{10} + 8856\ch(\kappa)^{12}\big)\\&\cdot\big(8\sh(\kappa)^5(4\sh(\kappa)^2 + 3)^2(6\sh(\kappa)^2 + 7)\big)^{-1}.
\end{align*}
We omit the formulas for $\phi_3,\;\eta_3,\;\ldots$.

\section{Proof of Lemma~\ref{lem:symm}}\label{A:symm}

Let $\lambda\in{\rm spec}(\mathcal{L}(\eps))$, and suppose that $\mathbf{u}=\begin{pmatrix} \varphi \\ u \\ \eta \\z\end{pmatrix}\in L^\infty(\mathbb{R};Y)$, by abuse of notation, is a nontrivial solution of \eqref{eqn:spec} satisfying $\mathbf{u}(x+T)=e^{ik T}\mathbf{u}(x)$ for all $x\in\mathbb{R}$ for some $k\in\mathbb{R}$. Notice that \eqref{eqn:spec} remains invariant under
\[
\lambda\mapsto \lambda^*\quad\text{and}\quad \mathbf{u}\mapsto \mathbf{u}^*,
\]
and $\mathbf{u}^*(x+T)=e^{-ik T}\mathbf{u}^*(x)$ for all $x\in\mathbb{R}$, where the asterisk denotes complex conjugation. Thus $\lambda^*\in{\rm spec}(\mathcal{L}(\eps))$. Also, notice that \eqref{eqn:spec} remains invariant under
\[
\lambda\mapsto -\lambda\quad\text{and}\quad
\begin{pmatrix} \varphi(x) \\ u(x) \\ \eta(x)\\z(x) \end{pmatrix}\mapsto
\begin{pmatrix} -\varphi(-x) \\ u(-x) \\ \eta(-x)\\-z(-x) \end{pmatrix}=:\mathbf{u}_-(x),
\]
and $\mathbf{u}_-(x+T)=e^{-ik T}\mathbf{u}_-(x)$ for all $x\in\mathbb{R}$. Thus $-\lambda\in{\rm spec}(\mathcal{L}(\eps))$. This completes the proof.

\section{Expansion of \texorpdfstring{$\mathbf{B}(x;\sigma,\delta,\epsilon)$}{Lg}}\label{A:Bexp}

Throughout the section, let $\mathbf{u}=\begin{pmatrix} \varphi\\u \\\eta\\  z \end{pmatrix}$. Recalling \eqref{def:B;exp0}, we use \eqref{eqn:stokes exp}, \eqref{eqn:spec}, \eqref{def:tildephi}, \eqref{phi_y}, \eqref{phi_yy}, \eqref{phiy1}, \eqref{phiy1}, \eqref{phix}, and make straightforward calculations to show that
\ba\label{eqn:B1020}
\mathbf{B}^{(1,0)}(x;\sigma)\mathbf{u}&=\begin{pmatrix}-y^2z/(2 \beta)\\-\eta\\0\\(\varphi(1) + i\sigma\eta)/\mu_0 \end{pmatrix}
\quad\text{and}\\
\mathbf{B}^{(2,0)}(x;\sigma)\mathbf{u}&=\begin{pmatrix}0\\0\\0\\\eta/(2\mu_0)\end{pmatrix}.
\ea
Notice that $\mathbf{B}^{(m,0)}$, $m\geq1$, do not involve $x$. 

Let $\mathbf{B}^{(0,1)}(x;\sigma)\mathbf{u}=\begin{pmatrix}
(\mathbf{B}^{(0,1)}(x;\sigma)\mathbf{u})_1\\
(\mathbf{B}^{(0,1)}(x;\sigma)\mathbf{u})_2 \\
(\mathbf{B}^{(0,1)}(x;\sigma)\mathbf{u})_3\\(\mathbf{B}^{(0,1)}(x;\sigma)\mathbf{u})_4\end{pmatrix}$, and we make straightforward calculations to show that
\begin{align}
&\begin{aligned}\label{eqn:B01}
(\mathbf{B}^{(0,1)}(x;\sigma)\mathbf{u})_1&=\left({\phi_1}_{xx}(1) - 2 {\eta_1}_x\right)\varphi+2 y {\eta_1}_x\varphi_y+{\eta_1}_x\varphi(0)\\&\quad+\left({\phi_1}_x(1) - \eta_1\right)u-{\eta_1}_{xx}\int_0^yy'u(y')dy'\\&\quad+\tfrac{2i\sigma y^2 {\eta_1}_x  - y^2 {\phi_1}_{xy}(1)}{2}\eta+\tfrac{2 y {\phi_1}_y - y^2 {\phi_1}_y(1) -i \sigma y^2 {\phi_1}_x(1) }{2 \beta}z,\\
(\mathbf{B}^{(0,1)}(x;\sigma)\mathbf{u})_2&=\left({\phi_1}_x(1) + \eta_1\right)\varphi_{yy}-{\eta_1}_x u+\tfrac{y {\phi_1}_{xy}}{\beta}z\\
&\quad+\left(i\sigma \eta_1 - {\phi_1}_y(1) + 2 {\phi_1}_{yy}\right)\eta,\\
(\mathbf{B}^{(0,1)}(x;\sigma)\mathbf{u})_3&=0,\\
(\mathbf{B}^{(0,1)}(x;\sigma)\mathbf{u})_4&=-\tfrac{i\sigma (\mu_0{\phi_1}_x(1) -\mu_0 \eta_1+\mu_1)}{\mu_0^2 }\varphi(1)+\tfrac{\mu_0{\phi_1}_x(1)+\mu_1}{\mu_0^2}u(1)\\&\quad+\tfrac{i\sigma {\eta_1}_x}{\mu_0}\int_0^1yu(y)dy+\tfrac{\sigma (i\mu_0{\phi_1}_y(1) - \sigma\mu_0 \eta_1+\mu_1\sigma)}{2\mu_0^2 }\eta\\
&\quad+\tfrac{-{\phi_1}_y(1)}{\beta \mu_0}z,
\end{aligned}
\intertext{
where $\phi_1$ and $\eta_1$ are given in \eqref{phi1eta1}, and, likewise,}
&\begin{aligned}\label{eqn:B11}
(\mathbf{B}^{(1,1)}(x;\sigma)\mathbf{u})_1&=y^2 {\eta_1}_x\eta-\tfrac{y^2 {\phi_1}_x(1)}{2 \beta}z,\\
(\mathbf{B}^{(1,1)}(x;\sigma)\mathbf{u})_2&=\eta_1\eta,\\
(\mathbf{B}^{(1,1)}(x;\sigma)\mathbf{u})_3&=0,\\
(\mathbf{B}^{(1,1)}(x;\sigma)\mathbf{u})_4&=\tfrac{\mu_0 \eta_1 - \mu_0 {\phi_1}_x(1) - \mu_1}{\mu_0^2}\varphi(1)+\tfrac{{\eta_1}_x}{\mu_0}\int_0^1yu(y)dy\\&\quad+\tfrac{\mu_0 {\phi_1}_y(1) - 2i\sigma\mu_1  + 2i\mu_0 \sigma \eta_1 }{2 \mu_0^2}\eta.
\end{aligned}
\end{align}
Additionally, we calculate that
\ba\label{eqn:B02}
&(\mathbf{B}^{(0,2)}(x;\sigma)\mathbf{u})_1\\
=&+\big({\phi_2}_{xx}(1) - {\phi_1}_y(1) {\eta_1}_{xx} - 3 {\eta_1}_x {\eta_1}_{xx} - 2 {\eta_2}_x - {\phi_1}_{xy}(1) {\eta_1}_x \\&+ {\phi_1}_x(1) {\phi_1}_{xx}(1) + 2 \eta_1 {\eta_1}_x\big)\varphi+\left(2 y {\eta_2}_x - 2 y \eta_1 {\eta_1}_x\right)\varphi_y\\&+\left({\eta_2}_x - \eta_1 {\eta_1}_x\right)\varphi(0)+\big({\phi_2}_x(1) - \eta_2 - {\phi_1}_x(1) \eta_1 - 3 {\eta_1}_x^2/2\\
& +  {\phi_1}_x(1)^2 + \eta_1^2 + y^2 {\eta_1}_x^2 - {\phi_1}_y(1) {\eta_1}_x\big)u\\
&-\left( {\phi_1}_x(1) {\eta_1}_{xx} + {\eta_2}_{xx}\right)\int_0^yy'u(y')dy'\\
&+\big(2 \phi_1 {\eta_1}_x - y^2 {\phi_2}_{xy}(1)/2 - 2 {\eta_1}_x \phi_1(\cdot, 0) +i \sigma y^2 {\eta_2}_x  - 3 y {\phi_1}_y {\eta_1}_x\\& - y^2 {\phi_1}_x(1) {\phi_1}_{xy}(1)/2 + 2y^2 {\phi_1}_y(1) {\eta_1}_x + y^2 \eta_1 {\phi_1}_{xy}(1) \\&+ i\sigma y^2 {\phi_1}_x(1) {\eta_1}_x  - i\sigma y^2 \eta_1 {\eta_1}_x\big) \eta+\Big( y {\phi_2}_y - y^2 {\phi_2}_y(1) /2\\&-  \int_0^y{y'}^2{\phi_1}_{xy'
}dy' {\eta_1}_x -i\sigma y^2 {\phi_2}_x(1)/2 +  y {\phi_1}_x(1) {\phi_1}_y - 2 y \eta_1 {\phi_1}_y \\
&-i \sigma y^2 {\phi_1}_x(1)^2/2  - y^2 {\phi_1}_x(1) {\phi_1}_y(1)/2 +  y^2 {\phi_1}_y(1) \eta_1\\& + i\sigma y^2 {\phi_1}_y(1) {\eta_1}_x/2 \Big) z/\beta,\\
&(\mathbf{B}^{(0,2)}(x;\sigma)\mathbf{u})_2\\
=&\left( {\phi_2}_x(1) +  \eta_2 -  {\phi_1}_x(1) \eta_1 - 3 {\eta_1}_x^2/2 - \eta_1^2 -  {\phi_1}_y(1) {\eta_1}_x\right)\varphi_{yy}\\&+\left(\eta_1 {\eta_1}_x - {\eta_2}_x\right)u+\big(3 {\phi_1}_y(1) \eta_1 - i\sigma \eta_1^2  - {\phi_2}_y(1) + i\sigma \eta_2 \\
&- 6 \eta_1 {\phi_1}_{yy} - y {\eta_1}_x {\phi_1}_{xy} + 2 {\phi_2}_{yy}\big)\eta\\
&+\big(y {\phi_2}_{xy} - y \eta_1 {\phi_1}_{xy} - y^2 {\eta_1}_x {\phi_1}_{yy} - y {\phi_1}_y {\eta_1}_x\big)z/\beta,\\
&(\mathbf{B}^{(0,2)}(x;\sigma)\mathbf{u})_3=3 {\eta_1}_x^2z/(2\beta),\\
&(\mathbf{B}^{(0,2)}(x;\sigma)\mathbf{u})_4\\
=&i\big(\mu_1^2 \sigma  - \mu_0^2 \sigma {\phi_2}_x(1)  + \mu_0^2 \sigma \eta_2 - \mu_0 \mu_2 \sigma  + 3\mu_0^2 \sigma {\eta_1}_x^2/2  + \mu_0^2 \sigma {\phi_1}_y(1) {\eta_1}_x  
\ea 
\begin{align*}
&- \mu_0^2 \sigma {\phi_1}_x(1) \eta_1  + \mu_0 \mu_1 \sigma {\phi_1}_x(1)  - \mu_0 \mu_1 \sigma \eta_1 \big) \mu_0^{-3}\varphi(1)\\&+\left(\mu_0^2 {\phi_2}_x(1) + \mu_0 \mu_2 - \mu_1^2 - \mu_0 \mu_1 {\phi_1}_x(1)\right)\mu_0^{-3}u(1)\\
&+i\sigma\big(\mu_0 {\eta_2}_x  - \mu_1  {\eta_1}_x  + \mu_0  \eta_1 {\eta_1}_x \big)\mu_0^{-2}\int_0^1yu(y)dy\\&+\big(\mu_0 \mu_2 \sigma^2 - \mu_1^2 \sigma^2 + i\sigma\mu_0^2  {\phi_2}_y(1)- \mu_0^2 \sigma^2 \eta_2 - i\sigma\mu_0^2  {\phi_1}_y(1) \eta_1 \\&- i\sigma\mu_0 \mu_1  {\phi_1}_y(1)  + \mu_0 \mu_1 \sigma^2 \eta_1\big)\mu_0^{-3}\eta/2+\big(\mu_1 {\phi_1}_y(1) - \mu_0 {\phi_2}_y(1)\\& + \mu_0 {\phi_1}_x(1) {\phi_1}_y(1) + \mu_0 {\phi_1}_y(1) \eta_1\big) \mu_0^{-2}\beta^{-1}z.
\end{align*}

We do not include the formulas of $\mathbf{B}^{(m,n)}(\sigma)$, $m+n\geq3$.

\section{\texorpdfstring{$\mathbf{a}^{(m,n)}$}{Lg} for non-resonant capillary-gravity waves}\label{amn_details} 

\begin{align}
&\begin{aligned}\label{def:a10}
a^{(1,0)}_{11}&=\frac{-4 \pi \sh(2\kappa) (\beta \kappa^2 + 1)}{\kappa (2 \kappa - \sh(2\kappa) + 2 \beta \kappa^3 + \beta \kappa^2 \sh(2\kappa))},\\
a^{(1,0)}_{13}&=\frac{4 i\pi \sh(\kappa) (\beta \kappa^2 + 1)}{\kappa^2 (2 \kappa - \sh(2\kappa) + 2 \beta \kappa^3 + \beta \kappa^2 \sh(2\kappa))},\\
a^{(1,0)}_{33}&=\frac{2 \pi \sh(\kappa) (\beta \kappa^2 + 1)}{\kappa (\sh(\kappa) - \kappa \ch(\kappa) + \beta \kappa^2 \sh(\kappa))},\\
a^{(1,0)}_{41}&=\frac{-2 \pi \ch(\kappa) \sh(\kappa)}{\kappa \sh(\kappa) - \kappa^2 \ch(\kappa) + \beta \kappa^3 \sh(\kappa)},
\end{aligned}
\intertext{}
&\begin{aligned}\label{def:a01}
a^{(0,1)}_{13}&=\frac{-2 \pi \sh(\kappa) (\beta \kappa^2 + 1) (2 \sh(\kappa)^2 + \beta \kappa^2 + 3)}{\ch(\kappa) (2 \kappa - \sh(2\kappa) + 2 \beta \kappa^3 + \beta \kappa^2 \sh(2\kappa))},\\
a^{(0,1)}_{41}&=-i\pi (4 \ch(\kappa) - 4 \ch(\kappa)^3 + \kappa \sh(\kappa) + \beta \kappa^3 \sh(\kappa) \\&\quad+ 2 \kappa \ch(\kappa)^2 \sh(\kappa))\big(\sh(\kappa) - \kappa \ch(\kappa) + \beta \kappa^2 \sh(\kappa)\big)^{-1},
\end{aligned}
\intertext{}
&\begin{aligned}\label{def:a20}
a^{(2,0)}_{11}&=\frac{{a^{(1,0)}_{11}}^2}{2}
+4 i \pi \sh(2\kappa) e^{2\kappa} (\beta \kappa^2 + 1)\big(-\kappa^5 (e^{2\kappa} - 1) (6 e^{4\kappa} \\&\quad- 3 e^{8\kappa} + 16 \kappa^2 e^{2\kappa} - 16 \kappa^2 e^{4\kappa} + 16 \kappa^2 e^{6\kappa} + 24 \kappa e^{2\kappa} - 24 \kappa e^{6\kappa} \\&\quad- 3)\beta^3-\kappa^3 (3 \kappa - 10 e^{2\kappa} - 20 e^{4\kappa} + 20 e^{6\kappa} + 10 e^{8\kappa} - 10 e^{10\kappa}\\&\quad - 72 \kappa^2 e^{2\kappa} - 16 \kappa^3 e^{2\kappa} + 88 \kappa^2 e^{4\kappa} - 88 \kappa^2 e^{6\kappa} + 72 \kappa^2 e^{8\kappa}\\&\quad - 16 \kappa^3 e^{8\kappa} - 61 \kappa e^{2\kappa} + 58 \kappa e^{4\kappa} + 58 \kappa e^{6\kappa} - 61 \kappa e^{8\kappa} + 3 \kappa e^{10\kappa} \\&\quad+ 10)\beta^2-\kappa (6 \kappa - 3 e^{2\kappa} - 6 e^{4\kappa} + 6 e^{6\kappa} + 3 e^{8\kappa} - 3 e^{10\kappa} \\&\quad- 80 \kappa^2 e^{2\kappa} - 32 \kappa^3 e^{2\kappa} + 96 \kappa^2 e^{4\kappa} - 96 \kappa^2 e^{6\kappa} + 80 \kappa^2 e^{8\kappa}
\end{aligned}
\end{align}
\begin{align*}
&- 32 \kappa^3 e^{8\kappa} - 34 \kappa e^{2\kappa} + 28 \kappa e^{4\kappa} + 28 \kappa e^{6\kappa} - 34 \kappa e^{8\kappa} \\&\quad+ 6 \kappa e^{10\kappa} + 3)\beta+e^{2\kappa} - 2 e^{4\kappa} - 2 e^{6\kappa} + e^{8\kappa} + e^{10\kappa} + 16 \kappa^2 e^{2\kappa}\\&\quad + 16 \kappa^2 e^{8\kappa} + 24 \kappa e^{2\kappa} - 40 \kappa e^{4\kappa} + 40 \kappa e^{6\kappa} - 24 \kappa e^{8\kappa} + 1\big)\\&\quad\cdot\big(\kappa (\kappa - e^{2\kappa} + \beta \kappa^2 + \kappa e^{2\kappa}  - \beta \kappa^2 e^{2\kappa}+ 1) (4 \kappa e^{2\kappa} - \beta \kappa^2 \\&\quad- e^{4\kappa} + 4 \beta \kappa^3 e^{2\kappa} + \beta \kappa^2 e^{4\kappa} + 1)^3\big)^{-1},\\
a^{(2,0)}_{34}&=\frac{2 \pi (e^{4\kappa} - 1) (\beta \kappa^2 + 1)}{(\kappa - e^{2\kappa} + \beta \kappa^2 + \kappa e^{2\kappa} - \beta \kappa^2 e^{2\kappa} + 1)^2},
\end{align*}
\begin{align}
&\begin{aligned}\label{def:a11}
a^{(1,1)}_{31}&=-i\pi \sh(2\kappa) (\beta \kappa^2 + 1) \big(3 \kappa - 4 e^{2\kappa} + 4 e^{6\kappa} - 2 e^{8\kappa} - 2 \kappa^2 e^{2\kappa} \\&\quad+ 8 \kappa^3 e^{4\kappa} + 2 \kappa^2 e^{6\kappa} + 6 \beta \kappa^2 + \beta \kappa^3 - 8 \kappa e^{2\kappa} + 10 \kappa e^{4\kappa}\\&\quad - 8 \kappa e^{6\kappa} + 3 \kappa e^{8\kappa} - 12 \beta \kappa^2 e^{2\kappa} - 8 \beta \kappa^3 e^{2\kappa} - 6 \beta \kappa^4 e^{2\kappa} \\&\quad+ 14 \beta \kappa^3 e^{4\kappa} + 12 \beta \kappa^2 e^{6\kappa}- 8 \beta \kappa^3 e^{6\kappa} + 8 \beta \kappa^5 e^{4\kappa} - 6 \beta \kappa^2 e^{8\kappa} \\&\quad+ 6 \beta \kappa^4 e^{6\kappa}  + \beta \kappa^3 e^{8\kappa} + 2\big)\cdot \big((\kappa - e^{2\kappa} + \beta \kappa^2 + \kappa e^{2\kappa}  \\&\quad- \beta \kappa^2 e^{2\kappa} + 1)^2\cdot(4 \kappa e^{2\kappa} - \beta \kappa^2 - e^{4\kappa} + 4 \beta \kappa^3 e^{2\kappa}  \\&\quad+ \beta \kappa^2 e^{4\kappa} + 1)\big)^{-1},\\
a^{(1,1)}_{14}&=2 \pi (e^{2\kappa} - 1) (\beta \kappa^2 + 1)^2 (e^{4\kappa} + 2 \kappa e^{2\kappa} - 1)\big(\kappa - e^{2\kappa} - e^{4\kappa} \\&\quad+ e^{6\kappa} + 4 \kappa^2 e^{2\kappa} + 4 \kappa^2 e^{4\kappa} - \beta \kappa^3 - \beta^2 \kappa^4 + 5 \kappa e^{2\kappa} - 5 \kappa e^{4\kappa} \\&\quad- \kappa e^{6\kappa} + \beta^2 \kappa^4 e^{2\kappa} + 4 \beta^2 \kappa^5 e^{2\kappa}+ \beta^2 \kappa^4 e^{4\kappa} - 4 \beta^2 \kappa^5 e^{4\kappa} \\&\quad - \beta^2 \kappa^4 e^{6\kappa} + 7 \beta \kappa^3 e^{2\kappa}+ 4 \beta \kappa^4 e^{2\kappa} - 7 \beta \kappa^3 e^{4\kappa} + 4 \beta \kappa^4 e^{4\kappa} \\&\quad + \beta \kappa^3 e^{6\kappa} + 1\big)^{-1},
\end{aligned}
\intertext{}
&\begin{aligned}\label{def:a02}
a^{(0,2)}_{11}&=-i\kappa \pi (e^{2\kappa} - 1)\big(2 \kappa^6 (4 e^{2\kappa} + e^{4\kappa} + 1) (\kappa - 32 e^{2\kappa} + 64 e^{4\kappa}\\&\quad - 32 e^{6\kappa} + 32 \kappa^2 e^{4\kappa} - 6 \kappa e^{2\kappa} + 6 \kappa e^{6\kappa} - \kappa e^{8\kappa})\beta^3\\&\quad+\kappa^4 (1216 e^{6\kappa} - 224 e^{2\kappa} - 352 e^{4\kappa} - 13 \kappa - 352 e^{8\kappa} \\&\quad- 224 e^{10\kappa} - 32 e^{12\kappa} + 64 \kappa^2 e^{2\kappa} + 430 \kappa^2 e^{4\kappa} + 1120 \kappa^2 e^{6\kappa} \\&\quad+ 430 \kappa^2 e^{8\kappa} + 64 \kappa^2 e^{10\kappa} + 2 \kappa^2 e^{12\kappa} + 2 \kappa^2 - 72 \kappa e^{2\kappa} \\&\quad- 153 \kappa e^{4\kappa} + 153 \kappa e^{8\kappa} + 72 \kappa e^{10\kappa} + 13 \kappa e^{12\kappa} - 32)\beta^2\\&\quad+\kappa^2 (\kappa - 160 e^{2\kappa} - 368 e^{4\kappa} + 1088 e^{6\kappa} - 368 e^{8\kappa}- 160 e^{10\kappa}\\&\quad - 16 e^{12\kappa} + 62 \kappa^2 e^{2\kappa} + 479 \kappa^2 e^{4\kappa} + 1220 \kappa^2 e^{6\kappa} + 479 \kappa^2 e^{8\kappa}\\&\quad+ 62 \kappa^2 e^{10\kappa} + \kappa^2 e^{12\kappa} + \kappa^2 - 36 \kappa e^{2\kappa} - 75 \kappa e^{4\kappa} + 75 \kappa e^{8\kappa} \\&\quad+ 36 \kappa e^{10\kappa} - \kappa e^{12\kappa} - 16)\beta+16 \kappa - 144 e^{4\kappa} + 256 e^{6\kappa} \\&\quad- 144 e^{8\kappa} + 16 e^{12\kappa} + 16 \kappa^2 e^{2\kappa} + 104 \kappa^2 e^{4\kappa} + 320 \kappa^2 e^{6\kappa}
\end{aligned}
\end{align}

\begin{align*} 
&\quad + 104 \kappa^2 e^{8\kappa} + 16 \kappa^2 e^{10\kappa} + 8 \kappa^2 e^{12\kappa} + 8 \kappa^2 + 32 \kappa e^{2\kappa} + 32 \kappa e^{4\kappa} \\
&\quad- 32 \kappa e^{8\kappa} - 32 \kappa e^{10\kappa} - 16 \kappa e^{12\kappa} + 16\big)\cdot\big(32 \kappa^6 e^{2\kappa} (e^{4\kappa} + 4 \kappa e^{2\kappa} \\
&\quad- 1) (3 e^{2\kappa} - 3 e^{4\kappa} - e^{6\kappa} + 1)\beta^3+16 \kappa^4 e^{2\kappa} (2 e^{4\kappa} - 3 e^{2\kappa} - 2 \kappa\\
&\quad + 2 e^{6\kappa} - 3 e^{8\kappa} + e^{10\kappa}  + 8 \kappa^2 e^{2\kappa}+ 40 \kappa^2 e^{4\kappa} + 40 \kappa^2 e^{6\kappa} + 8 \kappa^2 e^{8\kappa}\\
&\quad+ 2 \kappa e^{2\kappa} + 52 \kappa e^{4\kappa} - 52 \kappa e^{6\kappa}- 2 \kappa e^{8\kappa}+ 2 \kappa e^{10\kappa} + 1)\beta^2\\&\quad+16 \kappa^2 e^{2\kappa} (3 \kappa + 6 e^{2\kappa} - 8 e^{4\kappa} - 8 e^{6\kappa} + 6 e^{8\kappa} + 2 e^{10\kappa} + 4 \kappa^2 e^{2\kappa}\\&\quad+ 44 \kappa^2 e^{4\kappa} + 44 \kappa^2 e^{6\kappa} + 4 \kappa^2 e^{8\kappa} + 9 \kappa e^{2\kappa} + 54 \kappa e^{4\kappa}  - 54 \kappa e^{6\kappa}\\&\quad - 9 \kappa e^{8\kappa}- 3 \kappa e^{10\kappa} + 2)\beta-16 e^{2\kappa} (e^{2\kappa} - 1)^2 (\kappa - e^{2\kappa} - e^{4\kappa} + e^{6\kappa} \\&\quad+ 4 \kappa^2 e^{2\kappa} + 4 \kappa^2 e^{4\kappa} + 5 \kappa e^{2\kappa} - 5 \kappa e^{4\kappa} - \kappa e^{6\kappa} + 1)\big)^{-1}.
\end{align*}

\section{Modulational stability indices}
\label{Modulational_indexes}

Substitute \eqref{def:a10}, \eqref{def:a01}, \eqref{def:a20}, \eqref{def:a11}, and \eqref{def:a02} in \eqref{g1g2}, after simplification, we find
\begin{align}
\begin{aligned} \label{g_i}
g_1&=\frac{4 \pi^2 (e^{2\kappa} - 1) (\beta \kappa^2 + 1)}{\kappa^2\#_1^2\#_2}{\rm ind}_{5,1},\\
g_2&=\frac{\kappa \pi }{256 e^{4\kappa}\sh(2\kappa) (e^{2\kappa} + 1) (\beta \kappa^2 + 1)^2 \#_1^2\#_2}\frac{{\rm ind}_{5,2}{\rm ind}_{5,3}}{{\rm ind}_{5,4}},\\
\end{aligned}
\end{align}
where 
\begin{align}
\label{index_5i}
    \begin{aligned}
    &{\rm ind}_{5,1}\\
    =&\kappa^4 (3 e^{4\kappa} + 4 \kappa e^{2\kappa} - 3)^2\beta^2+2 \kappa^2 (2 \kappa - 6 e^{4\kappa} + 3 e^{8\kappa}  + 16 \kappa^2 e^{4\kappa}\\& - 12 \kappa e^{2\kappa}+ 12 \kappa e^{6\kappa} - 2 \kappa e^{8\kappa} + 3)\beta+4 \kappa - 2 e^{4\kappa} + e^{8\kappa} \\
    &+ 16 \kappa^2 e^{4\kappa} - 4 \kappa e^{8\kappa} + 1,\\
    &{\rm ind}_{5,2}\\
    =&\kappa^4 (6 e^{4\kappa} - 3 e^{8\kappa} + 16 \kappa^2 e^{2\kappa} - 16 \kappa^2 e^{4\kappa} + 16 \kappa^2 e^{6\kappa} + 24 \kappa e^{2\kappa}  \\&- 24 \kappa e^{6\kappa}- 3)\beta^2+2 \kappa^2 (6 e^{4\kappa} - 3 e^{8\kappa} + 16 \kappa^2 e^{2\kappa} - 16 \kappa^2 e^{4\kappa}\\& + 16 \kappa^2 e^{6\kappa} + 16 \kappa e^{2\kappa} - 16 \kappa e^{6\kappa} - 3)\beta+e^{8\kappa} - 2 e^{4\kappa} + 16 \kappa^2 e^{2\kappa}\\& - 16 \kappa^2 e^{4\kappa} + 16 \kappa^2 e^{6\kappa} + 8 \kappa e^{2\kappa} - 8 \kappa e^{6\kappa} + 1,
    \end{aligned}
    \end{align}
    \begin{align*}
    &{\rm ind}_{5,3}\\
    =&2 \kappa^8 (4 e^{2\kappa} + e^{4\kappa} + 1) (42 e^{2\kappa} - 9 e^{4\kappa} - 84 e^{6\kappa} - 9 e^{8\kappa} + 42 e^{10\kappa} \\&+ 9 e^{12\kappa} + 16 \kappa^2 e^{4\kappa} + 416 \kappa^2 e^{6\kappa} + 16 \kappa^2 e^{8\kappa} - 24 \kappa e^{2\kappa} - 240 \kappa e^{4\kappa} \\&+ 240 \kappa e^{8\kappa} + 24 \kappa e^{10\kappa} + 9)\beta^4+\kappa^6 (8 \kappa - 188 e^{2\kappa} - 28 e^{4\kappa} + 188 e^{6\kappa}\\& + 142 e^{8\kappa} + 188 e^{10\kappa} - 28 e^{12\kappa} - 188 e^{14\kappa} - 43 e^{16\kappa} + 80 \kappa^2 e^{4\kappa} \\&+ 2880 \kappa^2 e^{6\kappa} + 12512 \kappa^2 e^{8\kappa} + 2880 \kappa^2 e^{10\kappa}+ 80 \kappa^2 e^{12\kappa} + 168 \kappa e^{2\kappa} \\&- 112 \kappa e^{4\kappa} - 3384 \kappa e^{6\kappa} + 3384 \kappa e^{10\kappa} + 112 \kappa e^{12\kappa} - 168 \kappa e^{14\kappa} \\&- 8 \kappa e^{16\kappa} - 43)\beta^3+4 \kappa^4 (3 \kappa - 135 e^{2\kappa} - 314 e^{4\kappa} + 135 e^{6\kappa} + 636 e^{8\kappa}\\& + 135 e^{10\kappa} - 314 e^{12\kappa} - 135 e^{14\kappa} - 4 e^{16\kappa} + 48 \kappa^2 e^{4\kappa} + 720 \kappa^2 e^{6\kappa}\\& + 4224 \kappa^2 e^{8\kappa} + 720 \kappa^2 e^{10\kappa} + 48 \kappa^2 e^{12\kappa} + 66 \kappa e^{2\kappa} + 498 \kappa e^{4\kappa}\\& + 138 \kappa e^{6\kappa} - 138 \kappa e^{10\kappa} - 498 \kappa e^{12\kappa} - 66 \kappa e^{14\kappa} - 3 \kappa e^{16\kappa} - 4)\beta^2\\&+\kappa^2 (36 \kappa - 132 e^{2\kappa} - 892 e^{4\kappa} + 132 e^{6\kappa} + 1686 e^{8\kappa} + 132 e^{10\kappa} \\&- 892 e^{12\kappa} - 132 e^{14\kappa} + 49 e^{16\kappa} + 272 \kappa^2 e^{4\kappa} + 960 \kappa^2 e^{6\kappa} + 9824 \kappa^2 e^{8\kappa} \\&+ 960 \kappa^2 e^{10\kappa} + 272 \kappa^2 e^{12\kappa} + 48 \kappa e^{2\kappa} + 1560 \kappa e^{4\kappa} + 2736 \kappa e^{6\kappa} \\&- 2736 \kappa e^{10\kappa} - 1560 \kappa e^{12\kappa} - 48 \kappa e^{14\kappa} - 36 \kappa e^{16\kappa} + 49)\beta+32 \kappa \\&+ 64 e^{2\kappa} - 144 e^{4\kappa} - 64 e^{6\kappa} + 208 e^{8\kappa} - 64 e^{10\kappa} - 144 e^{12\kappa} \\&+ 64 e^{14\kappa} + 40 e^{16\kappa} + 128 \kappa^2 e^{4\kappa} + 2048 \kappa^2 e^{8\kappa} + 128 \kappa^2 e^{12\kappa} + 128 \kappa e^{4\kappa} \\&+ 768 \kappa e^{6\kappa} - 768 \kappa e^{10\kappa} - 128 \kappa e^{12\kappa} - 32 \kappa e^{16\kappa} + 40,\\
    &{\rm ind}_{5,4}\\
    =&-2 \kappa^2 (4 e^{2\kappa} + e^{4\kappa} + 1) \beta +(e^{2\kappa} - 1)^2,
    \end{align*}
    and 
    \begin{align*}
    \#_1&=4 \kappa e^{2\kappa} - \beta \kappa^2 - e^{4\kappa} + 4 \beta \kappa^3 e^{2\kappa} + \beta \kappa^2 e^{4\kappa} + 1,\\
    \#_2&=\kappa - e^{2\kappa} + \beta \kappa^2 + \kappa e^{2\kappa} - \beta \kappa^2 e^{2\kappa} + 1.
    \end{align*}
\section{\texorpdfstring{$\mathbf{a}^{(m,n)}$}{Lg} for Wilton ripples of order \texorpdfstring{$\geq3$}{Lg}}\label{amn_details_wiltonm}

The left upper $4$ by $4$ block matrices of $\mathbf{a}^{(1,0)}$ and $\mathbf{a}^{(2,0)}$ are given by evaluating \eqref{def:a10} \eqref{def:a20} at the domain of Wilton ripples of order $M$ \eqref{wilton_con}. Furthermore,
\begin{align}
&\begin{aligned}
a_{45}^{(1,0)}&=-2\pi\sh(M\kappa)(\ch(M\kappa)\sh(\kappa) - M\sh(M\kappa)\ch(\kappa))\\
&\quad\cdot\big(\kappa(1-M^2)\sh(M\kappa)\sh(\kappa)  + \kappa^2 M^2\sh(M\kappa)\ch(\kappa)\\
&\quad - \kappa^2 M\ch(M\kappa)\sh(\kappa)\big)^{-1},\\
\end{aligned}
\end{align}

\begin{align}
&\begin{aligned}\label{def:wiltonm_a10}
a_{53}^{(1,0)}&=2i\pi \sh(M\kappa) \sh(\kappa) (M^2 - 1) \big(\kappa^2 M (\kappa M^3 \sh(\kappa) - \kappa M \sh(\kappa) \\
&\quad+ \ch(M\kappa) \sh(M\kappa) \sh(\kappa) - 2 M \sh(M\kappa)^2 \ch(\kappa) \\
&\quad+ M^2 \ch(M\kappa) \sh(M\kappa) \sh(\kappa))\big)^{-1},\\
a_{55}^{(1,0)}&=-4 \pi \ch(M\kappa) \sh(M\kappa) \sh(\kappa) (M^2 - 1)\big(\kappa (2 M \ch(\kappa) - \kappa M \sh(\kappa)\\&\quad + \kappa M^3 \sh(\kappa) + \ch(M\kappa) \sh(M\kappa) \sh(\kappa) - 2 M \ch(M\kappa)^2 \ch(\kappa) \\
&\quad+ M^2 \ch(M\kappa) \sh(M\kappa) \sh(\kappa))\big)^{-1},
\end{aligned}
\end{align}

\begin{align}\label{def:a01_wiltonm}
\begin{aligned}
&a_{13}^{(0,1)}\\
=&-\pi (M^2-1) (\sh(\kappa)-M^2 \sh(\kappa)-\ch(M\kappa)^2 \sh(\kappa)\\&+M^2 \ch(M\kappa)^2 \sh(\kappa)-2 M^2  \ch(\kappa)^2 \sh(\kappa)\\&+2 M^2 \ch(M\kappa)^2  \ch(\kappa)^2 \sh(\kappa)-2 M \ch(M\kappa) \sh(M\kappa)  \ch(\kappa)^3\\&+2 M \ch(M\kappa) \sh(M\kappa)  \ch(\kappa))\big(M (\ch(M\kappa) \sh(\kappa)\\&-M \sh(M\kappa)  \ch(\kappa)) (\kappa \sh(M\kappa)-\kappa M^2 \sh(M\kappa)\\&-2 M \ch(M\kappa) \sh(\kappa)^2+\sh(M\kappa)  \ch(\kappa) \sh(\kappa)\\
&+M^2 \sh(M\kappa)  \ch(\kappa) \sh(\kappa))\big)^{-1},\\
&a_{41}^{(0,1)}\\
=&4 i\pi \alpha M  \sh(\kappa) (-M \ch(\kappa) \ch(M\kappa)^2+\sh(M\kappa)  \sh(\kappa) \ch(M\kappa)\\&+M \ch(\kappa))\big(\sh(M\kappa)  \sh(\kappa)-M^2 \sh(M\kappa)  \sh(\kappa)\\&+\kappa M^2 \sh(M\kappa) \ch(\kappa)-\kappa M \ch(M\kappa)  \sh(\kappa)\big)^{-1}\\&-i\pi  \sh(\kappa) (4 M \ch(M\kappa)+\kappa \sh(M\kappa)-\kappa M^2 \sh(M\kappa)\\&-4 M \ch(M\kappa) \ch(\kappa)^2-2 \kappa M^2 \sh(M\kappa) \ch(\kappa)^2\\
&+4 M^2 \sh(M\kappa) \ch(\kappa)  \sh(\kappa)+2 \kappa M \ch(M\kappa) \ch(\kappa)  \sh(\kappa))\\&\cdot \big(\sh(M\kappa)  \sh(\kappa)-M^2 \sh(M\kappa)  \sh(\kappa)+\kappa M^2 \sh(M\kappa) \ch(\kappa)\\&-\kappa M \ch(M\kappa)  \sh(\kappa)\big)^{-1},\\
&a_{43}^{(0,1)}\\
=&-2\pi \alpha\Big( (\sh(M\kappa)-\kappa M \ch(M\kappa))\kappa^{-1}+ \sh(M\kappa) (\sh(M\kappa) \sh(\kappa)\\&-M^2 \sh(M\kappa) \sh(\kappa)-\kappa M^2 \sh(M\kappa)  \ch(\kappa)+\kappa M \ch(M\kappa) \sh(\kappa))\\
&\quad\cdot\big(\kappa (\sh(M\kappa) \sh(\kappa)-M^2 \sh(M\kappa) \sh(\kappa)+\kappa M^2 \sh(M\kappa)  \ch(\kappa)\\
&\quad\quad-\kappa M \ch(M\kappa) \sh(\kappa))\big)^{-1}\Big)-2 \pi (\sh(\kappa)-\kappa  \ch(\kappa))\kappa^{-1}
\end{aligned}
\end{align}

\begin{align*}
&-2 \pi \sh(\kappa) (\sh(M\kappa) \sh(\kappa)-M^2 \sh(M\kappa) \sh(\kappa)\\&-\kappa M^2 \sh(M\kappa)  \ch(\kappa)+\kappa M \ch(M\kappa) \sh(\kappa))\big(\kappa (\sh(M\kappa) \sh(\kappa)\\&-M^2 \sh(M\kappa) \sh(\kappa)+\kappa M^2 \sh(M\kappa)  \ch(\kappa)-\kappa M \ch(M\kappa) \sh(\kappa))\big)^{-1},\\
&a_{45}^{(0,1)}\\
=&4 i\pi M \sh(M\kappa)  \sh(\kappa) (\ch(M\kappa)  \sh(\kappa)-M \sh(M\kappa) \ch(\kappa))\\&\cdot\big(\sh(M\kappa)  \sh(\kappa)-M^2 \sh(M\kappa)  \sh(\kappa)+\kappa M^2 \sh(M\kappa) \ch(\kappa)\\&-\kappa M \ch(M\kappa)  \sh(\kappa)\big)^{-1}-i\pi \alpha M  (4 \ch(M\kappa)  \sh(\kappa)\\&-4 \ch(M\kappa)^3  \sh(\kappa)+4 M \sh(M\kappa)^3 \ch(\kappa)+2 \kappa M^2 \ch(M\kappa) \ch(\kappa)\\&+2 \kappa M \sh(M\kappa)^3  \sh(\kappa)-\kappa M^3 \sh(M\kappa)  \sh(\kappa)\\&-2 \kappa M^2 \ch(M\kappa)^3 \ch(\kappa)+3 \kappa M \sh(M\kappa)  \sh(\kappa))\big(\sh(M\kappa)  \sh(\kappa)\\
&\quad-M^2 \sh(M\kappa)  \sh(\kappa)+\kappa M^2 \sh(M\kappa) \ch(\kappa)-\kappa M \ch(M\kappa)  \sh(\kappa)\big)^{-1},\\
&a_{53}^{(0,1)}\\
=&\alpha M \pi (M^2-1) (\sh(M\kappa)+2 \ch(M\kappa)^2 \sh(M\kappa)-\sh(M\kappa)  \ch(\kappa)^2\\&-M^2 \sh(M\kappa)+M^2 \sh(M\kappa)  \ch(\kappa)^2-2 \ch(M\kappa)^2 \sh(M\kappa)  \ch(\kappa)^2\\&-2 M \ch(M\kappa)  \ch(\kappa) \sh(\kappa)+2 M \ch(M\kappa)^3  \ch(\kappa) \sh(\kappa))\\&\cdot\big((\ch(M\kappa) \sh(\kappa)-M \sh(M\kappa)  \ch(\kappa)) (\kappa M^3 \sh(\kappa)-\kappa M \sh(\kappa)\\&+\ch(M\kappa) \sh(M\kappa) \sh(\kappa)-2 M \sh(M\kappa)^2  \ch(\kappa)\\&+M^2 \ch(M\kappa) \sh(M\kappa) \sh(2\kappa))\big)^{-1}.
\end{align*}

\section{\texorpdfstring{$\mathbf{a}^{(m,n)}$}{Lg} for Wilton ripples of order \texorpdfstring{$2$}{Lg} }\label{amn_details_wilton2}
\begin{align}
\begin{aligned}
a_{11}^{(0,1)}&=-\frac{3i\kappa\pi\sh(2\kappa)(5e^{2\kappa} + 5e^{4\kappa} + e^{6\kappa} + 1)}{4\alpha \ch(2\kappa)(9e^{2\kappa} - 9e^{4\kappa} - e^{6\kappa} + 12\kappa e^{2\kappa} + 12\kappa e^{4\kappa} + 1)},\\
a_{55}^{(0,1)}&=\frac{3i\kappa\pi(4e^{2\kappa} - 4e^{6\kappa} - e^{8\kappa} + 1)}{2\alpha (8e^{2\kappa} - 8e^{6\kappa} + e^{8\kappa} + 24\kappa e^{4\kappa} - 1)}.
\end{aligned}
\end{align}

\end{appendix}

\bibliographystyle{amsplain}
\bibliography{wwbib}

\end{document}